\documentclass[11pt]{article}
\usepackage[a4paper,margin=1in]{geometry}
\usepackage{amsmath,amssymb,amsthm,mathrsfs,mathtools}
\usepackage{microtype}
\usepackage{enumitem}
\usepackage{hyperref}
\hypersetup{
  colorlinks=true,
  linkcolor=blue,
  citecolor=blue,
  urlcolor=blue,
  pdftitle={Optimal Linear Dependence on Boundary Type for Local Gromov
  Hyperbolicity of the Kobayashi Metric},
  pdfkeywords={Kobayashi distance, local Gromov hyperbolicity, universal hyperbolicity profile, convex finite type, D'Angelo type}
}

\newtheorem{theorem}{Theorem}[section]
\newtheorem{lemma}[theorem]{Lemma}
\newtheorem{example}[theorem]{Example}
\newtheorem{proposition}[theorem]{Proposition}
\newtheorem{corollary}[theorem]{Corollary}
\theoremstyle{definition}
\newtheorem{definition}[theorem]{Definition}
\newtheorem{remark}[theorem]{Remark}

\newcommand{\C}{\mathbb{C}}
\newcommand{\R}{\mathbb{R}}
\newcommand{\D}{\mathbb{D}}
\newcommand{\HH}{\mathbb{H}}

\newcommand{\Hol}{\operatorname{Hol}}
\newcommand{\dist}{\operatorname{dist}}
\newcommand{\eps}{\varepsilon}
\newcommand{\dd}{\mathop{}\!\mathrm{d}}

\renewcommand{\Re}{\operatorname{Re}}
\renewcommand{\Im}{\operatorname{Im}}
\DeclareMathOperator{\ord}{ord}
\newcommand{\Aff}{\operatorname{Aff}}
\newcommand{\Id}{\operatorname{id}}
\begin{document}
\title{Optimal Linear Dependence on Boundary Type for Local Gromov
	Hyperbolicity of the Kobayashi Metric}

\author{%
  \parbox{\dimexpr\textwidth-2\tabcolsep\relax}{\centering
    Cheng Lou \quad Jianyong Qiao \quad Hongyu Wang \quad Yumin Zhong
    \\[1.2ex]
    {\small
      School of Science, Beijing University of Posts and Telecommunications,\\
      Beijing 100876, China\\
      Key Laboratory of Mathematics and Information Networks\\
      (Beijing University of Posts and Telecommunications),\\
      Ministry of Education, China\endgraf}
  }%
}

\date{}

\maketitle
\begin{abstract}

The Gromov hyperbolicity constant of a metric space \((X,d)\) is the infimum of all \(\delta\ge0\) such that \((X,d)\) is \(\delta\)-hyperbolic. For a Kobayashi hyperbolic domain \(\Omega\subset\C^n\) and a
boundary point \(p\in\partial\Omega\), let
\(\delta_{\mathrm{loc}}(\Omega,p)\) denote the local Gromov
hyperbolicity constant obtained by restricting the points to arbitrarily
small Euclidean neighborhoods of \(p\), while distances are still
measured by the ambient Kobayashi distance \(K_\Omega\).  For
each even integer \(M\ge2\), let \(\mathfrak H(M)\) be the supremum of these
constants over all complex dimensions \(n\ge2\) and all domains whose
boundary is smooth and convex near the distinguished point and has D'Angelo type
at most \(M\) at that point.  We prove
\[
\frac{\log 2}{2} M
\le
\mathfrak H(M)
<
36M.
\]
 Thus the optimal universal dependence of the local
Gromov hyperbolicity constant on boundary type is linear.  We also show that
no analogous bound holds for the global Gromov hyperbolicity constant,
even among smooth bounded strongly convex domains.

\end{abstract}

\noindent\textbf{2020 Mathematics Subject Classification.}
 32F45; 32T25; 53C23.

\medskip
\noindent\textbf{Keywords.}
Kobayashi distance,  Gromov hyperbolicity,  finite type.
\setcounter{tocdepth}{2}
\tableofcontents
\section{Introduction}\label{sec:introduction}

The Kobayashi distance is one of the basic intrinsic metrics in several
complex variables.  It is invariant under biholomorphic maps and
non-increasing under holomorphic maps \cite{Kobayashi}.  On the unit ball,
it agrees, up to normalization, with the complex hyperbolic distance.
On general domains, however, the associated infinitesimal metric is
typically Finsler rather than Riemannian, so classical Riemannian
curvature is not directly available.

Gromov hyperbolicity provides a natural framework for studying the
coarse negative-curvature behavior of the Kobayashi distance
\cite{Gromov,BridsonHaefliger}. Although Gromov hyperbolicity is a
large-scale metric property, large-scale Kobayashi geometry can already
be visible in arbitrarily small Euclidean neighborhoods of the boundary.
Indeed, points approaching the same boundary point can be arbitrarily
close in Euclidean distance while being arbitrarily far apart with
respect to the Kobayashi distance. Consequently, the question considered
here is local in the Euclidean topology but asymptotic in the Kobayashi
geometry.

The first systematic connection between boundary geometry and Gromov
hyperbolicity for domains equipped with the Kobayashi distance was
established by Balogh and Bonk. For bounded strongly pseudoconvex
domains, they compared the Kobayashi distance, up to a bounded additive
error, with a model metric involving the boundary distance and the
Carnot--Carath\'eodory metric associated with the complex tangent
bundle of the boundary. This comparison allowed them to prove that
every bounded strongly pseudoconvex domain is Gromov hyperbolic with
respect to its Kobayashi distance \cite{BaloghBonk}.  Weakly pseudoconvex domains exhibit a wider range
of behavior.  The polydisc is not Gromov hyperbolic, and more generally,
Gaussier and Seshadri proved that a smooth bounded convex domain whose
boundary contains a nonconstant analytic disk cannot be Gromov
hyperbolic for the Kobayashi distance
\cite{GaussierSeshadri}.

D'Angelo type measures the order of contact of holomorphic curves with
the boundary \cite{DAngelo}.  Strongly pseudoconvex points have type
two, while a nonconstant analytic disk contained in the boundary
forces infinite type.  Zimmer proved the fundamental qualitative
characterization that a bounded convex domain with smooth boundary is
Gromov hyperbolic for the Kobayashi distance if and only if its
boundary has finite D'Angelo type \cite{ZimmerFiniteType}.  Beyond
convexity, the sufficient direction has also been established in
several important finite-type settings.  Fiacchi treated smooth bounded
pseudoconvex finite-type domains in \(\C^2\), while Zhang extended this
to smooth bounded pseudoconvex finite-type domains whose Levi form has
corank at most one \cite{Fiacchi,ZhangLeviCorankOne}.

Scaling methods play an important role in these results.  Such methods
were developed by Pinchuk and Frankel and later extended in work of
Bedford--Pinchuk, Kim--Krantz, and others to finite-type boundary
problems and invariant metrics
\cite{PinchukScaling,FrankelAffine,BedfordPinchukScaling,KimKrantzScaling}.
The basic idea is to magnify the geometry near a boundary point by
suitable affine or holomorphic transformations and study the resulting
limit domains.  In particular, scaling is a central ingredient in
Zimmer's characterization of finite-type convex domains.  These results
answer the qualitative question of whether a finite hyperbolicity
constant exists, but they do not determine its quantitative dependence
on the boundary type.

There are also precise metric descriptions of finite-type domains.
Catlin developed distinguished coordinates adapted to the non-isotropic
boundary geometry, while McNeal introduced adapted polydiscs reflecting
the corresponding boundary scales
\cite{CatlinMetrics,McNealConvexFiniteType,McNealBergman}.
 For each fixed smoothly bounded convex finite-type domain, H.~Wang derived a pairwise comparison formula for the Kobayashi distance, up to a bounded additive error, in which the model expression involves McNeal's boundary quasi-distance \cite{WangJLMS}.
Li, Pu, and L.~Wang proved related estimates for smooth bounded
pseudoconvex finite-type domains in \(\C^2\) \cite{LiPuWang}.
Recently, T.~Wang and Zimmer developed a different approach based on linear
isoperimetric inequalities and obtained hyperbolicity results for several
intrinsic metrics, including the Kobayashi metric on convex finite-type
domains \cite{WangZimmerIso}.

These results establish qualitative Gromov hyperbolicity or provide
quantitative metric estimates under additional geometric hypotheses.
The associated constants, however, may depend on the particular domain,
the complex dimension, or finer boundary data. It is therefore natural
to ask whether D'Angelo type alone can provide uniform quantitative
control.

No such control is possible for the global Gromov hyperbolicity constant
of \((\Omega,K_\Omega)\), namely, the infimum of all
\(\delta\geq 0\) such that \((\Omega,K_\Omega)\) is
\(\delta\)-hyperbolic. The
D'Angelo type is a local boundary invariant, whereas the global
hyperbolicity constant also reflects the geometry of the entire domain.
In Example~\ref{ex:no-global-bound}, we construct, in every fixed complex
dimension \(n\ge2\), a sequence of smooth bounded strongly convex domains, all of
D'Angelo type two, whose global Gromov hyperbolicity constants tend to
infinity. These domains increasingly approximate the geometry of the
polydisc and develop large product-like regions in their Kobayashi
geometry, even though each domain in the sequence is strongly convex.

This global obstruction motivates the following local formulation. Let
\(\Omega\subset\C^n\) be a Kobayashi hyperbolic domain, and let
\(p\in\partial\Omega\). We define
\[
\delta_{\mathrm{loc}}(\Omega,p)
=
\inf_{V\ni p}
\delta\left(
\Omega\cap V,\,
\left.K_\Omega\right|_{\Omega\cap V}
\right),
\]
where \(V\) ranges over all Euclidean neighborhoods of \(p\),
\(\delta(X,d)\) denotes the Gromov hyperbolicity constant of the metric
space \((X,d)\), and \(\left.K_\Omega\right|_{\Omega\cap V}\) denotes
the restriction of the ambient Kobayashi distance \(K_\Omega\) to the
subset \(\Omega\cap V\).

Thus only the points under consideration are localized near \(p\); their
mutual distances are still measured by the ambient Kobayashi distance.
This ambient localization differs from the intrinsic localization
studied by Bracci, Gaussier, Nikolov, and Thomas
\cite{BracciGaussierNikolovThomas}, in which the truncated domain
\(\Omega\cap V\) is equipped with its own Kobayashi distance
\(K_{\Omega\cap V}\). Accordingly, \(\delta_{\mathrm{loc}}(\Omega,p)\)
records the ambient Kobayashi geometry seen arbitrarily close to the
boundary point \(p\), without introducing the additional effects
associated with the artificial boundary \(\partial V\cap\Omega\).

The quantitative problem is therefore to determine whether
\(\delta_{\mathrm{loc}}(\Omega,p)\) admits an upper bound depending only
on an upper bound for the D'Angelo type near \(p\), uniformly over all
domains and complex dimensions, and to determine the optimal order of
growth of such a bound. To encode this problem, for an even integer \(M\ge2\), define
\(\mathfrak H(M)=\sup\delta_{\mathrm{loc}}(\Omega,p)\), where the
supremum is taken over all complex dimensions \(n\ge2\), all Kobayashi
hyperbolic domains \(\Omega\subset\C^n\), and all \(p\in\partial\Omega\)
such that \(\Omega\cap U\) is convex for some neighborhood \(U\) of \(p\),
\(\partial\Omega\) is smooth near \(p\), and its D'Angelo type at \(p\)
is at most \(M\). Since the finite D'Angelo type at a smooth convex boundary point is an
even integer, it suffices to consider even \(M\). A
priori, this supremum could be infinite even though every individual
pointed domain in the class has a finite local hyperbolicity constant.

Our main result determines the growth of this profile.

\begin{theorem}
	\label{thm:universal-growth}
	For every even integer \(M\ge2\), the universal local hyperbolicity profile
	satisfies
	\begin{equation*}
		\frac{\log 2}{2}M
		\le
		\mathfrak H(M)
		<
		36M.
	\end{equation*}
	Moreover, the lower bound remains valid if the supremum is restricted
	to any fixed complex dimension \(n\ge2\).
\end{theorem}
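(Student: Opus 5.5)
The two inequalities are proved separately: the lower bound by an explicit model family, the upper bound by rescaling to a universal model class.

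\textbf{Lower bound.} Fix $n\ge2$ and consider the model $\Omega_M=\{(z,w)\in\C\times\C^{n-1}:\Re w_1+|z|^M+\sum_{j\ge2}|w_j|^2<0\}$, or a bounded convex domain agreeing with it near $p=0$. At $p$ the D'Angelo type is exactly $M$. The domain is invariant under the anisotropic dilations $(z,w_1,w')\mapsto(t^{1/M}z,tw_1,t^{1/2}w')$. Hence $K_\Omega$ restricted to arbitrarily small neighborhoods of $0$ still contains isometric copies of configurations of any Kobayashi size. This dilation argument applies to the unbounded model; for a bounded domain agreeing with it near $p$, one uses localization of the Kobayashi distance near a convex finite-type point up to a small additive error.

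The task is therefore to exhibit a geodesic-like quadruple in the slice $\{w'=0\}$ whose Gromov product defect is at least $\frac{\log 2}{2}M-o(1)$. The slice is the planar domain $\{\Re w+|z|^M<0\}$, a retract of $\Omega_M$, so distances within it are computed intrinsically. I would use the $M$-fold rotational symmetry in the $z$ variable together with the real slice. Points $\xi_k=(r e^{2\pi i k/M}, -s)$, with $s\gg r^M$, sit at approximately pairwise equal large mutual distances. When $M$ rotational "directions" each contribute roughly $\tfrac12\log 2$ to the four-point defect, the total defect is about $\frac{\log2}{2}M$.

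Concretely, I would compute $K$ on the slice via the explicit estimate $K(a,b)\approx\log\frac{|a-b|_{\text{aniso}}^{2}}{\dots}$. More reliably, I would use the real one-dimensional model $\{x:x_2+|x_1|^M<0\}$ and the comparison $K\ge\frac12\log$ of ratios of boundary distances along linear projections, which is exact up to $\frac12\log2$ errors for convex domains. The main thing to pin down here is the factor $\frac{\log2}{2}$: I expect it to arise as the additive defect $\frac12\log 2$ in the sharp convex lower bound $K_\Omega(a,b)\ge\frac12\log\frac{\delta(a)}{\delta(b)}$, accumulated over $M$ scales in a chain of quadruples.

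\textbf{Upper bound.} I would argue by contradiction with compactness, made quantitative. Take sequences $(\Omega_k,p_k)$ and quadruples near $p_k$ with four-point defect at least $36M$. Applying Frankel/Pinchuk affine rescaling at the relevant base points converges, after passing to the limit, to a convex domain of type at most $M$. Because the dimension is not fixed, I would first reduce the dimension: the Kobayashi distances between four points are governed up to uniform error by a McNeal-type adapted polydisc with at most $M$ nontrivial distinct scales in the complex-tangential directions. Equivalently, I would use H.~Wang's comparison formula with McNeal's quasi-distance and track its constants explicitly. Those constants depend only on $M$, linearly through the number of doubling steps $\log_2$ of the polynomial degree.

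The Gromov product is then controlled by an ultrametric-like model quantity $\log\big(\rho(a,b)/\sqrt{\delta(a)\delta(b)}\big)$. For this model the four-point condition holds with a constant of the form $C\cdot M$, coming from the quasi-triangle constant of McNeal's quasi-distance, which is $\lesssim 2^{M}$ and therefore contributes $\log$ of it, about $M\log2$. The additive comparison error is also $O(M)$.

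The main obstacle is obtaining dimension-free, explicitly linear-in-$M$ constants in the distance comparison near $p$. I would attempt this by working with convex support functions and Nikolov's sharp estimates, $K\approx\frac12\log$ of boundary-distance ratios along complex lines, where the type enters only through the estimate $\delta$ along lines $\gtrsim |t|^{M}$. Summing the errors should produce a bound of the form $36M$.
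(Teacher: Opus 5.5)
There are genuine gaps in both halves. The proposal outlines the shape of an argument, but it supplies neither a working mechanism for the lower bound nor the dimension-free comparison that the upper bound needs.

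\textbf{Lower bound.} The four-point defect of a single quadruple is $\frac12(S_{\max}-S_{\mathrm{mid}})$.

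\begin{itemize}
\item It is not a sum over $M$ rotational directions or over a chain of quadruples, so the accumulation you describe does not exist.
\item The $\frac12\log2$ slack in $K_\Omega(a,b)\ge\frac12|\log(\delta(a)/\delta(b))|$ is not where the constant comes from. That bound ignores tangential separation entirely.
\item Coarse comparisons $K\approx\log(\text{quasi-distance}/\sqrt{\delta\delta})$ cannot certify a defect of order $M$. Their additive errors, and the $e^{O(M)}$ ambiguity in the quasi-distance (for instance its asymmetry), are of the same order as the defect you want.
\item Your configuration with $s\gg r^M$ puts every $\xi_k$ deep inside the disc $\{w=-s\}$, whose radius $s^{1/M}$ is much larger than $r$. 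Their mutual distances are therefore small, not large.
\item The rotational model itself is a poor test case. Its type-$M$ degeneracy sits only on the axis $z=0$; off the axis the boundary is strongly pseudoconvex. The branched cover $(z,w)\mapsto(z^{M/2},w)$ onto the Siegel domain suggests the effective quasi-distance behaves like $\max\{|z_1|,|z_2|\}^M$ times a ball-type factor. The $\max$ factor is ultrametric and produces no defect, so one expects only logarithmic growth. Consistently, the model is biholomorphic to $\{|w|^2+|z|^M<1\}$, which tends to the bidisc, and that degeneration yields only order $\log M$.
\end{itemize}

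The paper instead takes $\mathcal T_m=\{\Re Q>(\Re Z)^{2m}\}$, whose degenerate complex-tangential direction $\Im Z$ is a translation symmetry.

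\begin{itemize}
\item Reflection together with convex averaging of an extremal disc reduces distances to the totally real slice $\Sigma_m$.
\item The dilations factor the slice metric exactly as $m\bar d_m$.
\item The asymptotic $f_m(A)=2\log A+b_m+o(1)$ holds, and the unknown $b_m$ cancels in the pairing sums.
\item Hence the collinear quadruple $0,1,\sqrt2,1+\sqrt2$ along $\Im Z$ has defect tending to exactly $m\log 2$. This is half of $2m\log2$: the growth exponent $2m$ times the logarithm of the cross-ratio $2$.
\item Dilations move these quadruples to the origin, and a holomorphic retraction gives the bound in each fixed dimension.
\end{itemize}

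\textbf{Upper bound.} Your final step matches the paper's: a two-sided comparison $K=\log(\rho/\sqrt{\delta\delta})+O(M)$, with $\rho$ quasi-ultrametric with constant $e^{O(M)}$, implies the four-point inequality. Everything before that step is missing.

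\begin{itemize}
\item A compactness/contradiction argument cannot produce the explicit constant $36M$, and it is circular. There is no compactness when $n$ is unbounded, and rescaled limits are again convex domains of type at most $M$, whose constants are exactly what you are trying to bound.
\item The constants in McNeal's polydisc estimates and in H.~Wang's formula depend on the domain and on $n$. Saying there are ``at most $M$ distinct scales'' does not remove the dimension dependence of extremal-basis comparisons. Asserting that these constants can be tracked linearly in $M$ is the whole content of the theorem.
\end{itemize}

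The paper obtains the dimension-free comparison as follows.

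\begin{itemize}
\item It replaces bases by the basis-free gauges $p_{q,\eps}$ of the balanced sets $\mathbb B_\eps(q)$.
\item The Chebyshev extremal bound gives $\tau(q,v,2T_m(5)\eps)\ge5\tau(q,v,\eps)$ uniformly in $n$.
\item This yields center stability and engulfing with absolute constants, and a quasi-triangle constant $Q_m<10^m$.
\item Linearity of the additive error needs one more idea: the finite-chain quasi-distance $r_m$ with at most $4m$ links, matched to the exponent $1/(4m)$ in the Frink metrization, together with the weighted first-exit argument that groups the exit links into $4m$ groups. With a bounded number of groups one loses $m\log m$ instead of $m$.
\end{itemize}

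Finally, you also need the germ-invariance step, via Nikolov--Thomas localization, to pass from domains that are only locally convex near $p$ to convex ones.
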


Thus \(\mathfrak H(M)\asymp M\). In particular, every local
hyperbolicity constant in the class admits an upper bound depending only on
the boundary type, uniformly over all domains and dimensions, and the linear
order of growth of the universal profile is optimal.   The size of the Euclidean neighborhood on which the local
estimate is obtained may still depend on the pointed domain.

A main ingredient in the upper bound is a type-uniform local comparison
for the Kobayashi distance.  In
Section~\ref{sec:kobayashi-comparison}, we construct from the
non-isotropic boundary geometry a finite-chain quasi-distance \(r_m\).
For each fixed smoothly bounded convex finite-type domain, this
quasi-distance is locally Lipschitz equivalent to McNeal's
quasi-distance.  We then set
\[
g_m(x,y)
=
\log
\frac{
	r_m(x,y)+\max\{\delta(x),\delta(y)\}
}{
	\sqrt{\delta(x)\delta(y)}
}.
\]
Although \(g_m\) reflects the geometry of the particular domain, the
additive errors in the following comparison depend only on the type
bound.

\begin{theorem}
	\label{thm:two-sided-kobayashi}
	Let \(\Omega\subset\C^n\) be a convex domain, possibly unbounded, and
	let \(p\in\partial\Omega\).  Suppose that \(\partial\Omega\) is smooth
	near \(p\) and has D'Angelo type at most an integer \(m\ge2\)
	at \(p\).  Then there exists a neighborhood
	\(V\) of \(p\) such that
	\begin{equation*}
		g_m(x,y)-31m
		\le
		K_\Omega(x,y)
		\le
		g_m(x,y)+\frac m2+2+\log3,
	\end{equation*}
	for all \(x,y\in\Omega\cap V\).
	The constants are independent of \(\Omega\), \(p\), and the dimension
	\(n\).
\end{theorem}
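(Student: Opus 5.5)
We prove the two inequalities separately, after fixing the normalization that defines \(r_m\). Near \(p\) we use convexity and finite type at most \(m\) to choose, for each \(x\in\Omega\cap V\), a McNeal-type orthonormal frame \(v_1(x),\dots,v_n(x)\) with scales \(\tau_j(x,\epsilon)\). Here \(\tau_j(x,\epsilon)\) is the distance from \(x\) to \(\partial\Omega\) in direction \(v_j\), computed for the \(\epsilon\)-sublevel of the defining function. Finite type \(\le m\) gives \(\epsilon^{1/2}\lesssim\tau_j\lesssim\epsilon^{1/m}\). The one-step quasi-distance \(\rho(x,y)\) is the least \(\epsilon\) such that \(y\) lies in the polydisc \(P(x,\epsilon)=\{x+\sum \zeta_jv_j: |\zeta_j|<\tau_j(x,\epsilon)\}\). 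The finite-chain version \(r_m\) is the infimum over chains \(x=z_0,\dots,z_k=y\) of \(\sum\rho(z_{i-1},z_i)\). The point of \(r_m\) is that it satisfies a genuine triangle inequality up to a bounded number of steps, so the constants coming from quasi-triangle inequalities are absorbed in a way that depends only on \(m\).

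\textbf{Upper bound.} We build an explicit path and use the triangle inequality for \(K_\Omega\). Set \(\epsilon=r_m(x,y)+\max\{\delta(x),\delta(y)\}\). First move from \(x\) along the inner normal to a point \(x'\) with \(\delta(x')\approx\epsilon\). The one-dimensional comparison with a half-plane, valid because \(\Omega\) is convex and so lies on one side of a supporting hyperplane, gives
\[
K_\Omega(x,x')\le\tfrac12\log\frac{\epsilon}{\delta(x)}+O(1).
\]
The same holds for \(y\to y'\), and these two pieces produce \(\log(\epsilon/\sqrt{\delta(x)\delta(y)})\). Next we join \(x'\) to \(y'\). Both points lie well inside a polydisc \(P(z,C\epsilon)\) contained in \(\Omega\), by convexity applied to McNeal's polydisc \(P(z,c\epsilon)\). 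By the decreasing property, \(K_\Omega(x',y')\) is then at most the Kobayashi distance in a polydisc between points at bounded relative position. The \(\epsilon^{1/m}\) scaling enters only through the ratio \(\tau_j(C\epsilon)/\tau_j(\epsilon)\le C^{1/2}\), and tracking it explicitly should give the additive constant \(m/2+2+\log 3\).

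\textbf{Lower bound.} This is the main obstacle, since the constant must be uniform in \(n\) and \(\Omega\). We would use the standard supporting-hyperplane projection argument. For each unit vector \(v\) and boundary point \(\xi\), the complex line projection maps \(\Omega\) into a half-plane \(H\). Hence
\[
K_\Omega(x,y)\ge K_H\bigl(\pi(x),\pi(y)\bigr)=\tfrac12\log\frac{|\pi(x)-\pi(y)|^2}{\Im\pi(x)\,\Im\pi(y)}-O(1).
\]
The task is to choose the projection so that \(|\pi(x)-\pi(y)|\gtrsim r_m(x,y)\) up to a factor \(e^{-Cm}\). We would argue by contradiction along a chain. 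If every supporting projection separates \(x,y\) by less than \(e^{-31m}\epsilon\), then \(y\) lies in the convex hull of a polydisc of radius \(\tau_j(x,e^{-cm}\epsilon)\). Convexity converts this into \(\rho\)-membership after enlarging by a factor controlled by the type bound. At most about \(m\) dyadic scale changes are needed, because the range \(\epsilon^{1/2}\) to \(\epsilon^{1/m}\) is governed by \(m\). Each scale change costs a bounded additive amount in the logarithm, which is the source of the linear constant \(31m\) and of its independence from the dimension. The remaining case, where \(r_m\) is small compared with \(\max\delta\), is handled by the normal projection alone, \(\pi=\) the normal coordinate, giving \(\tfrac12|\log(\delta(x)/\delta(y))|\). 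Finally, \(V\) is chosen small enough that the defining function's convexity and type bound hold uniformly on it.
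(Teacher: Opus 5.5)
Your lower-bound scheme fails in principle, not just for lack of detail. A projection onto a supporting half-plane essentially sees the depth of only one endpoint, so for complex-tangential separations it recovers only half of the logarithm.

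Take \(\Omega=\mathbb B^2\) (convex, type \(2\)), \(x=(0,1-\delta)\) and \(y=\bigl(s,\sqrt{(1-\delta)^2-s^2}\bigr)\), with \(s>0\) small and fixed. The explicit ball formula gives \(K_{\mathbb B^2}(x,y)=\log(1/\delta)+O_s(1)\) as \(\delta\to0\), so the statement itself forces \(g_2(x,y)=\log(1/\delta)+O_s(1)\). Now take any supporting projection \(w(z)=1-\langle z,\xi\rangle\) with \(|\xi|=1\). It maps \(\mathbb B^2\) into \(\{|w-1|<1\}\), where \(|w|^2<2\Re w\), and it satisfies \(\Re w(z)\ge\delta(z)\). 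Combining this with \(K_{\HH}(w_1,w_2)\le\log\bigl(|w_1+\overline{w_2}|/\sqrt{\Re w_1\,\Re w_2}\bigr)\) gives \(K_{\HH}(w(x),w(y))\le\frac12\log(1/\delta)+\log(2\sqrt2)\) for every \(\xi\). The deficit \(\frac12\log(1/\delta)\) is unbounded. Hence no choice of projection, and no chain or scale-counting argument built on projections, can give \(K_\Omega\ge g_m-Cm\). In your notation, \(\Im\pi(x)\approx\delta(x)\), \(\Im\pi(y)\approx\delta(y)\) and \(|\pi(x)-\pi(y)|\gtrsim r_m\) cannot hold simultaneously: a hyperplane supporting \(\Omega\) near \(x\) lies at distance \(\gtrsim r_m\) from \(y\) here.

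The missing idea is a length estimate for arbitrary curves. The paper proceeds as follows.
\begin{enumerate}
\item It localizes to \(\Omega\cap U\) by Nikolov--Thomas.
\item It cuts the two end portions of a curve at dyadic depth levels and charges the normal cost \(\log\bigl(H/\sqrt{\delta(x)\delta(y)}\bigr)\) via Lemma~\ref{lem:normal-lower}.
\item It charges \(\kappa_0=1/176\) for each first exit from \(\frac12B_{\delta(q)/4}(q)\) (Lemma~\ref{lem:crossing-cost}, from Graham's lower bound and center stability).
\item It weights the exit links by \((h_j/H)^{1/(4m)}\), groups them into at most \(4m\) consecutive blocks, and bounds \(r_m(x,y)^{1/(4m)}\) by the total weight using the Frink power-chain estimate (Lemma~\ref{lem:power-chain}).
\item Minimizing \(-4\log u+u/352\) then produces the \(31m\).
\end{enumerate}

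There are two further problems.

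\textbf{Your \(r_m\) is not the function in the statement.} There, \(r_m\) is the infimum over chains with at most \(4m\) links of the \emph{largest} \(\widehat d_B\)-link, with \(d_B\) built from the gauges \(p_{q,\eps}\) and the cutoff \(p<\frac18\). The constants \(\frac m2\) and \(31m\) are tied to exactly this normalization. An infimum of sums over chains of unrestricted length degenerates. In \(\mathbb B^2\), a step of length \(\ell\) along a horizontal curve in the sphere \(\{|z|=1-\delta\}\) has one-step quasi-distance \(O(\ell^2)\). Subdividing such a curve into \(N\) pieces gives a sum \(O(L^2/N)\to0\). So your \(r_m\) vanishes for points at equal depth, and the upper bound would then be false.

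\textbf{The upper-bound steps are not justified.}
\begin{itemize}
\item A half-plane \emph{containing} \(\Omega\) bounds \(K_\Omega\) from below, not above.
\item Graham's upper bound along the normal gives only \(\log(\eps/\delta(x))\), twice too much. The factor \(\frac12\) needs an interior object, such as the tangent ball in Lemma~\ref{lem:normal-leg}.
\item McNeal's polydiscs lie in \(\{\rho<\rho(z)+\eps\}\), not in \(\Omega\). This is why the paper lifts the whole chain by \(2S\nu_0\) (Lemma~\ref{lem:translated-disc}).
\item Your ratio bound \(\tau_j(C\eps)/\tau_j(\eps)\le C^{1/2}\) fails in the complex normal direction, where \(\tau\approx\eps\). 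Convexity only gives the factor \(C\) (Lemma~\ref{lem:p-scale}).
\item Extremal-frame comparisons carry dimension-dependent constants, which the basis-free gauges \(p_{q,\eps}\) are designed to avoid.
\item The constant \(\frac m2+2+\log3\) is asserted rather than derived. In the paper, \(\frac m2=4m\cdot\frac18\) comes from at most \(4m\) lifted links, each of Kobayashi length at most \(\frac18\) by Lemma~\ref{lem:horizontal-segment}. The remaining \(2+\log3\) comes from the two normal legs.
\end{itemize}
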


For comparison, H.~Wang's estimate \cite{WangJLMS} gives, for each fixed
smoothly bounded convex finite-type domain, a pairwise formula involving
McNeal's quasi-distance with constants that may depend on the domain and the dimension.
Theorem~\ref{thm:two-sided-kobayashi} addresses a different quantitative
question: the quasi-distance is modified by a finite-chain construction
so that the additive errors are uniform over all domains and dimensions
with the same type bound.  This uniformity is what allows us to estimate
\(\mathfrak H(M)\).

Theorem~\ref{thm:two-sided-kobayashi} yields the following local
hyperbolicity estimate.

\begin{theorem}
	\label{thm:main}
	Let \(\Omega\subset\C^n\) be a convex domain, possibly unbounded, and
	let \(p\in\partial\Omega\).  Suppose that \(\partial\Omega\) is smooth
	near \(p\) and has D'Angelo type at most an integer \(m\ge2\)
	at \(p\).  Then
	\[
	\delta_{\mathrm{loc}}(\Omega,p)<36m.
	\]
\end{theorem}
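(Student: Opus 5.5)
We deduce the estimate from Theorem~\ref{thm:two-sided-kobayashi} together with a Gromov hyperbolicity estimate for the model function \(g_m\). Fix the neighborhood \(V\) given by Theorem~\ref{thm:two-sided-kobayashi}. On \(\Omega\cap V\) we then have \(|K_\Omega-g_m|\le A\) in the sense that
\[
g_m-31m\le K_\Omega\le g_m+\tfrac m2+2+\log3 .
\]
We work with the four-point (Gromov product) definition of \(\delta\)-hyperbolicity. This definition only involves distances among points of the subset, so it is well adapted to the restricted metric \(K_\Omega|_{\Omega\cap V}\).

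Write \((x|y)_w=\tfrac12\bigl(K(x,w)+K(y,w)-K(x,y)\bigr)\). Replacing \(K_\Omega\) by \(g_m\) changes each Gromov product by at most \(\tfrac32\) times the spread of the additive error. In the inequality \((x|z)_w\ge\min\{(x|y)_w,(y|z)_w\}-\delta\), these errors are one-sided. The lower error \(31m\) and the upper error \(\tfrac m2+2+\log 3\) enter each product as \(\tfrac12(\text{up}+\text{up}+\text{down})\) in the worst direction. Hence, if \(g_m\) satisfies the four-point condition with constant \(\delta_0\), then \(K_\Omega\) satisfies it with constant
\[
\delta_0+\tfrac12\bigl(2\cdot31m+(\tfrac m2+2+\log 3)\bigr)
\]
or a similar linear combination. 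We bound this bookkeeping carefully, using the fact that the sum of the errors on each side is at most \(31m+\tfrac m2+2+\log3\). The target is that the total stays strictly below \(36m\) for all \(m\ge2\); since \(2+\log 3<3.1\le 1.55m\), there is room of about \(3m\) for \(\delta_0\).

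The key step is therefore to show that \(g_m\) itself satisfies the four-point condition with a constant of order \(\log 2\) times a small multiple of \(m\), or even an absolute constant. For this we use the structure of \(r_m\) from Section~\ref{sec:kobayashi-comparison}. It is a quasi-distance whose quasi-triangle constant depends only on \(m\), coming from the finite-chain construction, and it satisfies a ``height comparison'': \(|\delta(x)-\delta(y)|\lesssim r_m(x,y)\). The standard Bonk--Heinonen--Koskela/Balogh--Bonk argument then applies. A function of the form
\[
\log\frac{\rho(x,y)+\max\{h(x),h(y)\}}{\sqrt{h(x)h(y)}},
\]
with \(\rho\) a \(K\)-quasimetric and \(h\) \(1\)-Lipschitz with respect to \(\rho\), is \(\delta\)-hyperbolic with \(\delta\le C\log K\) for an absolute constant \(C\). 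One proves this by writing \(e^{(x|y)_w}\) up to bounded factors as a ratio of such quantities and checking the ultrametric-type inequality \(\min\) bound via the \(K\)-quasi-triangle inequality. Each use costs a factor \(K\), hence an additive \(\log K\). If the quasi-triangle constant of \(r_m\) is \(2^{O(m)}\), this gives \(\delta_0=O(m\log 2)\) with an explicit small coefficient.

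The main obstacle is extracting an explicit constant small enough, namely at most about \(3m\) after the \(31m\) loss, from the quasimetric constant of \(r_m\). Here we need the precise quasi-triangle constant established in Section~\ref{sec:kobayashi-comparison}. If it is only polynomial in \(m\), say \(K\le Cm\), then \(\delta_0=O(\log m)\), which is comfortably absorbed. Finally, strict inequality follows because \(\delta_{\mathrm{loc}}\) is at most the constant obtained on this particular \(V\), and that constant is strictly less than \(36m\).
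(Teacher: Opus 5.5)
Your architecture matches the paper's: Theorem~\ref{thm:two-sided-kobayashi}, then a four-point estimate for $g_m$, then an additive transfer. But the proposal never produces a constant below $36m$, and that explicit constant is the whole content of the statement. Two steps fail as written.

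\textbf{The transfer step.} If you perturb each Gromov product separately, as your factor $\tfrac32$ suggests, write $u=\tfrac m2+2+\log3$ and $\ell=31m$. The loss is then $\tfrac32(u+\ell)$, which is more than $47m$ and already exceeds $36m$. Your displayed value $\delta_0+\tfrac12(62m+u)$ matches neither that count nor the correct one. The correct count uses the pairing-sum form. There the left pairing is bounded with the upper comparison and each right pairing with the lower comparison, so the loss is exactly $u+\ell=\tfrac{63}{2}m+2+\log3$. This leaves room only for $\delta_0<\tfrac92m-2-\log3$, which is about $5.90$ when $m=2$.

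\textbf{The constant for $g_m$.} This is the decisive gap: you leave it as ``$C\log K$ with an absolute $C$.'' Your fallback that $K$ might be polynomial in $m$ does not apply here. The available quasi-triangle constants (Theorem~\ref{thm:dB-quasi}, Lemma~\ref{lem:chain-estimates}) involve $Q_m=2T_m(5)=(5+\sqrt{24})^m+(5-\sqrt{24})^m$, which is exponential in $m$. For example, $\log(3Q_2)=\log294\approx5.68$. So the margin is met only if $C$ is essentially $1$ and $K$ is the right constant.

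Your route through a $1$-Lipschitz height condition is also unsupported. Along chains of at most $4m$ links, Lemma~\ref{lem:depth-stability} gives only $|\delta(x)-\delta(y)|\le\tfrac m2\,r_m(x,y)$.

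\textbf{The missing argument.} It is short and needs no Lipschitz property.
\begin{itemize}
\item Since $\delta(x)\le R_m(x,y)$ and $\delta(z)\le R_m(y,z)$, Lemma~\ref{lem:chain-estimates} gives $R_m(x,z)\le3Q_m\max\{R_m(x,y),R_m(y,z)\}$.
\item Write $R_{ij}=R_m(x_i,x_j)$ and assume $R_{13}$ is the smallest of $R_{13},R_{14},R_{23},R_{24}$. Then $R_{12}\le3Q_mR_{23}$ and $R_{34}\le3Q_mR_{14}$.
\item Multiplying gives $R_{12}R_{34}\le9Q_m^2\max\{R_{13}R_{24},R_{14}R_{23}\}$, which is Lemma~\ref{lem:multiplicative-four-point}.
\item The term $\tfrac12\log\bigl(\delta(x_1)\delta(x_2)\delta(x_3)\delta(x_4)\bigr)$ is common to all three pairing sums of $g_m$. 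Hence $g_m$ satisfies the four-point inequality with $\delta_0=\log(3Q_m)$ exactly.
\end{itemize}
Adding the transfer loss $u+\ell$ gives $\mathcal E_m=\tfrac{63}{2}m+2+\log(9Q_m)<(\tfrac{63}{2}+\log10)m+2+\log9<36m$ for $m\ge2$.
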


The proof of the upper estimate proceeds through several steps.  We first collect
the directional boundary scales into complex balanced convex sets and use their
Minkowski functionals to define basis-free non-isotropic norms.  We then obtain
center stability and engulfing estimates, regularize the resulting local
quasi-distance by finite chains, and compare the associated model function
with the Kobayashi distance.  The definition of Gromov
hyperbolicity then yields the constant \(36m\) without introducing any
dependence on the dimension.

The linear lower bound is realized by the following homogeneous
model domains.

\begin{theorem}\label{thm:linear-lower-intro}
For each integer $m\geq1$, let
\[
\mathcal T_m
=
\left\{(Q,Z)\in\mathbb C^2:
\operatorname{Re}Q>(\operatorname{Re}Z)^{2m}\right\}.
\]
Then $\mathcal T_m$ is a convex Kobayashi hyperbolic domain. Its boundary is
smooth and real analytic, has D'Angelo type at most $2m$ everywhere, and has
type exactly $2m$ at the origin. Moreover,
\[
m\log 2
\leq
\delta_{\mathrm{loc}}(\mathcal T_m,0)
=
\delta(\mathcal T_m,K_{\mathcal T_m}).
\]
\end{theorem}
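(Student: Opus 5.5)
We would check the qualitative properties first, then prove the equality \(\delta_{\mathrm{loc}}=\delta\) using the symmetries of \(\mathcal T_m\), and finally obtain the lower bound from an explicit four-point configuration. The last step is where the real work lies.

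\emph{Convexity, hyperbolicity, regularity, type.} Write \(f(x)=x^{2m}\). Since \(f\) is convex, \(\mathcal T_m=\{\Re Q>f(\Re Z)\}\) is convex. The boundary is the real-analytic graph of \(\Re Q=f(\Re Z)\), with defining function \(r=f(\Re Z)-\Re Q\).

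For Kobayashi hyperbolicity we would use the Barth--Harris criterion: a convex domain is hyperbolic if and only if it contains no complex affine line. Suppose \(t\mapsto a+tb\), with \(b=(b_1,b_2)\ne0\), lies in \(\mathcal T_m\).
\begin{itemize}[leftmargin=2em]
\item If \(b_2=0\), then \(\Re(tb_1)\) is unbounded below on \(\C\), a contradiction.
\item If \(b_2\neq0\) and \(t\mapsto(\Re tb_1,\Re tb_2)\) has rank two, we may take \(\Re tb_1\to-\infty\) with \(\Re tb_2\) fixed.
\item If \(b_2\neq0\) and the rank is one, then \(\Re tb_1=c\,\Re tb_2\), which is linear growth against degree \(2m\) growth.
\end{itemize}

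For the type: at boundary points with \(\Re Z\ne0\), the Levi form in the \(Z\)-direction is \(f''(\Re Z)/4>0\), so these points are strongly pseudoconvex of type \(2\). By the translation invariance \((Q,Z)\mapsto(Q+ia,Z+ib)\), it then suffices to treat boundary points with \(\Re Z=0\), and after a translation the origin. Take a nonconstant holomorphic curve \(\gamma=(Q,Z)\) with \(\gamma(0)=0\), and let \(k=\ord Z\).
\begin{itemize}[leftmargin=2em]
\item If \(\ord Q<2mk\), the lowest harmonic term of \(\Re Q\) cannot cancel against \((\Re Z)^{2m}\), so \(\ord(r\circ\gamma)/\ord\gamma\le 2m\).
\item If \(\ord Q\ge2mk\), the lowest-order term \((\Re(at^k))^{2m}\) is nonnegative and not identically zero. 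It therefore has positive mean on circles and cannot be cancelled by a harmonic homogeneous term. Hence \(\ord(r\circ\gamma)=2mk\).
\end{itemize}
So the type is at most \(2m\), and the curve \(\gamma(t)=(0,t)\) shows it is exactly \(2m\) at \(0\).

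\emph{Equality.} Trivially \(\delta_{\mathrm{loc}}\le\delta\), since restricting the distance to a subset can only decrease the constant. For the reverse inequality, \(\mathcal T_m\) is invariant under the imaginary translations above and under the anisotropic dilations \(D_\lambda(Q,Z)=(\lambda^{2m}Q,\lambda Z)\), \(\lambda>0\). All of these are biholomorphisms, hence \(K_{\mathcal T_m}\)-isometries. Given any four points, we first apply an imaginary translation and then \(D_\lambda\) with \(\lambda\to0\). This moves the whole configuration into an arbitrary neighborhood \(V\) of \(0\) while preserving all mutual distances. Since \(\delta\) is determined by four-point configurations, \(\delta(\mathcal T_m\cap V,K_{\mathcal T_m})\ge\delta(\mathcal T_m,K_{\mathcal T_m})\) for every \(V\).

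\emph{Lower bound \(m\log2\) (main obstacle).} The plan is to exhibit four points whose four-point defect is at least \(m\log2\) up to terms that vanish in a limit. By dilation invariance, the region near the real slice \(\Re Q=R\) looks like a product of a half-plane in \(Q\) with a vertical strip \(\{|\Re Z|<R^{1/2m}\}\) in \(Z\). We would take two values of \(\Re Q\) in ratio \(2\); by the dilation invariance the common value \(R\) is immaterial, so the parameter that we send to infinity is the imaginary separation \(L\). The four points would be the corners of a rectangle: two heights \(\Re Q\in\{R,2R\}\), and two imaginary positions \(\Im Z\in\{0,L\}\) with \(L\gg R^{1/2m}\). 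The Gromov-product defect is then governed by the width ratio \(2^{1/2m}\) of the strips. Measured in the strip coordinate, where \(\Im Z\) distances scale like \(L/\text{width}\), this produces a logarithmic gain of \(\log(2^{1/2m})\cdot 2m\cdot(\cdots)\). The aim is to arrange the scaling so that the gain is exactly \(m\log 2\).

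The estimates would come from two sides:
\begin{itemize}[leftmargin=2em]
\item upper bounds on distances from explicit holomorphic discs inside \(\mathcal T_m\): half-planes in \(Q\), and strips in \(Z\) at fixed \(Q\);
\item lower bounds from holomorphic maps of \(\mathcal T_m\) into the half-plane or a strip, namely \(Q\) itself and the projection to \(Z\) restricted by \(|\Re Z|<(\Re Q)^{1/2m}\), for instance after the substitution \(Z\mapsto Z/(\Re Q)^{1/2m}\) along the chosen slices.
\end{itemize}
Making these lower bounds sharp is the delicate part, because \(\Re Q\) is not holomorphic and there is no global holomorphic projection onto a fixed strip. If the exact quadrilateral does not give the defect, we would try the two-parameter family obtained by varying both height ratios and taking \(L\to\infty\), optimizing the ratio to reach the constant \(m\log2\).
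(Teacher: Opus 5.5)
Your verification of convexity, hyperbolicity (via Barth--Harris rather than the paper's affine embedding $(Q,Z)\mapsto(Q,Q-2mZ+2m-1)$ into $\mathbb H\times\mathbb H$), boundary type (via the Levi form at points with $\operatorname{Re}Z\neq0$), and the equality $\delta_{\mathrm{loc}}=\delta$ via the dilations is fine.

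\textbf{The gap: the proposed configuration cannot give $m\log2$.} Since $(Q,Z)\mapsto Q$ is a holomorphic retraction onto each slice $\{(Q,iy):Q\in\mathbb H\}$, the vertical sides of your rectangle have length exactly
\[
K_{\mathcal T_m}\bigl((R,0),(2R,0)\bigr)=K_{\mathcal T_m}\bigl((R,iL),(2R,iL)\bigr)=\tfrac12\log2 .
\]
Now suppose a quadruple contains a pair at distance $\varepsilon$. The triangle inequality then shows that no pairing sum exceeds another by more than $2\varepsilon$; for example $K(c,d)\le K(c,a)+K(a,b)+K(b,d)$. Hence $S_{\max}-S_{\mathrm{mid}}\le2\varepsilon$. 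So your rectangle has defect at most $\tfrac12\log2<m\log2$ for all $R$ and $L$. The ``width ratio $2^{1/2m}$'' heuristic does not correspond to any actual mechanism.

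Varying the heights does not rescue it. A height ratio $\lambda$ caps the defect at $\tfrac12\log\lambda$. Moreover, for fixed heights the rectangle becomes tree-like as $L\to\infty$: by the sharp asymptotic below and the triangle inequality, both long pairing sums equal $4m\log L-\log(R_1R_2)+\mathrm{const}+o(1)$. So the defect tends to $0$.

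\textbf{Missing ingredient 1: the right configuration.} Put all four points at one height and spread them along $\operatorname{Im}Z$. For example, take $x_j(T)=(1,imTu_j)$ with $(u_1,\dots,u_4)=(0,1,\sqrt2,1+\sqrt2)$. The products $|u_i-u_j|\,|u_k-u_l|$ over the pairings $(12)(34)$, $(13)(24)$, $(14)(23)$ are then $1,2,1$.

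\textbf{Missing ingredient 2: the sharp asymptotic} $K_{\mathcal T_m}\bigl((1,0),(1,iL)\bigr)=2m\log L+c_m+o(1)$. With it, the three pairing sums are $C_T$, $C_T+2m\log2$, $C_T$ up to $o(1)$. The defect therefore tends to exactly $m\log2$, and the dilations $A_{T^{-2}}$ push these quadruples to $0$. Two-sided bounds with additive errors, which is the most your discs/projections plan can give, would only yield $m\log2-O(1)$.

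\textbf{How the paper proves the asymptotic.}
\begin{itemize}
\item Upper half: a monotonicity argument. Dilation invariance plus two vertical legs of length $m\log(B/A)$ each show that $K_{\mathcal T_m}\bigl((1,0),(1,iL)\bigr)-2m\log L$ is nonincreasing in $L$.
\item Lower half: this needs a bound of order $2m\log L$, which your tools cannot produce. The map $Q$ sees nothing for points at equal height, and, as you note, there is no holomorphic projection onto a strip. The paper uses the antiholomorphic involution $(Q,Z)\mapsto(\overline Q,-\overline Z)$ and convexity, averaging an extremal disc with its reflection. This shows that distances between points of $\Sigma_m=\{(h,iy)\}$ are realized by paths inside $\Sigma_m$.
\item It then integrates the infinitesimal bound $k_{\mathcal T_m}\bigl((1,0);(0,ib)\bigr)\ge|b|/2$. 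This bound comes from the affine map $Q-2mZ+2m-1$ into $\mathbb H$ and is transported along $\Sigma_m$ by the dilations. The result is $K_{\mathcal T_m}\bigl((1,0),(1,iL)\bigr)\ge2m\log\bigl(1+L/(4m)\bigr)$.
\end{itemize}
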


The proof exploits the symmetry and homogeneity of $\mathcal T_m$. On a
suitable real slice, the Kobayashi distance factors as $m$ times an auxiliary
path metric, and four appropriately chosen points yield the lower bound
$m\log 2$. Homogeneous dilations localize these points at the origin. Moreover,
holomorphic retractions transfer the same construction to every fixed
complex dimension \(n\ge2\).

\paragraph{Organization of the paper.}
Section~\ref{sec:preliminaries} recalls the basic notions and estimates,
including the global and local hyperbolicity constants, and explains
why boundary type alone cannot control the global constant.
Section~\ref{sec:balanced-geometry} develops the non-isotropic boundary
geometry for finite-type convex domains.
Section~\ref{sec:kobayashi-comparison} establishes the local distance
comparison and deduces the local hyperbolicity estimate.
Section~\ref{sec:lower-bound} proves the linear lower bound and the
optimality of the growth rate.
Finally, Section~\ref{sec:applications} discusses applications to
global Gromov hyperbolicity and asymptotic upper curvature.

\section{Preliminaries}\label{sec:preliminaries}

\subsection{Notation}

\begin{enumerate}[label=(\arabic*),leftmargin=2.4em]
\item Throughout the paper, \(\dist\) denotes Euclidean distance in the ambient space. For \(z\in\C^n\), \(|z|\) denotes the Euclidean norm, and \(B(z,r)\) is the Euclidean ball of radius \(r\) centered at \(z\).

\item If \(\Omega\subsetneq\C^n\) is a domain and \(z\in\Omega\), set
\(\delta_\Omega(z)=\dist(z,\partial\Omega)\).
When \(\Omega\) is fixed, we write simply \(\delta(z)\). For
\(z\in\Omega\) and a nonzero vector \(v\in\C^n\), define
\(\delta_\Omega(z;v)=
\dist\bigl(z,(z+\C v)\cap\partial\Omega\bigr)\), where
\(z+\C v=\{z+\lambda v:\lambda\in\C\}\) is the complex affine line
through \(z\) in the direction \(v\).
This is the radius of the largest Euclidean disc centered at \(z\) in the
slice \(\Omega\cap(z+\C v)\).  It depends only on the complex line spanned by
\(v\), so \(\delta_\Omega(z;av)=\delta_\Omega(z;v)\) for \(a\ne0\).  If
\(z+\C v\subset\Omega\), we set \(\delta_\Omega(z;v)=+\infty\).

\item If \(U,V\) are open subsets of a Euclidean space, then
\(V\Subset U\) means that \(\overline V\) is a compact subset of \(U\).

\item For positive quantities \(A,B\), the notation \(A\asymp B\) means
that \(C^{-1}B\le A\le CB\) for a constant \(C\ge1\).

\end{enumerate}
\subsection{The signed distance function}
\label{subsec:signed-distance}

Let \(\Omega\subsetneq\C^n\) be a domain.  Its signed Euclidean
distance function is defined by
\[
 \rho_\Omega(z)=
 \begin{cases}
  -\delta_\Omega(z),& z\in\Omega,\\
  \dist(z,\partial\Omega),& z\in\C^n\setminus\Omega.
 \end{cases}
\]
When the domain is fixed, we simply write \(\rho=\rho_\Omega\).
Throughout this subsection, we identify \(\C^n\) with
\(\R^{2n}\); thus convexity, gradients, and unit normals are
understood with respect to real Euclidean structure.

The signed distance function will be used in two complementary ways.
Its convexity is global and follows solely from the convexity of the
domain, whereas its regularity is local and follows from the smoothness
of the boundary near the distinguished point.

The global convexity of \(\rho\) requires no regularity assumption on
\(\partial\Omega\) and is due to Hiriart--Urruty.

\begin{lemma}[{\cite[Proposition~4]{HiriartUrruty}}]
	\label{lem:signed-distance-global-convexity}
	Let \(\Omega\subsetneq\C^n\) be a convex domain. Then its signed
	distance function \(\rho\) is convex on \(\C^n\).
\end{lemma}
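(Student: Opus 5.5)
The statement to prove is Lemma~\ref{lem:signed-distance-global-convexity}: the signed distance function of a convex domain is convex on all of \(\C^n\cong\R^{2n}\), with no regularity assumed on the boundary. The plan is to show that \(\rho\) is a supremum of affine functions. Concretely, set \(K=\overline\Omega\), a closed convex set with nonempty interior, and for each unit vector \(\nu\in\R^{2n}\) let \(h(\nu)=\sup_{w\in K}\langle w,\nu\rangle\) be the support function, which may equal \(+\infty\). The goal is the representation
\[
\rho(z)=\sup_{|\nu|=1,\ h(\nu)<\infty}\bigl(\langle z,\nu\rangle-h(\nu)\bigr),
\]
which is a supremum of affine functions of \(z\). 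Once this holds, convexity of \(\rho\) is immediate.

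The identity splits into two cases.

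\emph{Exterior points} \(z\notin\Omega\), where \(\rho(z)=\dist(z,\partial\Omega)=\dist(z,K)\).
\begin{itemize}
\item[(i)] Lower bound \(\le\): for any admissible \(\nu\) and any \(w\in K\), Cauchy--Schwarz gives \(\langle z,\nu\rangle-h(\nu)\le\langle z-w,\nu\rangle\le|z-w|\). Taking the infimum over \(w\) bounds every affine function in the supremum by \(\dist(z,K)\).
\item[(ii)] Upper bound \(\ge\): if \(z\notin K\), let \(w_0\) be the nearest-point projection of \(z\) onto \(K\) and \(\nu=(z-w_0)/|z-w_0|\). The projection characterization gives \(h(\nu)=\langle w_0,\nu\rangle\), so this \(\nu\) attains equality. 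If \(z\in\partial\Omega\), both sides vanish: use any supporting hyperplane at \(z\), which exists by convexity.
\end{itemize}

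\emph{Interior points} \(z\in\Omega\), where \(\rho(z)=-\delta_\Omega(z)\).
\begin{itemize}
\item[(i)] Bound \(\ge\): for each admissible \(\nu\), the distance from \(z\) to the hyperplane \(\{\langle\cdot,\nu\rangle=h(\nu)\}\) equals \(h(\nu)-\langle z,\nu\rangle\). The segment from \(z\) in direction \(\nu\) must leave \(K\) before reaching that hyperplane, so it meets \(\partial\Omega\) within that length. Hence \(\delta(z)\le h(\nu)-\langle z,\nu\rangle\), i.e.\ each affine term is at most \(-\delta(z)\). This shows the supremum is at most \(\rho(z)\); the supremum is also at least some admissible term, since admissible \(\nu\) exist (the domain is proper and convex, so it has a supporting half-space).
\item[(ii)] Equality: let \(q\in\partial\Omega\) be a boundary point with \(|z-q|=\delta(z)\), and take a supporting hyperplane to \(K\) at \(q\) with outer normal \(\nu\). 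The ball \(B(z,\delta(z))\) lies in \(\Omega\subset\{\langle\cdot,\nu\rangle\le h(\nu)\}\) and touches the hyperplane at \(q\). Therefore \(h(\nu)-\langle z,\nu\rangle=\delta(z)\), and this \(\nu\) achieves \(-\delta(z)\).
\end{itemize}

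The main subtlety is the equality case for interior points. One must check that the supporting hyperplane at the nearest point \(q\) is tangent to the inscribed ball, so that its distance from \(z\) is exactly \(\delta(z)\). This holds because the ball lies on one side of the hyperplane while the hyperplane passes through \(q\), which sits on the sphere of radius \(\delta(z)\) about \(z\).

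Unbounded \(\Omega\) causes no difficulty, since only \(\nu\) with finite support value are used. Note also that \(\partial\Omega=\partial K\) for convex \(\Omega\) with nonempty interior, which justifies replacing \(\Omega\) by its closure throughout.
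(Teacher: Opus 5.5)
Your proof is correct. It is also complete, since the paper gives no argument for this lemma and simply imports it from Hiriart-Urruty. Your proof is therefore a self-contained substitute for a citation.

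Your route is a standard one: write $\rho(z)=\sup\{\langle z,\nu\rangle-h(\nu): |\nu|=1,\ h(\nu)<\infty\}$ and verify the identity separately at exterior, boundary, and interior points. It uses only two tools, the supporting hyperplane theorem and the nearest-point projection onto a closed convex set. So it covers unbounded $\Omega$ and nonsmooth boundaries, which is exactly the generality in which the paper applies the lemma.

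Your approach also gives something the citation leaves implicit. Since $\rho$ is a finite-valued supremum of affine functions with unit gradients, it is automatically $1$-Lipschitz. The paper uses this fact later without separate justification, e.g.\ in Lemmas~\ref{lem:dB-separation} and~\ref{lem:dB-euclidean-control}.

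Two cosmetic points:
\begin{itemize}
\item In the exterior case, the labels ``lower bound $\le$'' and ``upper bound $\ge$'' are swapped relative to what you actually prove about the supremum.
\item In interior case (i), the ray from $z$ need not leave $K$ before reaching the hyperplane $\{\langle\cdot,\nu\rangle=h(\nu)\}$. What is true is that its endpoint on that hyperplane is not in the open set $\Omega$, because $\Omega$ lies in the open half-space $\{\langle\cdot,\nu\rangle<h(\nu)\}$. A simpler fix is the inclusion $B(z,\delta(z))\subset\Omega\subset\{\langle\cdot,\nu\rangle\le h(\nu)\}$, which you already use in (ii). It gives $\langle z,\nu\rangle+\delta(z)\le h(\nu)$ for every admissible $\nu$ directly. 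Then (i) and (ii) rest on the same one-line computation, with (ii) adding only the reverse inequality $h(\nu)=\langle q,\nu\rangle\le\langle z,\nu\rangle+|q-z|$ at the nearest boundary point $q$.
\end{itemize}
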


Convexity does not, however, imply smoothness. Even when
\(\partial\Omega\) is smooth, the signed distance function need not be
smooth globally; for instance, it fails to be differentiable at points
having more than one nearest boundary point. What is needed below is only local regularity near a fixed
smooth boundary point. The following results, due respectively to
Balogh--Bonk and Krantz--Parks, provide the required local smoothness
of the signed distance function and the nearest-point projection.

\begin{lemma}[{\cite[Lemma~2.1]{BaloghBonk}}]
\label{lem:signed-distance-tubular}
Let \(p\in\partial\Omega\), and suppose that \(\partial\Omega\) is of class \(C^2\) near \(p\). Then there exists a neighborhood \(U\) of \(p\) in \(\C^n\) such that every \(z\in U\) has a unique nearest point on \(\partial\Omega\). Hence the nearest-point projection \(\pi:U\to\partial\Omega\), \(z\mapsto\pi(z)\), is well defined. For \(\xi\in\partial\Omega\cap U\), let \(\nu(\xi)\) denote the outer unit normal vector to \(\partial\Omega\) at \(\xi\). The identities
\begin{equation*}
z=\pi(z)+\rho(z)\nu(\pi(z)),\qquad \nabla\rho(z)=\nu(\pi(z))
\end{equation*}
hold for every \(z\in U\).
In particular, \(|\nabla\rho(z)|=1\) for every \(z\in U\), the fibers of \(\pi\) are normal segments, and \(\pi\) is of class \(C^1\).
\end{lemma}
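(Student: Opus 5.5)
This lemma is classical, and I would prove it by the standard tubular-neighborhood argument, using only that \(\partial\Omega\) is a \(C^2\) real hypersurface near \(p\). \textbf{Setup.} Choose a small open piece \(S\subset\partial\Omega\) around \(p\) on which the outer unit normal \(\nu\) is \(C^1\). Define
\[
\Phi:S\times(-\eps,\eps)\to\C^n,\qquad \Phi(\xi,t)=\xi+t\nu(\xi).
\]
This map is \(C^1\). At \((p,0)\) its differential is the identity on \(T_pS\) in the first factor and sends \(\partial_t\) to \(\nu(p)\), which is transverse to \(T_pS\). By the inverse function theorem, after shrinking \(S\) and \(\eps\), \(\Phi\) is a \(C^1\) diffeomorphism onto an open neighborhood \(W\) of \(p\).

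\textbf{Uniqueness of nearest points.} Take \(U=B(p,r)\) with \(r\) small. For \(z\in U\), every nearest point \(\xi\in\partial\Omega\) satisfies \(|\xi-p|\le|\xi-z|+|z-p|\le 2|z-p|<2r\). So for \(r\) small, all nearest points lie in \(S\), and moreover \(|z-\xi|<\eps\). At such a \(\xi\), the function \(\eta\mapsto|z-\eta|^2\) restricted to \(\partial\Omega\) has a minimum. The first-order condition then forces \(z-\xi\) to be normal at \(\xi\), so \(z=\xi+t\nu(\xi)=\Phi(\xi,t)\) with \(|t|=\dist(z,\partial\Omega)\). Since \(z\) lies in the inner or outer side exactly according to the sign of \(t\) (this uses the local graph description of \(\Omega\) near \(p\)), we get \(t=\rho(z)\). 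Injectivity of \(\Phi\) on \(S\times(-\eps,\eps)\), with \(U\subset W\), then gives uniqueness of \(\xi\). This defines \(\pi\), shows \(z=\pi(z)+\rho(z)\nu(\pi(z))\), and shows \(\pi\) and \(\rho\) are the components of \(\Phi^{-1}\). Hence both are \(C^1\).

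\textbf{Gradient formula.} Differentiate the identity \(|z-\pi(z)|^2=\rho(z)^2\). Since \(\pi\) takes values in \(\partial\Omega\), \(d\pi\) takes values in \(T_{\pi(z)}\partial\Omega\), which is orthogonal to \(z-\pi(z)\). This gives
\[
2\rho\nabla\rho=2\bigl(z-\pi(z)\bigr)=2\rho\,\nu(\pi(z)).
\]
Off \(\partial\Omega\), dividing by \(\rho\) yields \(\nabla\rho=\nu(\pi(z))\). Continuity of \(\nabla\rho\), which holds because \(\rho\) is \(C^1\) as a component of \(\Phi^{-1}\), extends the formula to points of \(\partial\Omega\). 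Therefore \(|\nabla\rho|=1\), and the fibers \(\pi^{-1}(\xi)=\Phi(\{\xi\}\times(-\eps,\eps))\cap U\) are normal segments.

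\textbf{Main obstacle.} The delicate point is the regularity count. Because \(\nu\) is only \(C^1\), \(\Phi\) is only \(C^1\), so the inverse function theorem yields \(\pi\in C^1\) and no better; this is precisely the claim. The localization step, which guarantees that nearest points of \(z\in U\) stay inside the chart \(S\), also needs care. It is handled by the estimate \(|\xi-p|\le2|z-p|\), which requires no convexity.
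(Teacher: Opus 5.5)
Your proposal is correct and follows essentially the paper's route: the paper gives no proof of its own beyond citing Balogh--Bonk's standard tubular-neighborhood argument, which is what you reproduce via the inverse function theorem for \(\Phi(\xi,t)=\xi+t\nu(\xi)\). Your estimate \(|\xi-p|\le 2|z-p|\), which forces all nearest points into the chart \(S\), is exactly the localization described in the paper's remark, and your sign identification \(t=\rho(z)\) correctly uses that \(\Omega\) lies locally on one side of \(\partial\Omega\), which is implicit in the existence of an outer normal and automatic here by convexity.
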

\begin{lemma}[
{\cite[Theorem~3, p.~119]{KrantzParks}}]
\label{lem:signed-distance-smooth}
Let \(p\in\partial\Omega\), and suppose that \(\partial\Omega\) is of
class \(C^k\), \(k\ge2\), near \(p\). Then there exists a neighborhood \(U\) of \(p\) in \(\C^n\) such that
\(\rho|_U\in C^k(U)\).
\end{lemma}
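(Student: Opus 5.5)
The plan is to reduce Lemma~\ref{lem:signed-distance-smooth} to the regularity of the nearest-point projection from Lemma~\ref{lem:signed-distance-tubular}, and then bootstrap one derivative using the identity \(\nabla\rho=\nu\circ\pi\). Fix a local defining function for \(\partial\Omega\) near \(p\) of class \(C^k\). Then the outer unit normal field \(\nu\) is a \(C^{k-1}\) map on \(\partial\Omega\) near \(p\). It follows that the normal map
\[
F:(\partial\Omega\cap W)\times(-\eps,\eps)\to\C^n,\qquad F(\xi,t)=\xi+t\,\nu(\xi),
\]
is of class \(C^{k-1}\) on a \((2n-1)+1\)-dimensional \(C^k\) manifold.

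The first step is to show that \(F\) is a \(C^{k-1}\) diffeomorphism onto a neighborhood \(U\) of \(p\), after shrinking \(W\) and \(\eps\). At \((p,0)\) the differential of \(F\) is the identity on \(T_p\partial\Omega\) plus \(\nu(p)\) in the \(t\)-direction, so it is invertible. Because \(k-1\ge1\), the inverse function theorem applies and produces a local \(C^{k-1}\) inverse. The point needing care is that this local inverse agrees with \(z\mapsto(\pi(z),\rho(z))\). Lemma~\ref{lem:signed-distance-tubular} gives \(z=\pi(z)+\rho(z)\nu(\pi(z))\) for \(z\) near \(p\). By continuity of \(\pi\) and \(\rho\), for \(z\) close to \(p\) the pair \((\pi(z),\rho(z))\) lies in the domain where \(F\) is injective. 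Hence \((\pi,\rho)=F^{-1}\) on a small neighborhood of \(p\).

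Consequently \(\pi\in C^{k-1}\) near \(p\). Lemma~\ref{lem:signed-distance-tubular} also gives \(\nabla\rho=\nu\circ\pi\), which is a composition of \(C^{k-1}\) maps and so is \(C^{k-1}\). Therefore \(\rho\in C^k(U)\). This gain of one derivative over the projection is the whole point of the argument: the inverse function theorem alone only yields \(\rho\in C^{k-1}\).

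The main obstacle is the identification step, that is, ensuring that the unique nearest point from Lemma~\ref{lem:signed-distance-tubular} coincides with the branch of \(F^{-1}\) near \((p,0)\) and does not come from a far-away part of \(\partial\Omega\). To handle this, I would use continuity of \(\pi\) at \(p\) (with \(\pi(p)=p\)) and \(|\rho(z)|\le|z-p|\). Together these force \((\pi(z),\rho(z))\to(p,0)\) as \(z\to p\), which places the pair in the injectivity domain of \(F\) and settles the identification. The case \(k=\infty\) follows by applying the argument for each finite \(k\) and intersecting the resulting statements (the neighborhood from the inverse function theorem can be taken independent of \(k\), since it is determined at the \(C^1\) level).
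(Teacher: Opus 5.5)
Your argument is correct. Applying the inverse function theorem to the $C^{k-1}$ normal map $F(\xi,t)=\xi+t\nu(\xi)$ gives $\pi\in C^{k-1}$ near $p$, and the identity $\nabla\rho=\nu\circ\pi$ from Lemma~\ref{lem:signed-distance-tubular} then supplies the extra derivative, with your continuity argument correctly identifying $(\pi,\rho)$ with the local branch of $F^{-1}$.

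This is essentially the standard proof in the cited Krantz--Parks reference. The paper itself gives no argument beyond that citation and its remark that the proof localizes to a boundary patch, so your proposal follows the same route.
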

\begin{remark}
The results cited above are stated under global hypotheses. Balogh and
Bonk assume that \(\Omega\) is bounded and that \(\partial\Omega\) is
globally of class \(C^2\). The boundedness of \(\Omega\) makes
\(\partial\Omega\) compact, while compactness together with the global
\(C^2\)-regularity permits the choice of a tubular radius that is uniform
along the entire boundary. Likewise, Krantz--Parks assume that the
\(C^k\)-hypersurface is compact in order to obtain a uniform neighborhood
on which the signed distance is of class \(C^k\).

Here, however, we need these conclusions only near a fixed point
\(p\in\partial\Omega\).  One may choose a relatively compact smooth
boundary patch containing \(p\) and then shrink the ambient
neighborhood of \(p\) so that every nearest boundary point lies in this
patch.  The proofs in the cited references then apply in essentially
the same way, with the uniform estimates taken only over the chosen
local patch rather than over the entire boundary.  Consequently,
neither boundedness of \(\Omega\) nor regularity of \(\partial\Omega\)
away from \(p\) is required for the local statements used above.
\end{remark}

\subsection{The Kobayashi metric}

The Kobayashi pseudodistance on \(\Omega\), introduced by Kobayashi \cite{Kobayashi}, will be denoted by \(K_\Omega\). Its associated infinitesimal pseudometric, the Kobayashi--Royden pseudometric, will be denoted by \(k_\Omega\). For \(z\in\Omega\) and \(v\in\C^n\), define
\[
k_\Omega(z;v)=\inf\left\{|\lambda|:\lambda\in\C,\ f\in\Hol(\D,\Omega),\ f(0)=z,\ \lambda f'(0)=v\right\},
\]
where \(\D=\{\zeta\in\C:|\zeta|<1\}\), and \(\Hol(\D,\Omega)\) denotes the set of holomorphic maps from \(\D\) into \(\Omega\).

Following an observation of Royden, Venturini proved that the Kobayashi pseudodistance \(K_\Omega\) coincides with the length pseudodistance induced by \(k_\Omega\); see \cite{Royden,Venturini}. More precisely, if \(\gamma:[a,b]\to\Omega\) is absolutely continuous, its Kobayashi length is
\[
L_k(\gamma)=\int_a^b k_\Omega\bigl(\gamma(t);\gamma'(t)\bigr)\,\dd t.
\]
For \(x,y\in\Omega\), one has
\[
K_\Omega(x,y)=\inf_\gamma L_k(\gamma),
\]
where the infimum is taken over all absolutely continuous curves \(\gamma:[a,b]\to\Omega\) satisfying \(\gamma(a)=x\) and \(\gamma(b)=y\).
In general, \(k_\Omega\) and \(K_\Omega\) may be degenerate.  The domain
\(\Omega\) is called \emph{Kobayashi hyperbolic} if \(K_\Omega\) is a
genuine distance, that is,
\[
K_\Omega(x,y)>0
\qquad\text{whenever }x\ne y.
\]

When the domain is the unit disk \(\D\) or the right half-plane \(\HH\),
the Kobayashi metric is the Poincar\'e metric with our normalization:
\[
 k_\D(\zeta;v)=\frac{|v|}{1-|\zeta|^2},
 \qquad
 k_\HH(z;v)=\frac{|v|}{2\Re z}.
\]

The main functorial property of the Kobayashi metric is holomorphic
contraction. Let \(\Omega_1\subset\C^{n_1}\) and
\(\Omega_2\subset\C^{n_2}\) be domains, and let
\(F:\Omega_1\to\Omega_2\) be holomorphic. Then, for every
\(z\in\Omega_1\), \(v\in\C^{n_1}\), and \(x,y\in\Omega_1\),
the following contraction inequalities hold:
\[
k_{\Omega_2}\bigl(F(z);\dd F_z(v)\bigr)
\le
k_{\Omega_1}(z;v),
\qquad
K_{\Omega_2}\bigl(F(x),F(y)\bigr)
\le
K_{\Omega_1}(x,y).
\]
Here \(\dd F_z:\C^{n_1}\to\C^{n_2}\) denotes the differential of \(F\)
at \(z\).
Consequently, both quantities are biholomorphically invariant, and an inclusion
\(\Omega_1\subset\Omega_2\) gives
\[
 k_{\Omega_2}(z;v)\le k_{\Omega_1}(z;v),
 \qquad
 K_{\Omega_2}(x,y)\le K_{\Omega_1}(x,y).
\]
Another consequence will be useful later. Let \(\Omega_1\subset\C^{n_1}\) and \(\Omega_2\subset\C^{n_2}\) be domains. Suppose that there exist holomorphic maps
\[
i:\Omega_1\to\Omega_2,\qquad r:\Omega_2\to\Omega_1
\]
such that \(r\circ i=\operatorname{id}_{\Omega_1}\). Applying holomorphic contraction to \(i\) and \(r\), we obtain
\[
K_{\Omega_2}(i(x),i(y))=K_{\Omega_1}(x,y),\qquad x,y\in\Omega_1.
\]
Thus \(i\) is an isometric embedding for the Kobayashi pseudodistances, and \(i(\Omega_1)\) is a holomorphic retract of \(\Omega_2\).

Substantial progress has been made in estimating the Kobayashi metric
on convex domains.  Here, we record only the estimates needed below.

\begin{lemma}[{\cite[Theorem~5]{GrahamConvex}}]
\label{lem:graham}
If \(\Omega\subsetneq\C^n\) is convex, \(z\in\Omega\), and \(v\in\C^n\setminus\{0\}\), then
\begin{equation*}
 \frac{|v|}{2\delta_\Omega(z;v)}
 \le k_\Omega(z;v)
 \le\frac{|v|}{\delta_\Omega(z;v)}.
\end{equation*}
\end{lemma}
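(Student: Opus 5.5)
The upper bound comes from exhibiting an explicit disc in the slice, and the lower bound from projecting \(\Omega\) holomorphically onto a half-plane determined by a supporting hyperplane at the nearest boundary point of the slice. In both steps we use only the definition of \(k_\Omega\), holomorphic contraction, and the formula for \(k_\HH\). Throughout, write \(\delta=\delta_\Omega(z;v)\) and \(u=v/|v|\).

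\emph{Upper bound.} By definition of \(\delta\), the affine disc \(\{z+\zeta u:|\zeta|<\delta\}\) lies in \(\Omega\). The map \(f(\zeta)=z+\delta\zeta u\) belongs to \(\Hol(\D,\Omega)\), with \(f(0)=z\) and \(f'(0)=\delta u\). Taking \(\lambda=|v|/\delta\) gives \(\lambda f'(0)=v\), hence \(k_\Omega(z;v)\le |v|/\delta\).

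If \(\delta=+\infty\), the whole line \(z+\C v\) lies in \(\Omega\). The same construction then works with any radius \(R\), so \(k_\Omega(z;v)=0\) and both inequalities hold trivially with the convention \(|v|/\infty=0\).

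\emph{Lower bound.} Assume \(\delta<\infty\). The set \((z+\C v)\cap\partial\Omega\) is closed and nonempty, so there is a point \(w=z+\lambda_0 v\in\partial\Omega\) with \(|w-z|=|\lambda_0||v|=\delta\).

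Since \(\Omega\) is convex and open and \(w\notin\Omega\), there is a supporting real hyperplane through \(w\). Every real linear functional on \(\C^n\cong\R^{2n}\) is the real part of a complex linear functional, so this hyperplane can be written with a nonzero complex linear \(L\) satisfying
\[
\Re L(x)<\Re L(w)\qquad\text{for all }x\in\Omega .
\]
The inequality is strict because an open set lying on one side of a hyperplane cannot meet it. Hence \(F(x)=L(w)-L(x)\) is a holomorphic map \(\Omega\to\HH\).

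We also have \(L(v)\neq0\). Otherwise \(L(w-z)=\lambda_0L(v)=0\) would give \(\Re L(z)=\Re L(w)\), contradicting strictness.

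Holomorphic contraction now gives
\[
k_\Omega(z;v)\ge k_\HH\bigl(F(z);-L(v)\bigr)=\frac{|L(v)|}{2\Re\bigl(L(w)-L(z)\bigr)}.
\]
To finish we bound the denominator:
\[
0<\Re\bigl(L(w)-L(z)\bigr)\le|L(w-z)|=|\lambda_0|\,|L(v)|=\frac{\delta}{|v|}|L(v)|.
\]
Substituting, \(k_\Omega(z;v)\ge |v|/(2\delta)\).

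The only delicate step is choosing the supporting functional at the nearest point of the slice rather than at the nearest point of \(\partial\Omega\). This choice is what makes \(|L(w-z)|\) proportional to \(|L(v)|\), so that the factor \(|L(v)|\) cancels and no dependence on \(L\) remains.
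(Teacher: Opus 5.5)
Your proof is correct. The affine disc of radius \(\delta_\Omega(z;v)\) in the slice gives the upper bound. The map \(x\mapsto L(w)-L(x)\) into \(\HH\), built from a supporting functional at the nearest boundary point \(w\) of the slice, gives the lower bound, since \(|L(v)|\) cancels as you describe. The paper does not prove this lemma; it only cites Graham's Theorem~5, and the standard argument there is this same supporting-hyperplane/half-plane comparison, so your write-up makes the cited estimate self-contained.
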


\begin{lemma}[{\cite[Proposition~3(i)]{NikolovTrybula}}]
\label{lem:normal-lower}
	If \(\Omega\subsetneq\C^n\) is convex and
	\(x,y\in\Omega\), then
	\begin{equation*}
		K_\Omega(x,y)
		\ge
		\frac12
		\left|
		\log\frac{\delta_\Omega(x)}{\delta_\Omega(y)}
		\right|.
	\end{equation*}
\end{lemma}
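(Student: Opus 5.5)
The plan is to reduce to the right half-plane \(\HH\) by a supporting hyperplane and then use holomorphic contraction. By symmetry of both sides in \(x\) and \(y\), we may assume \(\delta_\Omega(y)\le\delta_\Omega(x)\). It then suffices to prove
\[
K_\Omega(x,y)\ge\frac12\log\frac{\delta_\Omega(x)}{\delta_\Omega(y)}.
\]
The case \(\delta_\Omega(y)=\delta_\Omega(x)\) is trivial.

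\emph{Step 1: a supporting half-space tangent at the nearest point of \(y\).} Choose \(q\in\partial\Omega\) with \(|y-q|=\delta_\Omega(y)\). Such a point exists because \(\Omega\ne\C^n\) and \(\partial\Omega\) is closed. The open ball \(B(y,\delta_\Omega(y))\) lies in \(\Omega\) and touches \(\partial\Omega\) at \(q\). By convexity, \(\Omega\) lies on one side of a real supporting hyperplane through \(q\). This hyperplane must also support the ball at \(q\), so it is the hyperplane orthogonal to \(u=(q-y)/|q-y|\). Let \(\ell(z)=\langle z,u\rangle\) be the Hermitian inner product; it is \(\C\)-linear in \(z\) and satisfies \(\Re\ell(z)=(z,u)_{\R^{2n}}\). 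Then
\[
\Omega\subset\{z:\Re\ell(z)<\Re\ell(q)\}.
\]

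\emph{Step 2: the holomorphic map to \(\HH\).} Define \(f(z)=\ell(q)-\ell(z)\). It is holomorphic and maps \(\Omega\) into \(\HH\). Since \(|u|=1\), the quantity \(\Re f(z)\) is exactly the Euclidean distance from \(z\) to the supporting hyperplane. For \(z\in\Omega\) the hyperplane lies outside \(\Omega\), so the segment from \(z\) to its orthogonal foot crosses \(\partial\Omega\). Hence \(\Re f(z)\ge\delta_\Omega(z)\). On the other hand, \(\Re f(y)=(q-y,u)=|q-y|=\delta_\Omega(y)\).

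\emph{Step 3: a lower bound in \(\HH\) and conclusion.} Using \(k_\HH(w;v)=|v|/(2\Re w)\), any absolutely continuous curve \(\gamma\) in \(\HH\) from \(a\) to \(b\) has length
\[
\int k_\HH(\gamma;\gamma')\,\dd t\ge\int\frac{|(\Re\gamma)'|}{2\Re\gamma}\,\dd t\ge\frac12\left|\log\frac{\Re a}{\Re b}\right|.
\]
By the length-space description of \(K_\HH\) (Royden--Venturini), this gives \(K_\HH(a,b)\ge\frac12|\log(\Re a/\Re b)|\). Holomorphic contraction under \(f\) then yields
\[
K_\Omega(x,y)\ge K_\HH(f(x),f(y))\ge\frac12\log\frac{\Re f(x)}{\Re f(y)}\ge\frac12\log\frac{\delta_\Omega(x)}{\delta_\Omega(y)}.
\]

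The only delicate point is Step 1. The hyperplane must be chosen tangent at the nearest point of the point with the \emph{smaller} boundary distance, so that equality \(\Re f(y)=\delta_\Omega(y)\) holds there, while for the other point only the inequality \(\Re f(x)\ge\delta_\Omega(x)\) is needed. Choosing the hyperplane at the nearest point of \(x\) instead would put the inequality on the wrong side. No smoothness of \(\partial\Omega\) is needed anywhere in the argument.
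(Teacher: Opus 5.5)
Your proof is correct. The paper does not prove this lemma itself and simply cites Nikolov--Trybula; your argument (a real supporting half-space tangent at the nearest boundary point of whichever of the two points is closer to \(\partial\Omega\), then contraction under the induced holomorphic map into \(\HH\), where \(\Re f \ge \delta_\Omega\) with equality at that point) is the standard proof of the cited estimate.
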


\begin{lemma}\label{lem:ball-radial-distance}
Let \(B(z_0,R)\subset\C^n\) be a Euclidean ball.  Suppose that \(x,y\in B(z_0,R)\) lie on the same radial segment from \(z_0\) and satisfy
\(\delta_{B(z_0,R)}(x)<\delta_{B(z_0,R)}(y)\).
Then
\[
K_{B(z_0,R)}(x,y)
=
\frac12
\log
\left(
\frac{2R-\delta_{B(z_0,R)}(x)}
     {2R-\delta_{B(z_0,R)}(y)}
\frac{\delta_{B(z_0,R)}(y)}
     {\delta_{B(z_0,R)}(x)}
\right).
\]
\end{lemma}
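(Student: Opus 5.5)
The plan is to reduce everything to the unit disc by a holomorphic retraction, and then use the explicit Poincaré distance there. Translate so that \(z_0=0\) and rescale by \(1/R\); both are biholomorphisms, which preserve \(K\). Since the quotient \((2R-\delta(x))/(2R-\delta(y))\cdot\delta(y)/\delta(x)\) is invariant under scaling of \(\delta\) and \(R\), it suffices to treat the unit ball \(\mathbb B=B(0,1)\). Let \(u\) be the unit vector spanning the radial segment. Then \(x=a u\) and \(y=b u\) with real \(a,b\). After multiplying by a unimodular scalar, which is a unitary automorphism of \(\mathbb B\), we may assume \(a,b\ge0\) lie on the same ray. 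The hypothesis \(\delta(x)<\delta(y)\) becomes \(a>b\ge0\).

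The key step is the retraction argument recorded after the contraction property. Put \(i:\D\to\mathbb B\), \(i(\zeta)=\zeta u\), and \(r:\mathbb B\to\D\), \(r(z)=\langle z,u\rangle\). By Cauchy--Schwarz \(|\langle z,u\rangle|\le|z|<1\), so \(r\) maps into \(\D\), and clearly \(r\circ i=\Id_\D\). Hence \(K_{\mathbb B}(au,bu)=K_\D(a,b)\).

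It remains to compute \(K_\D(a,b)\) for \(0\le b<a<1\) with the normalization \(k_\D(\zeta;v)=|v|/(1-|\zeta|^2)\). The standard formula is \(K_\D(0,t)=\tfrac12\log\frac{1+t}{1-t}\). It follows from two facts. The real segment has length \(\int_0^t ds/(1-s^2)=\operatorname{artanh}t\). Any curve is at least this long, since \(k_\D(\zeta;v)\ge d|\zeta|/(1-|\zeta|^2)\), which gives a lower bound by radial projection.

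Using the Möbius automorphism \(\zeta\mapsto(\zeta-b)/(1-b\zeta)\), we get \(K_\D(a,b)=\operatorname{artanh}a-\operatorname{artanh}b\) for \(a>b\ge0\), since real Möbius maps preserve \(\operatorname{artanh}\)-differences. Therefore
\[
K_\D(a,b)=\frac12\log\left(\frac{1+a}{1-a}\cdot\frac{1-b}{1+b}\right).
\]
Substituting \(\delta(x)=1-a\), \(\delta(y)=1-b\), \(1+a=2-\delta(x)\), and \(1+b=2-\delta(y)\) gives exactly the stated formula with \(R=1\).

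There is no serious obstacle. The only point needing care is the equality \(K_{\mathbb B}=K_\D\) on the diameter. This is exactly where the retraction identity from the preliminaries is used, rather than merely the contraction inequality.
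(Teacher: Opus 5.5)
Your proof is correct, but it reaches the formula by a different route from the paper. The paper's proof is a one-line citation: it takes the standard closed-form expression for the Kobayashi distance of the unit ball from Jarnicki--Pflug and transports it to \(B(z_0,R)\) by biholomorphic invariance. You instead prove only the radial case that is actually needed, by an elementary argument. After the affine normalization, the holomorphic retraction \(i(\zeta)=\zeta u\), \(r(z)=\langle z,u\rangle\) reduces \(K_{\mathbb B}(au,bu)\) to \(K_\D(a,b)\). You then compute the Poincar\'e distance on the real segment directly, using the normalization \(k_\D(\zeta;v)=|v|/(1-|\zeta|^2)\), the length description of \(K\), and a real M\"obius map.

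Your route is self-contained and uses only facts already recorded in the preliminaries. In particular it uses the retraction identity, where, as you note, equality rather than mere contraction is essential. The cited formula is shorter and gives \(K_{\mathbb B}\) for arbitrary pairs of points, which this lemma does not require.

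One small point: the step ``after multiplying by a unimodular scalar we may assume \(a,b\ge0\)'' is unnecessary, and it could not change the relative sign of \(a\) and \(b\) anyway. The same-ray hypothesis gives \(a,b\ge0\) directly. That hypothesis is genuinely needed, since for points on opposite sides of the center the distance is \(\operatorname{artanh}a+\operatorname{artanh}b\) rather than the stated formula.
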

\begin{proof}
This follows from the standard formula for the Kobayashi distance on
the unit ball and the biholomorphic invariance of the Kobayashi
distance; see, for example,
\cite[Corollary~2.3.5 and Theorem~11.2.1]{JP}.
\end{proof}

\subsection{Finite type}
\label{subsec:finite-type}

Let \(g\) be a smooth function defined in a neighborhood of \(0\in\C\), with complex variable \(\zeta\). The \emph{order of vanishing} of \(g\) at \(0\) is defined by
\[
\ord_0 g:=\min\left\{a+b:a,b\in\mathbb Z_{\ge0},\ \frac{\partial^{a+b}g}{\partial\zeta^a\partial\bar\zeta^b}(0)\ne0\right\},
\]
with the convention that \(\ord_0 g=+\infty\) if all derivatives of \(g\) vanish at \(0\).

If
\[
\phi=(\phi_1,\ldots,\phi_n):(\C,0)\longrightarrow(\C^n,\xi),\qquad \xi=(\xi_1,\ldots,\xi_n),
\]
is a nonconstant holomorphic curve germ, the \emph{order of \(\phi\) at \(0\)} is defined by
\[
\ord_0\phi:=\min_{1\le j\le n}\ord_0(\phi_j-\xi_j).
\]
Let \(\rho\) be a \(C^\infty\) defining function for \(\Omega\) near a
boundary point \(\xi\). The D'Angelo type of \(\partial\Omega\) at \(\xi\) is
\[
	\Delta_{\partial\Omega}(\xi)
	:=
	\sup_{\phi}
	\frac{\ord_0(\rho\circ\phi)}
	{\ord_0\phi},
\]
where the supremum is taken over all nonconstant holomorphic curve
germs through \(\xi\). This quantity is independent of the choice of
defining function; see \cite{DAngelo}.

For each \(v\in\C^n\setminus\{0\}\), regard \(\zeta\mapsto\rho(\xi+\zeta v)\) as a smooth function of the complex variable \(\zeta\) near \(0\). The line type of \(\partial\Omega\) at \(\xi\) is defined by
\[
\ell_{\partial\Omega}(\xi)
:=
\sup_{v\in\C^n\setminus\{0\}}
\ord_0\bigl(\rho(\xi+\zeta v)\bigr).
\]
Since an affine parametrization \(\zeta\mapsto\xi+\zeta v\) has order
one, the line type is precisely the restriction of the D'Angelo-type
definition to complex affine lines. By
\cite{McNealConvexFiniteType}, if \(\Omega\) is convex, then
\[
\Delta_{\partial\Omega}(\xi)
=
\ell_{\partial\Omega}(\xi)
\qquad
\text{for every }\xi\in\partial\Omega.
\]

Consequently, if
\[
\Delta_{\partial\Omega}(\xi)\le m,
\]
for every \(\xi\in\partial\Omega\cap U\), where \(U\) is a neighborhood of
\(p\), then
\[
\ord_0\bigl(\rho(\xi+\zeta v)\bigr)\le m,
\]
for every \(\xi\in\partial\Omega\cap U\) and every \(v\in\C^n\setminus\{0\}\).
Equivalently, for each such pair \((\xi,v)\) there exist
\(a,b\geq0\) with \(a+b\leq m\) such that
\[
	\frac{\partial^{a+b}}
	{\partial\zeta^a\partial\bar\zeta^b}
	\rho(\xi+\zeta v)\bigg|_{\zeta=0}\neq0.
\]

\begin{lemma}\label{lem:local-type-bound}
Let \(\Omega\subset\C^n\) be convex with smooth boundary near
\(p\in\partial\Omega\), and let \(m\ge2\) be an integer.
If \(\Delta_{\partial\Omega}(p)\le m\),
then \(\Delta_{\partial\Omega}(\xi)\le m\) at every boundary point
\(\xi\) in some neighborhood of \(p\).
\end{lemma}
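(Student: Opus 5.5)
We argue by semicontinuity of the line type, which by McNeal's theorem equals the D'Angelo type at every boundary point of a convex domain. Fix a neighborhood \(U\) of \(p\) on which \(\partial\Omega\) is smooth and \(\Omega\cap U\) is convex. Since the D'Angelo type is computed from germs at the boundary point, we may apply McNeal's equality to the locally convex piece and conclude that \(\ell_{\partial\Omega}(p)=\Delta_{\partial\Omega}(p)\le m\). We may also take a smooth local defining function; the signed distance \(\rho\) of Lemma~\ref{lem:signed-distance-smooth} serves after shrinking \(U\). The goal is then to show that \(\ord_0\rho(\xi+\zeta v)\le m\) for all \(\xi\in\partial\Omega\) near \(p\) and all unit \(v\). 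Applying McNeal's equality once more at each such \(\xi\) gives \(\Delta_{\partial\Omega}(\xi)\le m\).

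The key step is a compactness argument over the unit sphere of directions. For each unit vector \(v\), the hypothesis at \(p\) supplies indices \(a+b\le m\) with \(\partial_\zeta^a\partial_{\bar\zeta}^b\rho(p+\zeta v)|_{\zeta=0}\ne0\). Consider the continuous function
\[
F(\xi,v)=\sum_{a+b\le m}\Bigl|\frac{\partial^{a+b}}{\partial\zeta^a\partial\bar\zeta^b}\rho(\xi+\zeta v)\Big|_{\zeta=0}\Bigr|
\]
on \((\partial\Omega\cap U)\times S^{2n-1}\). Each term is a polynomial in the components of \(v,\bar v\) whose coefficients are derivatives of \(\rho\) at \(\xi\), so \(F\) is continuous. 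Since \(F(p,v)>0\) for every \(v\) in the compact sphere \(S^{2n-1}\), we have \(\min_v F(p,v)=c>0\). By uniform continuity on a compact set \(\overline{B(p,r)}\times S^{2n-1}\), there is a neighborhood of \(p\) on which \(F(\xi,v)>c/2\) for all unit \(v\). This gives \(\ord_0\rho(\xi+\zeta v)\le m\), and rescaling of \(v\) is harmless because the order is invariant under \(v\mapsto av\).

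The only delicate point, and the expected obstacle, is the applicability of McNeal's equality \(\Delta=\ell\) when the convexity and smoothness are merely local. The statement is pointwise and depends only on the germ of \(\partial\Omega\) at \(\xi\). We plan to handle it by observing that McNeal's proof uses only convexity near the point: one can compare with the convex set \(\Omega\cap U'\), where \(U'\subset U\) is a small ball. This set is convex, its boundary agrees with \(\partial\Omega\) near \(\xi\), and the types coincide since they are germ invariants. It is also worth checking that the type of a convex smooth point is at least \(2\), so the hypothesis \(m\ge2\) is consistent. No other difficulty arises.
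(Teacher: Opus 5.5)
Your proposal is correct and follows essentially the same argument as the paper: a continuity-plus-compactness lower bound over the unit sphere for the jets of order at most \(m\) of \(\rho(\xi+\zeta v)\) gives a uniform line-type bound near \(p\), and McNeal's equality of line type and D'Angelo type then concludes. One small remark: the step \(\ell_{\partial\Omega}(p)\le\Delta_{\partial\Omega}(p)\) needs no appeal to McNeal, since complex lines are admissible curves in the definition of the type; McNeal's theorem is only needed at the nearby points \(\xi\).
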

\begin{proof}
Choose a smooth defining function \(\rho\) near \(p\), and denote
the coefficients of the degree-\(m\) Taylor polynomial of
\(\rho(q+\zeta v)-\rho(q)\), in \(\Re\zeta,\Im\zeta\), by
\(a_{jk}(q,v)\), \(1\le j+k\le m\).
For every unit \(v\), the line-type bound at \(p\) gives
\(\max_{1\le j+k\le m}|a_{jk}(p,v)|>0\).
Continuity and compactness of the unit sphere imply
\[
\eta:=\min_{|v|=1}\max_{1\le j+k\le m}|a_{jk}(p,v)|>0.
\]
After restricting \(q\) to a sufficiently small neighborhood of \(p\),
the same maximum is at least \(\eta/2\), uniformly for \(|v|=1\).
Thus every complex line through a nearby boundary point has contact
order at most \(m\). The equality of line type and D'Angelo type
for convex domains proves the result.
\end{proof}

A convex domain is called \emph{\(\C\)-proper} if it contains no
complex affine line.  The following lemma is a direct consequence of results
of Bracci and Saracco~\cite{BracciSaracco}.

\begin{lemma}\label{lem:finite-type-c-proper}
Let \(\Omega\subset\C^n\) be a convex domain, possibly unbounded, and
let \(p\in\partial\Omega\).  Suppose that \(\partial\Omega\) is smooth
near \(p\) and has finite D'Angelo type at \(p\).  Then \(\Omega\) is
\(\C\)-proper.  Consequently, \(\Omega\) is complete Kobayashi
hyperbolic.
\end{lemma}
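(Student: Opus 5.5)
The plan is to prove that \(\Omega\) contains no complex affine line. Once that is known, the completeness statement follows from the Bracci--Saracco theorem: a convex domain is \(\C\)-proper if and only if it is biholomorphic to a bounded domain, and if and only if it is complete Kobayashi hyperbolic.

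Suppose instead that \(\Omega\) contains a complex affine line \(a+\C v\) with \(v\neq0\). The first step is to use convexity to move this line to every point. For any \(z\in\Omega\) and any \(\lambda\in\C\), the point \(z+\lambda v\) is the limit as \(t\to0^+\) of the convex combinations \((1-t)z+t\bigl(a+(\lambda/t)v\bigr)\). Each such combination lies in \(\Omega\), and because \(\Omega\) is open, the limit itself lies in \(\Omega\). A more careful version of this argument shows that \(z+\C v\subset\Omega\) for every \(z\in\Omega\). Hence \(\Omega=\Omega+\C v\), and by taking limits \(\overline\Omega+\C v=\overline\Omega\). Since \(\Omega+\C v=\Omega\), the complement \(\C^n\setminus\Omega\) is also invariant under translation by \(\C v\). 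Together these show that \(\partial\Omega+\C v=\partial\Omega\).

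In particular \(p+\C v\subset\partial\Omega\), so the holomorphic curve \(\phi(\zeta)=p+\zeta v\) lies entirely in the boundary. Take a smooth local defining function \(\rho\) near \(p\). Then \(\rho\circ\phi\equiv0\) near \(0\), so \(\ord_0(\rho\circ\phi)=+\infty\) while \(\ord_0\phi=1\). This gives \(\Delta_{\partial\Omega}(p)=\infty\), contradicting the assumption of finite type. Equivalently, the line type is infinite, which also contradicts McNeal's equality of line type and D'Angelo type.

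The only delicate step is the invariance argument, namely making rigorous that \(z+\lambda v\in\Omega\) for every \(z\in\Omega\) rather than only in \(\overline\Omega\). This is handled by choosing a small ball \(B(z,r)\subset\Omega\). The convex hull of this ball together with the line \(a+\C v\) contains points \(z'+\lambda v\) with \(z'\) arbitrarily close to \(z\). Replacing \(z\) by a point of the ball slightly beyond it along the segment from \(a\), and using openness, then gives the inclusion. Alternatively, this is the standard fact that the recession cone of an open convex set is closed and equals that of its closure, and that result can be cited directly.
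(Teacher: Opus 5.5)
Your proof is correct and is essentially the paper's argument. Both show that a complex line direction \(v\) inside \(\Omega\) forces \(p+\C v\subset\partial\Omega\), hence infinite D'Angelo type, and both appeal to Bracci--Saracco's Theorem~1.1 for the equivalence of \(\C\)-properness with complete Kobayashi hyperbolicity. The only difference is that the paper gets the translation invariance \(\Omega+\C v=\Omega\) by citing the splitting \(\Omega=\Omega_0\times\C^\ell\) from Proposition~1.2 of Bracci--Saracco, whereas you prove it directly from convexity.

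Your first justification, ``because \(\Omega\) is open, the limit lies in \(\Omega\)'', is false as stated, since such limits only land in \(\overline\Omega\). Your last paragraph repairs it correctly: write \(z+\lambda v=\tfrac{1}{1+s}w+\tfrac{s}{1+s}\bigl(a+\tfrac{1+s}{s}\lambda v\bigr)\), where \(w=z+s(z-a)\in\Omega\) for small \(s>0\).
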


\begin{proof}
By \cite[Proposition~1.2]{BracciSaracco}, after a complex affine change
of coordinates, which preserves D'Angelo type, we may write
\(\Omega=\Omega_0\times\C^\ell\),
where \(\ell\ge0\) and \(\Omega_0\) is complete Kobayashi hyperbolic.

Suppose that \(\ell\ge1\). Since
\[
\partial\Omega=\partial\Omega_0\times\C^\ell,
\]
write
\[
p=(p_0,w_0),\qquad p_0\in\partial\Omega_0,\quad w_0\in\C^\ell.
\]
Choose \(v\in\C^\ell\setminus\{0\}\) and define
\[
\gamma(\zeta)=(p_0,w_0+\zeta v),\qquad \zeta\in\C.
\]
Then \(\gamma(\C)\subset\partial\Omega\). Hence, if \(\rho\) is a smooth local defining function near \(p\), the nonconstant holomorphic curve \(\gamma\) satisfies
\(\rho\circ\gamma\equiv0\)
near \(0\).  It follows that
\[
\Delta_{\partial\Omega}(p)=+\infty,
\]
contrary to the finite-type hypothesis.  Therefore \(\ell=0\), and
\(\Omega\) is complete Kobayashi hyperbolic.  By
\cite[Theorem~1.1]{BracciSaracco}, this is equivalent to
\(\C\)-properness.
\end{proof}

\subsection{Global and local hyperbolicity constants}\label{sub2.5}

We recall the basic definitions of Gromov hyperbolicity; see
\cite{Gromov,BridsonHaefliger}.  Let \((X,d)\) be a metric space and
fix \(o\in X\).  The \emph{Gromov product} based at \(o\) is
\((x\mid y)_o=\frac12\bigl(d(x,o)+d(y,o)-d(x,y)\bigr)\).

For \(\delta\ge0\), the space \((X,d)\) is called
\emph{\(\delta\)-hyperbolic} if
\begin{equation}\label{eq:gromov-product-inequality}
(x\mid z)_o
\ge
\min\{(x\mid y)_o,(y\mid z)_o\}-\delta,
\end{equation}
for all \(o,x,y,z\in X\).  It is \emph{Gromov hyperbolic} if this holds
for some finite \(\delta\).  Equivalently, \((X,d)\) is
\(\delta\)-hyperbolic if and only if
\begin{equation*}
d(x,z)+d(y,w)
\le
\max\{d(x,y)+d(z,w),d(x,w)+d(y,z)\}
+2\delta,
\end{equation*}
for all \(x,y,z,w\in X\).  For geodesic spaces, these conditions are
also equivalent, up to a universal change of the constant, to the
thin-triangle formulation; see
\cite[Chapter~III.H]{BridsonHaefliger}.

\begin{definition}\label{def:hyperbolicity-constant}
For a metric space \((X,d)\), its \emph{hyperbolicity constant} is
\[
\delta(X,d)
=
\inf\{\delta\ge0:(X,d)\text{ is }\delta\text{-hyperbolic}\},
\]
with the convention that \(\delta(X,d)=+\infty\) if no finite
\(\delta\) exists.
\end{definition}

Equivalently, for four points \(x_1,x_2,x_3,x_4\in X\), consider the
three sums
\[
d(x_1,x_2)+d(x_3,x_4),\qquad
d(x_1,x_3)+d(x_2,x_4),\qquad
d(x_1,x_4)+d(x_2,x_3).
\]
If \(S_{\max}\) and \(S_{\mathrm{mid}}\) denote the largest and the
second largest of these three numbers, then
\begin{equation}\label{eq:global-fourpoint-constant}
\delta(X,d)
=
\frac12
\sup_{x_1,x_2,x_3,x_4\in X}
\bigl(S_{\max}-S_{\mathrm{mid}}\bigr).
\end{equation}
In particular, \(X\) is Gromov hyperbolic if and only if
\(\delta(X,d)<\infty\).

\begin{example}
For the unit ball $\mathbb B^n\subset\mathbb C^n$,
\[
\delta(\mathbb B^n,K_{\mathbb B^n})
=
\begin{cases}
\frac12\log 2, & n=1,\\
\log 2, & n\geq 2.
\end{cases}
\]

\end{example}
\begin{proof}
 $K_{\mathbb B^n}$ is the complex hyperbolic metric normalized to
have holomorphic sectional curvature $-4$. Thus, when $n=1$, it is the
real hyperbolic plane of curvature $-4$, and the first identity follows
by scaling from \cite[Corollary~5.4]{NicaSpakula}. When $n\geq2$, its
sectional curvatures lie in $[-4,-1]$, so it is $\operatorname{CAT}(-1)$
and hence $\log 2$-hyperbolic by
\cite[Theorems~4.2 and~5.1]{NicaSpakula}. Conversely, $\mathbb B^n$
contains an isometrically embedded totally real hyperbolic plane of
curvature $-1$, and \cite[Corollary~5.4]{NicaSpakula} shows that the
constant $\log 2$ is optimal. For the Rips definition in terms of thin triangles, the corresponding
constants are $\frac12\log(1+\sqrt2)$ for $n=1$ and
$\log(1+\sqrt2)$ for $n\ge2$.
\end{proof}
The global hyperbolicity constant of \((\Omega,K_\Omega)\) involves
configurations throughout the domain.  To measure the geometry near a
boundary point \(p\), we restrict the points to small Euclidean
neighborhoods of \(p\), while keeping the ambient Kobayashi distance
\(K_\Omega\).  The notion above applies to arbitrary metric spaces.  In this
paper, however, we only need a local version for the Kobayashi metric.

\begin{definition}\label{def:local-hyperbolicity-constant}
Let \(\Omega\subset\C^n\) be a Kobayashi hyperbolic domain, and let
\(p\in\partial\Omega\).  The \emph{local hyperbolicity constant} of
\(\Omega\) at \(p\) is
\begin{equation*}
\delta_{\mathrm{loc}}(\Omega,K_\Omega,p)
=
\inf_{V\ni p}
\delta\bigl(
\Omega\cap V,
\left.K_\Omega\right|_{\Omega\cap V}
\bigr),
\end{equation*}
where \(V\) ranges over all Euclidean neighborhoods of \(p\) and
\(\left.K_\Omega\right|_{\Omega\cap V}\) denotes the restriction of
\(K_\Omega\) to \(\Omega\cap V\).
When the metric is clear from the context, we write
\(\delta_{\mathrm{loc}}(\Omega,p)\).
\end{definition}

If \(V_1\subset V_2\), then every quadruple in \(\Omega\cap V_1\) is
also a quadruple in \(\Omega\cap V_2\).  Hence
\[
\delta\bigl(
\Omega\cap V_1,
\left.K_\Omega\right|_{\Omega\cap V_1}
\bigr)
\le
\delta\bigl(
\Omega\cap V_2,
\left.K_\Omega\right|_{\Omega\cap V_2}
\bigr).
\]
Therefore
\[
\delta_{\mathrm{loc}}(\Omega,p)
=
\lim_{r\to0^+}
\delta\bigl(
\Omega\cap B(p,r),
\left.K_\Omega\right|_{\Omega\cap B(p,r)}
\bigr),
\]
and the limit exists by monotonicity.

Taking \(V_2=\C^n\) in the monotonicity above gives
\(\delta_{\mathrm{loc}}(\Omega,p)\le\delta(\Omega,K_\Omega)\).

\subsection{No global bound in terms of type}

The global hyperbolicity constant does not provide a meaningful
quantitative measure of the dependence on boundary type.  In fact, even
in a fixed dimension and among bounded strongly convex domains, all of
boundary type \(2\), the constants
\(\delta(\Omega,K_\Omega)\) can be arbitrarily large.  Hence the global
constant cannot be controlled in terms of the dimension and the
D'Angelo type alone.  This is why we instead consider the local
hyperbolicity constant.

\begin{example}\label{ex:no-global-bound}
Fix \(n\ge2\).  For every integer \(j\ge2\), define
\[
F_j(z)
=
\left(\sum_{k=1}^n |z_k|^{2j}\right)^{1/j}
+
\frac1j\sum_{k=1}^n |z_k|^2,
\]
and set
\(\Omega_j=\{z\in\C^n:F_j(z)<1\}\).
Then each \(\Omega_j\) is a bounded strongly convex domain with smooth
boundary, and
\[
\delta(\Omega_j,K_{\Omega_j})\longrightarrow\infty.
\]
\end{example}

\begin{proof}
The function
\[
z\longmapsto
\left(\sum_{k=1}^n|z_k|^{2j}\right)^{1/j}
\]
is convex. At a point \(z\) where \(F_j\) is smooth, let
\(D^2F_j(z)\) denote its real Hessian, identifying
\(\C^n\) with \(\R^{2n}\). For \(v\in\R^{2n}\), we write
\(D^2F_j(z)[v,v]=\left.\frac{d^2}{dt^2}F_j(z+tv)\right|_{t=0}\),
where \(t\in\R\). Convexity of the first summand in \(F_j\) gives
\(D^2F_j(z)[v,v]\ge(2/j)|v|^2\).
Since \(\partial\Omega_j\) does not contain the origin, \(F_j\) is
smooth near \(\partial\Omega_j\).  Moreover, \(F_j\) is homogeneous of
real degree two, so Euler's identity gives
\[
\langle \nabla F_j(z),z\rangle=2F_j(z)=2,
\qquad z\in\partial\Omega_j.
\]
Thus \(\nabla F_j\ne0\) on \(\partial\Omega_j\), and \(F_j=1\) is a
regular level set.  The Hessian estimate above now shows that \(\Omega_j\)
is strongly convex with smooth boundary.  In particular,
\(\partial\Omega_j\) is strongly pseudoconvex, so by
Balogh--Bonk \cite{BaloghBonk},
\[
\delta(\Omega_j,K_{\Omega_j})<\infty,
\]
for every fixed \(j\).

We next compare \(\Omega_j\) with the polydisc.  Clearly,
\(
\Omega_j\subset\D^n.
\)
Set
\[
\rho_j
=
\left(n^{1/j}+\frac nj\right)^{-1/2}.
\]
If \(z\in\rho_j\D^n\), then the coordinate inequalities are strict, and
hence
\[
F_j(z)
<
\left(n^{1/j}+\frac nj\right)\rho_j^2
=1.
\]
Thus
\[
\rho_j\D^n\subset\Omega_j\subset\D^n.
\]
Since \(\rho_j\to1\), monotonicity of the Kobayashi distance gives
\[
K_{\D^n}\le K_{\Omega_j}\le K_{\rho_j\D^n},
\]
and hence
\(
K_{\Omega_j}\rightarrow K_{\D^n}
\)
locally uniformly on \(\D^n\times\D^n\).

Define \(\iota:\R^2\to\D^n\) by
\(\iota(s,t):=(\tanh s,\tanh t,0,\ldots,0)\).
This map is an isometric embedding of
\((\R^2,\|\cdot\|_\infty)\) into \((\D^n,K_{\D^n})\).
Consider the four points in
\(\R^2\)
\[
(R,0),\qquad
(-R,0),\qquad
(0,R),\qquad
(0,-R).
\]
For these points, the three four-point sums are
\(4R, 2R, 2R\),
so that
\(
S_{\max}-S_{\mathrm{mid}}=2R.
\)

Fix \(A>0\) and choose \(R>2A\).  The four points obtained by applying
\(\iota\) are contained in \(\Omega_j\) for all sufficiently large
\(j\).  The local uniform convergence of \(K_{\Omega_j}\) to
\(K_{\D^n}\) implies that
\[
S_{\max}(\Omega_j)-S_{\mathrm{mid}}(\Omega_j)>2A,
\]
for all sufficiently large \(j\).  Therefore,
by \eqref{eq:global-fourpoint-constant},
\(
\delta(\Omega_j,K_{\Omega_j})>A.
\)
Since \(A>0\) is arbitrary, we conclude that
\[
\delta(\Omega_j,K_{\Omega_j})\longrightarrow\infty.
\]
\end{proof}

\section{Canonical non-isotropic geometry}
\label{sec:balanced-geometry}

The goal of this section is to construct a canonical non-isotropic geometry
near a finite-type boundary point of a convex domain.  The construction is
motivated by McNeal's polydiscs \cite[Section~2]{McNealBergman}, whose side
lengths, relative to an extremal basis, reflect the non-isotropic boundary
geometry of the domain.

For quantitative estimates, however, a basis-dependent description may be less
convenient.  Comparing the geometry at different centers requires repeated
coordinate changes and comparison estimates.  The corresponding constants
may depend on both the type and the dimension, and repeated applications can
accumulate these dependencies.  As a result, the final bounds may acquire an
unnecessary dependence on the dimension or a stronger dependence on the type
than the geometry itself suggests.

Instead, we keep the information in every complex direction and collect it
into a complex balanced convex set.  Its Minkowski functional gives a
basis-independent non-isotropic norm.  The comparison properties of these
norms then lead to a quasi-distance, which will be used to obtain
the linear dependence on the type for the local Gromov hyperbolicity
constant.

Throughout this section, let \(\Omega\subsetneq\C^n\) be a convex domain,
possibly unbounded, and let \(\rho=\rho_\Omega\) denote its signed Euclidean
distance function, as defined in
Subsection~\ref{subsec:signed-distance}.  By
Lemma~\ref{lem:signed-distance-global-convexity}, \(\rho\) is convex on
\(\C^n\); moreover, \(\Omega=\{\rho<0\}\) and
\(\partial\Omega=\{\rho=0\}\).
\subsection{Non-isotropic balanced convex sets}
\label{subsec:directional-bodies}

For \(q\in\Omega\) and \(\eps>0\), define
\(\Omega_{q,\eps}=\{z:\rho(z)<\rho(q)+\eps\}\).
This is an open convex set containing \(q\): openness follows from the
continuity of the signed distance function, while convexity follows from the
convexity of \(\rho\).  Thus it provides a convex ambient level set in which
to measure the size of complex discs centered at \(q\).
Following McNeal \cite{McNealBergman} and Bruna--Charpentier--Dupain
\cite{BrunaCharpentierDupain}, for \(v\in\C^n\setminus\{0\}\), define
\begin{equation*}
	\tau(q,v,\eps)
	=
	\sup\left\{
	r>0:
	q+\lambda v\in\Omega_{q,\eps}
	\text{ for every }|\lambda|<r
	\right\},
\end{equation*}
which is the maximal radius in the parameter
\(\lambda\) for which the complex disc
\(\lambda\mapsto q+\lambda v\) remains in \(\Omega_{q,\eps}\).
The corresponding Euclidean radius in the complex line \(q+\C v\) is
\(\tau(q,v,\eps)|v|\).  The parameter radius \(\tau(q,v,\eps)\) depends
on the normalization of \(v\), whereas the Euclidean radius depends only
on the complex line spanned by \(v\).  In particular,
\begin{equation}\label{eq:tau-homogeneity}
 \tau(q,av,\eps)=|a|^{-1}\tau(q,v,\eps),\qquad a\in\C\setminus\{0\}.
\end{equation}

McNeal's construction selects an extremal basis by choosing \(n\)
distinguished directions successively and uses the corresponding lengths as
the axes of an adapted polydisc.  Here we keep the information in every
complex direction through \(\tau(q,v,\eps)\), rather than reducing it to a
finite collection of extremal directions.  This basis-independent viewpoint
allows us to describe the non-isotropic geometry intrinsically and to compare
the corresponding geometry at different centers.  It also provides the
natural framework for constructing the boundary quasi-distance below.

Recall that if \(B\subset\C^n\) is an open complex balanced convex neighborhood of the origin, then its Minkowski functional is the map
\[
\begin{aligned}
p_B:\C^n&\longrightarrow[0,\infty),\\
h&\longmapsto\inf\{t>0:h\in tB\}.
\end{aligned}
\]
The map \(p_B\) is finite, absolutely homogeneous, and subadditive. It may be degenerate when \(B\) is unbounded, whereas it is a norm when \(B\) is bounded; see, for example, \cite[Chapter~1]{RudinFA}.

For \(q\in\Omega\) and \(\eps>0\), define
\begin{equation*}
\mathbb B_\eps(q)=\left\{h\in\C^n:q+\lambda h\in\Omega_{q,\eps}\text{ for every }|\lambda|\le1\right\}.
\end{equation*}
By Lemma~\ref{lem:directional-norm} below, \(\mathbb B_\eps(q)\) is an
open complex balanced convex neighborhood of the origin.  Hence its
Minkowski functional is well defined; we denote it by
\(p_{q,\eps}:=p_{\mathbb B_\eps(q)}:\C^n\to[0,\infty)\).  The same lemma
relates this functional to the directional radius \(\tau\).

\begin{lemma}\label{lem:directional-norm}
	Let \(\Omega\subset\C^n\) be a convex domain, possibly unbounded,
	\(q\in\Omega\), and \(\eps>0\).  Then
	\(\mathbb B_\eps(q)\) is an open complex balanced convex neighborhood
	of the origin.  Its Minkowski functional \(p_{q,\eps}\) is finite,
	absolutely homogeneous, and subadditive.  Moreover, the identity
	\begin{equation*}
		p_{q,\eps}(h)
		=
		\frac{1}{\tau(q,h,\eps)}
	\end{equation*}
	holds for every \(h\in\C^n\setminus\{0\}\), with \(1/\infty\)
	interpreted as \(0\).  In particular, if
	\(\mathbb B_\eps(q)\) is bounded, then \(p_{q,\eps}\) is a norm on
	\(\C^n\).
\end{lemma}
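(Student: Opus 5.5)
We verify the listed properties of \(\mathbb B_\eps(q)\) in turn, then identify the Minkowski functional with \(1/\tau\). The whole argument uses only two facts established earlier: \(\Omega_{q,\eps}\) is open, since \(\rho\) is continuous, and convex, by Lemma~\ref{lem:signed-distance-global-convexity}. It also contains \(q\), because \(\rho(q)<\rho(q)+\eps\).

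\emph{Balanced and convex.} If \(h\in\mathbb B_\eps(q)\) and \(|a|\le1\), then \(q+\lambda(ah)=q+(\lambda a)h\) with \(|\lambda a|\le1\), so \(ah\in\mathbb B_\eps(q)\). If \(h_1,h_2\in\mathbb B_\eps(q)\) and \(t\in[0,1]\), then for each \(|\lambda|\le1\),
\[
q+\lambda(th_1+(1-t)h_2)=t(q+\lambda h_1)+(1-t)(q+\lambda h_2)\in\Omega_{q,\eps},
\]
by convexity of \(\Omega_{q,\eps}\).

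\emph{Open neighborhood of \(0\).} Openness is the only step needing a compactness argument. Fix \(h\in\mathbb B_\eps(q)\). The closed disc \(D_h=\{q+\lambda h:|\lambda|\le1\}\) is a compact subset of the open set \(\Omega_{q,\eps}\), so it lies at positive Euclidean distance \(\eta\) from the complement. For \(|h'-h|<\eta\) and \(|\lambda|\le1\) we have \(|(q+\lambda h')-(q+\lambda h)|<\eta\), so \(h'\in\mathbb B_\eps(q)\). The same argument applied to \(h=0\), where \(D_0=\{q\}\), shows that a ball around \(0\) lies in \(\mathbb B_\eps(q)\). Hence \(\mathbb B_\eps(q)\) is absorbing, and the general facts recalled from \cite{RudinFA} give that \(p_{q,\eps}\) is finite, absolutely homogeneous, and subadditive.

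\emph{Identity \(p_{q,\eps}(h)=1/\tau(q,h,\eps)\).} Fix \(h\neq0\). For \(t>0\) we have \(h\in t\mathbb B_\eps(q)\) if and only if \(q+\lambda h\in\Omega_{q,\eps}\) for all \(|\lambda|\le 1/t\). Set
\[
S=\{s>0:\ q+\lambda h\in\Omega_{q,\eps}\ \text{for all }|\lambda|\le s\},
\]
so that \(p_{q,\eps}(h)=\inf\{1/s:s\in S\}=1/\sup S\). It remains to show \(\sup S=\tau(q,h,\eps)\), where \(\tau\) is defined using open discs \(|\lambda|<r\). If \(s\in S\), every \(r\le s\) is admissible for \(\tau\), so \(\sup S\le\tau\). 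Conversely, if \(r\) is admissible for \(\tau\), then every \(s<r\) lies in \(S\), so \(\sup S\ge\tau\). The case \(\tau=\infty\) corresponds to \(S=(0,\infty)\), which gives \(p_{q,\eps}(h)=0\), matching the convention \(1/\infty=0\).

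\emph{Norm when bounded.} Suppose \(\mathbb B_\eps(q)\subset B(0,R)\). If \(h\in t\mathbb B_\eps(q)\), then \(|h|<tR\), so \(p_{q,\eps}(h)\ge|h|/R\), which is positive for \(h\neq0\). Thus \(p_{q,\eps}\) is definite and hence a norm.

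None of these steps is a serious obstacle; openness is the only point that needs the compactness of the closed disc.
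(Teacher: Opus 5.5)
Your proof is correct and follows essentially the same route as the paper. You prove convexity and balancedness with the same convex-combination and rescaling arguments, prove openness via compactness of the closed disc, and obtain the identity from \(h\in t\mathbb B_\eps(q)\iff h/t\in\mathbb B_\eps(q)\). Your set \(S\) makes explicit the open-versus-closed disc bookkeeping that the paper compresses into the step \(h/t\in\mathbb B_\eps(q)\iff\tau(q,h/t,\eps)>1\), and your bound \(p_{q,\eps}(h)\ge|h|/R\) spells out the definiteness that the paper states in one line.
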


\begin{proof}
	Let \(h_1,h_2\in\mathbb B_\eps(q)\), \(0\le t\le1\), and
	\(|\lambda|\le1\).  Since
	\[
	q+\lambda\bigl(th_1+(1-t)h_2\bigr)
	=
	t(q+\lambda h_1)+(1-t)(q+\lambda h_2)
	\]
	and \(\Omega_{q,\eps}\) is convex, we have
	\(th_1+(1-t)h_2\in\mathbb B_\eps(q)\).  Thus
	\(\mathbb B_\eps(q)\) is convex.  If
	\(h\in\mathbb B_\eps(q)\) and \(|a|\le1\), then
	\[
	q+\lambda(ah)=q+(a\lambda)h\in\Omega_{q,\eps},
	\qquad |\lambda|\le1,
	\]
	so \(\mathbb B_\eps(q)\) is complex balanced.
	
	Since \(\Omega_{q,\eps}\) is open and contains \(q\),
	\(\mathbb B_\eps(q)\) contains a neighborhood of the origin.  To see
	that it is open, fix \(h\in\mathbb B_\eps(q)\).  The compact set
	\(\{q+\lambda h:|\lambda|\le1\}\)
	is contained in the open set \(\Omega_{q,\eps}\).  Hence the same is
	true with \(h\) replaced by every \(h'\) sufficiently close to \(h\).
	
	It follows from the standard properties of Minkowski functionals that
	\(p_{q,\eps}\) is finite, absolutely homogeneous, and subadditive.
	For \(t>0\), the following equivalences hold:
	\[
		h\in t\mathbb B_\eps(q)
	\iff
		\frac{h}{t}\in\mathbb B_\eps(q)\iff
		\tau\left(q,\frac{h}{t},\eps\right)>1\iff
		t\,\tau(q,h,\eps)>1,
	\]
	where the last equivalence follows from
	\eqref{eq:tau-homogeneity}.  Taking the infimum over \(t>0\) gives
	the asserted identity, with the convention \(1/\infty=0\).
	
	Finally, if \(\mathbb B_\eps(q)\) is bounded, then its Minkowski
	functional vanishes only at the origin.  Hence \(p_{q,\eps}\) is a
	norm.
\end{proof}

We will also use the following elementary comparison of these
Minkowski functionals.

\begin{lemma}\label{lem:p-scale}
	Let \(q\in\Omega\), \(\eps>0\), and \(\lambda\ge1\).  Then
	\[
	\frac1\lambda p_{q,\eps}(h)
	\le
	p_{q,\lambda\eps}(h)
	\le
	p_{q,\eps}(h),
	\qquad h\in\C^n.
	\]
	Equivalently, for every \(v\ne0\), the following bounds hold:
	\[
	\tau(q,v,\eps)
	\le
	\tau(q,v,\lambda\eps)
	\le
	\lambda\,\tau(q,v,\eps).
	\]
	In particular, \(p_{q,\eps}(h)\) is nonincreasing in \(\eps\), while
	\(\tau(q,v,\eps)\) is nondecreasing in \(\eps\).
\end{lemma}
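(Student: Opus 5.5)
The statement to prove is Lemma~\ref{lem:p-scale}. I will prove the $\tau$ inequalities first and deduce the Minkowski-functional inequalities from the identity $p_{q,\eps}(h)=1/\tau(q,h,\eps)$ of Lemma~\ref{lem:directional-norm}. Taking reciprocals reverses inequalities, and $h=0$ is trivial, so the two formulations are equivalent. The conventions $1/\infty=0$ and $\lambda\cdot\infty=\infty$ handle unbounded directions.

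The first inequality, $\tau(q,v,\eps)\le\tau(q,v,\lambda\eps)$, is monotonicity. For $\lambda\ge1$,
\[
\Omega_{q,\eps}=\{\rho<\rho(q)+\eps\}\subset\{\rho<\rho(q)+\lambda\eps\}=\Omega_{q,\lambda\eps}.
\]
Hence every radius admissible in the definition of $\tau(q,v,\eps)$ is also admissible for $\tau(q,v,\lambda\eps)$.

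The second inequality, $\tau(q,v,\lambda\eps)\le\lambda\,\tau(q,v,\eps)$, is the real content. It comes from convexity of $\rho$ (Lemma~\ref{lem:signed-distance-global-convexity}) along the complex line through $q$. Fix $v$ and a complex number $\mu$ with $|\mu|<\tau(q,v,\lambda\eps)$; I want to show $q+(\mu/\lambda)v\in\Omega_{q,\eps}$. Write
\[
q+\tfrac{\mu}{\lambda}v=\bigl(1-\tfrac1\lambda\bigr)q+\tfrac1\lambda\,(q+\mu v).
\]
Convexity of $\rho$ on $\C^n\cong\R^{2n}$ then gives
\[
\rho\bigl(q+\tfrac{\mu}{\lambda}v\bigr)\le\bigl(1-\tfrac1\lambda\bigr)\rho(q)+\tfrac1\lambda\rho(q+\mu v)<\bigl(1-\tfrac1\lambda\bigr)\rho(q)+\tfrac1\lambda\bigl(\rho(q)+\lambda\eps\bigr)=\rho(q)+\eps .
\]
So the disc of radius $\tau(q,v,\lambda\eps)/\lambda$ lies in the admissible set for $\tau(q,v,\eps)$, which yields $\tau(q,v,\lambda\eps)\le\lambda\tau(q,v,\eps)$. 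If $\tau(q,v,\lambda\eps)=\infty$, the same argument applies to every finite radius and forces $\tau(q,v,\eps)=\infty$.

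The monotonicity statements in the final sentence of the lemma follow directly from the first inequality, since any $\eps'\ge\eps$ can be written as $\lambda\eps$ with $\lambda\ge1$. There is no real obstacle here. The only point needing care is that convexity of $\rho$ is used with $q$ as one endpoint, which is why the homogeneity factor is exactly $\lambda$; the infinite cases just require attention to the conventions.
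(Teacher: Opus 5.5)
Your proof is correct and matches the paper's argument: monotonicity follows from the inclusion $\Omega_{q,\eps}\subset\Omega_{q,\lambda\eps}$, and the reverse bound follows from convexity of $\rho$ applied to the combination $(1-\tfrac1\lambda)q+\tfrac1\lambda(q+\mu v)$. The only difference is cosmetic. You argue line by line with $\tau$ and pass to $p_{q,\eps}$ via Lemma~\ref{lem:directional-norm}, whereas the paper proves the inclusions $\mathbb B_\eps(q)\subset\mathbb B_{\lambda\eps}(q)\subset\lambda\mathbb B_\eps(q)$ and then takes Minkowski functionals.
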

\begin{proof}
	Since
	\(
	\Omega_{q,\eps}\subset\Omega_{q,\lambda\eps},
	\)
	we have
\(	
	\mathbb B_\eps(q)\subset\mathbb B_{\lambda\eps}(q),
\)
	and therefore
	\(p_{q,\lambda\eps}(h)\le p_{q,\eps}(h)\).
	For the reverse comparison, let
	\(h\in\mathbb B_{\lambda\eps}(q)\). Then, for every \(|\mu|\le1\),
	convexity of \(\rho\) gives
	\[
	\begin{aligned}
		\rho\left(q+\frac{\mu h}{\lambda}\right)
		&=
		\rho\left(
		\left(1-\frac1\lambda\right)q
		+\frac1\lambda(q+\mu h)
		\right)\\
		&\le
		\left(1-\frac1\lambda\right)\rho(q)
		+\frac1\lambda\rho(q+\mu h)
		<\rho(q)+\eps.
	\end{aligned}
	\]
	Hence \(h/\lambda\in\mathbb B_\eps(q)\), and thus
	\(
	\mathbb B_{\lambda\eps}(q)
	\subset
	\lambda\mathbb B_\eps(q).
	\)
	Taking Minkowski functionals yields
	\[
	p_{q,\lambda\eps}(h)
	\ge
	\frac1\lambda p_{q,\eps}(h).
	\]
	This proves the lemma.
\end{proof}

\subsection{Chebyshev estimates}
\label{subsec:finite-type-scales}
Lemma~\ref{lem:p-scale} gives monotonicity of \(\tau(q,v,\eps)\) in
\(\eps\) and a linear upper bound under dilation of this parameter.
It does not, however, give the quantitative lower growth estimate
needed below when \(\eps\) is enlarged by a prescribed factor.  We will obtain it using the finite-type assumption,
with explicit dependence on the D'Angelo type bound \(m\).  The key ingredient is the
extremal property of Chebyshev polynomials.

We now impose the local boundary hypotheses.  Fix
\(p\in\partial\Omega\) and an integer \(m\ge2\), and assume that
\(\partial\Omega\) is smooth near \(p\) and has D'Angelo type at most
\(m\) at \(p\). By Lemma~\ref{lem:local-type-bound}, the same type
bound holds throughout a sufficiently small boundary neighborhood. By
Lemmas~\ref{lem:signed-distance-tubular}
and~\ref{lem:signed-distance-smooth}, there is a neighborhood \(U_0\) of
\(p\) on which \(\rho\) is a smooth defining function and the
nearest-point projection \(\pi\) is well defined.  These local facts are
needed below to obtain Taylor estimates that are uniform in the center
\(q\) and the complex direction.  The working neighborhood \(U\subset U_0\)
will therefore be chosen, and reduced when necessary, in the statements
where those uniform estimates are established.

Chebyshev polynomials are classical objects in approximation theory and arise
naturally in extremal problems for polynomials.  A basic question is how large
a polynomial of fixed degree can become outside an interval when it is
uniformly bounded on that interval.  Chebyshev polynomials give the optimal
bound: among real polynomials \(P\) of degree at most \(m\) satisfying
\(|P(x)|\le1\) on \([-1,1]\), the Chebyshev polynomial \(T_m\) has the
maximal possible growth outside the interval; see
\cite{Rivlin,SachdevaVishnoi}.

The Chebyshev polynomial of the first kind \(T_m\) is characterized by
\(T_m(\cos\theta)=\cos(m\theta)\), which implies
\(|T_m(x)|\le1\) for \(-1\le x\le1\).  It also satisfies the
alternation property
\[
	T_m\left(\cos\frac{j\pi}{m}\right)=(-1)^j,
	\qquad j=0,\ldots,m.
\]
For \(A\ge1\), its growth outside \([-1,1]\) is given by
\begin{equation}\label{eq:chebyshev-formula}
	\begin{aligned}
		T_m(A)
		&=\cosh\bigl(m\,\operatorname{arcosh}A\bigr)\\
		&=
		\frac{(A+\sqrt{A^2-1})^m+
			(A-\sqrt{A^2-1})^m}{2}.
	\end{aligned}
\end{equation}
See, for example, \cite[Proposition~2.5]{SachdevaVishnoi}.

This alternation property gives the following
extremal estimate.

\begin{lemma}\label{lem:chebyshev-extremal}
Let \(P\) be a real polynomial of degree at most \(m\) satisfying
\(|P(x)|\le1\) for every \(x\in[-1,1]\).  Then the estimate
\begin{equation*}
 |P(A)|\le T_m(A)
\end{equation*}
holds for every \(A\ge1\).
\end{lemma}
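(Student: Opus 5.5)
The plan is the classical alternation (interpolation) argument. Put \(x_j=\cos(j\pi/m)\) for \(j=0,\dots,m\); these are \(m+1\) distinct nodes in \([-1,1]\) with \(T_m(x_j)=(-1)^j\). A polynomial of degree at most \(m\) is determined by its values at these nodes, so Lagrange interpolation gives
\[
P(A)=\sum_{j=0}^m P(x_j)\,\ell_j(A),
\qquad
\ell_j(x)=\prod_{k\ne j}\frac{x-x_k}{x_j-x_k}.
\]
Applying the same formula to \(T_m\) itself yields \(T_m(A)=\sum_j (-1)^j\ell_j(A)\).

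The key step is the sign pattern \(\operatorname{sign}\ell_j(A)=(-1)^j\) for \(A\ge1\). Since \(A\ge1\ge x_k\) for every \(k\), the numerator \(\prod_{k\ne j}(A-x_k)\) is nonnegative. For \(A=1\) it vanishes except when \(j=0\), which is harmless. For the denominator, the nodes are ordered \(x_0>x_1>\dots>x_m\), so \(x_j-x_k<0\) exactly for the \(j\) indices \(k<j\). Hence \(\prod_{k\ne j}(x_j-x_k)\) has sign \((-1)^j\), and therefore \((-1)^j\ell_j(A)=|\ell_j(A)|\).

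Combining these with \(|P(x_j)|\le1\):
\[
|P(A)|\le\sum_j|P(x_j)|\,|\ell_j(A)|\le\sum_j|\ell_j(A)|=\sum_j(-1)^j\ell_j(A)=T_m(A).
\]

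The only point needing care is the sign computation. One should also note that if \(\deg P<m\), the interpolation identity still holds, since \(P\) lies in the space of polynomials of degree at most \(m\). For \(m=0\) the statement is trivial.
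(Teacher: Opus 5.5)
Your proof is correct, but it takes a different route from the paper.

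The paper argues by contradiction. Assuming $|P(A)|>T_m(A)$ for some $A>1$, it rescales to $Q=\frac{T_m(A)}{P(A)}P$, so that $|Q|<1$ at the nodes $x_j=\cos(j\pi/m)$ and $Q(A)=T_m(A)$. Then $T_m-Q$ alternates in sign at the $m+1$ nodes and also vanishes at $A$, which gives $m+1$ zeros for a nonzero polynomial of degree at most $m$. The case $A=1$ is handled separately via $T_m(1)=1$.

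You instead expand $P(A)$ directly in the Lagrange basis at the same nodes. You then show $(-1)^j\ell_j(A)=|\ell_j(A)|$ for $A\ge1$, so $T_m(A)=\sum_j|\ell_j(A)|$ is exactly the Lebesgue-type sum that bounds $|P(A)|$.

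Your version has several advantages. It is constructive and covers $A=1$ without a separate case. It makes explicit that only the node values $P(x_j)$ enter. It also identifies the extremal polynomials at once: for $A>1$ every $\ell_j(A)\neq0$, so equality forces $P(x_j)=\pm(-1)^j$ with a fixed sign, i.e.\ $P=\pm T_m$.

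The paper's argument avoids any computation with the interpolation basis and uses only the intermediate value theorem and zero counting. That is quicker to state, but the equality case would need extra care, since zeros would then have to be counted with multiplicity. Either argument suffices for the use made of this lemma in Lemma~\ref{lem:radial-chebyshev}.
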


\begin{proof}This is the classical extremal property of Chebyshev polynomials; see, for
example, \cite[Proposition~2.4]{SachdevaVishnoi}.  We include the standard argument for
completeness.

Since \(T_m(1)=1\), the case \(A=1\) follows directly from the hypothesis.
Suppose, to the contrary, that
\[
|P(A)|>T_m(A),
\]
for some \(A>1\).  Define
\[
Q(x)=\frac{T_m(A)}{P(A)}P(x).
\]
Then
\[
|Q(x)|<1,\qquad x\in[-1,1],
\]
and
\(
Q(A)=T_m(A).
\)
Consider
\[
R(x)=T_m(x)-Q(x).
\]
At the alternating points
\[
x_j=\cos\frac{j\pi}{m},
\qquad j=0,\ldots,m,
\]
we have
\(
T_m(x_j)=(-1)^j.
\)
Since \(|Q(x_j)|<1\), the values \(R(x_j)\) have alternating signs.
Therefore \(R(x)\) vanishes at least once between each pair of consecutive
points \(x_j\), giving at least \(m\) zeros in \([-1,1]\).  Moreover,
\(
R(A)=0.
\)
Hence \(R(x)\) has at least \(m+1\) distinct zeros, while its degree is at
most \(m\).  This contradiction proves the claim.
\end{proof}

In our setting, the Taylor polynomial obtained by restricting the defining
function to a complex line is a polynomial of degree at most \(m\) in two
real variables.  By restricting this polynomial to real lines through the
origin, the preceding one-dimensional Chebyshev estimate gives the following
useful consequence.

\begin{lemma}
	\label{lem:radial-chebyshev}
    Let \(Q:\R^2\to\R\) be a polynomial of total degree at most \(m\).
    Under the identification \(\C\simeq\R^2\), write
	\(Q(\zeta):=Q(\operatorname{Re}\zeta,\operatorname{Im}\zeta)\).
	The following estimate holds for every \(A\ge1\) and \(r>0\):
	\begin{equation*}
		\sup_{|\zeta|\le Ar}|Q(\zeta)|
		\le
		T_m(A)\sup_{|\zeta|\le r}|Q(\zeta)|.
	\end{equation*}
\end{lemma}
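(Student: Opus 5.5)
The plan is to reduce to the one-dimensional extremal estimate of Lemma~\ref{lem:chebyshev-extremal} by restricting \(Q\) to real lines through the origin. Set \(S=\sup_{|\zeta|\le r}|Q(\zeta)|\). If \(S=0\), then \(Q\) vanishes on an open disc, so \(Q\equiv0\) and there is nothing to prove. Otherwise \(0<S<\infty\), since \(Q\) is continuous on a compact disc.

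Fix a point \(\zeta_0\) with \(|\zeta_0|\le Ar\). If \(\zeta_0=0\), then \(|Q(0)|\le S\le T_m(A)S\), because \(T_m(A)=\cosh(m\operatorname{arcosh}A)\ge1\) by \eqref{eq:chebyshev-formula}. Otherwise write \(\zeta_0=s e^{i\theta}\) with \(0<s\le Ar\), and define the one-variable real polynomial
\[
P(x)=\frac{1}{S}\,Q\bigl(x\,r e^{i\theta}\bigr),\qquad x\in\R .
\]
This is a polynomial of degree at most \(m\): substituting the real-linear map \(x\mapsto (xr\cos\theta,\,xr\sin\theta)\) into a polynomial of total degree at most \(m\) in two real variables cannot raise the degree. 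For \(x\in[-1,1]\), the point \(xre^{i\theta}\) lies in the closed disc of radius \(r\), so \(|P(x)|\le1\).

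Now put \(t=s/r\in(0,A]\), so that \(Q(\zeta_0)=S\,P(t)\). There are two cases.
\begin{enumerate}[label=(\roman*)]
\item If \(t\le1\), then \(|P(t)|\le1\le T_m(A)\).
\item If \(t\ge1\), Lemma~\ref{lem:chebyshev-extremal} gives \(|P(t)|\le T_m(t)\). Since \(T_m(t)=\cosh(m\operatorname{arcosh}t)\) is nondecreasing on \([1,\infty)\), this is at most \(T_m(A)\).
\end{enumerate}
In either case \(|Q(\zeta_0)|\le T_m(A)S\). Taking the supremum over \(|\zeta_0|\le Ar\) proves the lemma.

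There is no real obstacle here. The only points needing care are that the restriction to a line keeps the degree at most \(m\), that \(T_m(A)\ge1\) covers the points inside the unit interval, and that the monotonicity of \(T_m\) on \([1,\infty)\) lets us pass from \(T_m(t)\) to \(T_m(A)\). Using the full line through the origin, that is \(x\in[-1,1]\) rather than only \(x\ge0\), is what makes Lemma~\ref{lem:chebyshev-extremal} applicable exactly as stated.
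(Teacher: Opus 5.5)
Your proof is correct and follows essentially the same route as the paper: restrict \(Q\) to a real line through the origin, normalize by its supremum on the disc of radius \(r\), and apply Lemma~\ref{lem:chebyshev-extremal}. The only difference is cosmetic: the paper uses \(t\mapsto Q(t\zeta/A)/M_r\), so the target point always sits at \(t=A\), and neither your case split nor the monotonicity of \(T_m\) on \([1,\infty)\) is needed.
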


\begin{proof}
	Set \(M_r:=\sup_{|\xi|\le r}|Q(\xi)|\).
	If \(M_r=0\), then \(Q\)
	vanishes on an open disk and hence is identically zero, so the conclusion
	is immediate.  We may therefore assume that \(M_r>0\).

	For each \(\zeta\in\C\), define \(P_\zeta:\R\to\R\) by
	\[
	P_\zeta(t):=\frac{1}{M_r}Q\left(\frac{t\zeta}{A}\right).
	\]
	Since
	\(\operatorname{Re}(t\zeta/A)=t\operatorname{Re}\zeta/A\) and similarly
	for the imaginary part, \(P_\zeta\) is a real polynomial in \(t\) of
	degree at most \(m\).

	Now suppose that \(|\zeta|\le Ar\).  If \(|t|\le1\), then
	\(|t\zeta/A|\le r\), and hence \(|P_\zeta(t)|\le1\).

	Applying Lemma~\ref{lem:chebyshev-extremal} to \(P_\zeta\) at \(t=A\)
	gives
	\[
	|Q(\zeta)|=M_r|P_\zeta(A)|
	\le M_rT_m(A)
	=T_m(A)\sup_{|\xi|\le r}|Q(\xi)|.
	\]
	Taking the supremum over \(|\zeta|\le Ar\) proves the lemma.
\end{proof}
\begin{remark}\label{rem:chebyshev-versus-convex-polynomials}
Related polynomial estimates have been used in the study of convex
finite-type domains.  A typical result, used by Bruna--Nagel--Wainger
\cite{BrunaNagelWainger}, is the following.  If the real polynomial
\[
P(t)=\sum_{j=2}^{m}a_jt^j,\qquad a_j\in\R,
\]
is convex on \([0,T]\), then there exists a positive constant \(c_m\), depending only on \(m\), such that
\[
P(t)\ge c_m\sum_{j=2}^{m}|a_j|t^j,
\qquad 0\le t\le T.
\]
Consequently, for \(A\ge1\) and \(0\le t\le T\), we obtain
\[
\begin{aligned}
	|P(At)|
	\le
	\sum_{j=2}^{m}|a_j|A^jt^j
	\le
	A^m\sum_{j=2}^{m}|a_j|t^j
	\le
	c_m^{-1}A^mP(t).
\end{aligned}
\]
Similar arguments can be found in numerous works. We mention, among
others, McNeal \cite{McNealBergman}, Bruna--Charpentier--Dupain
\cite{BrunaCharpentierDupain}, Di Biase--Fischer
\cite{DiBiaseFischerBoundary}, Diederich--Forn{\ae}ss
\cite{DiederichFornaessSupport}, Gilliam--Halfpap
\cite{GilliamHalfpapSzego} and Lee \cite{LeeKobayashi}.

In our setting, we apply Lemma~\ref{lem:radial-chebyshev} directly
to real polynomials of total degree at most \(m\).
No convexity assumption is needed, and the factor \(T_m(A)\) is explicit.
This allows us to keep track of the dependence on the D'Angelo type
bound \(m\) in the later estimates.
\end{remark}

\bigskip

We now connect Lemma~\ref{lem:radial-chebyshev} to the directional scale
\(\tau\) by approximating the variation of \(\rho\) along each complex line
by its Taylor polynomial.  For \(q\in U_0\) and
\(v\in\C^n\setminus\{0\}\), define \(F_{q,v}:\C\to\R\) by
\(F_{q,v}(\zeta):=\rho(q+\zeta v)-\rho(q)\).
This function is smooth near the origin because \(\rho\) is smooth on
\(U_0\).  When \(q\in\Omega\cap U_0\), the definition of
\(\tau(q,v,\eps)\) says that it is the supremum of the radii \(r>0\)
for which \(F_{q,v}(\zeta)<\eps\) whenever \(|\zeta|<r\).
Thus estimates for \(F_{q,v}\) on disks translate directly into
estimates for \(\tau\).

Under the identification \(\C\simeq\R^2\), let \(P_{q,v}\) be the
Taylor polynomial of \(F_{q,v}\) at the origin through total degree
\(m\), and let \(R_{q,v}:=F_{q,v}-P_{q,v}\) be the Taylor remainder.
Then
\(P_{q,v}:\R^2\to\R\) is a polynomial of total degree at most \(m\), and it
has zero constant term because \(F_{q,v}(0)=0\).  Whenever a function \(G\)
is defined on the closed disk of radius \(r\), write
\(\|G\|_r:=\sup_{|\zeta|\le r}|G(\zeta)|\).

The finite-type assumption gives a uniform lower bound for the polynomial
part \(P_{q,v}\), while the smoothness of \(\rho\) gives a uniform estimate
for the remainder \(R_{q,v}\).  The following lemma makes these estimates
uniform in the center and the unit direction.
\begin{lemma}\label{lem:uniform-line-jets}
	There exist a neighborhood \(U\Subset U_0\) of \(p\), constants
	\(a,b>0\), and a radius \(r_0\in(0,1]\) such that the estimates
	\begin{equation*}
		\|P_{q,v}\|_r\ge ar^m,
		\qquad
		\|R_{q,v}\|_r\le br^{m+1}
	\end{equation*}
	hold for every \(q\in\Omega\cap U\), every unit vector \(v\in\C^n\),
	and every \(r\in(0,r_0]\).
\end{lemma}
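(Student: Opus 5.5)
The plan is to prove both estimates by compactness and continuity, applied to the jets of \(\rho\) along complex lines. The lower bound will come from the type bound through a normalized coefficient minimum. The upper bound will come from Taylor's theorem with uniform control of the \((m+1)\)-st derivatives of \(\rho\).

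\emph{Setup.} By Lemma~\ref{lem:local-type-bound}, choose a neighborhood \(U_1\Subset U_0\) of \(p\) such that every boundary point of \(U_1\) has type at most \(m\). Lemma~\ref{lem:signed-distance-smooth} makes \(\rho\) smooth on \(U_0\). Fix \(r_1>0\) and a compact neighborhood \(U\Subset U_1\) of \(p\) such that \(q+\zeta v\in U_0\) for every \(q\in\overline U\), every unit \(v\), and every \(|\zeta|\le r_1\).

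\emph{Remainder estimate.} Let \(b\) be a bound for all real derivatives of \(\rho\) of order \(m+1\) on the compact set \(\{q+\zeta v: q\in\overline U,\ |v|=1,\ |\zeta|\le r_1\}\), multiplied by a combinatorial factor. Taylor's theorem along the segment from \(0\) to \(\zeta\) then gives \(|R_{q,v}(\zeta)|\le b|\zeta|^{m+1}\). Note that \(v\) is a unit vector, so the chain rule contributes only factors of \(|v|=1\).

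\emph{Polynomial lower bound.} Write \(P_{q,v}(\zeta)=\sum_{1\le j+k\le m}c_{jk}(q,v)x^jy^k\). The coefficients \(c_{jk}\) are continuous on \(\overline U\times S^{2n-1}\). I would show that
\[\eta:=\min_{q\in\overline U,\,|v|=1}\max_{1\le j+k\le m}|c_{jk}(q,v)|>0\]
after possibly shrinking \(U\). This is the main obstacle, because the type hypothesis is stated at boundary points, while \(q\) ranges over interior points.

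I would handle it as in the proof of Lemma~\ref{lem:local-type-bound}. At \(q=p\), finite line type gives \(\max|c_{jk}(p,v)|>0\) for every unit \(v\). Compactness of the sphere and continuity of \(c_{jk}\) in \(q\) then give a positive lower bound \(\eta/2\) for all \(q\) in a small neighborhood of \(p\), including interior points. The type bound is needed only at \(p\) itself.

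With \(\eta\) in hand, I would use the fact that on the finite-dimensional space of polynomials of degree at most \(m\) with zero constant term, the norm \(\|P\|_1\) and the maximum coefficient norm are equivalent. So there is \(c_m>0\) with \(\|P\|_1\ge c_m\max|c_{jk}|\). For \(r\le1\), the polynomial \(P(r\,\cdot)\) has coefficients \(c_{jk}r^{j+k}\), and at least one of them has absolute value at least \(\eta r^m/2\), since \(r^{j+k}\ge r^m\). Hence \(\|P_{q,v}\|_r=\|P_{q,v}(r\,\cdot)\|_1\ge c_m\eta r^m/2=:ar^m\).

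\emph{Conclusion.} Finally, set \(r_0=\min\{1,r_1\}\). The equivalence of norms is the only place where a nonexplicit constant enters. That is harmless here, because the lemma asserts only that some \(a,b,r_0>0\) exist.
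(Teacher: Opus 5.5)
Your proof is correct and follows the paper's argument: a uniform lower bound on the maximal Taylor coefficient, equivalence of norms on polynomials with zero constant term, the scaling \(r^{j+k}\ge r^m\) for \(r\le1\), and Taylor's formula with uniformly bounded \((m+1)\)-st derivatives for the remainder. The one difference is how you get the coefficient bound. You use the type bound at \(p\) alone, plus continuity in \(q\) uniformly over the unit sphere. The paper instead takes a compact boundary patch \(K\) on which the type is at most \(m\) and then passes to nearby interior points. Both work; yours is slightly more economical, and it also makes your preliminary appeal to Lemma~\ref{lem:local-type-bound} unnecessary.
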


\begin{proof}
	Choose a compact subset \(K\subset\partial\Omega\cap U_0\), containing \(p\) in its relative interior, such that the D'Angelo type of \(\partial\Omega\)
	is at most \(m\) on \(K\).  For \(\xi\in K\) and \(|v|=1\), write
	\[
	P_{\xi,v}(\zeta)
	=
	\sum_{\substack{j,k\ge0\\1\le j+k\le m}}
	a_{jk}(\xi,v)
	(\operatorname{Re}\zeta)^j
	(\operatorname{Im}\zeta)^k.
	\]
	The finite line-type bound, as explained in
	Subsection~\ref{subsec:finite-type}, gives
	\begin{equation*}
		\max_{1\le j+k\le m}|a_{jk}(\xi,v)|>0,
		\qquad
		\xi\in K,\quad |v|=1.
	\end{equation*}
	The coefficients \(a_{jk}(\xi,v)\) depend continuously on
	\((\xi,v)\).  Let \(S^{2n-1}=\{v\in\C^n:|v|=1\}\) be the unit
	sphere in \(\C^n\).  Since
	\(K\times S^{2n-1}\) is compact, there exists \(\eta>0\) such that
	\[
	\max_{1\le j+k\le m}|a_{jk}(\xi,v)|
	\ge \eta,
	\qquad
	\xi\in K,\quad v\in S^{2n-1}.
	\]
	
	Choose \(U\Subset U_0\) so that
	Lemmas~\ref{lem:signed-distance-tubular}
	and~\ref{lem:signed-distance-smooth} apply and
	\(\pi(\Omega\cap U)\subset K\).
	By continuity of the derivatives of \(\rho\), after shrinking \(U\)
	once more we obtain
	\[
		\max_{1\le j+k\le m}|a_{jk}(q,v)|
		\ge \frac{\eta}{2},
		\qquad
		q\in\Omega\cap U,\quad |v|=1.
	\]
	Consider the finite-dimensional vector space of real polynomials on
	\(\R^2\simeq\C\) of total degree at most \(m\) and with zero constant
	term.  Equivalence of norms on this
	space gives a constant \(c_m>0\), depending only on \(m\), such that
	\[
		\sup_{|\zeta|\le1}|P(\zeta)|
		\ge
		c_m\max_{j,k}|c_{jk}|,
	\]
	for every
	\[
	P(\zeta)
	=
	\sum_{1\le j+k\le m}
	c_{jk}
	(\operatorname{Re}\zeta)^j
	(\operatorname{Im}\zeta)^k.
	\]
	Apply this norm-equivalence estimate to
	\(P_{q,v}(r\zeta)\).  If \(0<r\le1\), then the coefficient of total
	degree \(j+k\) is multiplied by \(r^{j+k}\), and
	\(
	r^{j+k}\ge r^m.
\)
	Together with the uniform coefficient lower bound, this gives
	\[
	\|P_{q,v}\|_r
	\ge
	c_m\frac{\eta}{2}r^m.
	\]
	Thus the first estimate in Lemma~\ref{lem:uniform-line-jets} holds with
	\(
	a=c_m\frac{\eta}{2}.
	\)
	
	Finally, choose \(r_0\in(0,1]\) and shrink \(U\), if necessary, so
	that all points \(q+\zeta v\), with \(q\in\Omega\cap U\),
	\(|v|=1\), and \(|\zeta|\le r_0\), lie in a fixed compact subset of
	\(U_0\).  The derivatives of \(\rho\) of order \(m+1\) are uniformly
	bounded there.  Taylor's formula therefore gives
	\[
	|R_{q,v}(\zeta)|
	\le
	b|\zeta|^{m+1},
	\qquad |\zeta|\le r_0,
	\]
	for a uniform constant \(b>0\).  Hence
	\[
	\|R_{q,v}\|_r\le br^{m+1},
	\qquad 0<r\le r_0,
	\]
	which proves the second estimate.
\end{proof}

Combining Lemma~\ref{lem:uniform-line-jets} with
Lemma~\ref{lem:radial-chebyshev}, we obtain the following
property, which is the key quantitative input for the non-isotropic geometry
constructed below.

\begin{lemma}\label{lem:finite-type-dilation}
	After shrinking \(U\) if necessary, the directional radii satisfy
	\begin{equation*}
		\lim_{\eps\to0^+}
		\sup_{\substack{q\in\Omega\cap U\\ |v|=1}}
		\tau(q,v,\eps)
		=0.
	\end{equation*}
	Moreover, there exists \(\eps_*>0\) such that the estimate
	\begin{equation*}
		\tau\bigl(q,v,2T_m(5)\eps\bigr)
		\ge
		5\,\tau(q,v,\eps)
	\end{equation*}
	holds for every \(q\in\Omega\cap U\), \(v\ne0\), and
	\(0<\eps<\eps_*\).
	Equivalently, Lemma~\ref{lem:directional-norm} gives
	\[
	p_{q,2T_m(5)\eps}(h)
	\le
	\frac15 p_{q,\eps}(h)
	\]
	for every \(h\in\C^n\).
\end{lemma}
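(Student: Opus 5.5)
The argument works directly with the functions \(F_{q,v}=P_{q,v}+R_{q,v}\) of Lemma~\ref{lem:uniform-line-jets}. By \eqref{eq:tau-homogeneity} it suffices to treat unit vectors \(v\). Recall that \(\tau(q,v,\eps)\) is the supremum of the radii \(r\) with \(F_{q,v}<\eps\) on \(\{|\zeta|<r\}\). Convexity of \(\rho\) makes \(F_{q,v}\) convex with \(F_{q,v}(0)=0\). Hence \(\sup_{|\zeta|\le r}F_{q,v}\) is nondecreasing in \(r\), and on the circle \(|\zeta|=\tau\) we have \(\max F_{q,v}=\eps\) whenever \(\tau\) is finite and within the Taylor range. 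A convex function vanishing at \(0\) also satisfies \(F(-\zeta)\ge -F(\zeta)\). This gives \(\|F_{q,v}\|_r\le \sup_{|\zeta|\le r}F_{q,v}\), so on disks the sup of \(|F|\) and the sup of \(F\) agree.

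\emph{First assertion.} Let \(r\le r_0\). Combining the two estimates of Lemma~\ref{lem:uniform-line-jets} gives
\[
\sup_{|\zeta|\le r}F_{q,v}=\|F_{q,v}\|_r\ge ar^m-br^{m+1}\ge \tfrac a2 r^m
\]
once \(r\le r_1:=\min\{r_0,a/(2b)\}\). Fix \(r\le r_1\). If \(\eps<\tfrac a2 r^m\), then \(F_{q,v}\) reaches \(\eps\) somewhere in the closed disk of radius \(r\), so \(\tau(q,v,\eps)\le r\) uniformly in \(q\in\Omega\cap U\) and \(|v|=1\). Note that the point where \(F\ge\eps\) lies inside the Taylor disk. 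Since \(r\) was arbitrary, this gives the limit.

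\emph{Dilation estimate.} Fix \(\eps\) small enough that \(\tau:=\tau(q,v,\eps)\) satisfies \(5\tau\le r_1\) for all \((q,v)\); this is possible by the first part. Then \(\|F_{q,v}\|_{\tau}=\eps\). Write \(\eta=\|P_{q,v}\|_\tau\). The remainder bound gives
\[
\|R_{q,v}\|_{5\tau}\le b(5\tau)^{m+1}\le 5^{m+1}b\tau\,\frac{2}{a}\,\eta ,
\]
using \(\eta\ge a\tau^m\). For \(\tau\) small this is a small multiple of \(\eta\). From \(\eta\le \eps+\|R\|_\tau\) we get \(\eta\le(1+o(1))\eps\). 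Lemma~\ref{lem:radial-chebyshev} with \(A=5\) gives \(\|P\|_{5\tau}\le T_m(5)\eta\). Therefore
\[
\sup_{|\zeta|\le 5\tau}F_{q,v}\le T_m(5)(1+o(1))\eps+o(\eps)<2T_m(5)\eps .
\]
The inequality is strict because \(T_m(5)\ge1\) leaves a factor of \(2\) of room. It follows that \(F<2T_m(5)\eps\) on the open disk of radius \(5\tau\), which means \(\tau(q,v,2T_m(5)\eps)\ge5\tau\). The \(o(1)\) terms are uniform, so one threshold \(\eps_*\) works for all \(q\) and \(v\). The Minkowski-functional form then follows from Lemma~\ref{lem:directional-norm}.

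\emph{Main obstacle.} The delicate point is ensuring that all relevant disks stay inside the region where the Taylor estimates hold, uniformly in \(q\) and \(v\). This is why the first assertion is proved before the second. A further subtlety is that \(\tau\) is defined through \(\Omega_{q,\eps}\), which involves \(\rho\) globally. Since the disks of radius \(\le r_1\) lie in \(U_0\), membership in \(\Omega_{q,\eps}\) along them is decided by \(F_{q,v}\) alone. Finally, at \(|\zeta|=\tau\) one only has \(\max F=\eps\) by continuity; that is exactly the bound the argument uses.
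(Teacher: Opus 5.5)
Your proof is correct and follows essentially the paper's route: convexity gives $F_{q,v}(\zeta)+F_{q,v}(-\zeta)\ge0$, the uniform jet bounds of Lemma~\ref{lem:uniform-line-jets} give the uniform shrinking first, and Lemma~\ref{lem:radial-chebyshev} at factor $5$, with the remainder absorbed, gives the dilation estimate. The differences are only bookkeeping: the paper shrinks $r_0$ so that $\|R_{q,v}\|_r\le\frac14\|P_{q,v}\|_r$, which yields the explicit bound $\frac53T_m(5)\eps$ in place of your uniform $o(1)$ terms, and it works at radii $r<\tau(q,v,\eps)$ and then lets $r\to\tau^-$ rather than using $\|F_{q,v}\|_{\tau}=\eps$.
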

\begin{proof}
	Let \(a,b,r_0>0\) be the constants in
	Lemma~\ref{lem:uniform-line-jets}.  After decreasing \(r_0\), we may
	assume that
	\begin{equation}\label{eq:relative-remainder}
		\|R_{q,v}\|_r
		\le
		\frac14\|P_{q,v}\|_r,
		\qquad 0<r\le r_0,
	\end{equation}
	for every \(q\in\Omega\cap U\) and \(|v|=1\).  Indeed,
	Lemma~\ref{lem:uniform-line-jets} gives
	\[
	\|R_{q,v}\|_r\le br^{m+1},
	\qquad
	\|P_{q,v}\|_r\ge ar^m,
	\]
	so it is enough to take \(r_0\) sufficiently small.
	
	We first prove the uniform shrinking assertion.  Fix
	\(q\in\Omega\cap U\), \(|v|=1\), and let
	\(0<r<\min\{\tau(q,v,\eps),r_0\}\).
	By the definition of \(\tau(q,v,\varepsilon)\), we have
	\(F_{q,v}(\zeta)<\varepsilon\) for every \(|\zeta|\le r\).
	Since \(F_{q,v}\) is convex on the underlying real plane and
	\(F_{q,v}(0)=0\), it follows that
	\(F_{q,v}(\zeta)+F_{q,v}(-\zeta)\ge0\).
	Applying the preceding upper bound to both \(\zeta\) and \(-\zeta\)
	therefore gives \(\|F_{q,v}\|_r\le\eps\).
	Since
	\(
	F_{q,v}=P_{q,v}+R_{q,v}
	\),
	we obtain from \eqref{eq:relative-remainder} that
	\[
	\|P_{q,v}\|_r
	\le
	\|F_{q,v}\|_r+\|R_{q,v}\|_r
	\le
	\eps+\frac14\|P_{q,v}\|_r.
	\]
	Hence \(\|P_{q,v}\|_r\le\frac43\eps\).
	Combining this with Lemma~\ref{lem:uniform-line-jets} gives
	\(ar^m\le\frac43\eps\).
	
	Choose \(\eps_0>0\) so small that
	\((4\eps_0/(3a))^{1/m}<r_0/2\).
	We claim that \(\tau(q,v,\eps)<r_0/2\) whenever \(0<\eps<\eps_0\).
	Indeed, if \(\tau(q,v,\eps)\ge r_0/2\), then
	\(ar^m\le\frac43\eps\) holds for every \(0<r<r_0/2\).
	Letting \(r\to(r_0/2)^-\), we obtain
	\[
	a\left(\frac{r_0}{2}\right)^m
	\le
	\frac43\eps
	<
	\frac43\eps_0,
	\]
	contrary to the choice of \(\eps_0\).  Thus
	\(\tau(q,v,\eps)<r_0/2\).  We may now let
	\(r\to\tau(q,v,\eps)^-\) in the bound \(ar^m\le\frac43\eps\), which gives
	\[
	\tau(q,v,\eps)
	\le
	\left(\frac{4\eps}{3a}\right)^{1/m}.
	\]
	The estimate is uniform in \(q\) and \(v\), and the uniform shrinking
	assertion follows.
	
	We next prove the directional dilation estimate.  By the uniform shrinking
	assertion just proved, after choosing
	\(0<\eps_*\le\eps_0\) sufficiently small, we have
	\(5\,\tau(q,v,\eps)<r_0\) for every \(q\in\Omega\cap U\),
	every unit vector \(v\), and every \(0<\eps<\eps_*\).
	
	Fix such \(q,v,\eps\), and take
\(
	0<r<\tau(q,v,\eps).
\)
	The polynomial bound obtained above gives
	\(\|P_{q,v}\|_r\le\frac43\eps\).
	Lemma~\ref{lem:radial-chebyshev} gives
	\[
	\|P_{q,v}\|_{5r}
	\le
	T_m(5)\|P_{q,v}\|_r.
	\]
	Since \(5r<r_0\), we may also apply
	\eqref{eq:relative-remainder} at radius \(5r\).  Therefore
	\[
	\begin{aligned}
		\|F_{q,v}\|_{5r}
		&\le
		\|P_{q,v}\|_{5r}+\|R_{q,v}\|_{5r}
		\le
		\frac54\|P_{q,v}\|_{5r}
		\le
		\frac54 T_m(5)\|P_{q,v}\|_r\\
		&\le
		\frac53 T_m(5)\eps
		<
		2T_m(5)\eps.
	\end{aligned}
	\]
	It follows that
	\[
	q+\zeta v\in\Omega_{q,2T_m(5)\eps},
	\qquad |\zeta|\le5r.
	\]
	Hence
	\(
	\tau\bigl(q,v,2T_m(5)\eps\bigr)\ge5r.
\)
	Letting \(r\to\tau(q,v,\eps)^-\), we obtain
	\[
	\tau\bigl(q,v,2T_m(5)\eps\bigr)
	\ge
	5\,\tau(q,v,\eps),
	\]
	when \(|v|=1\).  The result for arbitrary \(v\ne0\) follows from
	\eqref{eq:tau-homogeneity}.
\end{proof}
\begin{remark}
For later use, we fix the factor \(5\).  The same argument works for
any fixed \(A>1\), with \(2T_m(A)\) in place of \(2T_m(5)\).  The particular
choice of \(5\) is only for convenience.
\end{remark}
For later use, set
\begin{equation*}
	M(\eps)
	=
	\sup_{\substack{q\in\Omega\cap U\\ |v|=1}}
	\tau(q,v,\eps).
\end{equation*}
By Lemma~\ref{lem:finite-type-dilation}, we have \(M(\eps)\to0\)
as \(\eps\to0^+\). Moreover, \(M\) is nondecreasing.
\begin{corollary}\label{cor:local-directional-norm}
	After decreasing \(\eps_*\) if necessary, the inclusion
	\(\mathbb B_\eps(q)\subset B(0,M(\eps))\) holds for every
	\(q\in\Omega\cap U\) and \(0<\eps<\eps_*\).
	In particular, \(\mathbb B_\eps(q)\) is bounded and
	\(p_{q,\eps}\) is a norm on \(\C^n\).
\end{corollary}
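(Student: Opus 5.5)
The inclusion follows almost immediately from Lemma~\ref{lem:directional-norm} together with the definition of \(M(\eps)\). I will take \(\eps_*\) as in Lemma~\ref{lem:finite-type-dilation}, possibly decreased so that \(M(\eps)<\infty\) for all \(0<\eps<\eps_*\). This is possible because \(M(\eps)\to0\) and \(M\) is nondecreasing. In fact, the proof of Lemma~\ref{lem:finite-type-dilation} already gives the explicit bound \(M(\eps)\le(4\eps/(3a))^{1/m}\) for \(\eps<\eps_0\), so finiteness is automatic.

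Next, fix \(q\in\Omega\cap U\) and \(h\in\mathbb B_\eps(q)\) with \(h\ne0\), and write \(h=|h|v\) with \(|v|=1\). By the definition of \(\mathbb B_\eps(q)\), the point \(q+\lambda h\) lies in \(\Omega_{q,\eps}\) for every \(|\lambda|\le1\). Hence \(\tau(q,h,\eps)\ge1\). Since \(\mathbb B_\eps(q)\) is open, \(h/t\in\mathbb B_\eps(q)\) for some \(t<1\), and therefore in fact \(\tau(q,h,\eps)>1\); this is the same equivalence used in the proof of Lemma~\ref{lem:directional-norm}. By the homogeneity \eqref{eq:tau-homogeneity},
\[
\tau(q,v,\eps)=|h|\,\tau(q,h,\eps)>|h|.
\]
Combining this with \(\tau(q,v,\eps)\le M(\eps)\) gives \(|h|<M(\eps)\), that is, \(h\in B(0,M(\eps))\). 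The case \(h=0\) is trivial.

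It remains to show that \(p_{q,\eps}\) is a norm. The set \(\mathbb B_\eps(q)\) is contained in a ball, so it is bounded, and the final assertion of Lemma~\ref{lem:directional-norm} then shows that \(p_{q,\eps}\) is a norm. Concretely, the inclusion gives \(p_{q,\eps}(h)\ge|h|/M(\eps)\).

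The only point requiring care is the first step: the uniform shrinking statement is needed in the form that \(\tau(q,v,\eps)\) is finite and uniformly bounded over the unit sphere for all small \(\eps\), and not merely that it tends to zero in the limit. This is exactly why \(\eps_*\) may need to be decreased.
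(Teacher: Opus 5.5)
Your proof is correct and follows the paper's argument: decrease \(\eps_*\) so that \(M(\eps)<\infty\), then obtain \(\tau(q,h,\eps)>1\) for \(0\ne h\in\mathbb B_\eps(q)\). Next use \eqref{eq:tau-homogeneity} to get \(|h|<\tau(q,v,\eps)\le M(\eps)\), and conclude with Lemma~\ref{lem:directional-norm}.

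The only cosmetic difference is how you justify \(\tau(q,h,\eps)>1\): you use openness of \(\mathbb B_\eps(q)\), whereas the paper uses compactness of the closed disc \(\{q+\lambda h:|\lambda|\le1\}\) inside the open set \(\Omega_{q,\eps}\). Your remark that the explicit bound in the proof of Lemma~\ref{lem:finite-type-dilation} already makes \(M(\eps)\) finite (since \(\eps_*\le\eps_0\)) is also correct.
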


\begin{proof}
	Since \(M(\eps)\to0\) as \(\eps\to0^+\), we may decrease \(\eps_*\) so that
	\(M(\eps)<\infty\) whenever \(0<\eps<\eps_*\).
	Fix \(q\in\Omega\cap U\), \(0<\eps<\eps_*\), and
	\(h\in\mathbb B_\eps(q)\), \(h\ne0\).  Since
	\[
	\{q+\lambda h:|\lambda|\le1\}
	\Subset
	\Omega_{q,\eps},
	\]
	we have \(\tau(q,h,\eps)>1\).
	Writing \(h=|h|v\), where \(|v|=1\), and using
	\eqref{eq:tau-homogeneity}, we obtain
	\(|h| < \tau(q,v,\eps) \le M(\eps)\).
	This proves the asserted inclusion.  Hence
	\(\mathbb B_\eps(q)\) is bounded, and
	Lemma~\ref{lem:directional-norm} implies that \(p_{q,\eps}\) is a norm.
\end{proof}

Shrinking \(U\) once more, we may also assume that
\(\delta(z)<\eps_*\) for every \(z\in\Omega\cap U\).
Thus Corollary~\ref{cor:local-directional-norm} applies with
\(\eps=\delta(z)\).

\subsection{Engulfing and local quasi-distance}
\label{subsec:center-engulfing-quasidistance}

The non-isotropic norm constructed above gives rise to a family of
non-isotropic neighborhoods.  In this section, we study the stability of these
neighborhoods under changes of the center and establish an engulfing
property.  These properties allow us to define a local quasi-distance
adapted to the finite-type geometry.

The following lemma shows that a small displacement measured by
\(p_{q,\eps}\) changes the defining function only by a comparable amount.

\begin{lemma}\label{lem:level-displacement}
Let \(q\in\Omega\cap U\) and \(0<\eps<\eps_*\).  If
\(p_{q,\eps}(h)=r<1\), then
\[
 |\rho(q+h)-\rho(q)|\le r\eps.
\]
\end{lemma}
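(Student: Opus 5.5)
The plan is to use the convexity of \(\rho\) on the segment from \(q\) to a rescaled copy of \(h\) for the upper bound, and convexity along the same real line on the opposite side of \(q\) for the lower bound. Write \(\phi(t):=\rho(q+th)-\rho(q)\) for real \(t\). This is a convex function of one real variable with \(\phi(0)=0\), by Lemma~\ref{lem:signed-distance-global-convexity}.

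If \(r=0\), then \(h\in s\mathbb B_\eps(q)\) for every \(s>0\). Since \(p_{q,\eps}\) is a norm by Corollary~\ref{cor:local-directional-norm}, this forces \(h=0\), and the claim is trivial. So assume \(0<r<1\).

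For every \(s\in(r,1)\) we have \(h/s\in\mathbb B_\eps(q)\). Taking \(\lambda=\pm1\) in the definition of \(\mathbb B_\eps(q)\) gives \(q\pm h/s\in\Omega_{q,\eps}\), that is, \(\phi(\pm1/s)<\eps\).

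\emph{Upper bound.} Since \(s\in(0,1)\) we have \(0<1<1/s\), so convexity of \(\phi\) on \([0,1/s]\) gives
\[
\phi(1)\le(1-s)\phi(0)+s\,\phi(1/s)<s\eps .
\]

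\emph{Lower bound.} The point \(0\) lies between \(-1/s\) and \(1\). Writing it as the convex combination
\[
0=\frac{s}{1+s}\Bigl(-\frac1s\Bigr)+\frac{1}{1+s}\cdot1,
\]
convexity gives
\[
0=\phi(0)\le\frac{s}{1+s}\,\phi(-1/s)+\frac{1}{1+s}\,\phi(1).
\]
Hence \(\phi(1)\ge -s\,\phi(-1/s)>-s\eps\).

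Combining the two bounds gives \(|\rho(q+h)-\rho(q)|<s\eps\) for every \(s\in(r,1)\). Letting \(s\to r^+\) yields \(|\rho(q+h)-\rho(q)|\le r\eps\).

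The only point needing care is the lower bound. Membership in \(\mathbb B_\eps(q)\) directly gives only upper bounds on \(\rho\), so the lower bound has to come from the balancedness of \(\mathbb B_\eps(q)\), which supplies the opposite point \(q-h/s\), together with convexity along the real line through \(q\) in direction \(h\). No finite-type input or smoothness is needed; this is purely a consequence of the global convexity of \(\rho\).
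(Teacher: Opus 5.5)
Your proof is correct and follows the paper's argument essentially line for line. The paper also uses convexity of \(f(t)=\rho(q+th)-\rho(q)\), with the combination \(1=a\cdot\frac1a+(1-a)\cdot0\) for the upper bound and \(0=\frac{a}{1+a}\bigl(-\frac1a\bigr)+\frac1{1+a}\cdot1\) for the lower bound, and then lets \(a\to r^+\); the only difference is that it gets \(q\pm h/a\in\Omega_{q,\eps}\) from \(\tau(q,h,\eps)=1/r>1/a\) rather than directly from \(h/a\in\mathbb B_\eps(q)\). As a small aside, your \(r=0\) case does not need Corollary~\ref{cor:local-directional-norm}: the same argument with any \(s\in(0,1)\) gives \(|\rho(q+h)-\rho(q)|<s\eps\), which matches your closing remark that no finite-type input is required.
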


\begin{proof}
The case \(h=0\) is immediate.  If \(h\ne0\), then
Corollary~\ref{cor:local-directional-norm} implies that \(r>0\).  Fix \(a\)
with \(r<a<1\).
Lemma~\ref{lem:directional-norm} gives
\[
\tau(q,h,\eps)=\frac1r>\frac1a,
\]
and hence
\[
 \rho(q+h/a)<\rho(q)+\eps,
 \qquad
 \rho(q-h/a)<\rho(q)+\eps.
\]

Let
\(
f(t)=\rho(q+th)-\rho(q).
\)
The function \(f\) is convex and \(f(0)=0\).  Since
\(1=a(1/a)+(1-a)0\),
convexity gives
\(f(1)\le af(1/a)<a\eps\).

On the other hand,
\[
0=\frac{a}{1+a}\left(-\frac1a\right)
+\frac1{1+a}\cdot1.
\]
Therefore, we have
\[
0=f(0)
\le
\frac{a}{1+a}f(-1/a)+\frac1{1+a}f(1)
<
\frac{a}{1+a}\eps+\frac1{1+a}f(1),
\]
which implies
\(
f(1)>-a\eps.
\)
Letting \(a\to r^+\) proves the claim.
\end{proof}

 \begin{definition}\label{def:non-isotropic-neighborhood}
 	For \(q\in\Omega\) and \(\eps>0\), define
 	\[
 		B_\eps(q)
 		=
 		\Omega\cap
 		\left\{
 		q+h:p_{q,\eps}(h)<\frac18
 		\right\}.
 	\]
 	For \(A>0\), we write
 \[
 		A B_\eps(q)
 	=
 	\Omega\cap
 	\left\{
 	q+h:p_{q,\eps}(h)<\frac{A}{8}
 	\right\}.
 \]
 
 \end{definition}
Thus \(AB_\eps(q)=\Omega\cap\bigl(q+(A/8)\mathbb B_\eps(q)\bigr)\):
the set \(\mathbb B_\eps(q)\) is dilated and translated before
being intersected with \(\Omega\).
In general, when \(0<A<1\), this need not agree with the dilation of
the already truncated set \(B_\eps(q)\) about \(q\).
The numerical factor \(1/8\) is not intrinsic.  It is chosen only for
convenience: other choices would also give an engulfing property with
different constants.  We do not optimize these constants here, but choose
\(1/8\) so that the subsequent estimates remain simple and transparent.

For an open convex set \(K\subset\C^n\), \(x\in K\), and \(v\ne0\), define
\[
\tau_K(x,v)
=
\sup\left\{
r>0:
x+\lambda v\in K
\text{ for every }|\lambda|<r
\right\}.
\]
With this notation, it follows that
\(\tau(q,v,\eps)
=
\tau_{\Omega_{q,\eps}}(q,v)\)
.

\begin{lemma}\label{lem:convex-center-shift}
Let \(K\subset\C^n\) be open and convex.  Let \(q,h\in\C^n\), set
\(w=q+h\), and suppose that there exists \(b\in(0,1)\) such that
\[
q+\frac{h}{b}\in K
\qquad\text{and}\qquad
q-\frac{h}{b}\in K.
\]
Then \(q,w\in K\), and, for every nonzero vector \(v\in\C^n\), we have
\[
(1-b)\tau_K(q,v)
\le
\tau_K(w,v)
\le
(1+b)\tau_K(q,v).
\]
\end{lemma}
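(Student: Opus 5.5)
Both inclusions come from writing discs centered at one point as convex combinations of discs centered at the other, together with the points $q\pm h/b$. First, $q$ is the midpoint of $q+h/b$ and $q-h/b$, so $q\in K$. Also
\[
w=q+h=b\Bigl(q+\tfrac{h}{b}\Bigr)+(1-b)q,
\]
so $w\in K$ by convexity.

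\emph{Lower bound.} Fix $v\ne0$ and $0<r<\tau_K(q,v)$, and let $|\lambda|<(1-b)r$. We write
\[
w+\lambda v=b\Bigl(q+\tfrac hb\Bigr)+(1-b)\Bigl(q+\tfrac{\lambda}{1-b}v\Bigr).
\]
Expanding the right side gives $q+h+\lambda v$, so the identity holds. Since $|\lambda/(1-b)|<r$, the second point lies in $K$, and the first lies in $K$ by hypothesis. Convexity gives $w+\lambda v\in K$. Hence $\tau_K(w,v)\ge(1-b)r$, and letting $r\to\tau_K(q,v)$ gives the lower bound. If $\tau_K(q,v)=\infty$, the same argument with arbitrary $r$ gives $\tau_K(w,v)=\infty$.

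\emph{Upper bound.} It suffices to show $\tau_K(q,v)\ge\tau_K(w,v)/(1+b)$, where now the point $q-h/b$ is used. Take $0<s<\tau_K(w,v)$ and $|\mu|<s/(1+b)$, and seek weights $t\in(0,1)$ with
\[
q+\mu v=t\bigl(w+\lambda v\bigr)+(1-t)\Bigl(q-\tfrac hb\Bigr).
\]
Matching the $h$-components requires $t-(1-t)/b=0$, that is, $t=1/(1+b)$. Matching the $v$-components then gives $\lambda=(1+b)\mu$, so $|\lambda|<s$ and $w+\lambda v\in K$. Convexity yields $q+\mu v\in K$, hence $\tau_K(q,v)\ge s/(1+b)$. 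Letting $s\to\tau_K(w,v)$ gives $\tau_K(w,v)\le(1+b)\tau_K(q,v)$. The infinite case is handled the same way.

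The only real work is choosing the convex weights in the upper bound correctly; this step also explains why the symmetric hypothesis $q\pm h/b\in K$ is needed. Everything else is routine.
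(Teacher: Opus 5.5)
Your proof is correct and follows the paper's argument. It uses the same convex-combination identities: $w+\lambda v=b(q+h/b)+(1-b)\bigl(q+\tfrac{\lambda}{1-b}v\bigr)$ for the lower bound, and $q+\mu v=\tfrac{1}{1+b}\bigl(w+(1+b)\mu v\bigr)+\tfrac{b}{1+b}(q-h/b)$ for the upper bound; the only cosmetic difference is that you get $w\in K$ from $q$ and $q+h/b$ rather than from $q\pm h/b$.
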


\begin{proof}
The two points in the hypothesis satisfy
\[
q
=
\frac12\left(q+\frac{h}{b}\right)
+
\frac12\left(q-\frac{h}{b}\right)
\]
and
\[
w
=
\frac{1+b}{2}\left(q+\frac{h}{b}\right)
+
\frac{1-b}{2}\left(q-\frac{h}{b}\right).
\]
Since \(0<b<1\), the convexity of \(K\) implies that \(q,w\in K\).

Fix a nonzero vector \(v\in\C^n\) and a number
\(0<r<\tau_K(q,v)\).  For every \(\lambda\in\C\) with
\(|\lambda|<r\), we have
\[
w+(1-b)\lambda v
=
b\left(q+\frac{h}{b}\right)
+(1-b)(q+\lambda v)
\in K.
\]
It follows that
\(\tau_K(w,v)\ge(1-b)r\).
Since \(r<\tau_K(q,v)\) was arbitrary, we obtain
\(\tau_K(w,v)\ge(1-b)\tau_K(q,v)\).

Conversely, fix \(0<r<\tau_K(w,v)\).  For every
\(\lambda\in\C\) with \(|\lambda|<r\), we have
\[
q+\frac{\lambda}{1+b}v
=
\frac1{1+b}(w+\lambda v)
+
\frac{b}{1+b}\left(q-\frac{h}{b}\right)
\in K.
\]
Therefore, we have
\[
\tau_K(q,v)\ge\frac{r}{1+b}.
\]
Since \(r<\tau_K(w,v)\) was arbitrary, this gives
\(\tau_K(w,v)\le(1+b)\tau_K(q,v)\),
which proves the upper bound.
\end{proof}

\begin{lemma}\label{lem:center-stability}
    Let \(q\in\Omega\cap U\), \(0<\eps<\eps_*\), and
    \(w\in B_\eps(q)\).  The estimate
    \begin{equation*}
        \frac34p_{q,\eps}(v)
        \le
        p_{w,\eps}(v)
        \le
        \frac43p_{q,\eps}(v)
    \end{equation*}
    holds for every \(v\in\C^n\).
\end{lemma}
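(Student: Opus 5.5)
The plan is to reduce the statement to Lemma~\ref{lem:convex-center-shift}, applied to sublevel sets of \(\rho\) taken at levels fixed relative to \(q\). Then Lemma~\ref{lem:p-scale} absorbs the mismatch between levels measured from \(\rho(q)\) and from \(\rho(w)\). Write \(w=q+h\) with \(p_{q,\eps}(h)<1/8\); the case \(v=0\) is trivial. By Lemma~\ref{lem:directional-norm} it suffices to prove
\[
\tfrac34\tau(q,v,\eps)\le\tau(w,v,\eps)\le\tfrac43\tau(q,v,\eps),
\qquad v\ne0 .
\]

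\emph{Step 1 (level comparison).} Lemma~\ref{lem:level-displacement} with \(r=p_{q,\eps}(h)<1/8\) gives \(|\rho(w)-\rho(q)|\le\eps/8\). Hence
\[
\Omega_{q,7\eps/8}\subset\Omega_{w,\eps}\subset\Omega_{q,9\eps/8}.
\]

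\emph{Step 2 (upper bound).} Let \(K'=\Omega_{q,9\eps/8}\). By Lemma~\ref{lem:p-scale}, \(p_{q,9\eps/8}(h)\le p_{q,\eps}(h)<1/8\). Choose any \(b'\in(p_{q,9\eps/8}(h),1/8]\). Then \(p_{q,9\eps/8}(\pm h/b')<1\), so the whole closed disc through \(q\) in direction \(h/b'\) lies in \(K'\); in particular \(q\pm h/b'\in K'\). Lemma~\ref{lem:convex-center-shift}, together with Lemma~\ref{lem:p-scale} for \(\lambda=9/8\), gives
\[
\tau(w,v,\eps)\le\tau_{K'}(w,v)\le\tfrac98\,\tau(q,v,9\eps/8)\le\bigl(\tfrac98\bigr)^2\tau(q,v,\eps).
\]
Since \(81/64<4/3\), this yields \(p_{w,\eps}(v)\ge\tfrac{64}{81}p_{q,\eps}(v)\ge\tfrac34 p_{q,\eps}(v)\).

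\emph{Step 3 (lower bound).} Let \(K''=\Omega_{q,7\eps/8}\). Lemma~\ref{lem:p-scale} with \(\lambda=8/7\) gives
\[
p_{q,7\eps/8}(h)\le\tfrac87\,p_{q,\eps}(h)<\tfrac17 .
\]
Fix \(b''\in(p_{q,7\eps/8}(h),1/7)\); as in Step 2, \(q\pm h/b''\in K''\). Lemma~\ref{lem:convex-center-shift} then gives
\[
\tau(w,v,\eps)\ge\tau_{K''}(w,v)\ge(1-b'')\,\tau(q,v,7\eps/8)\ge(1-b'')\tfrac78\,\tau(q,v,\eps),
\]
where the last step is again Lemma~\ref{lem:p-scale}. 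Since \(b''<1/7\), we have \((1-b'')\tfrac78>\tfrac67\cdot\tfrac78=\tfrac34\). Hence \(p_{w,\eps}(v)\le\tfrac43 p_{q,\eps}(v)\).

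\emph{Main obstacle.} The delicate point is the lower bound in Step 3, because the constant \(\tfrac34\) is attained exactly in the limit. The strict inequality \(p_{q,\eps}(h)<1/8\) leaves just enough room to choose \(b''<1/7\). One must also check that every \(\tau\) involved is finite. This follows from Corollary~\ref{cor:local-directional-norm} once the monotonicity \(\tau(q,v,\eps)\le\tau(q,v,9\eps/8)\le\tfrac98\tau(q,v,\eps)\) is in hand. Alternatively it follows because the whole argument concerns ratios, which Lemma~\ref{lem:p-scale} controls for any \(\eps>0\).
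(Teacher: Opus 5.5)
Your proof is correct and follows the paper's argument: you sandwich \(\Omega_{w,\eps}\) between sublevel sets of \(\rho\) measured from \(\rho(q)\) using Lemma~\ref{lem:level-displacement}, apply Lemma~\ref{lem:convex-center-shift} on those sets, and absorb the change of level with Lemma~\ref{lem:p-scale}. The only difference is cosmetic: the paper works at levels \((1\pm r)\eps\) with \(r=p_{q,\eps}(h)\) and lets \(b\) tend to its endpoint, obtaining the factors \(1-2r\) and \((1+r)^2\), whereas you fix the levels \(7\eps/8\) and \(9\eps/8\) and a single admissible \(b\), which works equally well.
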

\begin{proof}
Fix \(v\in\C^n\).  The estimate is immediate if \(v=0\) or \(w=q\),
so assume that \(v\ne0\) and \(w\ne q\).  We first compare the
directional radii. Write \(w=q+h\) and set
\(r=p_{q,\eps}(h)<\frac18\).
Since \(p_{q,\eps}\) is a norm, we have \(r>0\).  By
Lemma~\ref{lem:level-displacement},
\(|\rho(w)-\rho(q)|\le r\eps\).
Consequently, we have the inclusions
\begin{equation}\label{eq:three-level-inclusions}
\Omega_{q,(1-r)\eps}
\subset
\Omega_{w,\eps}
\subset
\Omega_{q,(1+r)\eps}.
\end{equation}

For the lower bound, Lemma~\ref{lem:p-scale}, applied with
\(\lambda=(1-r)^{-1}\), gives
\[
\tau(q,h,(1-r)\eps)
\ge
(1-r)\tau(q,h,\eps)
=
\frac{1-r}{r}.
\]
Let \(b\in(r/(1-r),1)\).  Then
\[
\frac1b
<
\frac{1-r}{r}
\le
\tau(q,h,(1-r)\eps),
\]
and hence
\[
q+\frac{h}{b},\ q-\frac{h}{b}
\in
\Omega_{q,(1-r)\eps}.
\]
Applying Lemma~\ref{lem:convex-center-shift} to this level set and using
the first inclusion in \eqref{eq:three-level-inclusions}, we obtain
\[
\begin{aligned}
\tau(w,v,\eps)
&\ge
(1-b)\tau(q,v,(1-r)\eps)\\
&\ge
(1-b)(1-r)\tau(q,v,\eps).
\end{aligned}
\]
Letting \(b\to\bigl(r/(1-r)\bigr)^+\), we conclude that
\[
\tau(w,v,\eps)
\ge
(1-2r)\tau(q,v,\eps)
>
\frac34\tau(q,v,\eps).
\]

For the upper bound, the second inclusion in
\eqref{eq:three-level-inclusions} gives
\[
\tau(w,v,\eps)
\le
\tau_{\Omega_{q,(1+r)\eps}}(w,v).
\]
Moreover, we have
\[
\tau(q,h,(1+r)\eps)
\ge
\tau(q,h,\eps)
=
\frac1r.
\]
Let \(b\in(r,1)\).  Then
\[
\frac1b
<
\frac1r
\le
\tau(q,h,(1+r)\eps),
\]
so
\[
q+\frac{h}{b},\ q-\frac{h}{b}
\in
\Omega_{q,(1+r)\eps}.
\]
Lemma~\ref{lem:convex-center-shift}, applied to the convex set
\(\Omega_{q,(1+r)\eps}\), gives
\[
\tau_{\Omega_{q,(1+r)\eps}}(w,v)
\le
(1+b)\tau(q,v,(1+r)\eps).
\]
By Lemma~\ref{lem:p-scale}, we also have
\[
\tau(q,v,(1+r)\eps)
\le
(1+r)\tau(q,v,\eps).
\]
Letting \(b\to r^+\), we obtain
\[
\tau(w,v,\eps)
\le
(1+r)^2\tau(q,v,\eps)
<
\left(\frac98\right)^2\tau(q,v,\eps)
<
\frac43\tau(q,v,\eps).
\]
Taking reciprocals and applying Lemma~\ref{lem:directional-norm}
yields the asserted estimate.
\end{proof}

We now fix the constant \(Q_m=2T_m(5)\).
By \eqref{eq:chebyshev-formula}, we have
\[
Q_m
=
(5+\sqrt{24})^m+(5-\sqrt{24})^m
<
10^m,
\qquad m\ge2.
\]

\begin{proposition}\label{prop:scale-inclusion}
	If \(q\in\Omega\cap U\) and \(0<\eps<\eps_*\), then
	\(5B_\eps(q)\subset B_{Q_m\eps}(q)\).
\end{proposition}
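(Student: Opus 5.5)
This inclusion follows directly from Definition~\ref{def:non-isotropic-neighborhood} and the dilation estimate of Lemma~\ref{lem:finite-type-dilation}; no new geometry is needed. Unwinding the definitions, we have
\[
5B_\eps(q)=\Omega\cap\{q+h:p_{q,\eps}(h)<5/8\},
\qquad
B_{Q_m\eps}(q)=\Omega\cap\{q+h:p_{q,Q_m\eps}(h)<1/8\}.
\]
Both sets are the same affine translate of a sublevel set, intersected with \(\Omega\), and both use the same center \(q\). It therefore suffices to compare the two Minkowski functionals \(p_{q,\eps}\) and \(p_{q,Q_m\eps}\) at the same center.

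Take \(w=q+h\in 5B_\eps(q)\), so that \(w\in\Omega\) and \(p_{q,\eps}(h)<5/8\). Since \(Q_m=2T_m(5)\), \(q\in\Omega\cap U\), and \(0<\eps<\eps_*\), Lemma~\ref{lem:finite-type-dilation} gives
\[
p_{q,Q_m\eps}(h)\le\frac15\,p_{q,\eps}(h)<\frac18 .
\]
Together with \(w\in\Omega\), this places \(w\) in \(B_{Q_m\eps}(q)\), which proves the inclusion.

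One point needs checking. Lemma~\ref{lem:finite-type-dilation} is stated only for \(0<\eps<\eps_*\), and that is exactly our hypothesis, so the lemma applies. By contrast, \(Q_m\eps\) may exceed \(\eps_*\), so \(p_{q,Q_m\eps}\) need not be a norm. This causes no trouble, because the definition of \(AB_\eps(q)\) uses only the Minkowski functional, and that functional is defined for every \(\eps>0\) by Lemma~\ref{lem:directional-norm}. The case \(h=0\) is trivial. I therefore see no real obstacle: the substantive content, namely the Chebyshev growth bound, was already proved in Lemma~\ref{lem:finite-type-dilation}.
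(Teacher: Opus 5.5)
Your proposal is correct and follows the paper's proof: you take $w=q+h\in 5B_\eps(q)$, apply the dilation estimate $p_{q,Q_m\eps}(h)\le\frac15 p_{q,\eps}(h)<\frac18$ from Lemma~\ref{lem:finite-type-dilation}, and use $w\in\Omega$. Your side remark is also accurate: $p_{q,Q_m\eps}$ is defined for every $\eps>0$ by Lemma~\ref{lem:directional-norm}, so it does not matter whether $Q_m\eps$ exceeds $\eps_*$.
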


\begin{proof}
	Let \(z=q+h\in5B_\eps(q)\).  Then
	\(p_{q,\eps}(h)<5/8\).
	The directional dilation estimate in
	Lemma~\ref{lem:finite-type-dilation} gives
	\[
	p_{q,Q_m\eps}(h)
	\le
	\frac15 p_{q,\eps}(h)
	<
	\frac18.
	\]
	Since \(z\in\Omega\), it follows that
	\(z\in B_{Q_m\eps}(q)\).
\end{proof}

We next prove the engulfing property.

\begin{proposition}\label{prop:engulfing}
	Suppose that \(q,w\in\Omega\cap U\) and \(0<\eps<\eps_*\).
	If
	\(B_\eps(q)\cap B_\eps(w)\ne\varnothing\),
	then
	\[
	B_\eps(w)\subset5B_\eps(q)\ \text{and}\
	B_\eps(q)\subset5B_\eps(w).
	\]
\end{proposition}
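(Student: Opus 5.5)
The argument is symmetric in \(q\) and \(w\), so it suffices to prove \(B_\eps(w)\subset5B_\eps(q)\). The plan is to route everything through a common point \(z\in B_\eps(q)\cap B_\eps(w)\). We then use the center stability of Lemma~\ref{lem:center-stability} to transfer norms between the three centers \(q\), \(z\), \(w\), and finish with the triangle inequality for the norm \(p_{q,\eps}\).

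Write \(z=q+h_1=w+h_2\) with \(p_{q,\eps}(h_1)<\frac18\) and \(p_{w,\eps}(h_2)<\frac18\). Since \(z\in B_\eps(q)\) and \(z\in B_\eps(w)\), Lemma~\ref{lem:center-stability} applies twice. It gives \(p_{q,\eps}\le\frac43p_{z,\eps}\) and \(p_{z,\eps}\le\frac43p_{w,\eps}\), and therefore \(p_{q,\eps}(v)\le\frac{16}{9}p_{w,\eps}(v)\) for all \(v\). For this we need \(z\in\Omega\cap U\), which is the only point requiring care. If \(z\notin U\), I would first shrink the working neighborhood so that the relevant Euclidean balls are small: by Corollary~\ref{cor:local-directional-norm}, \(B_\eps(q)\subset B(q,M(\eps))\) and \(M(\eps)\) is small. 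Alternatively, I would observe that the proof of Lemma~\ref{lem:center-stability} uses only convexity of \(\rho\), Lemma~\ref{lem:level-displacement} and Lemma~\ref{lem:p-scale}, all valid at \(z\) provided \(q\in\Omega\cap U\). Indeed, Lemma~\ref{lem:level-displacement} only needs the center in \(U\), and the roles can be arranged so that \(q\) or \(w\) serves as the center.

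Now take \(x=w+k\in B_\eps(w)\), so that \(p_{w,\eps}(k)<\frac18\). Then
\[
x-q=h_1-h_2+k,
\]
and subadditivity gives
\[
p_{q,\eps}(x-q)\le p_{q,\eps}(h_1)+\tfrac{16}{9}\bigl(p_{w,\eps}(h_2)+p_{w,\eps}(k)\bigr)<\tfrac18+\tfrac{16}{9}\cdot\tfrac28=\tfrac18\Bigl(1+\tfrac{32}{9}\Bigr)=\tfrac{41}{72}<\tfrac58 .
\]
Since \(x\in\Omega\), we get \(x\in5B_\eps(q)\). The other inclusion follows by exchanging \(q\) and \(w\).

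The main obstacle is verifying the hypotheses of Lemma~\ref{lem:center-stability} when the base point is the intersection point \(z\) rather than \(q\) or \(w\). I would try to avoid using \(z\) as a base point. Center stability with base \(q\) and \(z\in B_\eps(q)\) gives \(p_{z,\eps}\ge\frac34p_{q,\eps}\); this bound is applied at base \(q\in U\). With base \(w\in U\) and \(z\in B_\eps(w)\) it gives \(p_{z,\eps}\le\frac43p_{w,\eps}\). Combining the two yields \(p_{q,\eps}\le\frac43p_{z,\eps}\le\frac{16}{9}p_{w,\eps}\) with both applications based at points of \(U\). This removes the issue entirely.
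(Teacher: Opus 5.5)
Your argument is correct and is essentially the paper's proof: you pick a common point $z$, apply Lemma~\ref{lem:center-stability} once with base $q$ and once with base $w$ (both in $\Omega\cap U$) to get $p_{q,\eps}\le\frac{16}{9}p_{w,\eps}$, and then split $x-q$ into three displacements to reach the bound $\frac{41}{72}<\frac58$. As your last paragraph recognizes, the lemma only requires the base point to lie in $\Omega\cap U$ and the other point to lie in the corresponding $B_\eps$. The earlier detour about shrinking the neighborhood to force $z\in U$ is therefore unnecessary and can be dropped.
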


\begin{proof}
	Choose
	\(\zeta\in B_\eps(q)\cap B_\eps(w)\).
	Applying Lemma~\ref{lem:center-stability} to the pairs
	\((q,\zeta)\) and \((w,\zeta)\), we obtain
	\[
		\left(\frac34\right)^2p_{q,\eps}(v)
		\le
		p_{w,\eps}(v)
		\le
		\left(\frac43\right)^2p_{q,\eps}(v),
		\qquad v\in\C^n.
	\]
	Let \(u\in B_\eps(w)\).  The triangle inequality for
	\(p_{q,\eps}\) gives
	\[
	\begin{aligned}
		p_{q,\eps}(u-q)
		&\le
		p_{q,\eps}(u-w)
		+p_{q,\eps}(w-\zeta)
		+p_{q,\eps}(\zeta-q).
	\end{aligned}
	\]
	Since \(u,w\in B_\eps(w)\) and
	\(\zeta\in B_\eps(q)\cap B_\eps(w)\), the preceding two-center
	comparison gives
	\[
	\begin{aligned}
		p_{q,\eps}(u-q)
		<
		\left(\frac43\right)^2\frac18
		+
		\left(\frac43\right)^2\frac18
		+
		\frac18
		=
		\frac{41}{72}
		<
		\frac58.
	\end{aligned}
	\]
	Thus \(u\in 5B_\eps(q)\).  Since \(u\in B_\eps(w)\) was arbitrary,
	we obtain
	\[
	B_\eps(w)\subset 5B_\eps(q).
	\]
	Interchanging \(q\) and \(w\) gives the reverse inclusion.
\end{proof}
The preceding estimates allow us to use the sets \(B_\eps(q)\) to define a
local quasi-distance.

\begin{definition}\label{def:dB}
	Let \(q,w\in\Omega\cap U\).  If \(w\in B_\eps(q)\) for some
	\(0<\eps<\eps_*\), define
	\begin{equation*}
		d_B(q,w)
		=
		\inf\left\{
		0<\eps<\eps_*:
		w\in B_\eps(q)
		\right\}.
	\end{equation*}
	Otherwise, define
	\(d_B(q,w)=\eps_*\).
\end{definition}

Since the sets \(B_\eps(q)\) increase with \(\eps\), the definition immediately
implies that
\begin{equation*}
 d_B(q,w)<S<\eps_*
 \quad\Longrightarrow\quad
 w\in B_S(q).
\end{equation*}

The next lemma shows that \(d_B\) is compatible
with the Euclidean topology.

\begin{lemma}\label{lem:dB-separation}
    For \(x,y\in\Omega\cap U\), the equality \(d_B(x,y)=0\) holds
    if and only if \(x=y\).
    Moreover, for every fixed \(x\in\Omega\cap U\) and every sequence
    \((y_\nu)\) in \(\Omega\cap U\), the following equivalence holds:
    \[
    y_\nu\longrightarrow x
    \quad\Longleftrightarrow\quad
    d_B(x,y_\nu)\longrightarrow0.
    \]
\end{lemma}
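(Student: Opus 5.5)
The plan is to prove both assertions by comparing the sets \(B_\eps(x)\) with Euclidean balls about \(x\). Two inclusions will be needed: Corollary~\ref{cor:local-directional-norm} for the outer bound, and the continuity of \(\rho\) together with the positivity of \(\delta(x)\) for the inner bound.

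\emph{Step 1: small \(d_B\) forces Euclidean closeness.} Suppose \(d_B(x,y)<S<\eps_*\). By the remark following Definition~\ref{def:dB}, \(y\in B_S(x)\). Writing \(y=x+h\), this means \(p_{x,S}(h)<1/8\), so \(8h\in\mathbb B_S(x)\). Corollary~\ref{cor:local-directional-norm} gives \(\mathbb B_S(x)\subset B(0,M(S))\), hence
\[
|y-x|<M(S)/8 .
\]
Since \(M(S)\to0\) as \(S\to0^+\) by Lemma~\ref{lem:finite-type-dilation}, two conclusions follow. First, if \(d_B(x,y)=0\), then \(|y-x|<M(S)/8\) for every small \(S>0\), so \(y=x\). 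Second, if \(d_B(x,y_\nu)\to0\), then eventually \(d_B(x,y_\nu)<S\) for any fixed small \(S\), so \(y_\nu\to x\).

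\emph{Step 2: Euclidean closeness forces small \(d_B\).} Fix \(0<\eps<\eps_*\). I claim \(B_\eps(x)\) contains a Euclidean ball \(\Omega\cap B(x,r_\eps)\) for some \(r_\eps>0\). To see this, note that \(\rho\) is continuous and \(\Omega_{x,\eps/2}\) is an open set containing \(x\). Choose \(s>0\) with \(B(x,s)\subset\Omega_{x,\eps/2}\). Then every \(h\) with \(|h|<s/2\) satisfies \(x+\lambda h\in\Omega_{x,\eps/2}\subset\Omega_{x,\eps}\) for all \(|\lambda|\le 2\). Hence \(\tau(x,h,\eps)>1\cdot\) something at least \(2\), i.e. \(\tau(x,h,\eps)\ge2\), and by Lemma~\ref{lem:directional-norm}, \(p_{x,\eps}(h)\le1/2\). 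This is not yet below \(1/8\), so I shrink further: taking \(|h|<s/16\) gives \(x+\lambda h\in B(x,s)\) for \(|\lambda|\le 16\). Then \(\tau(x,h,\eps)\ge16\), so \(p_{x,\eps}(h)\le1/16<1/8\). Therefore
\[
\Omega\cap B(x,s/16)\subset B_\eps(x).
\]
Consequently, if \(y_\nu\to x\), then for each \(\eps\) we eventually have \(y_\nu\in B_\eps(x)\), hence \(d_B(x,y_\nu)\le\eps\). Letting \(\eps\to0\) gives \(d_B(x,y_\nu)\to0\).

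\emph{Step 3: the converse in the separation statement.} The trivial direction is \(x=y\Rightarrow d_B=0\). Since \(p_{x,\eps}(0)=0\), we have \(x\in B_\eps(x)\) for every \(\eps\in(0,\eps_*)\), and so \(d_B(x,x)=0\).

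The only point needing care is the bookkeeping in Step 2. The ball radius \(r_\eps\) depends on \(\eps\) and on \(x\), but this is harmless because \(x\) is fixed in the sequential statement. One should also check that the open/closed conventions for \(\lambda\) in the definition of \(\tau\) do not matter, which the strict inclusions above handle. I do not expect a genuine obstacle. The key input is the uniform bound \(\mathbb B_S(x)\subset B(0,M(S))\) with \(M(S)\to0\), which is already available from Corollary~\ref{cor:local-directional-norm} and Lemma~\ref{lem:finite-type-dilation}.
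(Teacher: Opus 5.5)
Your proof is correct and follows essentially the same route as the paper. The outer inclusion \(B_\eps(x)\subset B(x,M(\eps)/8)\), from Corollary~\ref{cor:local-directional-norm} together with \(M(\eps)\to0\), gives separation and the implication from \(d_B(x,y_\nu)\to0\) to \(y_\nu\to x\); an inner Euclidean ball inside \(B_\eps(x)\) gives the other implication. The one difference is that the paper gets the inner ball explicitly from the \(1\)-Lipschitz property of \(\rho\), namely \(p_{x,\eta}(h)\le|h|/\eta\), so \(\Omega\cap B(x,\eta/8)\subset B_\eta(x)\); this quantitative bound is reused in Lemma~\ref{lem:dB-euclidean-control}. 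Your continuity argument yields a non-explicit radius, which is enough for this lemma once the \(|\lambda|\le2\) detour is discarded in favour of the \(s/16\) step.
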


\begin{proof}
    Since \(x\in B_\eps(x)\) for every \(0<\eps<\eps_*\), we have
    \(d_B(x,x)=0\).  Corollary~\ref{cor:local-directional-norm} and
    Definition~\ref{def:non-isotropic-neighborhood} give
    \begin{equation}\label{eq:B-euclidean-size}
        B_\eps(q)
        \subset
        B_{\mathrm{Euc}}
        \left(q,\frac18M(\eps)\right).
    \end{equation}

    Now let \(x\ne y\). Since \(M(\eps)\to0\) as \(\eps\to0^+\), there exists
    \(0<\eps_{x,y}<\eps_*\) such that
    \(M(\eps)/8<|x-y|\) whenever \(0<\eps<\eps_{x,y}\).  It follows from
    \eqref{eq:B-euclidean-size} that \(y\notin B_\eps(x)\) for such
    \(\eps\), and therefore \(d_B(x,y)\ge\eps_{x,y}>0\).

    It remains to prove the equivalence of the two convergences.  Let
    \((y_\nu)\) be a sequence in \(\Omega\cap U\).  Suppose first that
    \(y_\nu\to x\), and fix \(0<\eta<\eps_*\).  Since \(\rho\) is
    \(1\)-Lipschitz, we have
    \[
    \rho(x+\zeta v)-\rho(x)\le|\zeta|,
    \]
    for every unit vector \(v\in\C^n\).  Hence
    \(\tau(x,v,\eta)\ge\eta\), and therefore
    \(p_{x,\eta}(h)\le |h|/\eta\) for every \(h\in\C^n\).  For all
    sufficiently large \(\nu\), the following estimate holds:
    \[
    p_{x,\eta}(y_\nu-x)<\frac18,
    \]
    so \(y_\nu\in B_\eta(x)\) and
    \(d_B(x,y_\nu)\le\eta\).  Since \(\eta>0\) is arbitrary, we conclude
    that \(d_B(x,y_\nu)\to0\).

    Conversely, suppose that \(d_B(x,y_\nu)\to0\).  There is nothing to
    prove for indices for which \(y_\nu=x\).  For every sufficiently large
    \(\nu\) with \(y_\nu\ne x\), the first part of the lemma gives
    \[
    0<d_B(x,y_\nu)<\frac{\eps_*}{2}.
    \]
    By the definition of the infimum, we may choose \(\eps_\nu\) such that
    \[
    d_B(x,y_\nu)<\eps_\nu<2d_B(x,y_\nu)
    \qquad\text{and}\qquad
    y_\nu\in B_{\eps_\nu}(x).
    \]
    It follows from \eqref{eq:B-euclidean-size} that
    \(|x-y_\nu|<\frac18M(\eps_\nu)\).
    As \(\eps_\nu\to0\), we have \(M(\eps_\nu)\to0\), and hence
    \(|x-y_\nu|\to0\). Thus \(y_\nu\to x\).
\end{proof}
The following theorem shows that \(d_B\) is indeed a quasi-distance on
\(\Omega\cap U\).
\begin{theorem}\label{thm:dB-quasi}
For \(q,w\in\Omega\cap U\), the following quasi-symmetry
estimate holds:
\begin{equation*}
d_B(w,q)
\le
Q_m d_B(q,w).
\end{equation*}
Moreover, for \(x,y,z\in\Omega\cap U\), the following
quasi-triangle inequality holds:
\begin{equation*}
d_B(x,z)
\le
Q_m\bigl(d_B(x,y)+d_B(y,z)\bigr).
\end{equation*}
\end{theorem}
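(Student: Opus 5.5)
Both inequalities will come from combining the engulfing property (Proposition~\ref{prop:engulfing}) with the scale inclusion (Proposition~\ref{prop:scale-inclusion}), working at a scale slightly above the relevant value of \(d_B\). Throughout, we use the implication recorded after Definition~\ref{def:dB}: if \(d_B(q,w)<S<\eps_*\), then \(w\in B_S(q)\). We also use the trivial bound \(d_B\le\eps_*\), which settles every case in which a right-hand side is at least \(\eps_*/Q_m\). Since \(Q_m\ge 2\), such a right-hand side exceeds \(\eps_*\), so the inequality is immediate.

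\emph{Quasi-symmetry.} Assume \(Q_m d_B(q,w)<\eps_*\); otherwise we are done by the trivial bound. Fix \(S\) with \(d_B(q,w)<S\) and \(Q_mS<\eps_*\). Then \(w\in B_S(q)\). Since \(w\in B_S(w)\) as well, the sets \(B_S(q)\) and \(B_S(w)\) intersect, and Proposition~\ref{prop:engulfing} gives \(B_S(q)\subset 5B_S(w)\). In particular \(q\in 5B_S(w)\). Proposition~\ref{prop:scale-inclusion} then gives \(5B_S(w)\subset B_{Q_mS}(w)\), so \(q\in B_{Q_mS}(w)\) and hence \(d_B(w,q)\le Q_mS\). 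Letting \(S\downarrow d_B(q,w)\) proves the first estimate.

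\emph{Quasi-triangle inequality.} Set \(S_0=\max\{d_B(x,y),d_B(y,z)\}\) and assume \(Q_mS_0<\eps_*\). Choose \(S\) slightly larger than \(S_0\) with \(Q_mS<\eps_*\). Then \(y\in B_S(x)\) and \(z\in B_S(y)\). Since \(y\) lies in both \(B_S(x)\) and \(B_S(y)\), these sets intersect, and Proposition~\ref{prop:engulfing} gives \(B_S(y)\subset 5B_S(x)\). Thus \(z\in 5B_S(x)\subset B_{Q_mS}(x)\) by Proposition~\ref{prop:scale-inclusion}. Hence \(d_B(x,z)\le Q_mS\), and letting \(S\downarrow S_0\) gives
\[
d_B(x,z)\le Q_m\max\{d_B(x,y),d_B(y,z)\}\le Q_m\bigl(d_B(x,y)+d_B(y,z)\bigr).
\]
If instead \(Q_mS_0\ge\eps_*\), then the right-hand side is at least \(Q_mS_0\ge\eps_*\ge d_B(x,z)\), and the inequality again holds trivially.

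Both propositions require the centers to lie in \(\Omega\cap U\) and the scale to lie in \((0,\eps_*)\), which holds here. No further technical input is needed; the main thing to verify is the boundary bookkeeping for the infimum. This is handled by working with \(S>S_0\) and letting \(S\) decrease to \(S_0\), together with the cutoff case where \(d_B\) takes the default value \(\eps_*\).
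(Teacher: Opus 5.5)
Your proof is correct and follows the paper's argument: in both parts you apply Proposition~\ref{prop:engulfing} and then Proposition~\ref{prop:scale-inclusion} at a scale slightly above the relevant value of $d_B$, and you handle the cutoff case with the trivial bound $d_B\le\eps_*$. The one difference is in the triangle step, where you work at a common scale just above $\max\{d_B(x,y),d_B(y,z)\}$ instead of the paper's $E=a+b$. This is equally valid because the sets $B_\eps(q)$ increase with $\eps$, and it gives the slightly sharper bound $d_B(x,z)\le Q_m\max\{d_B(x,y),d_B(y,z)\}$; the remark about $Q_m\ge2$ in your opening paragraph is not needed.
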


\begin{proof}
	We first prove the quasi-symmetry estimate.  If
	\(Q_m d_B(q,w)\ge\eps_*\), then
	\[d_B(w,q)\le\eps_*\le Q_m d_B(q,w).\]
	Suppose that \(Q_m d_B(q,w)<\eps_*\).  By the definition of \(d_B\),
	we may choose \(\eps>d_B(q,w)\), arbitrarily close to
	\(d_B(q,w)\), such that \(w\in B_\eps(q)\) and
	\(Q_m\eps<\eps_*\).  Since \(w\in B_\eps(w)\), the two sets
	\(B_\eps(q)\) and \(B_\eps(w)\) intersect.  Proposition~\ref{prop:engulfing}
	therefore gives
	\(q\in 5B_\eps(w)\).
	By Proposition~\ref{prop:scale-inclusion},
	\(q\in B_{Q_m\eps}(w)\), and hence
	\(d_B(w,q)\le Q_m\eps\).
	Letting \(\eps\to d_B(q,w)^+\) proves the quasi-symmetry estimate.
	
	We now prove the quasi-triangle estimate.  Set
	\(D=d_B(x,y)+d_B(y,z)\).
	If \(Q_mD\ge\eps_*\), then
	\[d_B(x,z)\le\eps_*\le Q_mD.\]
	
	Suppose that \(Q_mD<\eps_*\).  Choose
	\(a>d_B(x,y)\) and \(b>d_B(y,z)\), arbitrarily close to these
	quantities, such that
	\[
	y\in B_a(x),\qquad z\in B_b(y),
	\qquad
	Q_m(a+b)<\eps_*.
	\]
	Set \(E=a+b\).  Since the sets \(B_\eps(q)\) increase with \(\eps\),
	we have \(y\in B_E(x)\) and \(z\in B_E(y)\).  Thus
	\(B_E(x)\cap B_E(y)\ne\varnothing\), and
	Proposition~\ref{prop:engulfing} gives
	\(B_E(y)\subset5B_E(x)\).
	In particular, \(z\in5B_E(x)\).  Proposition~\ref{prop:scale-inclusion}
	then gives \(z\in B_{Q_mE}(x)\), so
	\(d_B(x,z)\le Q_mE=Q_m(a+b)\).
	Letting \(a\to d_B(x,y)^+\) and
	\(b\to d_B(y,z)^+\) proves the quasi-triangle estimate.
\end{proof}

\section{Kobayashi distance estimates and local Gromov hyperbolicity}
\label{sec:kobayashi-comparison}

We first establish the distance comparison in
Theorem~\ref{thm:two-sided-kobayashi} and then deduce the local Gromov
hyperbolicity estimate in Theorem~\ref{thm:main}.

The constructions above are local near the boundary point \(p\), while the
Kobayashi distance on \(\Omega\) is a global object.  To compare the local
geometry with \(K_\Omega\), we use the following quantitative localization
result of Nikolov--Thomas.

Recall that a domain \(D\subset\C^n\) is called \emph{\(\C\)-convex} if
its intersection with every complex affine line is either empty or
connected and simply connected.  We say that \(D\) is
\emph{\(\C\)-convexifiable near} \(p\in\partial D\) if there exist a
neighborhood \(U\) of \(p\) and a biholomorphic map \(\Phi\), defined
on a neighborhood of \(\overline{D\cap U}\), such that
\(\Phi(D\cap U)\) is \(\C\)-convex.
\begin{theorem}[{\cite[Theorem~1.1]{NikolovThomas}}]
	\label{thm:nikolov-thomas-localization}
	Let \(\Omega\subset\C^n\) be a domain and let \(p\in\partial \Omega\).
	Assume that \(\Omega\) is \(\C\)-convexifiable near \(p\) and that \(p\) is of
	type at most \(s\ge2\).  Then there exists a neighborhood \(U_0\) of \(p\)
	such that, whenever
	\(
	V_0\Subset U\subset U_0
\)
	are neighborhoods of \(p\) and \(\Omega\cap U\) is connected, there exists
	\(C>0\) such that
	\begin{equation*}
		K_\Omega(z,w)
		\le
		K_{\Omega\cap U}(z,w)
		\le
		K_\Omega(z,w)+C|z-w|^{1/s},
	\end{equation*}
	for all \(z,w\in \Omega\cap V_0\).
\end{theorem}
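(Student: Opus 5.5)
The lower inequality \(K_\Omega\le K_{\Omega\cap U}\) is immediate from the inclusion \(\Omega\cap U\subset\Omega\) and the monotonicity of the Kobayashi distance, so the work is in the upper bound. Since this is the cited result of Nikolov--Thomas, I only outline how I would prove it. I would pass through the infinitesimal metric, using Royden's localization lemma in the form
\[
k_{\Omega\cap U}(z;v)\le \coth\bigl(K_\Omega(z,\Omega\setminus U)\bigr)\,k_\Omega(z;v),\qquad z\in\Omega\cap U.
\]
The first step is a lower bound \(K_\Omega(z,\Omega\setminus U)\ge -\tfrac12\log\delta_\Omega(z)-C\) for \(z\) near \(p\). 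Here \(\C\)-convexifiability and finite type supply a local holomorphic peak/support function at boundary points near \(p\), since in \(\C\)-convex coordinates one can separate by complex hyperplanes. Combining this bound with \(\coth x\le 1+3e^{-2x}\) for large \(x\) gives
\[
k_{\Omega\cap U}(z;v)\le\bigl(1+C\delta_\Omega(z)\bigr)k_\Omega(z;v).
\]

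Next I would integrate this along a curve. For \(z,w\in\Omega\cap V_0\) and \(\eta>0\), take an absolutely continuous curve \(\gamma\) in \(\Omega\) from \(z\) to \(w\) with \(L_k(\gamma)\le K_\Omega(z,w)+\eta\); its existence is guaranteed by the Royden--Venturini length description. The intended consequence is
\[
K_{\Omega\cap U}(z,w)\le K_\Omega(z,w)+\eta+C\int\delta_\Omega(\gamma)\,k_\Omega(\gamma;\gamma')\,\dd t,
\]
but this requires two things. First, \(\gamma\) must stay inside a fixed compact part of \(U\), which is a visibility/localization statement. I would prove it by comparing with the Euclidean geometry via the estimates \(k_\Omega(z;v)\asymp|v|/\delta_\Omega(z;v)\) for \(\C\)-convex domains (the analogue of Lemma~\ref{lem:graham}) and the finite-type bound \(\delta_\Omega(z;v)\lesssim\delta_\Omega(z)^{1/s}\). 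Together these show that an almost-geodesic excursion of Euclidean size \(\ge r\) costs length much larger than \(K_\Omega(z,w)\) when \(z,w\) are near \(p\). Second, I would bound the weighted length
\[
\int\delta_\Omega(\gamma)\,k_\Omega(\gamma;\gamma')\,\dd t
\]
by decomposing \(\gamma\) according to the dyadic height \(\delta_\Omega(\gamma(t))\sim2^{-j}\). On each piece the Kobayashi length is controlled via Lemma~\ref{lem:normal-lower}-type estimates and the weight is \(2^{-j}\). The maximal height \(H\) reached by an almost-geodesic is then bounded in terms of \(|z-w|\) through the finite-type relation between Euclidean displacement and the level-set radii \(\tau\), which scale between \(\eps\) and \(\eps^{1/s}\). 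This yields a bound \(\lesssim H\lesssim|z-w|^{1/s}\) after summing the geometric series.

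The main obstacle is the second step: proving that almost-geodesics of \(\Omega\) between points of \(V_0\) neither leave \(U\) nor climb to heights much larger than \(|z-w|^{1/s}\). This is precisely where finite type enters, and it also explains the exponent \(1/s\) rather than \(1\). My first attempt would argue by contradiction with a model curve: go normally inward from \(z\) to height \(h\approx|z-w|^{1/s}\), traverse at that height, and come back down to \(w\). Its length is \(\tfrac12\log(h^2/\delta(z)\delta(w))+O(1)\), and I would compare this with the lower bound for any curve that rises to height \(\gg h\) or exits \(U\). Letting \(\eta\to0\) then finishes the proof.
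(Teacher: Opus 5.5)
The paper gives no proof of this statement: it is quoted from Nikolov--Thomas and used only to obtain \eqref{eq:localization}, so your outline has to stand on its own.

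The lower inequality and the Royden step are fine. But the estimate everything else rests on, \(K_\Omega(z,\Omega\setminus U)\ge-\tfrac12\log\delta_\Omega(z)-C\) with the sharp constant \(\tfrac12\), is not established. Your justification is that complex supporting hyperplanes furnish local peak functions, and this fails at finite-type points. For \(\mathcal T_m\) at \(0\) (Section~\ref{sec:lower-bound}), the supporting hyperplane \(\{Q=0\}\) meets \(\partial\mathcal T_m\) along the real line \(\{(0,iy)\}\). The support function \(e^{-Q}\) then has modulus \(e^{-\eps}\), arbitrarily close to \(1\), at the points \((\eps,iy)\in\mathcal T_m\setminus U\). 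Moreover, \(\Omega\) is arbitrary outside \(U\), whereas Lemma~\ref{lem:graham}, Lemma~\ref{lem:normal-lower} and their \(\C\)-convex analogues are statements about globally (\(\C\)-)convex domains. So every lower bound for \(K_\Omega\) has to pass through some localization device. Even for convex \(\Omega\), projecting onto a half-plane and using \(\delta_\Omega(z;v)\lesssim\delta_\Omega(z)^{1/s}\), this argument yields only \(K_\Omega(z,\Omega\setminus U)\ge\frac1{2s}\log\frac1{\delta_\Omega(z)}-C\). That suffices for the Royden factor: it gives \(1+C\delta_\Omega(z)^{1/s}\), which is the natural source of the exponent \(1/s\). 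It does not suffice for the step you flag as the main obstacle.

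That step is where the missing idea lies.

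\emph{What confinement requires.} Your model curve has length \(\log\frac{h}{\sqrt{\delta(z)\delta(w)}}+O(1)\) with \(h\approx|z-w|^{1/s}\). To exclude an almost-geodesic leaving \(U\), you therefore need the two legs from \(z\) and from \(w\) to \(\Omega\setminus U\) to cost at least \(\tfrac12\log\frac1{\delta(z)}+\tfrac12\log\frac1{\delta(w)}-O(1)\). With any coefficient below \(\tfrac12\), the comparison breaks down as \(\delta(z),\delta(w)\to0\) with \(|z-w|\) fixed.

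\emph{Pairs your argument does not reach.} The same sharp visibility bound is needed for pairs in \(\Omega\cap V_0\) that are not close to each other relative to \(U\), and your excursion argument does not cover these at all. There you need \(K_{\Omega\cap U}(z,w)\le K_\Omega(z,w)+C\), that is, \(K_\Omega(z,w)\ge\frac12\log\frac1{\delta(z)\delta(w)}-C\), together with a covering of \(\partial\Omega\cap\overline{V_0}\).

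\emph{Why your Euclidean comparison falls short.} The normal cost from Lemma~\ref{lem:normal-lower} and the tangential cost from \(k_\Omega\asymp|v|/\delta_\Omega(z;v)\), \(\delta_\Omega(z;v)\lesssim\delta^{1/s}\), are paid on the same pieces of the curve. Taking the larger of the two, or averaging them, loses the constant \(\tfrac12\). Keeping the full normal term while adding the tangential one is exactly what the weighted exit-counting of Lemma~\ref{lem:weighted-exits} achieves. You cannot borrow it, though: that lemma only treats curves inside \(X=\Omega\cap U\), and Theorem~\ref{thm:lower-bound} reaches \(K_\Omega\) only through \eqref{eq:localization}, that is, through the present theorem.

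Two smaller points. First, Lemma~\ref{lem:normal-lower} bounds lengths only from below. The weighted-length estimate has to come from combining it with the elementary upper bound \(K_\Omega(z,w)\le\log\frac{|z-w|+\max\{\delta(z),\delta(w)\}}{\sqrt{\delta(z)\delta(w)}}+C\) (lift and join, as in Lemmas~\ref{lem:normal-leg} and~\ref{lem:horizontal-segment}), which caps the excess length above the forced normal cost. Second, \(H\lesssim|z-w|^{1/s}\) is false when \(|z-w|\ll\max\{\delta(z),\delta(w)\}\le H\). In that regime you should bound the weighted length by \(H\,L_k(\gamma)\) instead.
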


\begin{corollary}[Germ invariance]\label{cor:local-germ-invariance}
Let \(\Omega\subset\C^n\) be a Kobayashi hyperbolic domain and let
\(p\in\partial\Omega\). Assume that \(\partial\Omega\) is smooth near
\(p\), that \(\Omega\) is convex near \(p\), and that \(p\) has finite
D'Angelo type.
Then, for every sufficiently small convex neighborhood \(U\) of \(p\),
\[
 \delta_{\mathrm{loc}}(\Omega,p)
 =\delta_{\mathrm{loc}}(\Omega\cap U,p).
\]
Consequently, any two domains satisfying these assumptions and agreeing
near \(p\) have the same local Gromov hyperbolicity constant at \(p\).
\end{corollary}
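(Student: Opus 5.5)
We deduce the germ invariance from the Nikolov--Thomas localization (Theorem~\ref{thm:nikolov-thomas-localization}), using the fact that \(\delta_{\mathrm{loc}}\) only sees quadruples converging to \(p\). Convexity near \(p\) gives \(\C\)-convexifiability with \(\Phi=\Id\), and Lemma~\ref{lem:local-type-bound} gives a finite type bound \(s\) near \(p\). Let \(U_0\) be the neighborhood from Theorem~\ref{thm:nikolov-thomas-localization}. Fix a convex neighborhood \(U\subset U_0\) of \(p\) such that \(\Omega\cap U\) is convex. Then \(\Omega\cap U\) is connected, and it is Kobayashi hyperbolic because it is contained in \(\Omega\). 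Now choose a neighborhood \(V_0\Subset U\) and let \(C\) be the constant provided by the theorem. For \(z,w\in\Omega\cap V_0\) we then have
\[
0\le K_{\Omega\cap U}(z,w)-K_\Omega(z,w)\le C|z-w|^{1/s}.
\]

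For every neighborhood \(V\subset V_0\) of \(p\), both \(\Omega\cap V\) and \((\Omega\cap U)\cap V\) are the same set. The two metrics on this set, \(K_\Omega\) and \(K_{\Omega\cap U}\), differ by at most \(C\,(\operatorname{diam}V)^{1/s}\). Each four-point sum in \eqref{eq:global-fourpoint-constant} therefore changes by at most \(2C(\operatorname{diam}V)^{1/s}\). Consequently \(S_{\max}-S_{\mathrm{mid}}\) changes by at most \(4C(\operatorname{diam}V)^{1/s}\), since the largest and second largest of three numbers are \(1\)-Lipschitz in the sup norm. From this we obtain
\[
\bigl|\delta(\Omega\cap V,K_\Omega)-\delta(\Omega\cap V,K_{\Omega\cap U})\bigr|\le 2C(\operatorname{diam}V)^{1/s}.
\]
To handle the possibility that one side is infinite, we apply the same inequality quadruple by quadruple before taking suprema. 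This shows that one side is finite exactly when the other is.

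Now take \(V=B(p,r)\) and let \(r\to0^+\). The monotone limit description of \(\delta_{\mathrm{loc}}\) after Definition~\ref{def:local-hyperbolicity-constant} then gives \(\delta_{\mathrm{loc}}(\Omega,p)=\delta_{\mathrm{loc}}(\Omega\cap U,p)\). Here \(p\in\partial(\Omega\cap U)\) because \(p\) is an interior point of \(U\). For the consequence, suppose \(\Omega'\) is another domain satisfying the hypotheses and \(\Omega\cap U=\Omega'\cap U\) for some small convex \(U\). Shrinking \(U\) so that it works for both domains, we get \(\delta_{\mathrm{loc}}(\Omega,p)=\delta_{\mathrm{loc}}(\Omega\cap U,p)=\delta_{\mathrm{loc}}(\Omega',p)\).

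The only real obstacle is the phrase ``every sufficiently small convex neighborhood.'' We must check that the theorem applies to an arbitrary such \(U\subset U_0\). This holds because the theorem allows any \(U\subset U_0\) with \(\Omega\cap U\) connected, and for each such \(U\) it supplies its own \(V_0\) and \(C\). The constant \(C\) is allowed to depend on \(U\), since we only use it in the limit \(r\to0\).
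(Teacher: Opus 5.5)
Your proof is correct and follows essentially the same route as the paper. It applies the Nikolov--Thomas localization on a small convex truncation, bounds the change in each pairing sum by \(2C(\operatorname{diam}V)^{1/s}\), and lets the neighborhoods shrink to \(p\), while also treating explicitly the infinite case, \(p\in\partial(\Omega\cap U)\), and hyperbolicity of the truncation, which the paper leaves implicit.
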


\begin{proof}
Choose a sufficiently small convex neighborhood \(U\) of \(p\) such that
\(D=\Omega\cap U\) is convex. By Lemma~\ref{lem:local-type-bound} and
Theorem~\ref{thm:nikolov-thomas-localization}, there exist a neighborhood
\(V_*\Subset U\) of \(p\), a constant \(C>0\), and an exponent \(s>0\)
such that
\[
 0\leq K_D(z,w)-K_\Omega(z,w)
 \leq C|z-w|^s,
 \qquad z,w\in\Omega\cap V_*.
\]
Thus, for any four points in a smaller neighborhood \(V\Subset V_*\), each
distance computed with \(K_D\) differs from the corresponding distance
computed with \(K_\Omega\) by at most
\(C\operatorname{diam}(V)^s\). Each pairing sum therefore changes by at
most \(2C\operatorname{diam}(V)^s\), and the normalized four-point
defect changes by at most the same quantity. Taking the suprema over
quadruples and then the infima over shrinking neighborhoods \(V\) proves
the displayed equality. If two domains agree near \(p\), applying this
equality to a common convex truncation proves the last assertion.
\end{proof}

We now apply Theorem~\ref{thm:nikolov-thomas-localization} in our setting.  Since \(\Omega\) is
convex, it is \(\C\)-convexifiable near \(p\).
Moreover, by
assumption, the D'Angelo type of \(\partial\Omega\) is at most \(m\) near
\(p\).  Hence Theorem~\ref{thm:nikolov-thomas-localization} applies with
\(s=m\), and provides a neighborhood \(U_0\) of \(p\).
Since all the preceding constructions are local near \(p\), after shrinking
\(U\) if necessary, we may assume that \(U\) is a bounded convex
neighborhood of \(p\), that
\(
\overline{U}\subset U_0,
\)
and that all the estimates obtained above remain valid on \(\Omega\cap U\).
In particular, \(\Omega\cap U\) is convex and hence connected.

Fix a neighborhood \(V_0\Subset U\) of \(p\).  Applying
Theorem~\ref{thm:nikolov-thomas-localization}, we obtain a constant \(C>0\)
such that
\[
0
\le
K_{\Omega\cap U}(z,w)-K_\Omega(z,w)
\le
C|z-w|^{1/m},
\qquad
z,w\in\Omega\cap V_0.
\]
Choose a smaller neighborhood \(V\Subset V_0\) of \(p\) such that
\(C\operatorname{diam}(V)^{1/m}\le1\).
We will shrink \(V\) further when necessary, without changing the notation.
Then
\begin{equation}\label{eq:localization}
	K_\Omega(z,w)
	\le
	K_{\Omega\cap U}(z,w)
	\le
	K_\Omega(z,w)+1,
	\qquad
	z,w\in\Omega\cap V.
\end{equation}
Thus it is enough to establish the local comparison in \(\Omega\cap U\);
the corresponding estimate for \(K_\Omega\) then follows with an additive
error of at most \(1\).

For the remainder of the section, put \(X=\Omega\cap U\).  Choose
\(S_0>0\) so small that \(2S_0<\eps_*\).
In \eqref{eq:S0-upper-conditions} below, \(S_0\) will be decreased further,
if necessary, without changing the notation.
\begin{lemma}\label{lem:depth-stability}
Let \(z\in X\), \(0<\eps<\eps_*\), and
\(w\in B_\eps(z)\).  Then
\begin{equation*}
 |\delta(w)-\delta(z)|<\frac{\eps}{8}.
\end{equation*}
In particular, if \(\delta(z)<S_0\) and
\(\eps=\delta(z)/4\), then
\[
 \frac{31}{32}\delta(z)
 <
 \delta(w)
 <
 \frac{33}{32}\delta(z).
\]
\end{lemma}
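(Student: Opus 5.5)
The plan is to reduce the estimate to Lemma~\ref{lem:level-displacement}, which controls the variation of the signed distance function, and then use \(\rho=-\delta\) on \(\Omega\).

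Let \(w=z+h\in B_\eps(z)\). By Definition~\ref{def:non-isotropic-neighborhood}, \(w\in\Omega\) and \(r:=p_{z,\eps}(h)<\tfrac18<1\). We need \(z\in\Omega\cap U\) and \(0<\eps<\eps_*\) to apply Lemma~\ref{lem:level-displacement}, and both hold since \(z\in X=\Omega\cap U\). The lemma gives
\[
|\rho(w)-\rho(z)|\le r\eps<\frac{\eps}{8}.
\]
When \(h=0\) the left side is \(0<\eps/8\), so strictness is automatic. When \(h\neq0\), Corollary~\ref{cor:local-directional-norm} shows \(p_{z,\eps}\) is a norm, so \(r>0\) and still \(r\eps<\eps/8\). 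Since \(z,w\in\Omega\), we have \(\rho(z)=-\delta(z)\) and \(\rho(w)=-\delta(w)\). This proves the first inequality. Note that \(w\) need not lie in \(U\), but this is irrelevant because \(\rho=-\delta\) holds on all of \(\Omega\) by definition.

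For the second assertion, suppose \(\delta(z)<S_0\) and set \(\eps=\delta(z)/4\). This is admissible because \(0<\eps<S_0/4<\eps_*\), using \(2S_0<\eps_*\). The first part then gives
\[
|\delta(w)-\delta(z)|<\frac{\delta(z)}{32},
\]
which rearranges to \(\tfrac{31}{32}\delta(z)<\delta(w)<\tfrac{33}{32}\delta(z)\).

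There is no real obstacle here. The only points that need checking are that the hypotheses of Lemma~\ref{lem:level-displacement} (the center in \(\Omega\cap U\) and the range of \(\eps\)) are satisfied, and that the inequality remains strict.
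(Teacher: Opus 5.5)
Your proposal is correct and matches the paper's proof: apply Lemma~\ref{lem:level-displacement} to \(h=w-z\) with \(p_{z,\eps}(h)<\tfrac18\), use \(\delta=-\rho\) at both points of \(\Omega\), then substitute \(\eps=\delta(z)/4\). The extra checks you add are harmless: admissibility of \(\eps=\delta(z)/4<\eps_*\), and strictness, which already follows from \(r<\tfrac18\) without needing \(r>0\).
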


\begin{proof}
Write \(w=z+h\).  By Definition~\ref{def:non-isotropic-neighborhood},
\(
 p_{z,\eps}(h)<\frac18.
\)
Lemma~\ref{lem:level-displacement} therefore gives
\[
 |\rho(w)-\rho(z)|
 \le p_{z,\eps}(h)\eps
 <\frac{\eps}{8}.
\]
Since \(z,w\in\Omega\), we have \(\delta=-\rho\) at both points, and
hence
\[
 |\delta(w)-\delta(z)|
 =
 |\rho(w)-\rho(z)|
 <\frac{\eps}{8}.
\]
Taking \(\eps=\delta(z)/4\) gives the last two inequalities.
\end{proof}

\begin{lemma}\label{lem:crossing-cost}
	Let \(z\in X\) satisfy \(\delta(z)<S_0\), and let
	\(\gamma:[0,1]\to\Omega\cap U\) be absolutely continuous with
	\(\gamma(0)=z\).  Suppose that \(t_1\in(0,1]\) is the first exit time of \(\gamma\) from
	\[
	\frac12 B_{\delta(z)/4}(z);
	\]
	that is, \(\gamma(t)\) belongs to this set for \(0\le t<t_1\), while
	\(\gamma(t_1)\) lies on its relative boundary in \(\Omega\).
	Then
	\[
	L_k\bigl(\gamma|_{[0,t_1]}\bigr)\ge\kappa_0=\frac{1}{176}.
	\]
\end{lemma}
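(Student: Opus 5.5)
The approach is to bound the Kobayashi--Royden metric from below along the initial arc of \(\gamma\) by a fixed multiple of the single norm \(p_{z,\eps}\), where \(\eps=\delta(z)/4\), and then integrate. Since \(p_{z,\eps}\) is a genuine norm by Corollary~\ref{cor:local-directional-norm}, its integral along the curve is at least the norm of the total displacement. That displacement is forced by the exit condition.

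\emph{Step 1 (normalizing the exit point).} Put \(\eps=\delta(z)/4\). Then \(\eps<S_0/4<\eps_*\), so Lemma~\ref{lem:center-stability}, Lemma~\ref{lem:depth-stability} and Corollary~\ref{cor:local-directional-norm} are all available. The set \(\tfrac12B_\eps(z)=\Omega\cap\{z+h:p_{z,\eps}(h)<1/16\}\) is open. Because \(p_{z,\eps}\) is continuous and \(\gamma(t_1)\in\Omega\) lies on the relative boundary of this set in \(\Omega\), we get \(p_{z,\eps}(\gamma(t_1)-z)=1/16\). Moreover, for \(0\le t<t_1\) the point \(w=\gamma(t)\) lies in \(\tfrac12B_\eps(z)\subset B_\eps(z)\).

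\emph{Step 2 (pointwise lower bound for \(k_\Omega\)).} Fix such a point \(w\) and a vector \(v\ne0\).
\begin{itemize}
\item By Lemma~\ref{lem:graham}, \(k_\Omega(w;v)\ge |v|/(2\delta_\Omega(w;v))\).
\item Every point of \(\Omega\) satisfies \(\rho<0=\rho(w)+\delta(w)\), so \(\Omega\subset\Omega_{w,\delta(w)}\). Hence \(\delta_\Omega(w;v)\le |v|\,\tau(w,v,\delta(w))\).
\item By Lemma~\ref{lem:depth-stability}, \(\delta(w)<\tfrac{33}{32}\delta(z)=\tfrac{33}{8}\eps\). Monotonicity of \(\tau\) and Lemma~\ref{lem:p-scale} with \(\lambda=33/8\) then give \(\tau(w,v,\delta(w))\le\tau(w,v,\tfrac{33}{8}\eps)\le\tfrac{33}{8}\tau(w,v,\eps)\).
\item Lemma~\ref{lem:directional-norm} converts this into \(\delta_\Omega(w;v)\le\tfrac{33}{8}|v|/p_{w,\eps}(v)\), and therefore \(k_\Omega(w;v)\ge\tfrac{4}{33}p_{w,\eps}(v)\).
\item Lemma~\ref{lem:center-stability} (valid since \(w\in B_\eps(z)\)) gives \(p_{w,\eps}\ge\tfrac34p_{z,\eps}\).
\end{itemize}
Combining these, \(k_\Omega(w;v)\ge\tfrac1{11}p_{z,\eps}(v)\).

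\emph{Step 3 (integration).} This bound holds for almost every \(t\in[0,t_1)\). Hence
\[
L_k\bigl(\gamma|_{[0,t_1]}\bigr)\ge\frac1{11}\int_0^{t_1}p_{z,\eps}(\gamma'(t))\,\dd t\ge\frac1{11}p_{z,\eps}\bigl(\gamma(t_1)-z\bigr)=\frac1{11}\cdot\frac1{16}=\frac1{176}.
\]
The middle inequality is the integral triangle inequality for a norm, applied to the absolutely continuous curve \(\gamma\). If \(L_k\) is meant with respect to \(k_{\Omega\cap U}\), the same bound follows because \(k_{\Omega\cap U}\ge k_\Omega\).

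The only delicate point is Step~2. There one must remember that \(\Omega\) sits inside the level set \(\Omega_{w,\delta(w)}\), so that Graham's disc radius is controlled by \(\tau\) at level \(\delta(w)\). One must also keep the scaling constant \(33/8\) explicit through Lemma~\ref{lem:p-scale}; the rest is bookkeeping.
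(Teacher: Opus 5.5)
Your proof is correct and follows the paper's argument: Graham's lower bound at \(\gamma(t)\), then center stability, depth stability with Lemma~\ref{lem:p-scale}, and finally the integral triangle inequality for the frozen norm at \(z\), combined with the exit condition. The only difference is bookkeeping. You work at the reference scale \(\delta(z)/4\) throughout, whereas the paper works at scale \(\delta(z)\), using \(\tfrac12 B_{\delta(z)/4}(z)\subset B_{\delta(z)}(z)\), and converts the exit condition with \(\lambda=4\); the net loss \(\tfrac12\cdot\tfrac34\cdot\tfrac{8}{33}\) and the constant \(1/176\) come out the same.
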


\begin{proof}
	For \(0\le t<t_1\), the curve satisfies
	\[
	\gamma(t)
	\in
	\frac12 B_{\delta(z)/4}(z)
	\subset
	B_{\delta(z)}(z).
	\]
	Hence Lemma~\ref{lem:center-stability}, applied with
	\(\eps=\delta(z)\), gives
	\[
		p_{\gamma(t),\delta(z)}(v)
		\ge
		\frac34 p_{z,\delta(z)}(v),
		\qquad v\in\C^n.
	\]
	By Lemma~\ref{lem:depth-stability}, we have
	\[
	\frac{31}{32}\delta(z)
	<
	\delta(\gamma(t))
	<
	\frac{33}{32}\delta(z).
	\]
Since \(\delta(z)<S_0\) and \(2S_0<\eps_*\), we have
\[
\delta(\gamma(t))
<
\frac{33}{32}\delta(z)
<
\eps_*,
\]
so Lemma~\ref{lem:p-scale} applies.  Using the monotonicity of \(p_{q,\eps}\) in \(\eps\) and
	Lemma~\ref{lem:p-scale}, we obtain
	\[
	\begin{aligned}
		p_{\gamma(t),\delta(\gamma(t))}(v)
		\ge
		p_{\gamma(t),(33/32)\delta(z)}(v)
		\ge
		\frac{32}{33}\,
		p_{\gamma(t),\delta(z)}(v).
	\end{aligned}
	\]
	Combining these estimates gives
	\[
		p_{\gamma(t),\delta(\gamma(t))}(v)
		\ge
		\frac8{11}p_{z,\delta(z)}(v).
	\]
For every \(z\in\Omega\), we have
\[
\Omega_{z,\delta(z)}
=
\{w\in\C^n:\rho(w)<0\}
=
\Omega.
\]
Therefore, for every nonzero vector \(v\in\C^n\), we have
\[
p_{z,\delta(z)}(v)
=
\frac{|v|}{\delta_\Omega(z;v)}.
\]
Lemma~\ref{lem:graham} now gives
\begin{equation*}
    \frac12 p_{z,\delta(z)}(v)
    \le
    k_\Omega(z;v)
    \le
    p_{z,\delta(z)}(v),
    \qquad z\in\Omega,\quad v\in\C^n\setminus\{0\}.
\end{equation*}
Both sides of the preceding inequality vanish when \(v=0\).
Thus the same inequalities hold for every \(v\in\C^n\).
Using the infinitesimal comparison and the preceding moving-norm bound,
we obtain
\[
\begin{aligned}
	L_k\bigl(\gamma|_{[0,t_1]}\bigr)
	&=
	\int_0^{t_1}
	k_\Omega(\gamma(t);\gamma'(t))\,\dd t
	\ge
	\frac12
	\int_0^{t_1}
	p_{\gamma(t),\delta(\gamma(t))}
	(\gamma'(t))\,\dd t\\
	&\ge
	\frac4{11}
	\int_0^{t_1}
	p_{z,\delta(z)}(\gamma'(t))\,\dd t.
\end{aligned}
\]
Since \(\gamma\) is absolutely continuous, we have
\[
\gamma(t_1)-z
=
\int_0^{t_1}\gamma'(t)\,\dd t.
\]
Since
\(
\delta(z)<S_0<\eps_*,
\)
Corollary~\ref{cor:local-directional-norm} shows that
\(p_{z,\delta(z)}\) is a norm.  Hence, using the triangle inequality,
\[
p_{z,\delta(z)}\bigl(\gamma(t_1)-z\bigr)
\le
\int_0^{t_1}
p_{z,\delta(z)}(\gamma'(t))\,\dd t.
\]
Therefore, we obtain
\[
L_k\bigl(\gamma|_{[0,t_1]}\bigr)
\ge
\frac4{11}
p_{z,\delta(z)}\bigl(\gamma(t_1)-z\bigr).
\]

	Since \(\gamma(t_1)\) lies on the boundary, relative to \(\Omega\), of
	\(\frac12 B_{\delta(z)/4}(z)\),
	Definition~\ref{def:non-isotropic-neighborhood} gives
	\[
	p_{z,\delta(z)/4}\bigl(\gamma(t_1)-z\bigr)
	=
	\frac1{16}.
	\]
	Lemma~\ref{lem:p-scale} with \(\lambda=4\) then gives
	\[
	p_{z,\delta(z)}\bigl(\gamma(t_1)-z\bigr)
	\ge
	\frac14
	p_{z,\delta(z)/4}\bigl(\gamma(t_1)-z\bigr)
	=
	\frac1{64}.
	\]
	Consequently, we have
	\[
	L_k\bigl(\gamma|_{[0,t_1]}\bigr)
	\ge
	\frac4{11}\cdot\frac1{64}
	=
	\frac1{176}.
	\]
\end{proof}
\subsection{A finite-chain quasi-distance}
We first symmetrize the local quasi-distance.  For
\(x,y\in X\), set
\[
 \widehat d_B(x,y)
 =
 \max\{d_B(x,y),d_B(y,x)\}.
\]
By definition, \(\widehat d_B\) is symmetric, and
Lemma~\ref{lem:dB-separation} shows that it separates points. We shall
use \(\widehat d_B\) below as a symmetric quasi-distance. Although
Lemma~\ref{lem:dB-separation} establishes the qualitative compatibility
of \(d_B\) with the Euclidean topology, the construction below requires
uniform quantitative control. We therefore begin with the following
technical lemma.
\begin{lemma}\label{lem:dB-euclidean-control}
	For \(q,w\in\Omega\cap U\), the following estimates hold.
	
	If
	\(8|q-w|<\eps_*\),
	then
	\begin{equation*}
		\widehat d_B(q,w)
		\le
		8|q-w|.
	\end{equation*}
	Conversely, if \(0<\eps<\eps_*\) and
	\(d_B(q,w)<\eps\),
	then
	\begin{equation*}
		|q-w|
		<
		\frac18 M(\eps).
	\end{equation*}
\end{lemma}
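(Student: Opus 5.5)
The first estimate will come from the $1$-Lipschitz property of $\rho$, exactly as in the proof of Lemma~\ref{lem:dB-separation}. The second is a restatement of the Euclidean size bound \eqref{eq:B-euclidean-size}.

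For the first estimate, the case $q=w$ is trivial, so assume $q\ne w$. Fix any $\eta$ with $8|q-w|<\eta<\eps_*$. For every unit vector $v$ and every $\zeta$, we have $\rho(q+\zeta v)-\rho(q)\le|\zeta|$. Hence $\tau(q,v,\eta)\ge\eta$, and Lemma~\ref{lem:directional-norm} together with \eqref{eq:tau-homogeneity} gives $p_{q,\eta}(h)\le|h|/\eta$ for all $h\in\C^n$. Taking $h=w-q$ yields
\[
p_{q,\eta}(w-q)\le\frac{|w-q|}{\eta}<\frac18 .
\]
Since $w\in\Omega$, this means $w\in B_\eta(q)$, and therefore $d_B(q,w)\le\eta$. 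The roles of $q$ and $w$ are symmetric (both lie in $\Omega\cap U$, and the Lipschitz bound holds at any center), so $d_B(w,q)\le\eta$ as well. Letting $\eta\downarrow 8|q-w|$ gives $\widehat d_B(q,w)\le 8|q-w|$.

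For the converse, suppose $d_B(q,w)<\eps<\eps_*$. By the remark after Definition~\ref{def:dB}, $w\in B_\eps(q)$, so $p_{q,\eps}(w-q)<1/8$. Corollary~\ref{cor:local-directional-norm} gives $\mathbb B_\eps(q)\subset B(0,M(\eps))$. In Minkowski-functional form this reads $|h|\le M(\eps)\,p_{q,\eps}(h)$: if $h\ne0$ and $t>p_{q,\eps}(h)$, then $h/t\in\mathbb B_\eps(q)$, hence $|h|<tM(\eps)$. Applying this with $h=w-q$ gives $|q-w|<\frac18M(\eps)$.

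If $M(\eps)=0$ were possible, the inclusion $\mathbb B_\eps(q)\subset B(0,0)$ would contradict the fact that $\mathbb B_\eps(q)$ is a neighborhood of $0$, so $M(\eps)>0$. The strict inequality then survives even when $w=q$. I expect no real obstacle here; the only points needing care are this strictness and checking that the limit $\eta\downarrow8|q-w|$ stays inside the admissible range $(0,\eps_*)$, which holds because $8|q-w|<\eps_*$.
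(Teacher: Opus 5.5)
Your proposal is correct and follows essentially the same route as the paper: the first estimate uses the $1$-Lipschitz bound $\tau(q,v,\eta)\ge\eta$, then lets $\eta\downarrow 8|q-w|$ and swaps $q$ and $w$, and the second uses the Euclidean containment coming from Corollary~\ref{cor:local-directional-norm}. The only cosmetic difference is in the converse. You get $w\in B_\eps(q)$ directly from the remark after Definition~\ref{def:dB} and re-derive the containment at $\eps$. The paper instead picks an intermediate $\eps'<\eps$ with $w\in B_{\eps'}(q)$, applies \eqref{eq:B-euclidean-size} there, and uses monotonicity of $M$. Both are equally valid.
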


\begin{proof}

	Suppose first that \(8|q-w|<\eps_*\).  If \(q=w\), then
	the first estimate is immediate.  Assume that \(q\ne w\),
	and fix
	\(8|q-w|<\eps<\eps_*\).
	Set \(v=(w-q)/|w-q|\in\C^n\).
	Since \(\rho\) is \(1\)-Lipschitz and \(|v|=1\), we have
	\[
	\rho(q+\zeta v)-\rho(q)
	\le
	|\zeta|
	<
	\eps
	\qquad\text{whenever }|\zeta|<\eps.
	\]
	It follows that
	\(\tau(q,v,\eps)\ge\eps\).
	Since \(w-q=|w-q|v\), Lemma~\ref{lem:directional-norm} gives
	\[
	p_{q,\eps}(w-q)
	\le
	\frac{|w-q|}{\eps}
	<
	\frac18.
	\]
	Thus \(w\in B_\eps(q)\), and therefore
	\(d_B(q,w)\le\eps\).
	Letting \(\eps\to(8|q-w|)^+\) gives
	\(d_B(q,w)\le8|q-w|\).
	Interchanging \(q\) and \(w\) gives the same estimate for
	\(d_B(w,q)\), and hence the first estimate follows.
	
	Now suppose that \(d_B(q,w)<\eps<\eps_*\).  Choose
	\(d_B(q,w)<\eps'<\eps\)
	such that \(w\in B_{\eps'}(q)\).  By
	\eqref{eq:B-euclidean-size},
	\[
	|q-w|
	<
	\frac18M(\eps').
	\]
	Since \(M\) is nondecreasing,
	\(M(\eps')\le M(\eps)\),
	which proves the second estimate.
\end{proof}

The definition of \(\widehat d_B\) and
Theorem~\ref{thm:dB-quasi} give
\begin{equation}\label{eq:dB-hat-comparison}
 d_B(x,y)
 \le
 \widehat d_B(x,y)
 \le
 Q_m d_B(x,y),
\end{equation}
where \(Q_m=2T_m(5)<10^m\).  The same theorem also gives
\begin{equation}\label{eq:dB-hat-max-triangle}
 \widehat d_B(x,z)
 \le
 Q_m\bigl(
   \widehat d_B(x,y)+\widehat d_B(y,z)
 \bigr)
 \le
 2Q_m
 \max\{\widehat d_B(x,y),\widehat d_B(y,z)\}.
\end{equation}
Since \(Q_m<10^m\) and \(m\ge2\), we have
\((2Q_m)^{1/(4m)}<2\).

Put \(\mathfrak q(x,y)=\widehat d_B(x,y)^{1/(4m)}\).
It follows from \eqref{eq:dB-hat-max-triangle}  that
\[
	\mathfrak q(x,z)\le2\max\{\mathfrak q(x,y),\mathfrak q(y,z)\}.
\]
Also, Lemma~\ref{lem:dB-separation} shows that \(\mathfrak q(x,y)=0\) only when
\(x=y\).
We now obtain a metric from this quasi-metric.  By a result of Frink
\cite{Frink1937} (or see \cite[Theorem~1.2]{Schroeder}),
the preceding quasi-triangle inequality gives a metric
\(d_{\mathfrak q}\) such that
\[
\frac14\mathfrak q(x,y)\le d_{\mathfrak q}(x,y)\le \mathfrak q(x,y).
\]
This metric yields the following chain estimate.

\begin{lemma}\label{lem:power-chain}
Put \(\alpha=1/(4m)\). Every finite sequence \(x_0,\ldots,x_N\) in
\(X\) satisfies
\begin{equation}\label{eq:symmetric-power-chain}
\widehat d_B(x_0,x_N)^\alpha
\le4\sum_{j=1}^N\widehat d_B(x_{j-1},x_j)^\alpha.
\end{equation}
In particular,
\[
d_B(x_0,x_N)^\alpha
\le\widehat d_B(x_0,x_N)^\alpha
\le8\sum_{j=1}^N d_B(x_{j-1},x_j)^\alpha.
\]
\end{lemma}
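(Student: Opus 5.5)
The plan is to deduce both inequalities directly from the Frink metric \(d_{\mathfrak q}\) introduced just before the statement. No new geometric input is needed.

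For the first inequality \eqref{eq:symmetric-power-chain}, I would apply the lower Frink bound at the endpoints, the triangle inequality for \(d_{\mathfrak q}\) along the chain, and the upper Frink bound on each link:
\[
\tfrac14\,\widehat d_B(x_0,x_N)^\alpha=\tfrac14\,\mathfrak q(x_0,x_N)\le d_{\mathfrak q}(x_0,x_N)\le\sum_{j=1}^N d_{\mathfrak q}(x_{j-1},x_j)\le\sum_{j=1}^N \mathfrak q(x_{j-1},x_j).
\]
Since \(\mathfrak q=\widehat d_B^{\,\alpha}\), multiplying by \(4\) gives \eqref{eq:symmetric-power-chain}.

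The one point to check is that Frink's theorem applies with the stated constants. It requires a symmetric function vanishing exactly on the diagonal and satisfying the ultrametric-type inequality \(\mathfrak q(x,z)\le 2\max\{\mathfrak q(x,y),\mathfrak q(y,z)\}\).
\begin{itemize}
\item Symmetry holds by the definition of \(\widehat d_B\).
\item Vanishing exactly on the diagonal follows from Lemma~\ref{lem:dB-separation}.
\item The quasi-triangle inequality comes from \eqref{eq:dB-hat-max-triangle} together with \((2Q_m)^{1/(4m)}<2\), which in turn follows from \(Q_m<10^m\): indeed \((2\cdot10^m)^{1/(4m)}\le 20^{1/4}\) for \(m\ge1\), and \(20^{1/4}<2\).
\end{itemize}
The version of Frink's lemma with constant \(K=2\) yields exactly the bounds \(\tfrac14\mathfrak q\le d_{\mathfrak q}\le\mathfrak q\), as recorded in the paper. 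This verification is the only step needing care, and it amounts to citing the result correctly.

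For the second display, the left inequality \(d_B^\alpha\le\widehat d_B^{\,\alpha}\) holds because \(\widehat d_B\) is a maximum. For the right inequality, I would bound each link using \eqref{eq:dB-hat-comparison}, which gives \(\widehat d_B(x_{j-1},x_j)\le Q_m d_B(x_{j-1},x_j)\), hence \(\widehat d_B(x_{j-1},x_j)^\alpha\le Q_m^{\alpha}d_B(x_{j-1},x_j)^\alpha\). Then \(Q_m^{1/(4m)}<10^{1/4}<2\), so \(4Q_m^\alpha<8\). Substituting into \eqref{eq:symmetric-power-chain} gives the constant \(8\).
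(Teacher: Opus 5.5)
Your proof is essentially the paper's. The Frink comparison \(\tfrac14\mathfrak q\le d_{\mathfrak q}\le\mathfrak q\), together with the triangle inequality, gives \eqref{eq:symmetric-power-chain}, and \(\widehat d_B\le Q_m d_B\) with \(Q_m^\alpha<2\) gives the factor \(8\).

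One correction to your side check: \(20^{1/4}\approx 2.11>2\), so the condition \((2Q_m)^{1/(4m)}<2\) needed for Frink's lemma really depends on the standing hypothesis \(m\ge2\). For \(m\ge2\) one has \((2\cdot10^m)^{1/(4m)}\le 200^{1/8}<2\). For \(m=1\) the condition fails, since \(2Q_1=20\).
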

\begin{proof}
The metric comparison above and the triangle inequality give
\[
\begin{aligned}
\widehat d_B(x_0,x_N)^\alpha
&\le4d_{\mathfrak q}(x_0,x_N)\\
&\le4\sum_{j=1}^N d_{\mathfrak q}(x_{j-1},x_j)
\le4\sum_{j=1}^N\widehat d_B(x_{j-1},x_j)^\alpha.
\end{aligned}
\]
The second estimate follows from
\(\widehat d_B\le Q_m d_B\) and \(Q_m^\alpha<2\).
\end{proof}
We now use \(\widehat d_B\) to define a finite-chain quasi-distance.
Rather than measuring \(x\) and \(y\) only by the single value
\(\widehat d_B(x,y)\), we allow them to be joined by a chain with at most
\(4m\) links and measure the chain by the size of its largest link.
We then minimize this quantity over all such chains.

\begin{definition}\label{def:finite-chain-quasidistance}
	For \(x,y\in X\), define the following infimum over all chains
	\(x=x_0,\ldots,x_N=y\) in \(X\) with \(1\le N\le4m\):
	\begin{equation*}
		r_m(x,y)=\inf\left\{\max_{1\le j\le N}\widehat d_B(x_{j-1},x_j)\right\}.
	\end{equation*}
\end{definition}
Thus \(r_m(x,y)\) is the smallest possible value of the largest link
among all chains from \(x\) to \(y\) with at most \(4m\) links.
The restriction on the number of links is essential.  Since
\(\widehat d_B\) is locally compatible with the Euclidean topology,
allowing arbitrarily many links would permit a continuous path to be
subdivided into pieces of arbitrarily small \(\widehat d_B\)-size, so
the resulting infimum could vanish. 
The choice \(4m\) is matched to the exponent \(1/(4m)\) in
Lemma~\ref{lem:power-chain}.
The value \(4m\) is merely a convenient choice; it is neither
canonical nor claimed to be optimal.
On the other hand, the one-link chain gives
\(r_m(x,y)\le \widehat d_B(x,y)\).

Finally, define
\[
R_m(x,y)
=
r_m(x,y)+\max\{\delta(x),\delta(y)\},
\qquad
g_m(x,y)
=
\log\frac{R_m(x,y)}
{\sqrt{\delta(x)\delta(y)}}.
\]

The next lemma shows that \(r_m\)  is a quasi-distance on \(X\).

\begin{lemma}\label{lem:chain-estimates}
For \(x,y,z\in X\), one has
\[
\widehat d_B(x,y)^{1/(4m)}
\le16m\,r_m(x,y)^{1/(4m)}.
\]
In particular, \(r_m\) is symmetric and \(r_m(x,y)=0\) only when \(x=y\).
Moreover,
\[
r_m(x,z)\le2Q_m\max\{r_m(x,y),r_m(y,z)\},
\]
and
\[
R_m(x,z)\le3Q_m\max\{R_m(x,y),R_m(y,z)\}.
\]
\end{lemma}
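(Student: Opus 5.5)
The first estimate follows from Lemma~\ref{lem:power-chain}. Fix any admissible chain \(x=x_0,\ldots,x_N=y\) with \(N\le4m\), and let \(\mu=\max_j\widehat d_B(x_{j-1},x_j)\). Inequality \eqref{eq:symmetric-power-chain} gives
\[
\widehat d_B(x,y)^{1/(4m)}\le4\sum_{j=1}^N\widehat d_B(x_{j-1},x_j)^{1/(4m)}\le4N\mu^{1/(4m)}\le16m\,\mu^{1/(4m)}.
\]
Taking the infimum over admissible chains yields \(\widehat d_B(x,y)^{1/(4m)}\le16m\,r_m(x,y)^{1/(4m)}\). Symmetry of \(r_m\) is immediate: \(\widehat d_B\) is symmetric, so reversing a chain preserves both its number of links and its largest link. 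If \(r_m(x,y)=0\), the estimate forces \(\widehat d_B(x,y)=0\), hence \(d_B(x,y)=0\), and Lemma~\ref{lem:dB-separation} gives \(x=y\).

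The quasi-ultrametric inequality for \(r_m\) is the step requiring care, because concatenating a chain from \(x\) to \(y\) with a chain from \(y\) to \(z\) may produce up to \(8m\) links, exceeding the allowed \(4m\). I would repair the concatenation by merging adjacent links in pairs. Two consecutive links of sizes at most \(\mu\) merge, by \eqref{eq:dB-hat-max-triangle}, into one link of size at most \(2Q_m\mu\). Let \(a>r_m(x,y)\) and \(b>r_m(y,z)\), and choose chains of lengths \(N_1,N_2\le4m\) with largest links below \(a\) and \(b\) respectively. If \(N_1+N_2\le4m\), the concatenation is admissible and its largest link is below \(\max\{a,b\}\). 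Otherwise, group the concatenated \(N_1+N_2\le8m\) links into consecutive pairs (one link possibly left unpaired) and replace each pair by the single link joining its endpoints. This produces \(\lceil(N_1+N_2)/2\rceil\le4m\) links, each of size at most \(2Q_m\max\{a,b\}\). Letting \(a\) and \(b\) decrease to their infima gives \(r_m(x,z)\le2Q_m\max\{r_m(x,y),r_m(y,z)\}\). No merging ever involves more than two links, so the constant remains \(2Q_m\). Recovering this constant exactly is the point I expect to need attention.

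For \(R_m\), write \(D_{xy}=\max\{\delta(x),\delta(y)\}\), and note that \(\max\{\delta(x),\delta(z)\}\le\max\{D_{xy},D_{yz}\}\). Combining this with the inequality just proved gives
\[
R_m(x,z)\le2Q_m\max\{r_m(x,y),r_m(y,z)\}+\max\{D_{xy},D_{yz}\}\le3Q_m\max\{R_m(x,y),R_m(y,z)\}.
\]
The last step uses \(r_m\le R_m\), \(D\le R_m\), and \(Q_m\ge1\).
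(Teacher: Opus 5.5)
Your proof is correct and follows essentially the same route as the paper: the first estimate comes from the power-chain inequality \eqref{eq:symmetric-power-chain} with $N\le 4m$, the bound for $r_m$ comes from concatenating the two chains and merging consecutive pairs of links via \eqref{eq:dB-hat-max-triangle}, and the bound for $R_m$ comes from $2Q_m+1\le 3Q_m$. Your separate treatment of the case $N_1+N_2\le 4m$ and your use of two thresholds $a,b$, rather than the paper's single threshold $\max\{r_m(x,y),r_m(y,z)\}+\eta$, are harmless variations.
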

\begin{proof}
If \(x=x_0,\ldots,x_N=y\), \(N\le4m\), is a chain whose
\(\widehat d_B\)-links are at most \(r\), then
\eqref{eq:symmetric-power-chain} gives
\[
\widehat d_B(x,y)^{1/(4m)}
\le4N r^{1/(4m)}\le16m r^{1/(4m)}.
\]
Taking the infimum proves the comparison and separation; reversing
chains proves symmetry.

For the quasi-triangle estimate, concatenate chains from \(x\) to \(y\)
and from \(y\) to \(z\), each with at most \(4m\) links, whose largest
links are less than
\[
r=\max\{r_m(x,y),r_m(y,z)\}+\eta,\qquad \eta>0.
\]
The concatenation has at most \(8m\) links. Replace consecutive pairs
of links by single links, leaving the last link unchanged if necessary.
By \eqref{eq:dB-hat-max-triangle}, each new link is at most \(2Q_mr\),
and the resulting chain has at most \(4m\) links. Letting \(\eta\to0^+\)
gives the asserted inequality for \(r_m\).
Finally,
\[
\begin{aligned}
R_m(x,z)
&\le2Q_m\max\{r_m(x,y),r_m(y,z)\}
+\max\{\delta(x),\delta(z)\}\\
&\le(2Q_m+1)\max\{R_m(x,y),R_m(y,z)\}\\
&\le3Q_m\max\{R_m(x,y),R_m(y,z)\}.
\end{aligned}
\]
\end{proof}

\begin{remark}
\label{rem:mcneal-comparison}
Let \(d_{M}\) denote McNeal's local quasi-distance defined
using the adapted polydiscs \(P(x,\eps)\); see
\cite[Proposition~5.1]{McNealBergman}.  For each fixed smoothly bounded
convex finite-type domain and a sufficiently small boundary neighborhood,
there exists \(A\ge1\) such that
\[
A^{-1}d_{M}(x,y)
\le d_B(x,y)
\le A d_{M}(x,y),
\qquad x,y\in\Omega\cap U.
\]

Indeed, let \(\mu_{x,\eps}\) be the Minkowski functional of the
centered polydisc \(P(x,\eps)-x\).  McNeal's extremal-basis estimate
\cite[Proposition~2.2]{McNealBergman}, together with the equivalence of
the \(\ell^1\)- and \(\ell^\infty\)-norms in the fixed dimension, gives
constants \(C_0\ge1\) and \(\eps_0>0\) such that
\[
C_0^{-1}\mu_{x,\eps}(h)
\le p_{x,\eps}(h)
\le C_0\mu_{x,\eps}(h),
\]
whenever \(x\in\Omega\cap U\), \(0<\eps<\eps_0\), and \(h\in\C^n\).
Choose an integer \(k\ge1\) with \(5^k>8C_0\), and set
\(A=Q_m^k\).  After decreasing \(\eps_0\) so that
\(A\eps_0<\eps_*\), iterating the directional dilation estimate in
Lemma~\ref{lem:finite-type-dilation} gives
\[
p_{x,A\eps}(h)\le5^{-k}p_{x,\eps}(h),
\qquad
p_{x,\eps}(h)\le5^{-k}p_{x,A^{-1}\eps}(h).
\]
Together with the preceding gauge comparison, these inequalities imply
\[
\Omega\cap P(x,A^{-1}\eps)
\subset
B_\eps(x)
\subset
\Omega\cap P(x,A\eps),
\qquad 0<\eps<\eps_0.
\]
The definitions of \(d_{M}\) and \(d_B\) now give the stated
comparison.

It follows from \eqref{eq:dB-hat-comparison}, the one-link bound from
Definition~\ref{def:finite-chain-quasidistance}, the first estimate in
Lemma~\ref{lem:chain-estimates}, and the preceding comparison that,
for each fixed domain and boundary neighborhood, the four local
quasi-distances
\[
d_{M},\qquad d_B,\qquad \widehat d_B,\qquad r_m
\]
are mutually Lipschitz equivalent.  The corresponding comparison
constants may depend on the domain, the chosen neighborhood, and the
dimension.
\end{remark}

\subsection{Lower bound for the Kobayashi distance}

We next derive a lower bound for the length of a curve in terms of
\(r_m(x,y)\) and the boundary distances of its endpoints.  Let
\(\gamma:[0,1]\to X\) be absolutely continuous, with
\(\gamma(0)=x\) and \(\gamma(1)=y\), and set
\(H=\max_{0\le t\le1}\delta(\gamma(t))\).
Let \(t_x\) and \(t_y\) be, respectively, the first and the last times
at which
\(\delta(\gamma(t))=H\).
Applying Lemma~\ref{lem:normal-lower} to
\(\gamma|_{[0,t_x]}\) and
\(\gamma|_{[t_y,1]}\), we obtain
\[
	L_k(\gamma|_{[0,t_x]})
	+
	L_k(\gamma|_{[t_y,1]})
	\ge
	\frac12\log\frac{H}{\delta(x)}
	+
	\frac12\log\frac{H}{\delta(y)}=
	\log\frac{H}{\sqrt{\delta(x)\delta(y)}}.
\]
This gives the contribution coming from the change in the distance to
the boundary from \(\delta(x)\) and \(\delta(y)\) up to \(H\).

We now extract additional information from the tangential motion of the
curve.  Divide the initial and final portions into pieces on which the
boundary distance changes by a factor of \(2\).  On each such piece, we
follow the curve and record successive first exits from sets of the form
\[
\frac12 B_{\delta(q)/4}(q).
\]
By Lemma~\ref{lem:crossing-cost}, every completed first exit contributes
at least \(\kappa_0\) to the Kobayashi length.

The two estimates come from the same portions of the curve and therefore
cannot be added directly.  To combine them, we assign to each recorded
link a weight determined by the boundary distance at which the link is
created.  The following elementary lemma partitions the links, in their
original order, into at most \(4m\) consecutive groups, with controlled
total weight in each group.

\begin{lemma}\label{lem:weight-grouping}
	Let \(\omega_1,\ldots,\omega_L\in(0,1]\), and set
	\(\mathsf W=\sum_{\ell=1}^L\omega_\ell\).
	For every integer \(J\ge1\), the sequence
	\(\omega_1,\ldots,\omega_L\), in its given order, can be divided into
	at most \(J\) consecutive nonempty groups, each having total weight at
	most \(\mathsf W/J+1\).
\end{lemma}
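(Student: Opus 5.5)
A greedy left-to-right construction should suffice. Set the threshold \(\theta=\mathsf W/J\). Starting from \(\omega_1\), we keep adding the next weight to the current group as long as its total stays at most \(\theta+1\). More precisely, we close the current group as soon as its total weight reaches or exceeds \(\theta\), and then start a new group with the next weight. This produces consecutive nonempty groups in the original order, and we then bound both the weight of each group and the number of groups.

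\emph{Weight bound.} Consider a group as it is being built. Immediately before its last element is added, its total is strictly less than \(\theta\); otherwise the group would already have been closed. Every \(\omega_\ell\le1\), so adding that last element brings the total to less than \(\theta+1\). A group consisting of a single element has weight at most \(1\le\theta+1\). The final group, which may be closed only because the sequence ends, obeys the same bound.

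\emph{Counting.} Every group except possibly the last has weight at least \(\theta\). Suppose there were at least \(J+1\) groups. Then the first \(J\) groups would be complete, each of weight \(\ge\theta\), so they would have total weight at least \(J\theta=\mathsf W\). Since all weights are positive, the \((J+1)\)-st group, which is nonempty, would push the total weight strictly above \(\mathsf W\). That is a contradiction, so there are at most \(J\) groups.

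The degenerate case needs no separate treatment: if \(\mathsf W/J<\omega_1\), singletons are still allowed because \(\omega_\ell\le 1\le\theta+1\). The counting argument is the only step that needs care, and it relies precisely on the strict positivity of the weights and on the closing rule "\(\ge\theta\)".
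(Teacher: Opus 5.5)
Your proposal is correct and follows the paper's proof: the same greedy rule with threshold \(\theta=\mathsf W/J\) (close a group once its weight reaches \(\theta\)), the same \(<\theta+1\) bound from \(\omega_\ell\le1\), and the same count based on \(J\) closed groups already carrying weight \(\ge J\theta=\mathsf W\). The only difference is that you phrase the count as a contradiction with a nonempty \((J+1)\)-st group, whereas the paper separately handles the case of fewer than \(J\) closed groups plus one leftover group.
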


\begin{proof}
	Set \(\theta=\mathsf W/J\).
	Starting from \(\omega_1\), place consecutive weights into the current
	group until its total weight first reaches or exceeds \(\theta\).  Close
	the group at that point and begin a new one with the next weight.  If
	the total weight of the remaining terms is less than \(\theta\), place
	all of them into one final group.
	Whenever a group is closed by reaching \(\theta\), its total weight
	before the last term is added is less than \(\theta\).  Since every
	weight is at most \(1\), the total weight of such a group is therefore
	less than \(\theta+1=\mathsf W/J+1\) and is at least \(\theta\).
	There can be at most \(J\) groups closed in this way.  Indeed, \(J\)
	such groups already have total weight at least
	\(J\theta=\mathsf W\),
	which is the total weight of the whole sequence.  If fewer than \(J\)
	groups are closed, the remaining terms form at most one final group,
	whose total weight is less than \(\theta\).  Hence the sequence is
	divided into at most \(J\) consecutive nonempty groups, and every group
	has total weight at most \(\mathsf W/J+1\).
\end{proof}
\begin{lemma}\label{lem:weighted-exits}
	Let \(\gamma:[0,1]\to X\) be absolutely continuous, with
	\(\gamma(0)=x\) and \(\gamma(1)=y\).  Put
	\(H=\max_{0\le t\le1}\delta(\gamma(t))\).
	If \(H<S_0\), then
	\begin{equation*}
		L_k(\gamma)
		\ge
		\log\frac{H}{\sqrt{\delta(x)\delta(y)}}
		+
		\frac{\kappa_0m}{2}
		\left(\frac{r_m(x,y)}{H}\right)^{1/(4m)}
		-5m.
	\end{equation*}
\end{lemma}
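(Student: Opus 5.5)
We split the curve at the first and last times \(t_x\le t_y\) at which \(\delta(\gamma(t))=H\). As computed before Lemma~\ref{lem:weight-grouping}, Lemma~\ref{lem:normal-lower} bounds the lengths of \(\gamma|_{[0,t_x]}\) and \(\gamma|_{[t_y,1]}\) from below in terms of the depth change. We must not simply discard \(\gamma|_{[t_x,t_y]}\), however. We will instead treat the whole curve and extract a chain of links from it.

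\emph{Step 1 (dyadic layers and links).} We partition \([0,t_x]\) into consecutive pieces on which \(\delta(\gamma)\) stays in a dyadic layer \([H2^{-k-1},H2^{-k}]\), and do the same for \([t_y,1]\). On the middle piece \([t_x,t_y]\) we use the top layer, which still carries depth up to \(H\); there we only need the tangential count. Along each piece we record successive first exits. Starting at a point \(q\), we follow \(\gamma\) until it first leaves \(\frac12 B_{\delta(q)/4}(q)\), then restart at the exit point. Each completed exit costs at least \(\kappa_0\) by Lemma~\ref{lem:crossing-cost}. Each link \((q,q')\) satisfies \(q'\in B_{\delta(q)/4}(q)\) (closure), so \(d_B(q,q')\le\delta(q)/4\lesssim H2^{-k}\). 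The final incomplete exit of each piece is also a link of size at most \(\delta(q)/4\), at zero cost. Concatenating everything gives a chain \(x=x_0,\dots,x_L=y\) with \(d_B(x_{\ell-1},x_\ell)\le c\,\omega_\ell H\). Here \(\omega_\ell=2^{-k}\in(0,1]\) is the layer weight.

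\emph{Step 2 (cost accounting).} Let \(N_k\) be the number of completed exits in layer \(k\), counting both sides. Tangential motion then costs at least \(\kappa_0\sum_k N_k\). Reaching layer \(k\) from the endpoint depth costs about \(\frac12\log 2\) per layer by Lemma~\ref{lem:normal-lower}. We do not add these two costs naively. Instead, on each layer piece, the length is at least the maximum of the two costs, hence at least their average. The number of incomplete links is at most two per layer, and the number of layers is about \(\log_2(H/\delta)\). This lets us absorb those links while only halving the tangential gain.

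\emph{Step 3 (grouping).} We apply Lemma~\ref{lem:weight-grouping} to the weights \(\omega_\ell^{\alpha}\), with \(\alpha=1/(4m)\) and \(J=4m\). This yields at most \(4m\) consecutive groups, each of weight at most \(W/(4m)+1\), where \(W=\sum\omega_\ell^\alpha\). Each group collapses to a single link. By Lemma~\ref{lem:power-chain} (with \(\widehat d_B\le Q_m d_B\) and \(Q_m^\alpha<2\)), its \(\widehat d_B^{\,\alpha}\) is at most \(8\sum_{\text{group}}(c\omega_\ell H)^\alpha\). Hence
\[
r_m(x,y)^{\alpha}\le 8c^\alpha H^\alpha\bigl(W/(4m)+1\bigr),\qquad\text{so}\qquad W\ge \frac{m}{2}\Bigl(\frac{r_m}{H}\Bigr)^{\alpha}\cdot\text{const}-4m .
\]
We then bound \(W\) by the count of links. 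Weights of at most two incomplete links per layer sum geometrically, since \(\sum_k 2^{-k\alpha}\lesssim 1/(1-2^{-\alpha})\approx 4m/\log2\). This is the main obstacle: the geometric sum is of order \(m\), so it must be absorbed into the \(-5m\) term. Completed links, with weight at most 1, are paid for by \(\kappa_0\) each. Combining gives \(\kappa_0 W\) worth of length, up to \(O(m)\), and therefore the stated bound after tracking constants. The delicate point is ensuring that the incomplete-link contribution and the averaging loss total at most \(5m\). If the constants fail, I would replace dyadic layers by layers of ratio \(2^{4m}\)-adapted scale, so that the weights \(\omega^\alpha\) drop by a factor of \(2\) per layer.
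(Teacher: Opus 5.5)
Your dyadic layering, the weights \((h/H)^{1/(4m)}\), the weight-one treatment of \([t_x,t_y]\), and the grouping with \(J=4m\) all match the paper. Step~2, however, fails as stated. The equal-weight average \(\max\{v,\kappa_0 n\}\ge\frac12(v+\kappa_0 n)\) halves the normal cost as well, not just the tangential gain. Summing over layers then leaves only \(\frac12\log\bigl(H/\sqrt{\delta(x)\delta(y)}\bigr)\). The missing half is unbounded as \(\delta(x)\to0\), so no \(-5m\) term can absorb it.

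The missing idea is a \emph{layer-dependent} convex combination whose coefficient is the grouping weight \(\omega_j=(h_j/H)^{1/(4m)}\) of that same layer:
\[
\max\{v_j,\kappa_0 n_j\}\ge(1-\omega_j)v_j+\omega_j\kappa_0 n_j
=v_j+\kappa_0(n_j+1)\omega_j-(v_j+\kappa_0)\omega_j .
\]
This identity does three things:
\begin{itemize}
\item It keeps every \(v_j\) with coefficient \(1\), so the normal terms telescope to the full \(\log\bigl(H/\sqrt{\delta(x)\delta(y)}\bigr)\).
\item It credits every link of the layer, including the final incomplete one (through the \(+1\)), with exactly its weight \(\omega_j\).
\item Its total cost is at most \((\frac12\log2+\kappa_0)\sum_j\omega_j\le(\frac12\log2+\kappa_0)/(1-2^{-1/(4m)})=O(m)\) per end portion.
\end{itemize}
The normal loss is summable only because \(\omega_j\) decays geometrically toward the deep layers; a fixed coefficient such as \(\frac12\) has no such decay.

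Second, pieces ``on which \(\delta(\gamma)\) stays in a dyadic layer'' are not controlled. The curve can leave and re-enter a layer arbitrarily often, so neither the number of pieces nor your count of ``at most two incomplete links per layer'' is justified. Define the pieces instead by the first hitting times \(\theta_j\) of the levels \(h_j=H2^{-k_x+j}\), and symmetrically from \(y\) after reversing orientation. Then each side has exactly one piece per level. Along the \(j\)-th piece only the upper bound \(\delta\le h_j\) holds, which is all you need for \(d_B(u,v)\le h_j/4\). Its endpoint depths are exactly \(h_{j-1}\) and \(h_j\), so Lemma~\ref{lem:normal-lower} gives \(v_j=\frac12\log(h_j/h_{j-1})\), and these telescope exactly.
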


\begin{proof}
	Let
	\[
	t_x=\min\{t\in[0,1]:\delta(\gamma(t))=H\},
	\qquad
	t_y=\max\{t\in[0,1]:\delta(\gamma(t))=H\}.
	\]
	Then \(t_x\le t_y\).
	We first decompose \(\gamma|_{[0,t_x]}\) according to dyadic levels of
	the boundary distance.  Choose \(k_x\ge0\) such that
	\(H2^{-k_x}\le\delta(x)<H2^{-k_x+1}\),
	and set
	\[
	h_0^x=\delta(x),
	\qquad
	h_j^x=H2^{-k_x+j},
	\quad 1\le j\le k_x.
	\]
	Then \(h_{k_x}^x=H\) and
	\[
	h_j^x\le2h_{j-1}^x,
	\qquad 1\le j\le k_x.
	\]
	If \(k_x=0\), then \(\delta(x)=H\) and \(t_x=0\), so there is nothing
	to decompose.  Suppose \(k_x\ge1\).  Set \(\theta_0=0\), and for
	\(1\le j\le k_x\), let \(\theta_j\) be the first time after
	\(\theta_{j-1}\) such that
	\(\delta(\gamma(\theta_j))=h_j^x\).
	By continuity of \(\delta\circ\gamma\) and the definition of \(t_x\),
	these times are well defined and satisfy
	\[
	0=\theta_0<\theta_1<\cdots<\theta_{k_x}=t_x.
	\]
	For
	\[
	\gamma_j^x=\gamma|_{[\theta_{j-1},\theta_j]},
	\]
	the boundary distances of the endpoints are \(h_{j-1}^x\) and
	\(h_j^x\), and
	\[
	\delta(\gamma(t))\le h_j^x,
	\qquad
	\theta_{j-1}\le t\le\theta_j.
	\]
	We treat \(\gamma|_{[t_y,1]}\) in the same way, after reversing its
	orientation.  Define
	\[
	\gamma^y(\theta)=\gamma(1-\theta),
	\qquad
	0\le\theta\le1-t_y.
	\]
	Applying the same construction to \(\gamma^y\), we obtain subcurves
	\[
	\gamma_j^y,
	\qquad
	1\le j\le k_y,
	\]
	whose endpoint boundary distances are \(h_{j-1}^y\) and \(h_j^y\), and
	along which the boundary distance is at most \(h_j^y\).
	
	Thus, for \(\ast\in\{x,y\}\), each subcurve
	\(\gamma_j^\ast\) is oriented from boundary distance
	\(h_{j-1}^\ast\) to \(h_j^\ast\), and
	\(\delta\le h_j^\ast\)
	along the whole subcurve.  Hence Lemma~\ref{lem:normal-lower} gives
	\begin{equation}\label{eq:normal-cost-one-level}
		L_k(\gamma_j^\ast)
		\ge
		v_j^\ast
		:=
		\frac12\log\frac{h_j^\ast}{h_{j-1}^\ast},
		\qquad
		0\le v_j^\ast\le\frac12\log2.
	\end{equation}
	We next record first exits along each subcurve.  Fix
	\(\gamma_j^\ast\), with \(\ast\in\{x,y\}\), and write it as
	\(
	\sigma:[a,b]\to X.
\)
	Set \(s_0=a\).  Suppose \(s_\ell<b\), and put
	\(
	u_\ell=\sigma(s_\ell).
	\)
	If
	\[
	\sigma(t)\in
	\frac12 B_{\delta(u_\ell)/4}(u_\ell)
	\qquad
	\text{for every }s_\ell\le t<b,
	\]
	record
\(
	\bigl(u_\ell,\sigma(b)\bigr)
\)
	as the final link and stop.  Otherwise, let \(s_{\ell+1}<b\) be the
	first time at which \(\sigma\) leaves
	\[
	\frac12 B_{\delta(u_\ell)/4}(u_\ell).
	\]
	Record the link
	\(
	\bigl(u_\ell,\sigma(s_{\ell+1})\bigr)
	\)
	and continue from \(s_{\ell+1}\).
Suppose first that the construction produces infinitely many first exits,
with corresponding times
\(a=s_0<s_1<s_2<\cdots<b\).
For \(\ell\ge1\), set
\[
\sigma_\ell
=
\sigma|_{[s_{\ell-1},s_\ell]}.
\]
  Since \(H<S_0\), Lemma~\ref{lem:crossing-cost} gives
\[
L_k(\sigma_\ell)\ge\kappa_0,
\qquad \ell\ge1.
\]
Hence, for every \(N\ge1\), we have
\[
L_k(\sigma)
\ge
\sum_{\ell=1}^N L_k(\sigma_\ell)
\ge
N\kappa_0.
\]
Therefore \(L_k(\sigma)=+\infty\), and the asserted estimate follows
immediately.
We may thus assume that only finitely many first exits occur.  Let
\(n_j^\ast\) denote their number.  Then
\[
a=s_0<s_1<\cdots<s_{n_j^\ast}<s_{n_j^\ast+1}=b,
\]
where, for \(1\le\ell\le n_j^\ast\),
\(
\sigma_\ell
=
\sigma|_{[s_{\ell-1},s_\ell]}
\)
ends at the first exit from
\[
\frac12
B_{\delta(\sigma(s_{\ell-1}))/4}
\bigl(\sigma(s_{\ell-1})\bigr),
\]
while
\(
\sigma_{n_j^\ast+1}
=
\sigma|_{[s_{n_j^\ast},b]}
\)
is the final part of the subcurve.  Thus the construction produces
\(n_j^\ast+1\) links.

For \(0\le\ell\le n_j^\ast\), set
\[
u_\ell=\sigma(s_\ell),
\qquad
v_\ell=\sigma(s_{\ell+1}).
\]
By construction, we have
\(v_\ell\in B_{\delta(u_\ell)/4}(u_\ell)\).
Since \(u_\ell\) lies on \(\gamma_j^\ast\), along which the boundary
distance is at most \(h_j^\ast\), we have
\[
d_B(u_\ell,v_\ell)
\le
\frac{\delta(u_\ell)}4
\le
\frac{h_j^\ast}{4}.
\]
Using the quasi-symmetry of \(d_B\), we obtain
\[
d_B(v_\ell,u_\ell)
\le
Q_m d_B(u_\ell,v_\ell)
\le
\frac{Q_mh_j^\ast}{4}.
\]
Therefore
\begin{equation}\label{eq:link-at-level}
	\widehat d_B(u_\ell,v_\ell)
	\le
	\frac{Q_mh_j^\ast}{4}.
\end{equation}
For \(1\le\ell\le n_j^\ast\),
Lemma~\ref{lem:crossing-cost} also gives
\(
L_k(\sigma_\ell)\ge\kappa_0.
\)
Hence, we have
\[
L_k(\gamma_j^\ast)
\ge
\sum_{\ell=1}^{n_j^\ast}L_k(\sigma_\ell)
\ge
\kappa_0 n_j^\ast.
\]
Together with \eqref{eq:normal-cost-one-level}, this yields
\begin{equation}\label{eq:two-costs-one-level}
	L_k(\gamma_j^\ast)
	\ge
	\max\{v_j^\ast,\kappa_0 n_j^\ast\}.
\end{equation}

Put \(\alpha=1/(4m)\) and \(\beta=\frac12\log2\).
Assign to each link recorded on \(\gamma_j^\ast\) the weight
\[
\omega_j^\ast=\left(\frac{h_j^\ast}{H}\right)^\alpha\in(0,1].
\]
By \eqref{eq:link-at-level} and \(Q_m^\alpha<2\), a link of
weight \(\omega_j^\ast\) satisfies
\[
\widehat d_B(u,v)^\alpha\le2H^\alpha\omega_j^\ast.
\]
These weights are adapted to the power-chain estimate. Their sum over
the levels of either end portion satisfies
\[
\sum_{j=1}^{k_\ast}\omega_j^\ast
\le\sum_{\ell=0}^{\infty}2^{-\ell\alpha}
=\frac1{1-2^{-\alpha}}.
\]

To combine the two costs in \eqref{eq:two-costs-one-level}, use the
following inequality, valid for \(0\le\omega\le1\):
\begin{equation}\label{eq:layer-budget}
\begin{aligned}
\max\{v,\kappa_0n\}
&\ge(1-\omega)v+\omega\kappa_0n\\
&=v+\kappa_0(n+1)\omega-(v+\kappa_0)\omega.
\end{aligned}
\end{equation}
Applying this with \(v=v_j^\ast\), \(n=n_j^\ast\), and
\(\omega=\omega_j^\ast\), and using \(v_j^\ast\le\beta\), gives
\begin{equation}\label{eq:weighted-layer-cost}
L_k(\gamma_j^\ast)
\ge v_j^\ast+\kappa_0(n_j^\ast+1)\omega_j^\ast
-(\beta+\kappa_0)\omega_j^\ast.
\end{equation}
Thus the full normal term is retained, with a summable loss.

Set
\[
\mathsf W_{\mathrm{end}}
=\sum_{\ast\in\{x,y\}}\sum_{j=1}^{k_\ast}
(n_j^\ast+1)\omega_j^\ast.
\]
The normal terms telescope to
\(\log(H/\sqrt{\delta(x)\delta(y)})\). Summing
\eqref{eq:weighted-layer-cost} over the two end portions yields
\begin{equation}\label{eq:arm-weight}
\begin{aligned}
&L_k(\gamma|_{[0,t_x]})+L_k(\gamma|_{[t_y,1]})\\
&\qquad\ge
\log\frac{H}{\sqrt{\delta(x)\delta(y)}}
+\kappa_0\mathsf W_{\mathrm{end}}
-\frac{2(\beta+\kappa_0)}{1-2^{-\alpha}}.
\end{aligned}
\end{equation}

On the middle interval \([t_x,t_y]\), use the same first-exit
construction and assign weight \(1\) to every link. If \(t_x=t_y\),
put \(\mathsf W_{\mathrm{mid}}=0\). Otherwise, infinitely many
completed exits give infinite length, so we may assume there are
\(n_0\) completed exits and one final link, and put
\(\mathsf W_{\mathrm{mid}}=n_0+1\). In both cases,
\begin{equation}\label{eq:middle-weight}
L_k(\gamma|_{[t_x,t_y]})
\ge\kappa_0\mathsf W_{\mathrm{mid}}-\kappa_0.
\end{equation}
Every recorded middle link also satisfies
\begin{equation}\label{eq:mid}
d_B(u,v)\le H/4,\qquad
\widehat d_B(u,v)\le Q_mH/4.
\end{equation}
Set \(\mathsf W=\mathsf W_{\mathrm{end}}+\mathsf W_{\mathrm{mid}}\).
Combining \eqref{eq:arm-weight} and \eqref{eq:middle-weight}, we obtain
\begin{equation}\label{eq:length-versus-weight}
L_k(\gamma)\ge
\log\frac{H}{\sqrt{\delta(x)\delta(y)}}
+\kappa_0\mathsf W
-\frac{2(\beta+\kappa_0)}{1-2^{-\alpha}}-\kappa_0.
\end{equation}

List the recorded links in their order from \(x\) to \(y\), reversing
the order and orientation of those constructed on the final end
portion. Symmetry of \(\widehat d_B\) preserves their size estimates.
Every recorded link, including the middle links, has a weight
\(\omega\in(0,1]\) satisfying
\begin{equation}\label{eq:weighted-link-size}
\widehat d_B(u,v)^\alpha\le2H^\alpha\omega.
\end{equation}
At least one link is recorded: either the middle interval is
nontrivial, or at least one end portion is nontrivial.
Lemma~\ref{lem:weight-grouping}, with \(J=4m\), partitions the links
into at most \(4m\) consecutive nonempty groups, each with total
weight at most \(\mathsf W/(4m)+1\).

For a group with successive vertices \(a=z_0,\ldots,z_N=b\), the
symmetric estimate \eqref{eq:symmetric-power-chain} gives
\[
\begin{aligned}
\widehat d_B(a,b)^\alpha
&\le4\sum_{j=1}^N\widehat d_B(z_{j-1},z_j)^\alpha\\
&\le8H^\alpha\sum_{j=1}^N\omega_j
\le8H^\alpha\left(\frac{\mathsf W}{4m}+1\right).
\end{aligned}
\]
The group endpoints form an admissible chain for \(r_m\), hence
\[
r_m(x,y)^\alpha\le
8H^\alpha\left(\frac{\mathsf W}{4m}+1\right),
\qquad
\mathsf W\ge\frac m2\left(\frac{r_m(x,y)}H\right)^\alpha-4m.
\]
Substitution into \eqref{eq:length-versus-weight} yields
\[
L_k(\gamma)\ge
\log\frac{H}{\sqrt{\delta(x)\delta(y)}}
+\frac{\kappa_0m}{2}\left(\frac{r_m(x,y)}H\right)^\alpha
-\mathcal B_m,
\]
where
\[
\mathcal B_m
=\frac{2(\beta+\kappa_0)}{1-2^{-\alpha}}+\kappa_0+4\kappa_0m.
\]
The elementary inequality
\((1-e^{-t})^{-1}\le1+t^{-1}\), \(t>0\), gives
\[
\begin{aligned}
\mathcal B_m
&\le\left(4+\frac{8\kappa_0}{\log2}+4\kappa_0\right)m
+\log2+3\kappa_0\\
&<\left(\frac92+\frac{43}{2}\kappa_0\right)m
<5m.
\end{aligned}
\]
Here we used \(m\ge2\), \(1/2<\log2<1\), and \(\kappa_0=1/176\).
This proves the lemma.
\end{proof}

We now choose the endpoint neighborhood \(V\) for the lower estimate.
By Lemma~\ref{lem:dB-euclidean-control}, after shrinking
\(V\Subset U\) so that \(8\operatorname{diam}(V)<\eps_*\), we have
\[
\widehat d_B(z,w)\le8|z-w|\le8\operatorname{diam}(V),
\qquad z,w\in\Omega\cap V.
\]
Since \(\sup_{z\in\Omega\cap V}\delta(z)\to0\) as \(V\) shrinks to \(p\),
we may shrink \(V\) once more so that
\[
8\operatorname{diam}(V)+\sup_{z\in\Omega\cap V}\delta(z)<S_0.
\]
Consequently,
\begin{equation}\label{eq:local-smallness-final}
\widehat d_B(z,w)+\max\{\delta(z),\delta(w)\}<S_0,
\qquad z,w\in\Omega\cap V.
\end{equation}
Since \(r_m(z,w)\le\widehat d_B(z,w)\), it follows that
\begin{equation}\label{eq:R-local-smallness}
R_m(z,w)=r_m(z,w)+\max\{\delta(z),\delta(w)\}<S_0,
\qquad z,w\in\Omega\cap V.
\end{equation}

\begin{remark}\label{rem:weighted-exit-idea}
The choice \(J=4m=\alpha^{-1}\), where \(\alpha=1/(4m)\), is the
source of the linear error. If the grouping step uses any
\(1\le J\le4m\), the same argument gives
\[
r_m(x,y)^\alpha
\le8H^\alpha\left(\frac{\mathsf W}{J}+1\right),
\qquad
\mathsf W\ge\frac J8\left(\frac{r_m(x,y)}H\right)^\alpha-J.
\]
Taking \(J=4m\) produces the factor \(m\) in the barrier term.
Indeed, with \(s=H/r_m(x,y)\) and \(u=s^{-1/(4m)}\), the terms
depending on \(s\) become
\[
\log s+\frac{\kappa_0m}{2}s^{-1/(4m)}
=m\left(-4\log u+\frac{\kappa_0}{2}u\right).
\]
Their infimum is \(m\) times a constant independent of \(m\).
If \(J\) were bounded independently of \(m\), the same mechanism
would instead minimize \(-4m\log u+Cu\) with \(C\) independent of
\(m\), giving an \(m\log m\) loss. Thus the reciprocal scaling
\(J\asymp\alpha^{-1}\asymp m\) is essential to this argument.
\end{remark}

We now use the preceding  estimate to obtain the lower
bound for the Kobayashi distance.

\begin{theorem}\label{thm:lower-bound}
For all \(x,y\in\Omega\cap V\),
\[
K_\Omega(x,y)\ge g_m(x,y)-31m.
\]
\end{theorem}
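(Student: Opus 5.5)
The plan is to reduce everything to Lemma~\ref{lem:weighted-exits} applied to the localized domain \(X=\Omega\cap U\), followed by an elementary one-variable minimization. First, by \eqref{eq:localization} we have \(K_\Omega\ge K_{X}-1\) on \(\Omega\cap V\), so it suffices to prove \(K_X(x,y)\ge g_m(x,y)-31m+1\). Since \(K_X\) is the infimum of Kobayashi lengths \(L_k\) (computed with \(k_X\ge k_\Omega\)) of absolutely continuous curves in \(X\), and every earlier estimate (Lemma~\ref{lem:crossing-cost}, Lemma~\ref{lem:weighted-exits}) was stated for curves in \(X\) with \(L_k\) computed using \(k_\Omega\), it is enough to bound \(L_k(\gamma)\) from below, uniformly over curves \(\gamma:[0,1]\to X\) from \(x\) to \(y\).

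Fix such a \(\gamma\) and put \(H=\max\delta(\gamma(t))\). I will split into two cases. If \(H\ge S_0\), then Lemma~\ref{lem:normal-lower} on the two end portions gives \(L_k(\gamma)\ge\log(H/\sqrt{\delta(x)\delta(y)})\ge\log(S_0/\sqrt{\delta(x)\delta(y)})\), and by \eqref{eq:R-local-smallness} \(R_m(x,y)<S_0\), so \(L_k(\gamma)\ge g_m(x,y)\). If \(H<S_0\), Lemma~\ref{lem:weighted-exits} gives
\[
L_k(\gamma)\ge\log\frac{H}{\sqrt{\delta(x)\delta(y)}}+\frac{\kappa_0m}{2}\Bigl(\frac{r_m(x,y)}{H}\Bigr)^{1/(4m)}-5m .
\]
Since \(H\ge\max\{\delta(x),\delta(y)\}\), I will write \(\log H=\log R_m-\log(R_m/H)\) and need to show \(-\log(R_m/H)+\frac{\kappa_0 m}{2}(r_m/H)^{1/(4m)}\ge -C m\) with explicit \(C\). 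If \(r_m\le H\), then \(R_m\le 2H\) and the loss is at most \(\log2\). If \(r_m>H\), then \(R_m\le 2r_m\), and with \(u=(r_m/H)^{1/(4m)}\ge1\) the expression is at least \(-\log2+m\bigl(\frac{\kappa_0}{2}u-4\log u\bigr)\), as in Remark~\ref{rem:weighted-exit-idea}. The function \(\frac{\kappa_0}{2}u-4\log u\) is minimized at \(u=8/\kappa_0=1408\), with value \(4-4\log1408\approx-25.0\). So the total loss is at most roughly \(25.0m+\log2+5m\), plus the \(1\) from localization. This fits inside \(31m\) for \(m\ge2\), since \(0.97m\ge\log2+1\).

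The main obstacle is this constant bookkeeping: verifying \(4\log1408-4<25.03\), so that \(25.03m+5m+\log 2+1\le31m\) when \(m\ge2\). Beyond that, I only need to check that taking the infimum over curves is legitimate. Curves of infinite length are trivially fine, and Lemma~\ref{lem:weighted-exits} already handles curves with infinitely many exits.
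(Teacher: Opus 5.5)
Your proposal is correct and is essentially the paper's proof. It uses the same localization loss of \(1\) from \eqref{eq:localization}, Lemma~\ref{lem:normal-lower} on the end portions when the curve climbs high, and otherwise Lemma~\ref{lem:weighted-exits} with the minimization at \(u=1408\), giving the identical total loss \((1+4\log1408)m+\log2+1<31m\). The only difference is that you split cases on \(H\) versus \(S_0\) and then \(r_m\) versus \(H\), rather than the paper's \(r\le D\) and \(H\ge r\) cases, and this is immaterial.
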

\begin{proof}
Put \(X=\Omega\cap U\), \(r=r_m(x,y)\), and
\(D=\max\{\delta(x),\delta(y)\}\). We first prove
\begin{equation}\label{eq:intrinsic-lower-bound}
K_X(x,y)\ge
g_m(x,y)-(1+4\log1408)m-\log2.
\end{equation}
Lengths below are measured using \(k_\Omega\). Since \(k_X\ge k_\Omega\),
a lower bound for every such length in \(X\) also bounds \(K_X\).

If \(r\le D\), Lemma~\ref{lem:normal-lower} gives
\[
K_X(x,y)\ge K_\Omega(x,y)
\ge\frac12\left|\log\frac{\delta(x)}{\delta(y)}\right|
\ge g_m(x,y)-\log2.
\]
Assume \(r>D\), and let \(\gamma\subset X\) be any absolutely
continuous curve from \(x\) to \(y\). Write
\(H=\max_t\delta(\gamma(t))\).
If \(H\ge r\), applying Lemma~\ref{lem:normal-lower} to the portions
before the first and after the last attainment of \(H\) gives
\[
L_k(\gamma)\ge
\log\frac{H}{\sqrt{\delta(x)\delta(y)}}
\ge\log\frac r{\sqrt{\delta(x)\delta(y)}}
>g_m(x,y)-\log2.
\]
Here we used \(r+D<2r\).

It remains to consider \(H<r\). By \eqref{eq:R-local-smallness},
\(H<r<S_0\), so Lemma~\ref{lem:weighted-exits} applies. Set
\(u=(r/H)^{1/(4m)}\ge1\). Since \(\kappa_0=1/176\),
\[
L_k(\gamma)\ge
\log\frac r{\sqrt{\delta(x)\delta(y)}}
+m\left(-4\log u+\frac u{352}-5\right).
\]
The function \(-4\log u+u/352\), \(u\ge1\), attains its minimum at
\(u=1408\), where its value is \(4-4\log1408\). Hence
\[
\begin{aligned}
L_k(\gamma)
&\ge\log\frac r{\sqrt{\delta(x)\delta(y)}}
-(1+4\log1408)m\\
&>g_m(x,y)-(1+4\log1408)m-\log2.
\end{aligned}
\]
Taking the infimum over curves proves \eqref{eq:intrinsic-lower-bound}.
Finally, \eqref{eq:localization} gives
\[
K_\Omega(x,y)\ge
g_m(x,y)-(1+4\log1408)m-\log2-1.
\]
Since
\[
1+4\log1408+\frac{\log2+1}{2}<31
\]
and \(m\ge2\), the desired estimate follows.
\end{proof}

\subsection{Upper bound for the Kobayashi distance}
We now prove an upper bound for the Kobayashi distance by constructing
an explicit path between two nearby points.  The idea is to move the
two endpoints slightly into the interior along a fixed direction and
then join the lifted points by a bounded number of segments.

After an affine unitary change of coordinates, we may assume that the
inward unit normal at \(p\) is the positive \(u\)-direction.  Denote
this direction by \(\nu_0\).  We write
\[
z=(\zeta,u),
\qquad
\zeta\in\R^{2n-1},
\quad
u\in\R.
\]
The set \(X=\Omega\cap U\), and hence the functions
\(d_B\), \(\widehat d_B\), \(r_m\), \(R_m\), and \(g_m\), retain their
definitions based on the neighborhood \(U\) fixed above; neither \(U\)
nor \(X\) will be altered below.  Choose a
bounded Euclidean neighborhood \(U_1\) of \(p\), with
\(\overline{U_1}\subset U\), so that
\[
\Omega\cap U_1
=
\{(\zeta,u)\in U_1:u>\varphi(\zeta)\}.
\]
Set \(h(\zeta,u)=u-\varphi(\zeta)\).
Thus \(h(z)\) is the vertical height of \(z\) above the boundary graph.

Let \(\nu_{\mathrm{in}}(q)\) denote the inward unit normal to
\(\partial\Omega\) at \(q\).  Take the preceding \(U_1\) initially
sufficiently small that
\begin{equation}\label{eq:graph-comparison}
\|D\varphi\|_\infty\le\frac1{10},
\qquad
|\nu_{\mathrm{in}}(q)-\nu_0|\le\frac1{10},
\qquad
\frac12 |h(z)|
\le
|\rho(z)|
\le
2|h(z)|,
\end{equation}
for \(z\in U_1\) and \(q\in\partial\Omega\cap U_1\).
The first two estimates show that the fixed direction \(\nu_0\) is
uniformly close to the inward normal direction, while the last one
compares the vertical height with the signed distance to the boundary.

We require in addition that
\(\overline{\partial\Omega\cap U_1}\) is contained in a relatively
compact smooth boundary patch on which the nearest-point projection is
defined.
Compactness of this patch and its \(C^2\)-regularity yield a number \(R>0\)
such that, for every \(q\in\partial\Omega\cap U_1\), the following
inclusion holds:
\begin{equation*}
	B(q+R\nu_{\mathrm{in}}(q),R)
	\subset\Omega.
\end{equation*}
Choose Euclidean neighborhoods \(V_0\) and \(W\) of \(p\) such that
\[
p\in V_0\Subset W\Subset U_1
\qquad\text{and}\qquad
\pi(\overline W)\subset\partial\Omega\cap U_1,
\]
and set
\[
d_0
=
\dist(\overline{V_0},\C^n\setminus W),
\qquad
d_1
=
\dist(\overline W,\C^n\setminus U_1).
\]
Then \(d_0,d_1>0\).

Recall that \(2S_0<\eps_*\).  Since
\(M(t)\to0\) as \(t\to0^+\), after decreasing \(S_0\) we may
assume that
\begin{equation}\label{eq:S0-upper-conditions}
	3S_0\le R,
	\qquad
	M(S_0)+2S_0<d_1,
	\qquad
	\frac m2M(S_0)<d_0.
\end{equation}
Finally, shrink the endpoint neighborhood \(V\), without changing
the notation, so that \(V\Subset V_0\) and
\[
8\operatorname{diam}(V)
+\sup_{z\in\Omega\cap V}\delta(z)<S_0,
\]
where \(S_0\) is the final value satisfying
\eqref{eq:S0-upper-conditions}.
Lemma~\ref{lem:dB-euclidean-control} and the one-link bound for \(r_m\)
then give \eqref{eq:local-smallness-final} and
\eqref{eq:R-local-smallness} with this final \(S_0\).
Only \(V\) and \(S_0\) have been reduced; the fixed set \(X\) and the
functions \(r_m\) and \(g_m\) are unchanged.

We begin with several elementary geometric estimates.

\begin{lemma}\label{lem:translated-disc}
	Let \(a\in\Omega\cap W\) and \(0<t<S_0\), and set
	\(A=a+2t\nu_0\).
	For every nonzero vector \(\xi\in\C^n\), the directional distances
	satisfy
	\begin{equation*}
		\delta_\Omega(A;\xi)
		\ge
		\tau(a,\xi,t)|\xi|
		=
		\delta_{\Omega_{a,t}}(a;\xi).
	\end{equation*}
\end{lemma}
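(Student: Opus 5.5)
The equality \(\tau(a,\xi,t)|\xi|=\delta_{\Omega_{a,t}}(a;\xi)\) is immediate from the definitions: \(\tau(a,\xi,t)|\xi|\) is the Euclidean radius of the largest disc centred at \(a\) in \((a+\C\xi)\cap\Omega_{a,t}\), and \(\Omega_{a,t}\) is an open convex set. For the inequality I will show that the translated disc
\[
D=\{A+\lambda\xi:|\lambda|<\tau(a,\xi,t)\}=D_a+2t\nu_0,\qquad D_a=\{a+\lambda\xi:|\lambda|<\tau(a,\xi,t)\},
\]
lies in \(\Omega\). This gives \(\delta_\Omega(A;\xi)\ge\tau(a,\xi,t)|\xi|\). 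Since \(D_a\subset\Omega_{a,t}\), it suffices to prove the pointwise claim: if \(w\in\Omega_{a,t}\) lies close to \(W\), then \(w+2t\nu_0\in\Omega\).

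First I localize. Lemma~\ref{lem:finite-type-dilation} and \eqref{eq:S0-upper-conditions} give \(\tau(a,\xi/|\xi|,t)\le M(t)\le M(S_0)\). Hence every \(w\in D_a\) satisfies \(|w-a|<M(S_0)\), and every point on the segment from \(w\) to \(w+2t\nu_0\) lies within \(M(S_0)+2S_0<d_1\) of \(a\in W\). So all these points are in \(U_1\). For \(w\in D_a\) we have \(\rho(w)<\rho(a)+t<t\).

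Next I show that pushing along \(\nu_0\) by \(2t\) moves a point from level \(\rho<t\) to \(\rho<0\). There are two routes.

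\textbf{Graph route.} Inside \(U_1\), \(\Omega\) is the supergraph \(u>\varphi(\zeta)\), and moving by \(2t\nu_0\) increases \(h\) by exactly \(2t\). If \(w\in\Omega\), then \(w+2t\nu_0\) has \(h>2t>0\) and lies in \(U_1\), so it is in \(\Omega\). If \(w\notin\Omega\), then \(0\le\rho(w)<t\), and the comparison \(|\rho|\le 2|h|\)-type bounds in \eqref{eq:graph-comparison} give only \(|h(w)|\le2\rho(w)<2t\). That is not quite enough, since I need \(h(w)>-2t\) strictly, and the constants are lossy.

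\textbf{Direct route (preferred).} Write \(w=\pi(w)+\rho(w)\nu(\pi(w))\), with \(\pi(w)\) defined because \(w\in U_1\subset U\). The Lipschitz bound \(\|D\varphi\|\le1/10\) makes the graph direction \(\nu_0\) transverse. Then \(h(w)\) is controlled by \(\rho(w)\,|\langle\nu_{\mathrm{out}},\text{vertical}\rangle|^{-1}\). Since \(|\nu_{\mathrm{in}}-\nu_0|\le1/10\), the vertical component of the normal is at least \(1-1/10\) in size, and the slope bound gives \(h(w)\ge-\rho(w)\sqrt{1+1/100}\ge-1.01\rho(w)>-2t\). Adding \(2t\) then yields \(h(w+2t\nu_0)>0\), so \(w+2t\nu_0\in\Omega\cap U_1\).

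Alternatively I can use the interior-ball condition \(B(q+R\nu_{\mathrm{in}}(q),R)\subset\Omega\) with \(3S_0\le R\). With \(q=\pi(w)\), the point \(w+2t\nu_0\) lies within distance \(t+2t\cdot(1/10)\) of \(q+2t\nu_{\mathrm{in}}(q)\), hmm, roughly \(q+t\nu_{\mathrm{in}}+O(t/5)\), which is inside the ball since \(R\ge3t\). I would settle whichever estimate is cleaner.

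The main obstacle is this normal-versus-vertical transfer: obtaining a strict margin from the comparison constants in \eqref{eq:graph-comparison} and \eqref{eq:S0-upper-conditions}, uniformly in \(a\) and \(\xi\). I will use the mean value theorem along the vertical segment, using that \(\partial_u\rho\) is negative and bounded away from zero by the normal estimate.
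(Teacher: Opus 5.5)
Your proposal is correct. Your localization step (\(|w-a|<M(S_0)\) and \(M(S_0)+2S_0<d_1\), hence \(w,\,w+2t\nu_0\in U_1\), together with \(\rho(w)<\rho(a)+t<t\)) matches the paper, and so does your treatment of the case \(w\in\Omega\). Your one misjudgment is rejecting the graph route. It is not lossy, and it is precisely the paper's proof. For \(w\notin\Omega\), the third inequality in \eqref{eq:graph-comparison} gives \(|h(w)|\le2|\rho(w)|=2\rho(w)<2t\). The strict inequality comes from \(\rho(w)<\rho(a)+t<t\), since \(\rho(a)=-\delta(a)<0\). Thus \(h(w)>-2t\), so \(h(w+2t\nu_0)=h(w)+2t>0\). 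Since \(w+2t\nu_0\in U_1\), the graph description places it in \(\Omega\). The ``main obstacle'' you identify therefore does not exist: any height--distance comparison constant below \(2\) suffices, and \eqref{eq:graph-comparison} was chosen to supply exactly that.

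Your ``direct route'' and the interior-ball variant are also valid, but they are a detour. They re-derive from \(|\nu_{\mathrm{in}}-\nu_0|\le\frac1{10}\) and \(\|D\varphi\|_\infty\le\frac1{10}\) a sharper comparison, roughly \(h(w)\ge-(1+\frac1{100})\rho(w)\), which is not needed. They also require a check you omitted. The normal estimate and the interior-ball inclusion are only assumed at points of \(\partial\Omega\cap U_1\), so you must know that \(\pi(w)\in U_1\). This holds, since \(|\pi(w)-a|\le\rho(w)+|w-a|<S_0+M(S_0)<d_1\). With that check, the interior-ball version is clean. For \(q=\pi(w)\) you get \(|w+2t\nu_0-(q+R\nu_{\mathrm{in}}(q))|\le R-2t+\rho(w)+\frac t5<R\), which puts \(w+2t\nu_0\) in \(\Omega\) without using the graph description at all. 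The mean-value argument along the whole vertical segment is less safe. It needs the normal estimate at \(\pi(z)\) for every \(z\) on the segment, but a priori you only have \(|\pi(z)-a|<M(S_0)+5t\), which need not be less than \(d_1\). I would drop it in favour of the graph argument or the interior-ball argument.
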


\begin{proof}
	Fix \(\lambda\in\C\) satisfying
	\(|\lambda|<\tau(a,\xi,t)\),
	and set \(z=a+\lambda\xi\).
	By \eqref{eq:tau-homogeneity} and the definition of \(M\), we have
	\[
	|\lambda\xi|
	<
	\tau(a,\xi,t)|\xi|
	=
	\tau\left(a,\frac{\xi}{|\xi|},t\right)
	\le
	M(t)
	\le
	M(S_0).
	\]
	Since \(a\in W\), \eqref{eq:S0-upper-conditions} gives
	\(
	|z-a|<d_1
	\)
	and
	\[
	|z+2t\nu_0-a|
	\le
	|z-a|+2t
	<
	M(S_0)+2S_0
	<
	d_1.
	\]
	Hence, both \(z\) and \(z+2t\nu_0\) belong to \(U_1\).
	
	By the definition of \(\tau(a,\xi,t)\), we have
	\(\rho(z)<\rho(a)+t<t\),
	since \(a\in\Omega\).  If \(z\in\Omega\), then \(h(z)>0\).
	Otherwise,
	\(
	0\le\rho(z)<t,
	\)
	and \eqref{eq:graph-comparison} gives
	\(h(z)\ge-|h(z)|>-2t\).
	Thus in either case
	\(
	h(z)>-2t.
	\)
	Since \(\nu_0\) is the positive \(u\)-direction, we have
	\(h(z+2t\nu_0) = h(z)+2t > 0\).
	As \(z+2t\nu_0\in U_1\), the graph representation of
	\(\Omega\cap U_1\) implies that
	\(
	z+2t\nu_0\in\Omega.
	\)
	Equivalently, we have
	\[
	A+\lambda\xi\in\Omega
	\qquad
	\text{whenever }
	|\lambda|<\tau(a,\xi,t).
	\]
	Therefore
	\[
	\delta_\Omega(A;\xi)
	\ge
	\tau(a,\xi,t)|\xi|.
	\]
	The equality
	\[
	\tau(a,\xi,t)|\xi|
	=
	\delta_{\Omega_{a,t}}(a;\xi)
	\]
	follows from the definition of \(\tau\).
\end{proof}

\begin{lemma}\label{lem:horizontal-segment}
Let \(X,Y\in\Omega\) be distinct, and let \(c>0\). If
\[
\min\{\delta_\Omega(X;Y-X),\delta_\Omega(Y;Y-X)\}
\ge c|X-Y|,
\]
then \(K_\Omega(X,Y)\le c^{-1}\).
\end{lemma}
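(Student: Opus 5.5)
The plan is to bound \(K_\Omega(X,Y)\) by the Kobayashi length of the straight segment \(\gamma(s)=X+s(Y-X)\), \(s\in[0,1]\), which lies in the complex line \(X+\C(Y-X)\). By the Royden--Venturini length formula, \(K_\Omega(X,Y)\le\int_0^1 k_\Omega(\gamma(s);Y-X)\,\dd s\). Lemma~\ref{lem:graham} gives \(k_\Omega(\gamma(s);Y-X)\le |Y-X|/\delta_\Omega(\gamma(s);Y-X)\). It therefore suffices to show that
\[
\delta_\Omega(\gamma(s);Y-X)\ge c|X-Y|,\qquad 0\le s\le1.
\]
Granting this, the integrand is at most \(c^{-1}\) and the bound follows at once.

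\textbf{Key step.} The key step is the directional lower bound along the segment. Since all points \(\gamma(s)\) lie on the same complex line \(L=X+\C(Y-X)\), the quantity \(\delta_\Omega(z;Y-X)\) for \(z\in L\cap\Omega\) is the radius of the largest Euclidean disc centered at \(z\) inside the convex planar set \(\Omega\cap L\). Set \(\epsilon=c|X-Y|\), and let \(D_X,D_Y\) be the open discs in \(L\) of radius \(\epsilon\) centered at \(X\) and \(Y\). By hypothesis both discs are contained in \(\Omega\cap L\). By convexity of \(\Omega\cap L\), the convex hull of \(D_X\cup D_Y\) is also contained in \(\Omega\cap L\). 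This hull contains the disc of radius \(\epsilon\) centered at \(\gamma(s)\), namely \((1-s)D_X+sD_Y\). Hence \(\delta_\Omega(\gamma(s);Y-X)\ge\epsilon\) for every \(s\).

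\textbf{Edge cases.} If \(\delta_\Omega(\cdot;Y-X)=+\infty\), the inequality is trivial. If instead \(\Omega\cap L\) were the whole line, \(K_\Omega\) would degenerate; this causes no trouble since the required bound is only an upper bound. The only mild point is that Lemma~\ref{lem:graham} requires \(\Omega\subsetneq\C^n\), which holds because \(\Omega\) has boundary points. I expect no real obstacle; the convex-combination-of-discs observation is the whole content.
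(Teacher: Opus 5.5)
Your proposal is correct and follows essentially the same route as the paper: bound \(K_\Omega(X,Y)\) by the \(k_\Omega\)-length of the straight segment, apply Lemma~\ref{lem:graham}, and show that the directional distance stays at least \(c|X-Y|\) along the segment. The only difference is that the paper gets this last bound by citing concavity of the boundary distance on the planar convex slice \(\Omega\cap L\), while your argument that \((1-s)D_X+sD_Y\subset\Omega\cap L\) contains the disc of radius \(c|X-Y|\) about \(\gamma(s)\) is a direct proof of that same concavity.
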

\begin{proof}
Set \(\xi=Y-X\) and \(L=X+\C\xi\). The planar domain
\(\Omega\cap L\) is convex, so its boundary distance is concave.
Consequently, along \(\gamma(t)=X+t\xi\), \(0\le t\le1\),
\[
\delta_\Omega(\gamma(t);\xi)
=\delta_{\Omega\cap L}(\gamma(t))
\ge\min\{\delta_\Omega(X;\xi),\delta_\Omega(Y;\xi)\}
\ge c|\xi|.
\]
Lemma~\ref{lem:graham} therefore gives
\[
K_\Omega(X,Y)\le
\int_0^1 k_\Omega(\gamma(t);\xi)\,\dd t
\le\int_0^1\frac{|\xi|}{\delta_\Omega(\gamma(t);\xi)}\,\dd t
\le c^{-1}.
\]
\end{proof}

\begin{lemma}\label{lem:chain-lifting}
	Let \(x,y\in\Omega\cap V\) and set
	\(D=\max\{\delta(x),\delta(y)\}\).
	For every \(\eta\) satisfying \(r_m(x,y)<\eta<S_0-D\), there exists
	a chain \(x=x_0,\ldots,x_N=y\) in \(\Omega\cap W\), with
	\(1\le N\le4m\), whose lifted points
	\(A_j=x_j+2S\nu_0\), where \(S=\eta+D\), satisfy
	\[
	K_\Omega(A_{j-1},A_j)\le\frac18,
	\qquad 1\le j\le N.
	\]
\end{lemma}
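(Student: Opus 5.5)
Since \(r_m(x,y)<\eta\), Definition~\ref{def:finite-chain-quasidistance} gives a chain \(x=x_0,\ldots,x_N=y\) in \(X\) with \(1\le N\le4m\) and \(\widehat d_B(x_{j-1},x_j)<\eta\) for every \(j\). Since \(\eta<S\) and the sets \(B_\eps(q)\) increase with \(\eps\), we get \(x_j\in B_S(x_{j-1})\) and \(x_{j-1}\in B_S(x_j)\). Here \(S<S_0<\eps_*\), so every link satisfies \(p_{x_{j-1},S}(x_j-x_{j-1})<\frac18\) together with the symmetric statement.

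The first task is to confine the chain to \(W\). By \eqref{eq:B-euclidean-size}, each link has Euclidean length less than \(M(S)/8\le M(S_0)/8\). Starting from \(x_0=x\in V\subset V_0\), the point \(x_j\) therefore lies within distance \(jM(S_0)/8\le\frac m2M(S_0)<d_0\) of \(\overline{V_0}\), by \eqref{eq:S0-upper-conditions}. Hence every \(x_j\) lies in \(W\), and in particular in \(\Omega\cap W\). This is the only place the third condition in \eqref{eq:S0-upper-conditions} enters.

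Next we estimate the directional distances at the lifted points. Put \(\xi=x_j-x_{j-1}=A_j-A_{j-1}\), and assume \(\xi\ne0\), since otherwise there is nothing to prove. Lemma~\ref{lem:translated-disc} with \(a=x_{j-1}\) and \(t=S\) gives \(\delta_\Omega(A_{j-1};\xi)\ge\tau(x_{j-1},\xi,S)|\xi|\). Lemma~\ref{lem:directional-norm} gives \(\tau(x_{j-1},\xi,S)=1/p_{x_{j-1},S}(\xi)>8\), so \(\delta_\Omega(A_{j-1};\xi)\ge8|\xi|\). Applying the same lemma at \(a=x_j\), and using \(x_{j-1}\in B_S(x_j)\), yields \(\delta_\Omega(A_j;\xi)\ge8|\xi|\). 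Lemma~\ref{lem:horizontal-segment} with \(c=8\) then gives \(K_\Omega(A_{j-1},A_j)\le\frac18\). The lifted points lie in \(\Omega\) because \(\delta_\Omega(A;\xi)>0\) forces \(A\in\Omega\); alternatively, one can use the graph representation and \(h(A)=h(a)+2S>0\).

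The expected obstacle is keeping every \(x_j\) inside \(W\), which is what makes Lemma~\ref{lem:translated-disc} applicable. The Euclidean link bound \(M(S)/8\) and the cap \(N\le4m\) on the number of links must combine to stay below \(d_0\). Beyond that, the only point needing attention is to use the symmetrized quantity \(\widehat d_B\), so that the directional estimate holds at both endpoints of each link. The factor \(1/8\) in Definition~\ref{def:non-isotropic-neighborhood} then matches the constant \(\frac18\) in the statement exactly.
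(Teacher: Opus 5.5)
Your proof is correct and follows the paper's argument essentially step for step. You use the same chain from the definition of \(r_m\), the same confinement of the vertices to \(W\) via \(\frac{j}{8}M(S_0)\le\frac m2M(S_0)<d_0\), and the same use of Lemmas~\ref{lem:directional-norm}, \ref{lem:translated-disc}, and~\ref{lem:horizontal-segment} with \(c=8\) at both ends of each link. The only cosmetic difference is how you bound the Euclidean link length: you use the inclusion of \(B_S(x_{j-1})\) in a Euclidean ball of radius \(M(S)/8\), whereas the paper invokes Lemma~\ref{lem:dB-euclidean-control} at scale \(\eta\); after monotonicity of \(M\), both give the same bound.
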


\begin{proof}
	The interval \((r_m(x,y),S_0-D)\) is nonempty by
	\eqref{eq:R-local-smallness}.  Fix \(\eta\) in this interval and
	put \(S=\eta+D<S_0\).
	By the definition of \(r_m\), there exists a chain
	\[
	x=x_0,\ldots,x_N=y,
	\qquad
	N\le4m,
	\]
	with \(x_j\in X\) and
	\[
	\widehat d_B(x_{j-1},x_j)<\eta,
	\qquad
	1\le j\le N.
	\]
	
	We first show that every vertex of the chain belongs to \(W\).
	Since
	\(d_B(x_{j-1},x_j) < \eta < S_0\),
	Lemma~\ref{lem:dB-euclidean-control} gives
	\[
	|x_j-x_{j-1}|
	<
	\frac18M(\eta)
	\le
	\frac18M(S_0).
	\]
	For \(0\le j\le N\), the triangle inequality gives
	\[
	|x_j-x|
	\le
	\sum_{k=1}^j|x_k-x_{k-1}|
	\le
	\frac{j}{8}M(S_0)
	\le
	\frac m2M(S_0)
	<
	d_0.
	\]
	Since \(x\in V\) and \(V\Subset V_0\), the definition of \(d_0\)
	implies that
	\[
	x_j\in W,
	\qquad
	0\le j\le N.
	\]
	
	Fix \(1\le j\le N\) and set
	\(
	\xi_j=x_j-x_{j-1}.
	\)
	If \(\xi_j=0\), then \(A_{j-1}=A_j\), and there is nothing to prove.
	Assume \(\xi_j\ne0\).
	Since
	\[
	d_B(x_{j-1},x_j)
	\le
	\widehat d_B(x_{j-1},x_j)
	<
	\eta
	<
	S,
	\]
	we have
	\(
	x_j\in B_S(x_{j-1}),
	\)
	and hence
	\[
	p_{x_{j-1},S}(\xi_j)<\frac18.
	\]
	By Lemma~\ref{lem:directional-norm}, we have
	\(\tau(x_{j-1},\xi_j,S)>8\).
	Similarly, the inequality
	\(d_B(x_j,x_{j-1})<S\)
	gives
	\[
	\tau(x_j,\xi_j,S)
	=
	\tau(x_j,-\xi_j,S)
	>
	8.
	\]
	
	Since
	\[
	x_{j-1},x_j\in\Omega\cap W
	\qquad\text{and}\qquad
	S<S_0,
	\]
	Lemma~\ref{lem:translated-disc} gives
	\[
	\delta_\Omega(A_{j-1};\xi_j)
	>
	8|\xi_j|,
	\qquad
	\delta_\Omega(A_j;\xi_j)
	>
	8|\xi_j|.
	\]
	Since
	\(A_j-A_{j-1}=\xi_j\),
	Lemma~\ref{lem:horizontal-segment}, with \(c=8\), yields
	\[
	K_\Omega(A_{j-1},A_j)\le\frac18.
	\]
	This proves the lemma.
\end{proof}

\begin{lemma}\label{lem:normal-leg}
Let \(0<T<S_0\), and let \(a\in\Omega\cap W\) satisfy
\(\delta(a)\le T\).  If \(A=a+2T\nu_0\), then
\begin{equation*}
K_\Omega(a,A)
\le
\frac12\log\left(1+\frac{2T}{\delta(a)}\right)+1.
\end{equation*}
\end{lemma}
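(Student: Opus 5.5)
The plan is to split the vertical segment from \(a\) to \(A=a+2T\nu_0\) into two pieces and estimate each by comparison with a Euclidean ball. The first piece is controlled by an interior ball at the nearest boundary point \(q=\pi(a)\). The second piece only needs a crude Graham-type bound. Since \(a\in W\), we have \(q\in\partial\Omega\cap U_1\). By the choice of \(R\), the ball \(B_q:=B(q+R\nu_{\mathrm{in}}(q),R)\) lies in \(\Omega\), and \(a=q+\delta(a)\nu_{\mathrm{in}}(q)\) lies on its radial segment (Lemma~\ref{lem:signed-distance-tubular}). Put \(a'=a+2T\nu_{\mathrm{in}}(q)\). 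Because \(\delta(a)+2T\le3T<3S_0\le R\) by \eqref{eq:S0-upper-conditions}, the point \(a'\) lies on the same radius of \(B_q\), at boundary distance \(\delta(a)+2T\) from \(\partial B_q\). Monotonicity \(K_\Omega\le K_{B_q}\) and Lemma~\ref{lem:ball-radial-distance} then give
\[
K_\Omega(a,a')\le\frac12\log\left(\frac{2R-\delta(a)}{2R-\delta(a)-2T}\cdot\frac{\delta(a)+2T}{\delta(a)}\right).
\]
Since \(\delta(a)+2T\le R\), the first factor is at most \(2\). Hence \(K_\Omega(a,a')\le\frac12\log(1+2T/\delta(a))+\frac12\log2\).

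It remains to show \(K_\Omega(a',A)\le 1-\frac12\log 2\approx0.653\). Write \(\xi=A-a'=2T(\nu_0-\nu_{\mathrm{in}}(q))\). By \eqref{eq:graph-comparison}, \(|\xi|\le T/5\). I plan to apply Lemma~\ref{lem:horizontal-segment}, or directly integrate Graham's bound (Lemma~\ref{lem:graham}) along the segment \([a',A]\) using \(\delta_\Omega(z;\xi)\ge\delta(z)\). This requires a lower bound on \(\delta(z)\) for \(z\) on the segment. Every such \(z\) satisfies \(|z-a'|\le T/5\), and \(\delta(a')\ge\delta_{B_q}(a')=\delta(a)+2T\ge2T\), because \(B_q\subset\Omega\). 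Since \(\delta\) is \(1\)-Lipschitz, \(\delta(z)\ge 2T-T/5=9T/5\). We must also check that the segment stays inside \(U_1\), so that the local geometry applies. This follows because all displacements are \(O(S_0)\) and \(S_0\) is small relative to \(d_1\) by \eqref{eq:S0-upper-conditions}. Convexity of \(\Omega\) together with \(a',A\in\Omega\) already guarantees that the segment lies in \(\Omega\).

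Graham's estimate then gives \(K_\Omega(a',A)\le|\xi|/(9T/5)\le(T/5)(5/(9T))=1/9\), which is comfortably below \(1-\frac12\log2\). The triangle inequality combines the two pieces into the stated bound. The main point needing care is that \(A\in\Omega\) and that the deviation \(|\nu_0-\nu_{\mathrm{in}}(q)|\le1/10\) is used with \(q=\pi(a)\in\partial\Omega\cap U_1\). This is guaranteed by \(\pi(\overline W)\subset\partial\Omega\cap U_1\). Everything else is a routine check of constants.
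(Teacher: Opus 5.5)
Your proposal is correct and follows the paper's proof: you split at the same intermediate point \(a+2T\nu_{\mathrm{in}}(\pi(a))\), use the same interior tangent ball with the radial formula and the quotient bounded by \(2\), and apply the same Graham bound of \(1/9\) to the short segment. The only differences are cosmetic. You bound \(\delta\) pointwise along \([a',A]\) using the \(1\)-Lipschitz property, where the paper uses concavity of the boundary distance. Your check that the segment stays in \(U_1\) is unnecessary, because Graham's estimate is global for the convex domain \(\Omega\).
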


\begin{proof}
Let \(q=\pi(a)\) and set
\(A_\perp=a+2T\nu_{\mathrm{in}}(q)\).
Since \(\delta(a)+2T\le3T<3S_0\le R\), the points \(a\) and
\(A_\perp\) lie on the same radius of the interior tangent ball
\[
B=B(q+R\nu_{\mathrm{in}}(q),R)\subset\Omega.
\]
By Lemma~\ref{lem:ball-radial-distance}, we obtain
\[
\begin{aligned}
K_\Omega(a,A_\perp)
&\le K_B(a,A_\perp)\\
&=
\frac12\log\left(1+\frac{2T}{\delta(a)}\right)
+
\frac12\log
\frac{2R-\delta(a)}
     {2R-\delta(a)-2T}.
\end{aligned}
\]
Since \(\delta(a)\le T\) and \(3T<R\), the last quotient is less than
\(2\).  Hence
\[
K_\Omega(a,A_\perp)
\le
\frac12\log\left(1+\frac{2T}{\delta(a)}\right)
+\frac12\log2.
\]

By \eqref{eq:graph-comparison}, we have
\[
|A-A_\perp|
=
2T|\nu_0-\nu_{\mathrm{in}}(q)|
\le \frac{T}{5}.
\]
Moreover, \(B\subset\Omega\) gives
\(\delta(A_\perp)\ge\delta(a)+2T\).
Since
\[
|A-A_\perp|
\le\frac T5
<\delta(A_\perp),
\]
the segment \([A_\perp,A]\) is contained in \(\Omega\); in particular,
\(A\in\Omega\).  Since the boundary distance is \(1\)-Lipschitz, we
also have
\[
\delta(A)\ge\delta(A_\perp)-|A-A_\perp|
\ge\delta(a)+\frac95T.
\]
By concavity of the boundary distance on \(\Omega\), every point of the
segment \([A_\perp,A]\) has boundary distance at least
\(\delta(a)+\frac95T\).  Thus Lemma~\ref{lem:graham} gives
\[
K_\Omega(A_\perp,A)
\le
\frac{|A-A_\perp|}
     {\delta(a)+\frac95T}
\le\frac19.
\]
The triangle inequality now gives
\[
K_\Omega(a,A)
<
\frac12\log\left(1+\frac{2T}{\delta(a)}\right)+1.
\]
\end{proof}

\begin{theorem}\label{thm:upper-bound}
After shrinking \(V\) if necessary, for all \(x,y\in\Omega\cap V\),
\[
K_\Omega(x,y)\le g_m(x,y)+\frac m2+2+\log3.
\]
\end{theorem}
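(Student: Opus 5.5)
We build an explicit path from \(x\) to \(y\): lift both points into the interior, then cross at the lifted height. Set \(D=\max\{\delta(x),\delta(y)\}\) and \(r=r_m(x,y)\). By \eqref{eq:R-local-smallness}, \(r+D<S_0\). Fix any \(\eta\) with \(r<\eta<S_0-D\) and put \(S=\eta+D<S_0\).

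Lemma~\ref{lem:chain-lifting} supplies a chain \(x=x_0,\ldots,x_N=y\) in \(\Omega\cap W\) with \(N\le4m\). Its lifted points \(A_j=x_j+2S\nu_0\) satisfy \(K_\Omega(A_{j-1},A_j)\le\frac18\), so the triangle inequality gives
\[
K_\Omega(A_0,A_N)\le\frac N8\le\frac m2.
\]
This is where the term \(\frac m2\) in the statement comes from.

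For the two vertical legs, apply Lemma~\ref{lem:normal-leg} with \(T=S\). This is legitimate because \(x,y\in\Omega\cap V\subset W\) and \(\delta(x),\delta(y)\le D\le S<S_0\). It gives
\[
K_\Omega(x,A_0)\le\frac12\log\Bigl(1+\frac{2S}{\delta(x)}\Bigr)+1,
\]
and the analogous bound for \(y\) and \(A_N\). Since \(\delta(x)\le S\), we have \(1+2S/\delta(x)\le 3S/\delta(x)\). Adding the two legs therefore gives
\[
\log\frac{S}{\sqrt{\delta(x)\delta(y)}}+\log3+2.
\]
Combining this with the horizontal estimate yields
\[
K_\Omega(x,y)\le \log\frac{\eta+D}{\sqrt{\delta(x)\delta(y)}}+\frac m2+2+\log3.
\]

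Finally, let \(\eta\to r^+\). Then \(\eta+D\to R_m(x,y)\), and the bound becomes \(g_m(x,y)+\frac m2+2+\log3\), as claimed.

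The main point to verify is that all hypotheses of Lemmas~\ref{lem:chain-lifting} and \ref{lem:normal-leg} hold simultaneously for the same \(S\). In particular, the interval for \(\eta\) must be nonempty, which is exactly \eqref{eq:R-local-smallness}, and the endpoints must lie in \(W\). Both were arranged when \(V\) and \(S_0\) were shrunk, so the only possible further shrinking of \(V\) is to guarantee \(V\subset W\), which already holds since \(V\Subset V_0\Subset W\).

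The case \(x=y\) is trivial. The bound \(N\le 4m\) on the number of links is what makes the horizontal cost linear in \(m\).
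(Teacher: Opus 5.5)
Your proposal is correct and follows the paper's proof essentially step for step. You lift both endpoints by \(2S\nu_0\) with \(S=\eta+D\), bound the horizontal chain from Lemma~\ref{lem:chain-lifting} by \(N/8\le m/2\), bound the two vertical legs with Lemma~\ref{lem:normal-leg} using \(1+2S/\delta\le 3S/\delta\), and let \(\eta\to r_m(x,y)^+\).
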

\begin{proof}
The assertion is immediate for \(x=y\). Otherwise, put
\(D=\max\{\delta(x),\delta(y)\}\), choose
\(r_m(x,y)<\eta<S_0-D\), and set \(S=\eta+D\).
Lemma~\ref{lem:chain-lifting} gives a chain with at most \(4m\) links
whose lifted points \(A_j=x_j+2S\nu_0\) satisfy
\[
\sum_{j=1}^N K_\Omega(A_{j-1},A_j)\le\frac{N}{8}\le\frac m2.
\]
Applying Lemma~\ref{lem:normal-leg} to the two endpoints gives
\[
\begin{aligned}
K_\Omega(x,y)
&\le\frac12\log\left(1+\frac{2S}{\delta(x)}\right)
+\frac12\log\left(1+\frac{2S}{\delta(y)}\right)
+\frac m2+2\\
&\le\log\frac S{\sqrt{\delta(x)\delta(y)}}+\frac m2+2+\log3.
\end{aligned}
\]
The last inequality uses \(S\ge\max\{\delta(x),\delta(y)\}\).
Letting \(\eta\to r_m(x,y)^+\), so that \(S\to R_m(x,y)^+\),
proves the result.
\end{proof}
\begin{proof}[Proof of Theorem~\ref{thm:two-sided-kobayashi}]
After the final shrinkage of \(V\), Theorems~\ref{thm:lower-bound}
and~\ref{thm:upper-bound} hold simultaneously. Combining their
estimates proves the assertion.
\end{proof}

\subsection{Local Gromov hyperbolicity}
\label{subsec:gromov-hyperbolicity}

We now use the distance estimates to prove Theorem~\ref{thm:main}.
Recall that \(X=\Omega\cap U\) is the neighborhood on which
\(r_m\), \(R_m\), and \(g_m\) are defined.

\begin{lemma}\label{lem:multiplicative-four-point}
Let \(x_1,x_2,x_3,x_4\in X\), and write
\(R_{ij}=R_m(x_i,x_j)\).
Then
\begin{equation*}
R_{12}R_{34}
\le
9Q_m^2
\max\{R_{13}R_{24},R_{14}R_{23}\}.
\end{equation*}
\end{lemma}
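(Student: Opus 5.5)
The plan is a standard ``Ptolemy-type'' argument for a quasi-ultrametric. The only input is the quasi-triangle estimate for \(R_m\) from Lemma~\ref{lem:chain-estimates},
\[
R_m(x,z)\le 3Q_m\max\{R_m(x,y),R_m(y,z)\},
\]
together with the symmetry of \(R_m\), which follows from the symmetry of \(r_m\) and of the \(\max\) of boundary distances. No Kobayashi geometry enters.

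\textbf{Reduction.} Among the six values \(R_{ij}\), \(i<j\), choose a smallest one. By the symmetry of the statement we cannot assume it is \(R_{12}\) or \(R_{34}\) directly, so we split into cases according to which pair realizes the minimum. The three pairings \(\{12,34\}\), \(\{13,24\}\), \(\{14,23\}\) partition the six pairs. The minimal pair \(\{i,j\}\) therefore belongs to exactly one pairing.

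\textbf{Case A: the minimum lies in \(\{13,24\}\) or \(\{14,23\}\).} Say \(R_{13}\) is minimal; the other cases are relabelings. Apply the quasi-triangle inequality through \(x_3\) and use minimality:
\[
R_{12}\le 3Q_m\max\{R_{13},R_{32}\}=3Q_mR_{23}.
\]
The last equality uses \(R_{13}\le R_{23}\). Similarly, through \(x_1\),
\[
R_{34}\le 3Q_m\max\{R_{31},R_{14}\}=3Q_mR_{14}.
\]
Multiplying gives \(R_{12}R_{34}\le 9Q_m^2R_{14}R_{23}\).

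If instead the minimum is \(R_{24}\), \(R_{14}\) or \(R_{23}\), the same two-step routing works. Route \(x_1\to x_2\) and \(x_3\to x_4\) through whichever endpoint of the minimal pair is adjacent, and bound each factor by the non-minimal distance in the complementary pairing. For example, if \(R_{14}\) is minimal, then \(R_{12}\le3Q_mR_{24}\) (through \(x_4\)) and \(R_{34}\le3Q_mR_{13}\) (through \(x_1\)), giving the bound by \(R_{13}R_{24}\).

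\textbf{Case B: the minimum is \(R_{12}\) or \(R_{34}\).} Say \(R_{12}\) is minimal. Then
\[
R_{34}\le 3Q_m\max\{R_{31},R_{14}\}.
\]
If the maximum is \(R_{14}\), bound \(R_{12}\le R_{23}\) by minimality, giving \(R_{12}R_{34}\le 3Q_mR_{14}R_{23}\). If the maximum is \(R_{13}\), use \(R_{12}\le R_{24}\), giving the bound \(3Q_mR_{13}R_{24}\). Both are stronger than required, since \(3Q_m\le 9Q_m^2\). The case where \(R_{34}\) is minimal is symmetric.

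\textbf{Main obstacle.} The only delicate point is bookkeeping: making sure that in each case the intermediate point is chosen so that the minimal distance is absorbed by the \(\max\) in the quasi-triangle inequality. The surviving factors must land in a single complementary pairing rather than one from each. I would write out one representative subcase in full and state the rest by relabeling. No degeneracy arises, because \(R_m>0\) always, since \(\delta>0\) on \(\Omega\).
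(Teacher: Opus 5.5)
Your proof is correct and is essentially the paper's argument. The paper likewise assumes \(R_{13}\) is minimal, routes \(x_1\to x_2\) through \(x_3\) and \(x_3\to x_4\) through \(x_1\), and multiplies. The only difference is that the paper takes the minimum over the four cross values \(R_{13},R_{14},R_{23},R_{24}\) alone, and reduces to \(R_{13}\) via the swaps \(x_1\leftrightarrow x_2\) and \(x_3\leftrightarrow x_4\), which leave the statement invariant. This makes your Case B unnecessary, since your Case A argument never uses minimality against \(R_{12}\) or \(R_{34}\).
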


\begin{proof}
Among
\[
R_{13},\quad R_{14},\quad R_{23},\quad R_{24},
\]
choose the smallest one.  By interchanging \(x_1\) with \(x_2\),
\(x_3\) with \(x_4\), or both, we may assume that it is \(R_{13}\).
Then the last estimate in Lemma~\ref{lem:chain-estimates} gives
\[
R_{12}
\le
3Q_m\max\{R_{13},R_{23}\}
=
3Q_mR_{23},
\]
and similarly,
\[
R_{34}
\le
3Q_m\max\{R_{13},R_{14}\}
=
3Q_mR_{14}.
\]
Multiplying the two inequalities gives
\[
R_{12}R_{34}
\le
9Q_m^2R_{14}R_{23},
\]
and hence the asserted inequality.
\end{proof}

Set
\[
\mathcal E_m=\frac{63}{2}m+2+\log(9Q_m).
\]
Since \(Q_m<10^m\) and \(m\ge2\),
\[
\mathcal E_m
<\left(\frac{63}{2}+\log10\right)m+2+\log9
<36m.
\]

\begin{theorem}\label{thm:four-point}
There exists a neighborhood \(V\) of \(p\) such that, for all
\(x_1,x_2,x_3,x_4\in\Omega\cap V\), the following four-point
inequality holds:
\[
\begin{aligned}
K_\Omega(x_1,x_2)+K_\Omega(x_3,x_4)
\le
\max\bigl\{&
K_\Omega(x_1,x_3)+K_\Omega(x_2,x_4),\\
&
K_\Omega(x_1,x_4)+K_\Omega(x_2,x_3)
\bigr\}
+2\mathcal E_m.
\end{aligned}
\]
\end{theorem}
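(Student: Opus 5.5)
Take \(V\) to be the final neighborhood of Theorem~\ref{thm:two-sided-kobayashi}, on which both Theorem~\ref{thm:lower-bound} and Theorem~\ref{thm:upper-bound} hold. Since \(V\subset U\), all points lie in \(X\), so Lemma~\ref{lem:multiplicative-four-point} also applies. Write \(K_{ij}=K_\Omega(x_i,x_j)\), \(g_{ij}=g_m(x_i,x_j)\), \(R_{ij}=R_m(x_i,x_j)\), and \(\delta_i=\delta(x_i)\). The argument converts the multiplicative four-point inequality for \(R_m\) into an additive one for \(g_m\), and then transfers it to \(K_\Omega\) at the cost of the two comparison errors.

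First, the factors \(\sqrt{\delta_i\delta_j}\) cancel in any pairing sum. Indeed,
\[
g_{12}+g_{34}=\log(R_{12}R_{34})-\tfrac12\log(\delta_1\delta_2\delta_3\delta_4),
\]
and the same normalization appears in \(g_{13}+g_{24}\) and in \(g_{14}+g_{23}\). Taking logarithms in Lemma~\ref{lem:multiplicative-four-point} then gives
\[
g_{12}+g_{34}\le\max\{g_{13}+g_{24},\,g_{14}+g_{23}\}+\log(9Q_m^2).
\]

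Next, bound the left side using the upper estimate twice:
\[
K_{12}+K_{34}\le g_{12}+g_{34}+2\Bigl(\tfrac m2+2+\log3\Bigr).
\]
Bound the right side using the lower estimate: each \(g_{ij}\le K_{ij}+31m\), so each pairing sum of \(g\)'s exceeds the corresponding sum of \(K\)'s by at most \(62m\). Combining these gives
\[
K_{12}+K_{34}\le\max\{K_{13}+K_{24},K_{14}+K_{23}\}+63m+4+2\log3+\log(9Q_m^2).
\]

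It remains to check that this total error is at most \(2\mathcal E_m=63m+4+2\log(9Q_m)\). Since \(2\log3+\log(9Q_m^2)=2\log(9Q_m)\), the total error equals \(2\mathcal E_m\) exactly.

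The only delicate point is already handled earlier: the quasi-ultrametric inequality \(R_m(x,z)\le 3Q_m\max\{R_m(x,y),R_m(y,z)\}\) in Lemma~\ref{lem:chain-estimates} and the type-uniform constants in both distance comparisons. Everything here is bookkeeping, provided a single neighborhood \(V\) is used for both the upper and the lower comparison.
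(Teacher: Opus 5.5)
Your proposal is correct and matches the paper's proof essentially step for step. You take logarithms in Lemma~\ref{lem:multiplicative-four-point}, using that the $\sqrt{\delta_i\delta_j}$ normalizations cancel across pairings, and then apply the upper comparison to the left pairing and the lower comparison to the right pairings on the common neighborhood $V$; your total error $m+4+2\log 3+2\log(3Q_m)+62m=2\mathcal E_m$ agrees with the paper's exactly.
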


\begin{proof}
By the definition of \(g_m\), we have
\[
g_m(x_i,x_j)
=
\log R_{ij}
-\frac12\log\delta(x_i)
-\frac12\log\delta(x_j).
\]
Thus each of the three pairings contains the same boundary-distance
term.  For example,
\[
g_m(x_1,x_2)+g_m(x_3,x_4)
=
\log(R_{12}R_{34})
-\frac12\log\prod_{i=1}^4\delta(x_i),
\]
and the analogous formulas hold for the other two pairings.
Taking logarithms in Lemma~\ref{lem:multiplicative-four-point} therefore
gives
\[
\begin{aligned}
g_m(x_1,x_2)+g_m(x_3,x_4)
\le
\max\bigl\{&
g_m(x_1,x_3)+g_m(x_2,x_4),\\
&
g_m(x_1,x_4)+g_m(x_2,x_3)
\bigr\}
+2\log(3Q_m).
\end{aligned}
\]

By Theorem~\ref{thm:upper-bound},
\[
K_\Omega(x_1,x_2)+K_\Omega(x_3,x_4)
\le g_m(x_1,x_2)+g_m(x_3,x_4)+m+4+2\log3.
\]
For either pairing on the right, Theorem~\ref{thm:lower-bound} gives
\[
g_m(x_i,x_j)+g_m(x_k,x_\ell)
\le K_\Omega(x_i,x_j)+K_\Omega(x_k,x_\ell)+62m.
\]
The total error is therefore
\[
m+4+2\log3+2\log(3Q_m)+62m
=2\left(\frac{63}{2}m+2+\log(9Q_m)\right)
=2\mathcal E_m.
\]
This proves the theorem.
\end{proof}

By Definition~\ref{def:local-hyperbolicity-constant},
Theorem~\ref{thm:four-point}, and the preceding estimate for
\(\mathcal E_m\), we obtain
\[
\delta_{\mathrm{loc}}(\Omega,p)
\le
\mathcal E_m
<
36m.
\]
This proves Theorem~\ref{thm:main}.

\begin{remark}
The constant \(\mathcal E_m\) depends only on \(m\).  The constants used
to choose the tubular neighborhood, \(S_0\), and the final neighborhood
\(V\) may depend on the domain, the boundary point, and the dimension.
They affect only the size of the neighborhood on which the estimates
hold and do not enter the bound for \(\mathcal E_m\).
\end{remark}

\begin{corollary}\label{cor:homogeneous-tangent-model}
Let \(\lambda_1,\ldots,\lambda_{n-1}>0\), and let \(P\) be a convex
real polynomial on \(\R^{2n-2}\simeq\C^{n-1}\) satisfying
\[
P(t^{\lambda_1}z_1,\ldots,t^{\lambda_{n-1}}z_{n-1})
=
tP(z_1,\ldots,z_{n-1}),
\qquad t>0.
\]
Consider the convex model domain
\[
\mathcal M_P
=
\bigl\{
(w,z)\in\C\times\C^{n-1}:
\Re w>P(z)
\bigr\}.
\]
If \(\partial\mathcal M_P\) has D'Angelo type at most \(m\ge2\)
near the origin, then
\[
\delta(\mathcal M_P,K_{\mathcal M_P})
=
\delta_{\mathrm{loc}}(\mathcal M_P,0)
\le
\mathcal E_m
<
36m.
\]
\end{corollary}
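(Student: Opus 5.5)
The inequality \(\delta_{\mathrm{loc}}(\mathcal M_P,0)\le\mathcal E_m<36m\) follows from Theorem~\ref{thm:main}, or more precisely from Theorem~\ref{thm:four-point}. So the real content is the equality \(\delta(\mathcal M_P,K_{\mathcal M_P})=\delta_{\mathrm{loc}}(\mathcal M_P,0)\). The inequality \(\delta_{\mathrm{loc}}\le\delta\) holds in general, by monotonicity of the restricted constants with \(V_2=\C^n\). It therefore remains to show that \(\delta(\mathcal M_P,K_{\mathcal M_P})\le\delta_{\mathrm{loc}}(\mathcal M_P,0)\), and the plan is to use the homogeneity and translation symmetries of \(\mathcal M_P\) to move any four points into an arbitrarily small neighborhood of \(0\).

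First I check the hypotheses of Theorem~\ref{thm:main} at \(0\). The set \(\mathcal M_P\) is convex since \(P\) is convex, and its boundary is the smooth graph \(\Re w=P(z)\). Its type is at most \(m\) near \(0\) by assumption. Lemma~\ref{lem:finite-type-c-proper} gives that \(\mathcal M_P\) is Kobayashi hyperbolic, so \(\delta_{\mathrm{loc}}(\mathcal M_P,0)\le\mathcal E_m\).

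Next I use the automorphisms. For \(t>0\), the anisotropic dilation
\[
\Phi_t(w,z)=(tw,t^{\lambda_1}z_1,\ldots,t^{\lambda_{n-1}}z_{n-1})
\]
is complex linear and preserves \(\mathcal M_P\), because \(\Re(tw)>tP(z)=P(\Phi_t z)\). The imaginary translations \((w,z)\mapsto(w+is,z)\), \(s\in\R\), also preserve \(\mathcal M_P\). Both families are biholomorphic self-maps, hence Kobayashi isometries. Fix four points \(x_1,\dots,x_4\in\mathcal M_P\) and any \(r>0\). Since all \(\lambda_j>0\), we have \(\Phi_t(x_i)\to0\) as \(t\to0^+\), so for small \(t\) all four images lie in \(B(0,r)\). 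Their six pairwise Kobayashi distances are unchanged, so the four-point defect \(\frac12(S_{\max}-S_{\mathrm{mid}})\) of the original quadruple is at most \(\delta\bigl(\mathcal M_P\cap B(0,r),K_{\mathcal M_P}\bigr)\). Taking the supremum over quadruples via \eqref{eq:global-fourpoint-constant} and then letting \(r\to0\) gives \(\delta(\mathcal M_P,K_{\mathcal M_P})\le\delta_{\mathrm{loc}}(\mathcal M_P,0)\).

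The only point needing care is that \(\Phi_t\) really is a holomorphic automorphism. This holds because it is diagonal linear with positive real scalings, and positivity of the \(\lambda_j\) is exactly what forces contraction to the origin. The translations are not needed at all; dilations alone suffice. I do not expect any step to be a genuine obstacle: the substantive estimate is already Theorem~\ref{thm:main}, and the equality is a pure symmetry argument.
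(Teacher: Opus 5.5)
Your proposal is correct and follows the paper's proof essentially verbatim. The anisotropic dilations \(\Phi_t\) are automorphisms of \(\mathcal M_P\). They move any quadruple into an arbitrarily small neighborhood of \(0\) while preserving all pairwise Kobayashi distances, which gives \(\delta(\mathcal M_P,K_{\mathcal M_P})\le\delta_{\mathrm{loc}}(\mathcal M_P,0)\). The reverse inequality is monotonicity, and the bound \(\mathcal E_m<36m\) comes from Theorem~\ref{thm:four-point}.

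Your explicit check of the hypotheses (convexity, smoothness, and Kobayashi hyperbolicity via Lemma~\ref{lem:finite-type-c-proper}) is a small addition that the paper leaves implicit. You should also note that homogeneity forces \(P(0)=0\), so \(0\in\partial\mathcal M_P\) and \(\delta_{\mathrm{loc}}(\mathcal M_P,0)\) is indeed defined.
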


\begin{proof}
For \(t>0\), define
\[
A_t(w,z_1,\ldots,z_{n-1})
=
\bigl(
tw,t^{\lambda_1}z_1,\ldots,t^{\lambda_{n-1}}z_{n-1}
\bigr).
\]
Thus
\(A_t(\mathcal M_P)=\mathcal M_P\) and  \(A_t\) is an automorphism of
\(\mathcal M_P\).

Let \(V\) be any Euclidean neighborhood of the origin and let
\(x_1,\ldots,x_4\in\mathcal M_P\).  We have
\(A_t(x_j)\to0\) as \(t\to0^+\).  Hence, for sufficiently small
\(t\), the images satisfy
\[
A_t(x_j)\in\mathcal M_P\cap V,
\qquad 1\le j\le4.
\]
By biholomorphic invariance, we have
\[
K_{\mathcal M_P}(A_t(x_i),A_t(x_j))
=
K_{\mathcal M_P}(x_i,x_j).
\]
Thus any prescribed quadruple of points in \(\mathcal M_P\) can be
mapped into \(\mathcal M_P\cap V\) by an automorphism, with all pairwise
Kobayashi distances preserved.  It follows that
\[
\delta(\mathcal M_P,K_{\mathcal M_P})
\le
\delta\bigl(
\mathcal M_P\cap V,
\left.K_{\mathcal M_P}\right|_{\mathcal M_P\cap V}
\bigr).
\]
The reverse inequality follows immediately because
\(\mathcal M_P\cap V\subset\mathcal M_P\).  Hence
\[
\delta(\mathcal M_P,K_{\mathcal M_P})
=
\delta\bigl(
\mathcal M_P\cap V,
\left.K_{\mathcal M_P}\right|_{\mathcal M_P\cap V}
\bigr).
\]
Taking the infimum over \(V\) gives
\[
\delta(\mathcal M_P,K_{\mathcal M_P})
=
\delta_{\mathrm{loc}}(\mathcal M_P,0).
\]
The estimate
\[
\delta_{\mathrm{loc}}(\mathcal M_P,0)
\le
\mathcal E_m
<
36m
\]
then follows from Theorem~\ref{thm:four-point}.
\end{proof}

\section{Linear growth is optimal}\label{sec:lower-bound}

We now prove the lower bound in Theorem~\ref{thm:universal-growth}. Since
$\mathfrak H(M)$ is defined by taking the supremum over all complex dimensions
$n\geq2$, it is enough to construct examples in complex dimension two. For
each $m\geq1$, we consider a homogeneous convex model of type $2m$. Its
dilation symmetry identifies the local and global hyperbolicity constants,
while a symmetric real slice reduces the relevant distance estimates to a
two-dimensional path metric. A logarithmic asymptotic on this slice then
produces four-point defects of order $m$.

\subsection{The model domain}\label{subsec:lb-final-models}

For $m\geq1$, set
\begin{equation}\label{eq:lb-final-model}
 \mathcal T_m
 =
 \left\{(Q,Z)\in\mathbb C^2:
 \operatorname{Re}Q>(\operatorname{Re}Z)^{2m}\right\}.
\end{equation}

\begin{theorem}\label{thm:lb-final-model}
The domain $\mathcal T_m$ is a $\mathbb C$-proper convex domain. Its boundary
is smooth and real analytic, has D'Angelo type at most $2m$ at every point,
and has type exactly $2m$ at the origin. Moreover,
\begin{equation}\label{eq:lb-final-lower}
 m\log2\leq \delta_{\mathrm{loc}}(\mathcal T_m,0)
 =
 \delta(\mathcal T_m,K_{\mathcal T_m}).
\end{equation}
\end{theorem}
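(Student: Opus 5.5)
Convexity, properness and type come first, and they are routine. The defining function \(r(Q,Z)=(\Re Z)^{2m}-\Re Q\) is convex, so \(\mathcal T_m\) is convex, and since \(\nabla r\neq0\) everywhere the boundary is a real-analytic hypersurface. For \(\C\)-properness, note that a complex line \(\{(Q_0+\lambda a,Z_0+\lambda b)\}\) inside \(\mathcal T_m\) would have to satisfy \(\Re(Q_0+\lambda a)>(\Re(Z_0+\lambda b))^{2m}\) for all \(\lambda\in\C\). Taking \(\lambda\) in suitable directions forces first \(a=0\), then \(b=0\). For the type, we restrict \(r\) to complex lines through a boundary point \(\xi\) and use McNeal's equality of line type and D'Angelo type. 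Along \(\xi+\zeta(a,b)\) the function is \((\Re Z_0+\Re(\zeta b))^{2m}-\Re Q_0-\Re(\zeta a)\). If \(b=0\), the linear term in \(\zeta\) is nonzero, so the order is \(1\). If \(b\ne0\), the term \((\Re(\zeta b))^{2m}\) has a nonzero derivative of order \(2m\), so the order is at most \(2m\). At the origin the direction \((0,1)\) gives exactly \((\Re\zeta)^{2m}\), so the type there is exactly \(2m\).

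The equality \(\delta_{\mathrm{loc}}(\mathcal T_m,0)=\delta(\mathcal T_m,K_{\mathcal T_m})\) follows by the argument of Corollary~\ref{cor:homogeneous-tangent-model}. The maps \(A_t(Q,Z)=(tQ,t^{1/(2m)}Z)\) are automorphisms of \(\mathcal T_m\) contracting every point to \(0\), so any quadruple can be moved into an arbitrarily small neighborhood of \(0\) without changing its pairwise Kobayashi distances. That argument only used homogeneity, so it applies here without change.

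The main work is the lower bound \(\delta\ge m\log2\), which I would organize as follows.
\begin{enumerate}
\item \emph{Reduction to a planar slice.} Consider the holomorphic map \(\Phi(Q,Z)=(Q,\Re Z+\text{const})\)? That is not holomorphic. Instead I would exploit the translations \(Z\mapsto Z+is\) (\(s\in\R\)) and \(Q\mapsto Q+is\), which are automorphisms. The Kobayashi distance is then a function of \((\Re Q,\Re Z)\) for the two points together with the imaginary differences.
\item \emph{The real slice \(\Sigma=\{\Im Q=\Im Z=0\}\).} The antiholomorphic involution \((Q,Z)\mapsto(\bar Q,\bar Z)\) together with the imaginary translations suggest that geodesics between points of \(\Sigma\) can be taken in \(\Sigma\), or at least that \(K\) restricted to \(\Sigma\) is a length metric. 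The paper's stated strategy is that, on \(\Sigma\), \(K\) equals \(m\) times a path metric, so I would try to compute \(k_{\mathcal T_m}\) on \(\Sigma\) explicitly.
\item \emph{The substitution \(w=Z^{2m}\) on suitable regions.} The natural idea is to relate \(\mathcal T_m\) to the product \(\HH\times\C\) or to \(\HH\times\HH\) using the map \(W=Q-Z^{2m}\)? Its real part is not \(\Re Q-(\Re Z)^{2m}\) in general, but on \(\Sigma\) they agree. So I would compare \(k\) on \(\Sigma\) with the metric obtained by pulling back the product of Poincaré metrics under \((Q,Z)\mapsto(Q,\ldots)\). The factor \(m\) should arise from the logarithmic behavior: the scale \(\Re Z\sim(\Re Q)^{1/(2m)}\) contributes \(\log\) terms multiplied by \(2m\) compared with the normal direction.
\item \emph{The four points.} I would take the points \((1,\pm c)\) and \((\epsilon,\pm c')\), or more likely a configuration mimicking the \(\ell^\infty\) square of Example~\ref{ex:no-global-bound}, in which the three pairing sums differ by about \(2m\log 2\). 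By the four-point formula~\eqref{eq:global-fourpoint-constant}, that gives \(\delta\ge m\log2\).
\end{enumerate}

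The main obstacle is step (3): obtaining exact, or sharp two-sided, control of \(K_{\mathcal T_m}\) on \(\Sigma\). Upper bounds come from explicit discs, but sharp lower bounds need a holomorphic map to a model. I would first try the holomorphic retraction onto a one-dimensional domain: the map \(\zeta\mapsto(\zeta^{2m}\cdot\text{const},\zeta)\) and a left inverse. These would isometrically embed a planar domain, on which the metric is computable, and transfer the configuration to every dimension \(n\ge2\) via the retraction \((Q,Z,z')\mapsto(Q,Z)\).
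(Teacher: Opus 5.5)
Your checks of convexity, $\C$-properness, real analyticity, the type bounds, and the identity $\delta_{\mathrm{loc}}(\mathcal T_m,0)=\delta(\mathcal T_m,K_{\mathcal T_m})$ via dilations are correct and close to the paper. The paper gets $\C$-properness from the affine embedding $(Q,Z)\mapsto(Q,Q-2mZ+(2m-1))$ into $\HH\times\HH$, and bounds the type along arbitrary curve germs by a circular-mean argument rather than citing McNeal; your line-by-line argument is also fine.

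\textbf{The gap.} The genuine gap is the inequality $m\log2\le\delta(\mathcal T_m,K_{\mathcal T_m})$, which is the substance of the theorem. Your proposal names no quadruple and proves no estimate for any Kobayashi distance, and the tools you suggest cannot supply one.
\begin{itemize}
\item The function $Q-Z^{2m}$ does not map $\mathcal T_m$ into $\HH$, so it gives no contraction inequality.
\item The projection $(Q,Z)\mapsto Z$ maps $\mathcal T_m$ onto all of $\C$, so it cannot be the left inverse of an isometric embedding of a hyperbolic planar domain. Moreover, $\zeta\mapsto(c\zeta^{2m},\zeta)$ leaves $\mathcal T_m$ wherever $\Re(c\zeta^{2m})\le0$.
\item Imitating the polydisc square is the wrong model. $\mathcal T_m$ is Gromov hyperbolic by the paper's upper estimate, so the defect has to come from a logarithmic distance law.
\item Your slice $\{\Im Q=\Im Z=0\}$ is the real base of the tube. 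At height $\Re Q=h$ it has width only $2h^{1/(2m)}$ in $\Re Z$, and its closure meets the degenerate boundary set $\{\Re Q=\Re Z=0\}$ only at the origin. It is unclear that it carries any defect of order $m$.
\end{itemize}

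\textbf{The missing idea.} You need to separate the points in the $\Im Z$ direction, along which $\mathcal T_m$ is translation invariant, and to extract the factor $m$ exactly, so that no sharp two-sided bound is required. The paper proceeds as follows.
\begin{itemize}
\item It uses the slice $\Sigma_m=\{(h,iy):h>0,\ y\in\R\}$, the fixed set of $R(Q,Z)=(\overline Q,-\overline Z)$. It shows that $K_{\mathcal T_m}$ on $\Sigma_m$ equals the intrinsic path distance: take an extremal disc $f$ and replace it by $\frac12(f+f^R)$, which stays in $\mathcal T_m$ by convexity.
\item It parametrizes $\Sigma_m$ by $\Psi_m(\sigma,Y)=(e^{2m\sigma},imY)$. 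The dilations and imaginary translations show that the pulled-back metric is exactly $mF_m$, where $F_m$ is invariant under $(\sigma,Y)\mapsto(\sigma+s,e^sY)$. Hence $K_{\mathcal T_m}(\Psi_m(\xi),\Psi_m(\eta))=m\bar d_m(\xi,\eta)$.
\item It then needs only the asymptotic $f_m(A)=\bar d_m((0,0),(0,A))=2\log A+b_m+o(1)$, with $b_m$ left unspecified. The invariance makes $f_m(A)-2\log A$ nonincreasing. The bound $F_m\ge\max\{|a|,\frac12e^{-\sigma}|b|\}$ bounds it below; its tangential part comes from the affine map $Q-2mZ+(2m-1)$ into $\HH$.
\item For $x_j(T)=(1,imTu_j)$ with $u=(0,1,\sqrt2,1+\sqrt2)$, the products $|u_i-u_j|\,|u_k-u_\ell|$ for the three pairings are $1,2,1$. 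So $b_m$ cancels and the four-point defect tends to $m\log2$.
\end{itemize}
None of these steps appears in your proposal, so the lower bound remains unproved.
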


We first verify the geometric properties of the model. The function
\[
 \rho(Q,Z)=(\operatorname{Re}Z)^{2m}-\operatorname{Re}Q
\]
is convex and real analytic, and its derivative in the real $Q$-direction is
$-1$. Thus $\rho$ is a smooth defining function for $\mathcal T_m$. The
inequality
\[
 x^{2m}\geq2mx-(2m-1),
 \qquad x\in\mathbb R,
\]
shows that the injective affine map
\[
 (Q,Z)\longmapsto\bigl(Q,Q-2mZ+(2m-1)\bigr)
\]
maps $\mathcal T_m$ into $\mathbb H\times\mathbb H$, the Cartesian product
of two copies of the right half-plane
\[
 \mathbb H:=\{w\in\mathbb C:\operatorname{Re}w>0\}.
\]
Composing with the Cayley transform in each coordinate gives a holomorphic
embedding of $\mathcal T_m$ into the bidisc. In particular,
$\mathcal T_m$ is $\mathbb C$-proper and Kobayashi hyperbolic.

We next determine the boundary type. Let $(q,z)$ be a nonconstant
holomorphic curve germ at the origin, and set
\[
 \ell=\operatorname{ord}_0q,
 \qquad
 k=\operatorname{ord}_0z,
\]
with order $+\infty$ assigned to an identically zero component. If
$z\not\equiv0$, the first nonzero homogeneous term of
$(\operatorname{Re}z)^{2m}$ has positive circular mean and therefore cannot
cancel the harmonic leading term of $\operatorname{Re}q$. Consequently,
\[
 \operatorname{ord}_0\bigl(\rho\circ(q,z)\bigr)
 =\min\{\ell,2mk\}
 \leq2m\min\{\ell,k\}.
\]
If $z\equiv0$, the same estimate follows from the nonzero normal component.
Thus the type at the origin is at most $2m$, while the curve
$\zeta\mapsto(0,\zeta)$ shows that it is exactly $2m$.

Now fix an arbitrary boundary point $(Q^0,Z^0)$ and put
$a=\operatorname{Re}Z^0$. After translating this point to the origin, absorb
the affine tangential term into the normal coordinate by setting
\[
 \widetilde q=q-2ma^{2m-1}z.
\]
In the resulting coordinates, the defining function is
\[
 -\operatorname{Re}\widetilde q
 +(a+\operatorname{Re}z)^{2m}-a^{2m}
 -2ma^{2m-1}\operatorname{Re}z.
\]
If $a\neq0$, the tangential remainder has a positive quadratic leading term;
if $a=0$, it is exactly $(\operatorname{Re}z)^{2m}$. The same circular-mean
argument therefore shows that the boundary type is at most $2m$ at every
point.

For $s>0$, the homogeneous dilation
\begin{equation}\label{eq:lb-final-dilation}
 A_s(Q,Z)=(s^{2m}Q,sZ)
\end{equation}
is an automorphism of $\mathcal T_m$. Let $V$ be any Euclidean neighborhood
of the origin. Given a fixed quadruple in $\mathcal T_m$, applying $A_s$ for
$s>0$ sufficiently small moves all four points into $V$ without changing any
of their pairwise Kobayashi distances. It follows that
\[
 \delta(\mathcal T_m,K_{\mathcal T_m})
 \leq
 \delta\bigl(\mathcal T_m\cap V,
 K_{\mathcal T_m}|_{\mathcal T_m\cap V}\bigr).
\]
The reverse inequality follows from the inclusion
$\mathcal T_m\cap V\subset\mathcal T_m$.
Taking the infimum over $V$ gives
\[
 \delta_{\mathrm{loc}}(\mathcal T_m,0)
 =\delta(\mathcal T_m,K_{\mathcal T_m}).
\]
It remains to establish the lower bound in
\eqref{eq:lb-final-lower}.

\subsection{The lower-bound construction}
\label{subsec:lb-final-construction}\label{subsec:lb-final-slice}

Consider the totally real slice
\[
 \Sigma_m=\{(h,iy):h>0,\ y\in\mathbb R\}.
\]
For each $y\in\mathbb R$, the inclusion
$Q\mapsto(Q,iy)$ from $\mathbb H$ into $\mathcal T_m$ has the holomorphic
left inverse $(Q,Z)\mapsto Q$ and is therefore an isometric embedding for the
Kobayashi distance. The maps
\[
 T_t(Q,Z)=(Q,Z+it),
 \qquad t\in\mathbb R,
\]
are also automorphisms of $\mathcal T_m$. Combining these observations with
the dilation \eqref{eq:lb-final-dilation}, we obtain, for $h>0$,
$\lambda\geq1$, and $y,v\in\mathbb R$,
\begin{equation}\label{eq:lb-final-scale}
 \begin{aligned}
 K_{\mathcal T_m}\bigl((h,iy),(\lambda^{2m}h,iy)\bigr)
   &=m\log\lambda,\\
 k_{\mathcal T_m}\bigl((\lambda^{2m}h,iy);(0,iv)\bigr)
   &=\lambda^{-1}k_{\mathcal T_m}\bigl((h,iy);(0,iv)\bigr).
 \end{aligned}
\end{equation}
The first identity follows from the holomorphic retract onto $\mathbb H$. For
the second, the automorphism
\[
 T_y\circ A_{\lambda^{-1}}\circ T_{-y}
\]
maps $(\lambda^{2m}h,iy)$ to $(h,iy)$ and sends $(0,iv)$ to
$(0,i\lambda^{-1}v)$. The identity follows from the biholomorphic invariance
and homogeneity of the infinitesimal Kobayashi metric.

The following lemma uses the reflection symmetry and convexity of the model
to show that distances with endpoints in $\Sigma_m$ can be computed without
leaving the slice.

\begin{lemma}\label{lem:lb-final-slice}
For $x,y\in\Sigma_m$, the distance $K_{\mathcal T_m}(x,y)$ equals the
intrinsic path distance obtained by integrating $k_{\mathcal T_m}$ along
$\Sigma_m$.
\end{lemma}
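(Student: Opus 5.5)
The approach is to fold an arbitrary curve onto the slice using a reflection symmetry of \(\mathcal T_m\), and then use convexity of the Kobayashi--Royden metric to show that folding does not increase length. One inequality is trivial: the intrinsic path distance along \(\Sigma_m\) is an infimum over fewer curves than the Royden--Venturini length description of \(K_{\mathcal T_m}\), so it is \(\ge K_{\mathcal T_m}(x,y)\). For the reverse inequality, fix \(x,y\in\Sigma_m\) and an absolutely continuous curve \(\gamma=(Q(t),Z(t))\) in \(\mathcal T_m\) from \(x\) to \(y\). The aim is to produce a curve in \(\Sigma_m\) with the same endpoints and Kobayashi length at most \(L_k(\gamma)\).

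\emph{Folding.} Consider the antiholomorphic involution \(\sigma(Q,Z)=(\overline Q,-\overline Z)\). It preserves \(\mathcal T_m\), since \(\Re\overline Q=\Re Q\) and \((\Re(-\overline Z))^{2m}=(\Re Z)^{2m}\). Its fixed set is \(\{Q\in\R,\ Z\in i\R\}\cap\mathcal T_m\), and there \(\Re Q>(\Re Z)^{2m}=0\), so this fixed set is exactly \(\Sigma_m\). Antiholomorphic self-maps also preserve \(k_{\mathcal T_m}\): compose the competing discs with complex conjugation. Hence \(k(\sigma(z);\dd\sigma(v))=k(z;v)\), and \(\sigma\circ\gamma\) is a curve from \(x\) to \(y\) of the same length. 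Now set \(\beta=\tfrac12(\gamma+\sigma\circ\gamma)=(\Re Q,\,i\Im Z)\). This curve lies in \(\mathcal T_m\) by convexity, and it lies in \(\operatorname{Fix}\sigma=\Sigma_m\) with endpoints \(x,y\).

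\emph{Joint convexity.} It then suffices that \((z,v)\mapsto k_{\mathcal T_m}(z;v)\) is jointly convex on \(\mathcal T_m\times\C^2\). Granting this, we get pointwise
\[
k\bigl(\beta(t);\beta'(t)\bigr)\le\tfrac12 k\bigl(\gamma(t);\gamma'(t)\bigr)+\tfrac12 k\bigl(\sigma\gamma(t);(\sigma\gamma)'(t)\bigr)=k\bigl(\gamma(t);\gamma'(t)\bigr).
\]
Integrating gives \(L_k(\beta)\le L_k(\gamma)\), and taking the infimum over \(\gamma\) finishes the proof.

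\emph{Main obstacle.} The key step is proving joint convexity of \(k\) on the convex domain \(\mathcal T_m\). My plan is to show that \(k_{\mathcal T_m}\) is the supremum, over supporting real half-spaces \(H=\{\Re\ell<c\}\supset\mathcal T_m\) with \(\ell\) complex linear, of
\[
k_{H}(z;v)=\frac{|\ell(v)|}{2\,(c-\Re\ell(z))}.
\]
Each such function is a perspective of the convex function \(|\ell(\cdot)|\), so it is jointly convex, and a supremum of jointly convex functions is jointly convex. The inequality \(k_{\mathcal T_m}\ge\sup_H k_H\) is immediate from monotonicity under inclusion. For equality I would invoke Lempert's theory, which gives \(k=\) Carathéodory metric on convex domains; that theory is stated for bounded domains, and \(\mathcal T_m\) is unbounded, so this step must be justified by exhaustion. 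Concretely, exhaust \(\mathcal T_m\) by increasing bounded convex domains \(D_j\). Each \(k_{D_j}\) is jointly convex, and by monotonicity \(k_{D_j}\) decreases as \(j\) increases with pointwise limit \(k_{\mathcal T_m}\) (the exhaustion property of the Kobayashi--Royden metric). A pointwise limit of jointly convex functions is jointly convex, so convexity passes to \(\mathcal T_m\). It therefore suffices to prove joint convexity for bounded convex domains, where Lempert's extremal discs should identify \(k\) with the supremum of the half-space metrics. This identification is the step that needs the most care.

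As a fallback, I would try to obtain joint convexity directly from the definition: given extremal discs \(f_1,f_2\) for \((z_1,v_1)\) and \((z_2,v_2)\), a suitable convex combination of \(f_1\) and \(f_2\), after reparametrizing so the radii match, is a competitor disc for the averaged data \((z,v)\).
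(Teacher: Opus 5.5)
Your folding set-up is correct: the fixed set of $\sigma$ is $\Sigma_m$, $\sigma$ is a $k_{\mathcal T_m}$-isometry, and $\beta=(\Re Q,i\Im Z)$ lies in $\Sigma_m$. The gap is that the step you identify as the key one is false: $(z,v)\mapsto k_{\mathcal T_m}(z;v)$ is \emph{not} jointly convex.

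Since $Q\mapsto(Q,0)$ is a holomorphic retract of $\mathcal T_m$, one has $k_{\mathcal T_m}((Q,0);(a,0))=|a|/(2\Re Q)$. The data $((1,0);(0,0))$ and $((3,0);(2,0))$ have values $0$ and $\tfrac13$. Their midpoint $((2,0);(1,0))$ has value $\tfrac14>\tfrac16$.

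The supporting claims fail as well:
\begin{itemize}
\item $|\ell(v)|/(c-\Re\ell(z))$ is not a perspective of $|\ell|$. The perspective $t\,|\ell(v/t)|$ of a $1$-homogeneous function is just $|\ell(v)|$, and $(t,s)\mapsto s/t$ is not convex on $t,s>0$.
\item $k\ne\sup_H k_H$ in general. For example, $k_{\mathbb B^n}(0;v)=|v|$, while the supremum over supporting half-spaces is $|v|/2$.
\end{itemize}
So neither the Lempert identification nor the exhaustion step can be carried out, and the argument as planned does not close.

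What you actually need is only the symmetric inequality
\[
k_{\mathcal T_m}\Bigl(\tfrac12(z+\sigma z);\tfrac12\bigl(v+\dd\sigma(v)\bigr)\Bigr)\le k_{\mathcal T_m}(z;v).
\]
Your fallback proves this with no reparametrization. Take $f\in\Hol(\D,\mathcal T_m)$ with $f(0)=z$ and $\lambda f'(0)=v$, and rotate the disc so that $\lambda>0$. Then $f^\sigma(\zeta)=\sigma(f(\bar\zeta))$ is holomorphic, with $f^\sigma(0)=\sigma z$ and $\lambda(f^\sigma)'(0)=\dd\sigma(v)$. Hence $\tfrac12(f+f^\sigma)$, which maps into $\mathcal T_m$ by convexity, is a competitor for the averaged datum with the same $\lambda$.

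For two unrelated data, averaging discs only bounds the midpoint value by the larger of the two values, which is why joint convexity fails. Here the two values coincide because $\sigma$ is an isometry. Applied at a.e.\ $t$, this gives $k(\beta;\beta')\le k(\gamma;\gamma')$ and completes your proof.

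This repaired version differs from the paper's only in where the reflection-averaging is applied. The paper averages a single extremal disc $f$ with $f(0)=x$, $f(r)=y$, and reads off the bound from $F([0,r])\subset\Sigma_m$. The existence of that disc uses tautness of $\C$-proper convex domains and Bracci--Saracco's Lemma~3.3. Your curvewise version needs only the Royden--Venturini length formula and no extremal discs.
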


\begin{proof}
The domain $\mathcal T_m$ is invariant under the antiholomorphic involution
\[
 R(Q,Z)=(\overline Q,-\overline Z),
\]
whose fixed-point set is precisely $\Sigma_m$. Since every path in
$\Sigma_m$ is also a path in $\mathcal T_m$, the ambient distance is bounded
above by the intrinsic path distance on the slice.

For the reverse inequality, fix distinct $x,y\in\Sigma_m$. Since
$\mathcal T_m$ is a $\mathbb C$-proper convex domain, it is taut by
\cite[Theorem~1.1]{BracciSaracco}. Hence
\cite[Lemma~3.3]{BracciSaracco} provides a holomorphic extremal disc
$f:\mathbb D\to\mathcal T_m$ and $r\in(0,1)$ such that
\[
 f(0)=x,
 \qquad
 f(r)=y,
 \qquad
 K_{\mathbb D}(0,r)=K_{\mathcal T_m}(x,y).
\]
The reflected map
\(
 f^R(\zeta)=R\bigl(f(\overline\zeta)\bigr)
\)
is holomorphic. By convexity,
\(
 F=\tfrac12(f+f^R)
\)
also maps $\mathbb D$ into $\mathcal T_m$. Since $x$ and $y$ are fixed by
$R$, the map $F$ has the same endpoints as $f$. Moreover,
$F([0,r])\subset\Sigma_m$. Infinitesimal contraction gives
\[
 \operatorname{length}_{k_{\mathcal T_m}}\bigl(F|_{[0,r]}\bigr)
 \leq K_{\mathbb D}(0,r)
 =K_{\mathcal T_m}(x,y).
\]
Thus the intrinsic path distance on $\Sigma_m$ is bounded above by
$K_{\mathcal T_m}(x,y)$. Together with the opposite inequality, this proves
the claim.
\end{proof}

We now introduce coordinates adapted to the dilation and establish elementary
estimates for the induced infinitesimal metric. For $a,b\in\mathbb R$, set
\[
 N_m(a,b)=k_{\mathcal T_m}\bigl((1,0);(a,ib)\bigr).
\]
Let
\[
 \pi:\mathcal T_m\to\mathbb H,
 \qquad \pi(Q,Z)=Q,
\]
and
\[
 \iota:\mathbb H\to\mathcal T_m,
 \qquad \iota(Q)=(Q,0).
\]
Since $\pi\circ\iota$ is the identity, holomorphic contraction gives
\[
 N_m(a,b)\geq k_{\mathbb H}(1;a)=\frac{|a|}{2},
 \qquad
 N_m(a,0)=\frac{|a|}{2}.
\]

To obtain a lower bound in the tangential direction, consider the
complex-affine functional
\[
 L(Q,Z)=Q-2mZ+(2m-1).
\]
As observed above, $L(\mathcal T_m)\subset\mathbb H$. Since
\[
 L(1,0)=2m,
 \qquad
 dL_{(1,0)}(a,ib)=a-2mib,
\]
holomorphic contraction yields
\[
 N_m(a,b)
 \geq k_{\mathbb H}(2m;a-2mib)
 =\frac{|a-2mib|}{4m}
 \geq\frac{|b|}{2}.
\]

We shall also use a crude upper bound. Suppose that $S=|a|+|b|>0$ and
define
\[
 \phi(\zeta)
 =
 \left(1+\frac{a\zeta}{2S},\frac{ib\zeta}{2S}\right).
\]
For every $\zeta\in\mathbb D$, one has
\[
 \operatorname{Re}\phi_1(\zeta)>\frac12,
 \qquad
 \bigl|\operatorname{Re}\phi_2(\zeta)\bigr|<\frac12,
\]
and hence $\phi(\mathbb D)\subset\mathcal T_m$. Since
\[
 \phi(0)=(1,0),
 \qquad
 \phi'(0)=\left(\frac{a}{2S},\frac{ib}{2S}\right),
\]
infinitesimal contraction gives
\[
 k_{\mathcal T_m}\left((1,0);
 \left(\frac{a}{2S},\frac{ib}{2S}\right)\right)
 \leq k_{\mathbb D}(0;1)=1.
\]
By homogeneity of the infinitesimal Kobayashi metric,
\[
 N_m(a,b)\leq2S=2(|a|+|b|).
\]
The same inequality is immediate when $a=b=0$.

To make the dilation symmetry explicit, parametrize $\Sigma_m$ by
\[
 \Psi_m(\sigma,Y)=(e^{2m\sigma},imY),
 \qquad (\sigma,Y)\in\mathbb R^2.
\]
Its differential is
\[
 d\Psi_m|_{(\sigma,Y)}(a,b)
 =(2me^{2m\sigma}a,imb).
\]
Applying $A_{e^{-\sigma}}$ and then translating the $Z$-coordinate by
$-im e^{-\sigma}Y$ sends the base point $\Psi_m(\sigma,Y)$ to $(1,0)$ and
the tangent vector above to $(2ma,ime^{-\sigma}b)$. The invariance and
homogeneity of the infinitesimal Kobayashi metric therefore give
\[
 \begin{aligned}
 k_{\mathcal T_m}\bigl(
 \Psi_m(\sigma,Y);d\Psi_m|_{(\sigma,Y)}(a,b)\bigr)
 =k_{\mathcal T_m}\bigl((1,0);(2ma,ime^{-\sigma}b)\bigr)
 =mN_m(2a,e^{-\sigma}b).
 \end{aligned}
\]

Define the Finsler structure
\[
 F_m(\sigma,Y;a,b)=N_m(2a,e^{-\sigma}b),
\]
and let $\bar d_m$ denote its associated path distance on $\mathbb R^2$.
Since the pullback of $k_{\mathcal T_m}|_{\Sigma_m}$ under $\Psi_m$ is
exactly $mF_m$, Lemma~\ref{lem:lb-final-slice} gives
\begin{equation}\label{eq:lb-final-factorization}
 K_{\mathcal T_m}\bigl(\Psi_m(\xi),\Psi_m(\eta)\bigr)
 =m\bar d_m(\xi,\eta),
 \qquad \xi,\eta\in\mathbb R^2.
\end{equation}
This extracts an explicit multiplicative factor $m$; the remaining dependence
on $m$ is encoded in $\bar d_m$. The estimates for $N_m$ imply
\begin{equation}\label{eq:lb-final-F-bounds}
 F_m(\sigma,Y;a,b)
 \geq\max\left\{|a|,\frac12e^{-\sigma}|b|\right\},
 \qquad
 F_m(\sigma,Y;a,0)=|a|.
\end{equation}

For $A\geq0$, define
\[
 f_m(A)=\bar d_m\bigl((0,0),(0,A)\bigr).
\]
Then
\(
 K_{\mathcal T_m}\bigl((1,0),(1,imA)\bigr)=mf_m(A).
\)
For later use, we need the following lemma.

\begin{lemma}\label{lem:lb-final-log}
For each fixed $m\geq1$, there exists $b_m\in\mathbb R$ such that
\begin{equation}\label{eq:lb-final-log}
 f_m(A)=2\log A+b_m+o(1),
 \qquad A\to\infty.
\end{equation}
Consequently, for every fixed $a,b>0$,
\begin{equation}\label{eq:lb-final-difference}
 f_m(aT)-f_m(bT)\longrightarrow2\log(a/b),
 \qquad T\to\infty.
\end{equation}
\end{lemma}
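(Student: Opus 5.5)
The plan is to use the symmetries of the Finsler structure $F_m$ to show that $g(t):=f_m(e^t)-2t$ is nonincreasing in $t$, then to prove that $g$ is bounded below. The limit $b_m=\lim_{t\to\infty}g(t)$ then exists, which is \eqref{eq:lb-final-log}. Since $2\log(aT)-2\log(bT)=2\log(a/b)$, \eqref{eq:lb-final-difference} follows at once by subtraction.

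\emph{Symmetries and monotonicity.} From $F_m(\sigma,Y;a,b)=N_m(2a,e^{-\sigma}b)$, the translations $Y\mapsto Y+c$ are isometries of $\bar d_m$. So is the map $\Phi_s(\sigma,Y)=(\sigma+s,e^{s}Y)$: its differential sends $(a,b)$ to $(a,e^sb)$, and $N_m(2a,e^{-\sigma-s}e^sb)=N_m(2a,e^{-\sigma}b)$. Applying $\Phi_s$ gives
\[
\bar d_m\bigl((s,Y),(s,Y+A)\bigr)=f_m(e^{-s}A).
\]
By \eqref{eq:lb-final-F-bounds}, a purely vertical segment in $\sigma$ of length $s$ has $F_m$-length exactly $s$. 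Consider the path from $(0,0)$ that rises to $(s,0)$, crosses horizontally to $(s,A)$, and descends to $(0,A)$. Its length gives $f_m(A)\le 2s+f_m(e^{-s}A)$. With $A=e^{t+s}$ this reads $g(t+s)\le g(t)$, so $g$ is nonincreasing.

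\emph{Lower bound (main step).} I expect the lower bound $f_m(A)\ge 2\log A-C$ to be the main obstacle, because the two lower bounds $|a|$ and $\tfrac12e^{-\sigma}|b|$ in \eqref{eq:lb-final-F-bounds} enter only through a maximum and cannot simply be added. I would imitate the weighted argument of Lemma~\ref{lem:weighted-exits}.

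Let $\gamma$ run from $(0,0)$ to $(0,A)$, and let $H=\max\sigma$ along $\gamma$; we may assume $H\ge0$. Take the first and last times at which $\sigma=H$. This splits $\gamma$ into an ascending arm, a middle portion, and a descending arm.

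On each arm, cut at the first (respectively last) times at which $\sigma$ reaches the levels $H-k$, for integers $0\le k\le\lceil H\rceil$. This produces at most $H+2$ pieces per arm. On the piece $j$ levels below $H$, $\sigma$ stays below $H-j+1$ and rises by at least $1$, except for at most one short piece. Hence its cost $c_j$ satisfies
\[
c_j\ge\max\{\Delta\sigma_j,\ \tfrac12e^{-(H-j+1)}\Delta Y_j\},
\]
where $\Delta\sigma_j$ and $\Delta Y_j$ are the $\sigma$-rise and the $|dY|$-length of the piece. With weight $w_j=e^{-j}\in(0,1]$ I would use
\[
c_j\ge(1-w_j)\Delta\sigma_j+w_j\cdot\tfrac12e^{-H-1}\Delta Y_j .
\]
Because $\sum_j w_j\Delta\sigma_j\le\sum_j e^{-j}<2$, each arm costs at least $H-2+\tfrac12e^{-H-1}\Delta Y_{\mathrm{arm}}$. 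The middle portion costs at least $\tfrac12e^{-H}\Delta Y_{\mathrm{mid}}$. Since the total $|dY|$-length is at least $A$, summing gives
\[
L\ge 2H-4+\tfrac12e^{-H-1}A .
\]
Minimizing the right side over $H\in\mathbb R$ yields $L\ge2\log A-C$ for a constant $C$. Taking the infimum over paths $\gamma$ shows that $g$ is bounded below, which completes the argument.
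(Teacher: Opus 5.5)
Your proof is correct, and your monotonicity step (the isometries $\Phi_s$, $Y$-translations, and two $\sigma$-segments of length $s$) is exactly the paper's. The lower bound, however, is obtained differently.

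The paper uses no level decomposition or weights. It parametrizes a competing path by its accumulated $F_m$-length $\ell$. The bound $F_m\ge|a|$, together with $\sigma=0$ at both endpoints, gives the position constraint $|\sigma(t)|\le\min\{\ell(t),L-\ell(t)\}$. The bound $F_m\ge\frac12e^{-\sigma}|b|$ is used only as the velocity bound $|Y'(t)|\le2e^{\sigma(t)}\ell'(t)$. Integrating yields $A\le2\int_0^Le^{\min\{u,L-u\}}\,du=4(e^{L/2}-1)$, i.e.\ $f_m(A)\ge2\log(1+A/4)$.

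So the obstacle you anticipated does not really arise. One lower bound controls where the path can be, the other controls how fast $Y$ can move there, and nothing has to be added.

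Your weighted cutting also works. The choice $w_j=e^{-j}$ makes $w_j\cdot\frac12e^{-(H-j+1)}=\frac12e^{-H-1}$ independent of $j$, which gives $L\ge2H-C+\frac{A}{2e}e^{-H}$ and hence $L\ge2\log A-C'$. Its advantage is that it only uses costs of whole pieces. That is why the same mechanism is needed in Lemma~\ref{lem:weighted-exits}, where the tangential cost is available only in quantized first-exit form. Here it is heavier than necessary and gives a slightly worse constant.

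One detail to tidy if you keep your version: on each arm, start the cutting at the arm's initial time, not at a first hitting time of the level $H-\lceil H\rceil$. That level may be negative, so $\sigma$ may never reach it on the arm, or may reach it out of order relative to the other cut times. With that convention the short piece is the one adjacent to the start, and all your estimates go through.
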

\begin{proof}
Set $h_m(A)=f_m(A)-2\log A$ for $A>0$. The maps
$\Phi_s(\sigma,Y)=(\sigma+s,e^sY)$ preserve $F_m$, since
\[
 F_m\bigl(\Phi_s(\sigma,Y);d\Phi_s(a,b)\bigr)
 =N_m\bigl(2a,e^{-(\sigma+s)}e^sb\bigr)
 =F_m(\sigma,Y;a,b).
\]
Let $B\geq A>0$ and $s=\log(B/A)$. Applying $\Phi_s$ to a path from
$(0,0)$ to $(0,A)$ gives a path of the same length from $(s,0)$ to $(s,B)$.
Joining its endpoints to $(0,0)$ and $(0,B)$ by $\sigma$-segments, each of
length $s$, and then taking the infimum gives
\[
 f_m(B)\leq f_m(A)+2\log(B/A).
\]
Thus $h_m$ is nonincreasing.

To bound it from below, let $\gamma(t)=(\sigma(t),Y(t))$ be a piecewise
smooth path from $(0,0)$ to $(0,A)$, let $L$ be its $F_m$-length, and let
$\ell(t)$ denote its accumulated length. Applying
\eqref{eq:lb-final-F-bounds} to the two portions of the path and to its
velocity gives, for almost every $t$,
\[
 |\sigma(t)|\leq\min\{\ell(t),L-\ell(t)\},
 \qquad
 |Y'(t)|\leq2e^{\sigma(t)}\ell'(t).
\]
Consequently,
\[
 A\leq\int|Y'(t)|\,dt
 \leq2\int_0^L e^{\min\{u,L-u\}}\,du
 =4(e^{L/2}-1).
\]
Taking the infimum over all such paths yields
$f_m(A)\geq2\log(1+A/4)$, and hence
\[
h_m(A)\geq2\log\left(\frac1A +\frac 14\right)
 \geq-2\log4,
 \qquad A\geq1.
\]
Thus $h_m$ decreases to a finite limit $b_m$, proving
\eqref{eq:lb-final-log}. Applying this expansion at $aT$ and $bT$ and
subtracting proves \eqref{eq:lb-final-difference}.
\end{proof}

To prove \eqref{eq:lb-final-lower}, we use the four-point defect
of a metric \(\mathsf d\), defined by
\[
\Delta_{\mathsf d}(x_1,x_2,x_3,x_4)
:=\tfrac12(S_{\max}-S_{\mathrm{mid}}),
\]
where $S_{\max}$ and $S_{\mathrm{mid}}$ are the largest and second largest of
the three pairing sums. Choose
\[
 u_1=0,
 \qquad u_2=1,
 \qquad u_3=\sqrt2,
 \qquad u_4=1+\sqrt2,
\]
and, for $T>0$, set $x_j(T)=(1,imTu_j)$. Imaginary translations and the
involution $Z\mapsto-Z$ give
\[
 K_{\mathcal T_m}\bigl(x_i(T),x_j(T)\bigr)
 =mf_m\bigl(T|u_i-u_j|\bigr).
\]
Let $S_{(ij)(k\ell)}(T)$ denote the corresponding pairing sum and put
$C_T=4m\log T+2mb_m$. Since the products
$|u_i-u_j|\,|u_k-u_\ell|$ for the pairings $(12)(34)$, $(13)(24)$, and
$(14)(23)$ are $1$, $2$, and $1$, respectively,
Lemma~\ref{lem:lb-final-log} gives
\[
 \begin{aligned}
 S_{(12)(34)}(T)&=C_T+o(1),\\
 S_{(13)(24)}(T)&=C_T+2m\log2+o(1),\\
 S_{(14)(23)}(T)&=C_T+o(1).
 \end{aligned}
\]
It follows that
\begin{equation}\label{eq:lb-final-one-limit}
 \lim_{T\to\infty}
 \Delta_{K_{\mathcal T_m}}
 \bigl(x_1(T),x_2(T),x_3(T),x_4(T)\bigr)
 =m\log2.
\end{equation}

Finally, define $\widetilde x_j(T)=A_{T^{-2}}x_j(T)$. Then
\begin{equation}\label{eq:lb-final-local-witnesses}
 \widetilde x_j(T)
 =(T^{-4m},imT^{-1}u_j)\longrightarrow0.
\end{equation}
The automorphism $A_{T^{-2}}$ preserves all pairwise Kobayashi distances and
hence leaves $\Delta_{K_{\mathcal T_m}}$ unchanged. Thus the limit in
\eqref{eq:lb-final-one-limit} is realized by quadruples converging to the
origin. This proves \eqref{eq:lb-final-lower} and completes the proof of
Theorem~\ref{thm:lb-final-model}.

\begin{corollary}\label{cor:lb-final-higher-dimensional}
For integers $m,d\geq1$, let
\[
 \mathcal T_{m,d}
 =
 \left\{(Q,Z)\in\mathbb C\times\mathbb C^d:
 \operatorname{Re}Q>
 \sum_{j=1}^d(\operatorname{Re}Z_j)^{2m}\right\},
 \qquad \mathcal T_{m,1}=\mathcal T_m.
\]
Then $\mathcal T_{m,d}$ is a $\mathbb C$-proper convex domain with smooth
real-analytic boundary. Its boundary has D'Angelo type at most $2m$ at every
point and type exactly $2m$ at the origin. Moreover,
\[
 m\log2
 \leq\delta_{\mathrm{loc}}(\mathcal T_{m,d},0)
 =\delta(\mathcal T_{m,d},K_{\mathcal T_{m,d}}).
\]
\end{corollary}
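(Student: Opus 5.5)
The plan is to reduce the higher-dimensional statement to Theorem~\ref{thm:lb-final-model} via a holomorphic retraction, and to verify the geometric properties by the same arguments used for $\mathcal T_m$.

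\emph{Geometric properties.} The function $\rho(Q,Z)=\sum_j(\Re Z_j)^{2m}-\Re Q$ is convex and real analytic, and its derivative in the real $Q$-direction is $-1$. Hence it is a smooth defining function and $\mathcal T_{m,d}$ is convex.

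For $\C$-properness, observe that $x^{2m}\ge 2mx-(2m-1)$ and that each summand is nonnegative. Therefore the affine maps
\[
(Q,Z)\mapsto Q \quad\text{and}\quad (Q,Z)\mapsto Q-2mZ_j+(2m-1),\qquad 1\le j\le d,
\]
all send $\mathcal T_{m,d}$ into $\HH$. Together they form an injective affine map into $\HH^{d+1}$, which is $\C$-proper. A complex line in $\mathcal T_{m,d}$ would therefore be mapped into a constant. Since the map is injective, this is impossible, so $\mathcal T_{m,d}$ is $\C$-proper.

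For the type bound, take a curve germ $(q,z_1,\dots,z_d)$ through a boundary point. Absorb the affine tangential terms into the normal coordinate, as was done for $\mathcal T_m$. The tangential part becomes a sum over $j$ of functions, each either $(\Re z_j)^{2m}$ or one with a positive quadratic leading term. Each summand is nonnegative, and its lowest-order homogeneous term has positive circular mean. The sum therefore has order $\min_j$ of the individual orders, which is at most $2m\,\min_j\operatorname{ord} z_j$, and it cannot cancel the harmonic term $\Re\widetilde q$. The circular-mean argument then gives type at most $2m$ at every point. The curve $\zeta\mapsto(0,\zeta,0,\dots,0)$ gives type exactly $2m$ at the origin.

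\emph{Local equals global.} The dilations $(Q,Z)\mapsto(s^{2m}Q,sZ)$ are automorphisms, and the argument of Corollary~\ref{cor:homogeneous-tangent-model} applies verbatim. An automorphism moves any quadruple into an arbitrary neighborhood $V$ of $0$ while preserving all pairwise distances, so $\delta_{\mathrm{loc}}(\mathcal T_{m,d},0)=\delta(\mathcal T_{m,d},K_{\mathcal T_{m,d}})$.

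\emph{Lower bound via retraction.} Define $i:\mathcal T_m\to\mathcal T_{m,d}$ by $i(Q,Z)=(Q,Z,0,\dots,0)$ and $r:\mathcal T_{m,d}\to\mathcal T_m$ by $r(Q,Z)=(Q,Z_1)$. The map $r$ lands in $\mathcal T_m$ because $\Re Q>\sum_j(\Re Z_j)^{2m}\ge(\Re Z_1)^{2m}$. Since $r\circ i=\Id$, the contraction argument in the preliminaries shows that $i$ is a Kobayashi isometric embedding.

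Now take the quadruples $\widetilde x_j(T)\to0$ from the proof of Theorem~\ref{thm:lb-final-model}. Their images $i(\widetilde x_j(T))$ tend to $0$ and have the same four-point defect, which tends to $m\log2$. Since these images eventually lie in any given neighborhood $V$, we obtain $\delta_{\mathrm{loc}}(\mathcal T_{m,d},0)\ge m\log 2$.

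The only step needing care is the type bound for several variables, specifically ruling out cancellation between different $j$. Nonnegativity of every summand handles this.
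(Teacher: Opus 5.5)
Your proposal is correct and follows essentially the same route as the paper: the retraction pair $i$, $r$ with $r\circ i=\Id$ gives an isometric copy of $\mathcal T_m$, the dilations $A_s$ identify the local and global constants, and the $\mathcal T_m$ arguments for convexity, $\C$-properness and type are carried out coordinatewise. You spell out that last step in more detail than the paper, and you correctly use nonnegativity of each tangential summand to rule out cancellation between different $j$. The only cosmetic difference is the lower bound: you push the local witnesses $\widetilde x_j(T)$ forward by $i$ to bound $\delta_{\mathrm{loc}}(\mathcal T_{m,d},0)$ directly, whereas the paper bounds the global constant via $\delta(\mathcal T_{m,d},K_{\mathcal T_{m,d}})\ge\delta(\mathcal T_m,K_{\mathcal T_m})$ and then invokes local $=$ global.
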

\begin{proof}The argument in the proof of Theorem~\ref{thm:lb-final-model} applies
coordinatewise and gives all the stated geometric and boundary-type
properties.
Consider the holomorphic maps
\[
I:\mathcal T_m\longrightarrow\mathcal T_{m,d},\quad
 I(Q,W)=(Q,W,0,\ldots,0),\]
\[P:\mathcal T_{m,d}\longrightarrow\mathcal T_m,\quad
 P(Q,Z_1,\ldots,Z_d)=(Q,Z_1).
\]
Since $P\circ I=\operatorname{id}_{\mathcal T_m}$, holomorphic contraction
shows that $I$ is an isometric embedding for the Kobayashi distance.
Consequently,
\[
\delta(\mathcal T_{m,d},K_{\mathcal T_{m,d}})
\geq
\delta(\mathcal T_m,K_{\mathcal T_m})
\geq m\log 2.
\]
The maps $A_s(Q,Z)=(s^{2m}Q,sZ)$ are automorphisms of
$\mathcal T_{m,d}$ and move every fixed quadruple arbitrarily close to the
origin as $s\downarrow0$. Hence
\[
\delta_{\mathrm{loc}}(\mathcal T_{m,d},0)
=
\delta(\mathcal T_{m,d},K_{\mathcal T_{m,d}}),
\]
which finishes the proof.
\end{proof}

\begin{proof}[Proof of Theorem~\ref{thm:universal-growth}]
By Corollary~\ref{cor:local-germ-invariance}, we may replace $\Omega$ by
a sufficiently small convex truncation without changing its local Gromov
hyperbolicity constant at $p$. Thus we may assume without loss of
generality that $\Omega$ is convex.

For every even integer $M\geq2$, Theorem~\ref{thm:four-point} supplies a constant
$\mathcal E_M<36M$ for every pointed domain entering the definition of
$\mathfrak H(M)$. Hence
\[
 \mathfrak H(M)\leq \mathcal E_M<36M.
\]
For the lower bound, put $m=\frac{M}{2}$. The two-dimensional model
$\mathcal T_m$ has boundary type at most $M$, and
Theorem~\ref{thm:lb-final-model} gives
\[
 \mathfrak H(M)
 \geq\delta_{\mathrm{loc}}(\mathcal T_m,0)
 \geq\frac{M}{2}\log2.
\]
The assertion for each fixed dimension follows from
Corollary~\ref{cor:lb-final-higher-dimensional}. Combining the upper and
lower bounds proves the theorem.
\end{proof}

\section{Applications}
\label{sec:applications}

This section discusses two consequences:
conditions under which local Gromov hyperbolicity yields global
Gromov hyperbolicity, and a local bound on asymptotic upper curvature.

\subsection{From local to global Gromov hyperbolicity}
Theorem~\ref{thm:main} gives a quantitative estimate near each
finite-type boundary point.  We next discuss how the pointwise local Gromov hyperbolicity
constant relates to global Gromov hyperbolicity.
The local-to-global problem for the Kobayashi distance was studied
systematically by Bracci, Gaussier, Nikolov, and Thomas
\cite{BracciGaussierNikolovThomas}.  Their notion of local Gromov
hyperbolicity is intrinsic: near a boundary point \(p\), one considers a
truncated domain \(\Omega\cap U\) equipped with its own Kobayashi distance
\(K_{\Omega\cap U}\).  They also introduce a local visibility condition and
prove that a bounded domain is both visible and Gromov hyperbolic
if and only if it is both locally visible and locally Gromov hyperbolic
\cite[Theorem~1.2]{BracciGaussierNikolovThomas}.

The local Gromov hyperbolicity constant considered here is different.
Definition~\ref{def:local-hyperbolicity-constant} restricts only the
points to a small neighborhood of \(p\), while all distances are
measured by \(K_\Omega\).  Thus the definition of the local Gromov hyperbolicity constant does not use
the intrinsic Kobayashi distance of a truncated domain.
The result of Bracci--Gaussier--Nikolov--Thomas applies in greater
generality and also addresses the geometry near the artificial
boundary \(\partial U\cap\Omega\).

The remaining issue is to relate different boundary regions.  By the
Hopf--Rinow theorem, a bounded complete Kobayashi hyperbolic domain is
geodesic for its Kobayashi distance.  Such a domain \(\Omega\) is called
\emph{visible} if, for every pair of distinct points
\(\xi,\eta\in\partial\Omega\), there exist disjoint Euclidean neighborhoods
\(U_\xi,U_\eta\) and a compact set \(L\Subset\Omega\) such that every
\(K_\Omega\)-geodesic segment joining a point of
\(\Omega\cap U_\xi\) to a point of \(\Omega\cap U_\eta\) meets \(L\).
Thus visibility provides the global information needed to connect the
separate local descriptions near different boundary points
\cite{BracciGaussierNikolovThomas}.

The following proposition may be viewed as an ambient-metric analogue
of the local-to-global implication in
\cite[Theorem~1.2]{BracciGaussierNikolovThomas}.
\begin{proposition}\label{prop:visible-local-to-global}
Let \(\Omega\Subset\C^n\) be complete Kobayashi hyperbolic and visible.
If \(\delta_{\mathrm{loc}}(\Omega,p)<\infty\) for every
\(p\in\partial\Omega\), then \(\delta(\Omega,K_\Omega)<\infty\).
\end{proposition}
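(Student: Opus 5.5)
The plan is a compactness argument combined with a case analysis on where a quadruple sits. We argue by contradiction. Suppose \(\delta(\Omega,K_\Omega)=\infty\). By \eqref{eq:global-fourpoint-constant} there are quadruples \(x^\nu=(x^\nu_1,\dots,x^\nu_4)\) whose four-point defect \(\Delta_\nu\) tends to infinity. Since \(\overline\Omega\) is compact, we pass to a subsequence so that each \(x^\nu_i\to\xi_i\in\overline\Omega\). For each \(p\in\partial\Omega\), finiteness of \(\delta_{\mathrm{loc}}(\Omega,p)\) gives a neighborhood \(V_p\) with \(\delta(\Omega\cap V_p,K_\Omega)\le\delta_{\mathrm{loc}}(\Omega,p)+1\). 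If all four limits coincide at a single boundary point \(p\), the quadruples eventually lie in \(V_p\), and their defects are bounded, which is a contradiction. If all limits lie in \(\Omega\), the distances converge and the defect stays bounded. So the real work is the mixed configurations: at least two distinct limit points, with at least one on \(\partial\Omega\).

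For these we use visibility together with the geodesic structure, which exists by Hopf--Rinow since \(\Omega\) is bounded and complete hyperbolic. First, group the indices by limit point. Second, show that for indices \(i,j\) with distinct limits \(\xi_i\ne\xi_j\), the Gromov product based at a fixed origin \(o\in\Omega\) stays bounded:
\[
(x^\nu_i\mid x^\nu_j)_o\le C .
\]
Indeed, visibility supplies a compact \(L\) met by every geodesic from \(x_i^\nu\) to \(x_j^\nu\). Picking \(w\in L\) on such a geodesic, we get
\[
K(x_i,x_j)=K(x_i,w)+K(w,x_j)\ge K(x_i,o)+K(o,x_j)-2\operatorname{diam}_K L .
\]
Interior limit points are handled even more simply, by the triangle inequality through \(o\). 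Third, rewrite the four-point condition in its Gromov-product form \eqref{eq:gromov-product-inequality} based at \(o\). The three pairing sums then differ, up to the common term \(\sum_i K(x_i,o)\), by \(-2\) times sums of two products.

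The case analysis then runs as follows. If the limits split into two clusters \(\{1,2\}\) and \(\{3,4\}\), the cross products are bounded. The pairing \((12)(34)\) then gives the smallest sum, and the two other sums agree up to \(O(1)\), so the defect is bounded. If three or more clusters occur, at least two of the three pairings involve only bounded cross products. Since the defect is half the gap between the largest and the middle sum, it is again bounded. The hard subcase is a cluster of size three or a partial cluster, for instance \(\{1,2,3\}\) converging to \(p\) and \(x_4\) converging elsewhere. Here we need to replace \(x_4\) by a point near \(p\) with a controlled error. We take a geodesic from \(x^\nu_4\) to \(o\) and a point \(y^\nu\) on it, chosen near \(p\) inside \(V_p\) or at least tracking, so that the distances from \(x_1,x_2,x_3\) to \(x_4\) equal the distances to \(y^\nu\) plus \(K(y^\nu,x_4)\) up to \(O(1)\). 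Each pairing sum then shifts by the same quantity, and the defect of \((x_1,x_2,x_3,x_4)\) is within \(O(1)\) of the defect of \((x_1,x_2,x_3,y)\), which is bounded by local hyperbolicity at \(p\).

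The main obstacle is this last replacement step. Proving that geodesics from points near \(p\) to a far point pass through a fixed compact set uniformly, and that the additive errors are uniform, needs visibility applied to a pair \(p\ne\xi_4\). If \(\xi_4\) is interior, we replace visibility by a compact-set argument. To get the needed point, we would first try \(y^\nu\in L\). This gives \(K(x_i,x_4)=K(x_i,y)+K(y,x_4)+O(1)\) only for geodesics through \(x_i\), so \(y\) must be chosen uniformly over the three geodesics from \(x_1,x_2,x_3\). Their passage points lie in the compact set \(L\), which has bounded diameter, so any one of them works up to \(2\operatorname{diam}_KL\). However, \(y\) then lies in \(L\), not near \(p\). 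We therefore use \(y\in L\) and note that the products \((x_i\mid y)_o\) are bounded, so the configuration reduces to the \(\{1,2,3\}\) cluster together with a bounded-product point. The defect of such a configuration is governed by the triple alone, and any triple is trivially fine. This yields the bound and the contradiction.
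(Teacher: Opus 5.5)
Your overall architecture works: the visibility bound \((x_i^\nu\mid x_j^\nu)_o\le C\) for distinct limits, the rewriting of the defect as \(P_{\mathrm{mid}}-P_{\min}\) for the product sums \(P_{(ij)(k\ell)}=(x_i\mid x_j)_o+(x_k\mid x_\ell)_o\), and the one-cluster, two-cluster and three-or-more-cluster cases are all fine. (A minor point: take \(C=\max_{a\in L}K_\Omega(o,a)\) rather than \(\operatorname{diam}_K L\), unless you put \(o\in L\).)

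\textbf{The gap is the last step of the \(\{1,2,3\}\cup\{4\}\) case.} Once the cross products \((x_i\mid x_4)_o\), or \((x_i\mid y^\nu)_o\), are known to be bounded, the defect equals, up to \(O(1)\), the gap between the two smallest of \((x_1\mid x_2)_o\), \((x_1\mid x_3)_o\), \((x_2\mid x_3)_o\). That gap is exactly the four-point defect of \((x_1,x_2,x_3,o)\). Your claim that ``any triple is trivially fine'' is false, because bounding this gap is precisely the content of hyperbolicity. In \((\R^2,\|\cdot\|_\infty)\) with \(o=0\), \(x_1=(R,0)\), \(x_2=(0,R)\), \(x_3=(-R,0)\), the three products are \(R/2,0,R/2\), so the gap is unbounded. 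Nor can you invoke local hyperbolicity at \(p\) for \((x_1,x_2,x_3,y^\nu)\): once \(y^\nu\in L\), it does not lie in \(V_p\). So the hardest case is unproved as written, and the geodesic detour through \(L\) does not help.

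\textbf{The repair is to replace \(x_4\) by a fixed point near \(p\), with no geodesic needed.} Fix \(a\in\Omega\cap V_p\). You have \((x_i\mid x_4)_o\le C\) and \(0\le(x_i\mid a)_o\le K_\Omega(o,a)\). Hence the three product sums of \((x_1,x_2,x_3,x_4)\) and of \((x_1,x_2,x_3,a)\) are each within a bounded amount of \((x_1\mid x_2)_o\), \((x_1\mid x_3)_o\), \((x_2\mid x_3)_o\), respectively. Their defects therefore differ by a bounded amount. For large \(\nu\) the quadruple \((x_1,x_2,x_3,a)\) lies in \(\Omega\cap V_p\), so its defect is at most \(\delta_{\mathrm{loc}}(\Omega,p)+1\).

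\textbf{Comparison with the paper.} With this fix your four-point argument is valid, and it is the counterpart of the paper's final step. The paper avoids the cluster analysis by testing hyperbolicity through the three-point Gromov-product inequality at the fixed basepoint \(o\). There both \((x_j\mid y_j)_o\) and \((y_j\mid z_j)_o\) blow up, so visibility forces all three limits to coincide at a single \(p\). The paper then finishes with a fixed \(a\in\Omega\cap V_p\) and the estimate \(|(u\mid v)_o-(u\mid v)_a|\le K_\Omega(o,a)\). Your route does not need the fact that hyperbolicity can be tested at a single basepoint, at the cost of the case bookkeeping.
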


\begin{proof}
Suppose, to the contrary, that \((\Omega,K_\Omega)\) is not Gromov
hyperbolic, and fix \(o\in\Omega\).
Gromov hyperbolicity can equivalently be tested at one fixed basepoint,
up to a universal change of the constant; see
\cite[Chapter~III.H]{BridsonHaefliger}.
Consequently, there exist \(x_j,y_j,z_j\in\Omega\) such that
\begin{equation}\label{eq:unbounded-product-defect}
\min\{(x_j\mid y_j)_o,(y_j\mid z_j)_o\}
-
(x_j\mid z_j)_o
\longrightarrow\infty.
\end{equation}
Since Gromov products are nonnegative, both
\((x_j\mid y_j)_o\) and \((y_j\mid z_j)_o\) tend to infinity.  Moreover, we have
\[
(u\mid v)_o
\le
\min\{K_\Omega(o,u),K_\Omega(o,v)\},
\]
so each of the sequences \(x_j,y_j,z_j\) leaves every compact subset of
\(\Omega\).  Since \(\overline\Omega\) is compact, after passing to a
subsequence we may assume that
\[
x_j\to\xi,\qquad
y_j\to\eta,\qquad
z_j\to\zeta
\qquad
\text{in }\partial\Omega.
\]

We claim that \(\xi=\eta=\zeta\).
Suppose, for example, that \(\xi\ne\eta\).  By visibility, there is a
compact set \(L\Subset\Omega\) such that every geodesic segment joining
\(x_j\) to \(y_j\) meets \(L\) for all sufficiently large \(j\).
Choose \(a_j\in[x_j,y_j]_\Omega\cap L\).
Since \(a_j\) lies on a geodesic from \(x_j\) to \(y_j\), we have
\[
(x_j\mid y_j)_o
\le
K_\Omega(o,a_j)
\le
\max_{a\in L}K_\Omega(o,a),
\]
contradicting \((x_j\mid y_j)_o\to\infty\).  Hence
\(\xi=\eta\).  Applying the same argument to \(y_j,z_j\) gives
\(\eta=\zeta\).

Write \(p=\xi=\eta=\zeta\).
Since \(\delta_{\mathrm{loc}}(\Omega,p)<\infty\), there is a
neighborhood \(V\) of \(p\) and some \(\delta_p<\infty\) such that
\[
\bigl(
\Omega\cap V,
\left.K_\Omega\right|_{\Omega\cap V}
\bigr)
\]
is \(\delta_p\)-hyperbolic.  Fix \(a\in\Omega\cap V\).  For all
sufficiently large \(j\), the four points
\(x_j,y_j,z_j,a\) lie in \(\Omega\cap V\), and hence
\[
(x_j\mid z_j)_a
\ge
\min\{(x_j\mid y_j)_a,(y_j\mid z_j)_a\}
-\delta_p.
\]
For any \(u,v\in\Omega\), the triangle inequality gives
\[
\bigl|(u\mid v)_o-(u\mid v)_a\bigr|
\le
K_\Omega(o,a).
\]
Therefore, we have
\[
\min\{(x_j\mid y_j)_o,(y_j\mid z_j)_o\}
-
(x_j\mid z_j)_o
\le
\delta_p+2K_\Omega(o,a),
\]
contradicting \eqref{eq:unbounded-product-defect}.
\end{proof}

\begin{remark}
Proposition~\ref{prop:visible-local-to-global} is qualitative.  Even if
the local constants are uniformly bounded over
\(\partial\Omega\), the proposition does not give a bound for
\(\delta(\Omega,K_\Omega)\) in terms of this common local bound alone.
Visibility ensures that geodesics joining different boundary regions
pass through a compact subset of \(\Omega\), but the Kobayashi size of
such a compact set may depend on the global geometry of the domain.
\end{remark}
In the bounded \(C^\infty\)-smooth convex setting, the visibility
argument can be replaced by affine scaling.  Building on Zimmer's work
\cite{ZimmerFiniteType}, one obtains, at a suitable infinite-type
boundary point, an affine scaling limit whose boundary contains a
nontrivial complex affine disk.  Such a limit is not Gromov hyperbolic
for the Kobayashi distance.

Recall that a convex domain is called \(\C\)-proper if it contains no complex
affine line.  We use the local Hausdorff topology on the space
of \(\C\)-proper convex domains.

\begin{proposition}
	\label{prop:local-constant-scaling-limit}
	Let \(\Omega\subset\C^n\) be a \(\C\)-proper convex domain and let
	\(p\in\partial\Omega\).  Suppose that
	\(A_j\in\operatorname{Aff}(\C^n)\) are complex affine automorphisms such
	that
	\[
	\Omega_j:=A_j(\Omega)\longrightarrow\Omega_\infty
	\]
	in the local Hausdorff topology, where \(\Omega_\infty\) is a
	\(\C\)-proper convex domain.  Assume moreover that the scaling is
	centered at \(p\), in the sense that for every compact set
	\(E\Subset\Omega_\infty\), the following convergence holds:
	\begin{equation*}
		\sup_{x\in E}|A_j^{-1}(x)-p|
		\longrightarrow0.
	\end{equation*}
	Then
	\[
	\delta(\Omega_\infty,K_{\Omega_\infty})
	\le
	\delta_{\mathrm{loc}}(\Omega,p).
	\]
\end{proposition}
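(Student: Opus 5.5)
The plan is to use the four-point characterization \eqref{eq:global-fourpoint-constant} together with the continuity of the Kobayashi distance under local Hausdorff convergence of \(\C\)-proper convex domains. Fix four points \(y_1,\dots,y_4\in\Omega_\infty\) and \(\eps>0\). It suffices to show
\[
\Delta_{K_{\Omega_\infty}}(y_1,\dots,y_4)\le\delta_{\mathrm{loc}}(\Omega,p)+\eps.
\]
Take the supremum over quadruples and then let \(\eps\to0\).

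\textbf{Step 1: approximate the distances on the limit.} Choose a compact set \(E\Subset\Omega_\infty\) containing the \(y_i\) in its interior. By local Hausdorff convergence, \(E\subset\Omega_j\) for all large \(j\). I will invoke the standard stability result for Kobayashi distances of \(\C\)-proper convex domains (Zimmer's continuity theorem: \(K_{\Omega_j}\to K_{\Omega_\infty}\) locally uniformly on compact subsets of \(\Omega_\infty\times\Omega_\infty\) under local Hausdorff convergence). It gives
\[
|K_{\Omega_j}(y_i,y_k)-K_{\Omega_\infty}(y_i,y_k)|<\eps/2
\]
for all pairs and all large \(j\). Each pairing sum then changes by less than \(\eps\), and so the defect \(\Delta\) changes by less than \(\eps\). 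This is the step I expect to be the main obstacle. It is not proved earlier in the paper, so I would either cite it or prove it. To prove it, I would get the upper bound \(\limsup K_{\Omega_j}\le K_{\Omega_\infty}\) by pushing slightly shrunk extremal discs of \(\Omega_\infty\) into \(\Omega_j\) via convexity. For the lower bound I would use tautness and normal-family limits of near-extremal discs of \(\Omega_j\); \(\C\)-properness of \(\Omega_\infty\) is what prevents these discs from escaping to infinity.

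\textbf{Step 2: transfer to \(\Omega\) near \(p\).} Since \(A_j\) is a biholomorphism \(\Omega\to\Omega_j\), the points \(x_i^{(j)}:=A_j^{-1}(y_i)\in\Omega\) satisfy \(K_\Omega(x_i^{(j)},x_k^{(j)})=K_{\Omega_j}(y_i,y_k)\). So the defect of \(\{x_i^{(j)}\}\) in \((\Omega,K_\Omega)\) equals that of \(\{y_i\}\) in \((\Omega_j,K_{\Omega_j})\).

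\textbf{Step 3: conclude.} Pick a neighborhood \(V\) of \(p\) with \(\delta(\Omega\cap V,K_\Omega|_{\Omega\cap V})<\delta_{\mathrm{loc}}(\Omega,p)+\eps\); if the local constant is infinite there is nothing to prove. The centering hypothesis, applied with the compact set \(\{y_1,\dots,y_4\}\subset E\), gives \(x_i^{(j)}\to p\). Hence all four points lie in \(\Omega\cap V\) for large \(j\). Combining the three steps,
\[
\Delta_{K_{\Omega_\infty}}(y_1,\dots,y_4)<\Delta_{K_\Omega}\bigl(x^{(j)}_1,\dots,x^{(j)}_4\bigr)+\eps\le\delta_{\mathrm{loc}}(\Omega,p)+2\eps,
\]
which finishes the proof after relabeling \(\eps\).
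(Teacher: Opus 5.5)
Your proposal is correct and follows essentially the same route as the paper. Both arguments choose a neighborhood $V$ of $p$ on which the restricted ambient distance is nearly $\delta_{\mathrm{loc}}(\Omega,p)$-hyperbolic, pull a quadruple of $\Omega_\infty$ back into $\Omega\cap V$ using the centering hypothesis and the biholomorphic invariance of $A_j$, and pass to the limit via Zimmer's locally uniform convergence $K_{\Omega_j}\to K_{\Omega_\infty}$, which the paper cites rather than proves. The only cosmetic difference is that you track the four-point defect with an $\eps$-margin, whereas the paper passes to the limit directly in the four-point inequality with a fixed $\Delta>\delta_{\mathrm{loc}}(\Omega,p)$.
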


\begin{proof}
	The conclusion is immediate if
	\(\delta_{\mathrm{loc}}(\Omega,p)=+\infty\).
	Assume that this constant is finite and fix
	\(\Delta>\delta_{\mathrm{loc}}(\Omega,p)\).
	By the definition of the local hyperbolicity constant, there exists a
	neighborhood \(V\) of \(p\) such that
	\[
	\bigl(
	\Omega\cap V,
	\left.K_\Omega\right|_{\Omega\cap V}
	\bigr)
	\]
	is \(\Delta\)-hyperbolic.
	
	Let \(x_1,x_2,x_3,x_4\in\Omega_\infty\).  By local Hausdorff
	convergence, these points belong to \(\Omega_j\) for all sufficiently
	large \(j\).  Since the scaling is centered at \(p\), their inverse
	images under \(A_j\) lie in \(\Omega\cap V\) for all sufficiently large
	\(j\).  By biholomorphic invariance and the four-point inequality, we have
	\[
	\begin{aligned}
		K_{\Omega_j}(x_1,x_3)+K_{\Omega_j}(x_2,x_4)
		\le
		\max\bigl\{&
		K_{\Omega_j}(x_1,x_2)+K_{\Omega_j}(x_3,x_4),\\
		&
		K_{\Omega_j}(x_1,x_4)+K_{\Omega_j}(x_2,x_3)
		\bigr\}
		+2\Delta.
	\end{aligned}
	\]
	
	By \cite[Theorem~4.1]{ZimmerFiniteType},
	\(K_{\Omega_j}\) converges locally uniformly to
	\(K_{\Omega_\infty}\).  Passing to the limit gives the same
	inequality in \(\Omega_\infty\).  Since the quadruple was arbitrary,
	\[
	\delta(\Omega_\infty,K_{\Omega_\infty})
	\le
	\Delta.
	\]
	Letting
	\(\Delta\to\delta_{\mathrm{loc}}(\Omega,p)^+\)
	proves the result.
\end{proof}

We recall two notions of extremality for a convex set
\(C\subset\C^n\).

\begin{definition}
	A point \(q\in C\) is a \emph{real extreme point} of \(C\) if every
	representation \(q=tx+(1-t)y\), with \(x,y\in C\) and \(0<t<1\),
	satisfies \(x=y=q\).
	
	A point \(q\in C\) is a \emph{complex extreme point} of \(C\) if the
	condition \(q+\zeta v\in C\) for every sufficiently small
	\(\zeta\in\C\) forces \(v=0\).
	Equivalently, \(q\) is not the center of any nontrivial complex affine
	disk contained in \(C\).
\end{definition}

Every real extreme point is complex extreme.  Indeed, if
\(v\in\C^n\setminus\{0\}\) and
\(q+\zeta v\in C\) for all sufficiently small \(\zeta\), then, for some
sufficiently small \(r>0\), one can write
\[
q=\frac12(q+rv)+\frac12(q-rv),
\]
contradicting the real extremality of \(q\).
\begin{lemma}
	\label{lem:extreme-infinite-type-point}
	Let \(\Omega\Subset\C^n\) be a convex domain with \(C^\infty\)-smooth
	boundary.  If \(\partial\Omega\) is not of finite line type, then there
	exists \(q\in\partial\Omega\) of infinite line type which is a complex
	extreme point of \(\overline\Omega\).
\end{lemma}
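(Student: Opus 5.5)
The plan is to start from the set of infinite-type points and push it, by a face-reduction argument, onto an extreme point of \(\overline\Omega\).

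Let \(E\subset\partial\Omega\) be the set of boundary points of infinite line type. First, \(E\) is closed. Indeed, by the argument of Lemma~\ref{lem:local-type-bound}, the condition \(\ell_{\partial\Omega}(\xi)\le m\) is an open condition in \(\xi\) for each fixed \(m\). So the set of finite-type points is a union of open sets, and its complement \(E\) is closed. Since \(\partial\Omega\) is compact, \(E\) is compact, and by hypothesis \(E\ne\varnothing\).

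Pick \(q_0\in E\) and a unit vector \(v_0\) along which \(\rho(q_0+\zeta v_0)\) vanishes to infinite order. Since the defining function is convex, infinite order of contact means that along the complex line \(q_0+\C v_0\) the convex function \(\zeta\mapsto\rho(q_0+\zeta v_0)\ge0\) is flat at \(0\). Flatness alone does not force a disc in the boundary, so I would not rely on that.

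Instead I would argue via faces. Let \(H\) be the supporting real hyperplane of \(\overline\Omega\) at \(q_0\), and consider the exposed face \(F=\overline\Omega\cap H\). This is a compact convex set containing \(q_0\), and every point of \(F\) lies in \(\partial\Omega\). The key claim is that every point of \(F\) has infinite line type, or at least that some extreme point of \(F\) does. Extreme points of \(F\) exist by Krein--Milman, since \(F\) is compact. Moreover, an extreme point of a face is an extreme point of \(\overline\Omega\), because faces of faces are faces. Such a point is therefore real extreme, hence complex extreme by the remark preceding the lemma.

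The main obstacle is ensuring that the extreme point we select still has infinite type. If \(F=\{q_0\}\), we are done, since \(q_0\) is then exposed and hence extreme. In general I would iterate with a lexicographic choice. Among the points of \(E\), choose \(q\) maximizing a strictly convex function, such as \(|z|^2\), or more safely a generic linear functional \(\Re\langle z,w\rangle\) followed by successive tie-breaking functionals. Then \(q\) is an extreme point of the convex hull of \(E\). It remains to see that \(q\) is extreme in \(\overline\Omega\).

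Suppose \(q\) is the midpoint of a segment \([x,y]\subset\overline\Omega\). Then the segment lies in \(\partial\Omega\), inside the face through \(q\). Smoothness of the boundary makes \(\rho\) vanish on a neighborhood of \(q\) within the relative interior of that face. I would then show that infinite type propagates along the segment: the flat complex direction \(v\) at \(q\) persists at nearby points of the face by translation within the flat piece. That would place points of \(E\) on both sides of \(q\), contradicting the extremality of \(q\) in \(\mathrm{conv}(E)\).

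This propagation step is where I expect the difficulty. A simpler alternative I would try first is the maximization of \(|z-c|^2\) for a center \(c\) far away in a generic direction. That gives a point \(q\in E\) extreme in \(\mathrm{conv}(E)\). One then needs only to rule out \(q\) lying in the open segment of a boundary face, using the fact that points in the relative interior of a face share the face's flat complex directions of infinite order.
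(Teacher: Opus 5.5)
\textbf{There is a genuine gap.} Your setup is fine: the set $E$ of infinite-type points is closed, nonempty and compact, and a farthest point $q\in E$ from some centre is extreme in $\mathrm{conv}(E)$.

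The gap is exactly the step you flag: showing that the selected point is extreme in $\overline\Omega$. In your face-based attempt, the corresponding step is showing that some extreme point of the exposed face $F$ still has infinite type. You never prove either. The mechanism you propose is that the flat direction $v$ at $q$ persists at nearby points of the face ``by translation within the flat piece.'' That mechanism does not exist in general. As you noted yourself, infinite-order vanishing of $\rho(q+\zeta v)$ produces no complex disc in $\partial\Omega$, so there is no flat piece to translate. Likewise, the remark that $\rho$ vanishes near $q$ in the relative interior of the face is vacuous, since $\rho\equiv0$ on all of $\partial\Omega$. It says nothing about the transverse complex direction $v$. So the argument stops at its only nontrivial point.

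\textbf{How the paper avoids this.} The paper uses a case split that exploits the fact that the lemma only asks for \emph{complex} extremality. Take $p$ of infinite type. If $p$ is complex extreme, you are done. Otherwise there is a genuine disc $D=\{p+\zeta v:|\zeta|<\eps\}\subset\overline\Omega$.
\begin{itemize}
\item $D$ lies in the exposed face $F=\overline\Omega\cap\{\ell=0\}\subset\partial\Omega$.
\item A farthest point $q\in F$ from some $a$ is real extreme in $F$, hence in $\overline\Omega$.
\item Since $(1-t)q+tD\subset F$, the points $p_t=(1-t)q+tp$ satisfy $\rho(p_t+\zeta v)\equiv0$ near $\zeta=0$. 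Letting $t\to0^+$ gives infinite type at $q$.
\end{itemize}

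\textbf{How to complete your route.} An analytic version of the same trick works. Suppose $q\in E$, flat in direction $v$, is the midpoint of $[x,y]\subset\overline\Omega$. Then $[x,y]$ lies in the face cut out by the tangent hyperplane at $q$. So every point of $[x,y]$ has the same outer normal, and $v$ is complex tangent there. For $z=(1-s)q+sx$ with $0\le s<1$, convexity of $\rho$ gives
\[
0\le\rho(z+\zeta v)\le(1-s)\,\rho\bigl(q+\zeta v/(1-s)\bigr)+s\,\rho(x)=(1-s)\,\rho\bigl(q+\zeta v/(1-s)\bigr).
\]
The lower bound comes from the subgradient inequality at $z$ together with $v$ being complex tangent. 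Hence $\rho(z+\zeta v)=O(|\zeta|^N)$ for every $N$, so $z\in E$. The same holds on $[q,y)$. Then $q$ is the midpoint of two distinct points of $E$, contradicting its extremality in $\mathrm{conv}(E)$. This completion even yields a real extreme point of infinite type. That is slightly more than the paper obtains in the case where $p$ is already complex extreme.
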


\begin{proof}
Let \(\rho\) be the signed distance function and consider the unit complex tangent bundle
\[
\mathcal S
=
\{(x,v):x\in\partial\Omega,\ v\in T_x^{\C}\partial\Omega,\ |v|=1\}.
\]
This bundle is compact.  For \(N\ge1\), let
\(\mathcal E_N\subset\mathcal S\) consist of the pairs \((x,v)\) satisfying
\[
\frac{\partial^{a+b}}
{\partial\zeta^a\partial\bar\zeta^b}
\rho(x+\zeta v)\bigg|_{\zeta=0}=0
\qquad
\text{whenever }1\le a+b\le N.
\]
Each \(\mathcal E_N\) is closed, and
\(\mathcal E_{N+1}\subset\mathcal E_N\).  Since the boundary has no
uniform finite line-type bound, every \(\mathcal E_N\) is nonempty. The Cantor intersection theorem therefore gives
\(\bigcap_{N\ge1}\mathcal E_N\ne\varnothing\).  Choose
\((p,v_0)\) in this intersection.  Then \(p\) has infinite line type.

If \(p\) is a complex extreme point of \(\overline\Omega\), there is
nothing to prove.  Otherwise, there exist
\(v\in\C^n\setminus\{0\}\) and \(\eps>0\)
	such that
	\[
	D:=\{p+\zeta v:|\zeta|<\eps\}\subset\overline\Omega.
	\]
	Let \(\ell\) be a real affine functional supporting
	\(\overline\Omega\) at \(p\), normalized so that
	\[
	\ell\le0\quad\text{on }\overline\Omega,
	\qquad
	\ell(p)=0.
	\]
	The map
	\(\zeta\longmapsto \ell(p+\zeta v)\)
	is real affine, nonpositive near \(0\), and vanishes at \(0\).  Hence it
	vanishes identically, and therefore
	\[
	D\subset
	F:=\overline\Omega\cap\{\ell=0\}
	\subset\partial\Omega.
	\]
	
	The set \(F\) is nonempty, compact, and convex.  Fix \(a\in\C^n\), and
	choose \(q\in F\) at which the function \(x\mapsto|x-a|^2\) attains its
	maximum on \(F\).  We claim that \(q\) is a real extreme point of \(F\).
	Indeed, if
	\[
	q=tx+(1-t)y,
	\qquad
	x,y\in F,\quad 0<t<1,
	\]
	with \(x\ne y\), then the strict convexity of \(|\,\cdot-a|^2\) gives
	\[
	|q-a|^2
	<
	t|x-a|^2+(1-t)|y-a|^2
	\le
	|q-a|^2,
	\]
	a contradiction.
	
	We next show that \(q\) is a real extreme point of
	\(\overline\Omega\).  Suppose that
	\[
	q=tx+(1-t)y,
	\qquad
	x,y\in\overline\Omega,\quad 0<t<1.
	\]
	Since
	\[
	0=\ell(q)=t\ell(x)+(1-t)\ell(y)
	\]
	and \(\ell(x),\ell(y)\le0\), we have
	\(\ell(x)=\ell(y)=0\).  Hence \(x,y\in F\), and the real extremality of
	\(q\) in \(F\) gives \(x=y=q\).  Thus \(q\) is a real extreme point of
	\(\overline\Omega\), and therefore also a complex extreme point of
	\(\overline\Omega\).
	
	It remains to show that \(q\) has infinite line type.  For \(0<t<1\),
	set
	\(p_t=(1-t)q+tp\).
	Since \(F\) is convex and contains both \(q\) and \(D\), it follows that
	\[
	\{p_t+\zeta v:|\zeta|<t\eps\}
	=
	(1-t)q+tD
	\subset F
	\subset\partial\Omega.
	\]
	
	Let \(\rho\) be a smooth defining function for \(\Omega\) in a
	neighborhood of \(q\).  Since \(p_t\to q\) as \(t\to0^+\), for all
	sufficiently small \(t>0\), the following identity holds:
	\[
	\rho(p_t+\zeta v)=0
	\qquad
	\text{for }|\zeta|<t\eps.
	\]
	Consequently, we have
	\[
	\frac{\partial^{a+b}}
	{\partial\zeta^a\partial\bar\zeta^b}
	\rho(p_t+\zeta v)\bigg|_{\zeta=0}
	=0
	\qquad
	\text{for all }a,b\ge0.
	\]
	Letting \(t\to0^+\) and using the smoothness of \(\rho\), we obtain
	\[
	\frac{\partial^{a+b}}
	{\partial\zeta^a\partial\bar\zeta^b}
	\rho(q+\zeta v)\bigg|_{\zeta=0}
	=0
	\qquad
	\text{for all }a,b\ge0.
	\]
	Thus \(\rho(q+\zeta v)\) vanishes to infinite order at \(0\), so \(q\)
	has infinite line type.
\end{proof}

The following lemma is essentially due to Zimmer \cite{ZimmerFiniteType,ZimmerLimitSet}.
\begin{lemma}\label{lem:pointed-scaling}
Let $\Omega\Subset\C^n$ be a convex domain with $C^\infty$-smooth
boundary, and let $q\in\partial\Omega$ have infinite line type. Then
there exist points $p_j\in\Omega$, complex affine automorphisms
$A_j\in\Aff(\C^n)$, a $\C$-proper convex domain $\Omega_\infty$, and a
point $u\in\Omega_\infty$ such that
\[
  p_j\longrightarrow q,\qquad A_j(p_j)=u,
  \qquad A_j(\Omega)\longrightarrow\Omega_\infty
\]
in the local Hausdorff topology. Moreover,
$\partial\Omega_\infty$ contains a nontrivial complex affine disk.
\end{lemma}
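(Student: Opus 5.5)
We would follow Zimmer's rescaling scheme for convex domains near an infinite-type point, using a ``maximal'' extremal-basis scaling in the spirit of Frankel and Pinchuk. Since \(q\) has infinite line type, there is a unit complex direction \(v_0\in T^{\C}_q\partial\Omega\) with \(\rho(q+\zeta v_0)=O(|\zeta|^N)\) for every \(N\). After a complex affine change of coordinates we place \(q\) at the origin with inward normal \(\Re z_1\)-direction and \(v_0=e_2\). We take \(p_j=q+\eps_j\nu_{\mathrm{in}}(q)\) with \(\eps_j\to0\), so \(p_j\to q\) and \(\delta(p_j)\asymp\eps_j\).

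At each \(p_j\) we choose a McNeal-type extremal basis for \(\Omega_{p_j,\eps_j}\), defined via \(\tau(p_j,\cdot,\eps_j)\). This is an orthonormal frame \(e^j_1,\dots,e^j_n\) with \(e^j_1\) normal, where each successive \(e^j_k\) maximizes \(\tau\) on the orthocomplement of the previous vectors. Set \(\tau_{j,k}=\tau(p_j,e^j_k,\eps_j)\) and let \(A_j\) be the affine map sending \(p_j\) to \(u=(-1,0,\dots,0)\) (or another fixed point) and \(p_j+\tau_{j,k}e^j_k\) to \(u+e_k\). By construction, \(A_j(\Omega)\) contains a fixed polydisc about \(u\). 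Convexity together with extremality shows that \(A_j(\Omega)\) is contained in a fixed product of half-planes, with McNeal's polydisc comparison giving uniform bounds. By the Blaschke selection theorem a subsequence converges in the local Hausdorff topology to a convex \(\Omega_\infty\). Here \(\Omega_\infty\) is \(\C\)-proper because it sits inside a product of half-planes, each coordinate bounded in one real direction.

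The key step is producing the disk in \(\partial\Omega_\infty\). Along \(v_0\), the ratio \(\tau(p_j,v_0,\eps_j)/\eps_j^{1/N}\) tends to infinity for every \(N\). Hence the first extremal direction has \(\tau_{j,2}\ge\tau(p_j,v_0,\eps_j)\), and it is ``flat to infinite order'' relative to \(\eps_j\). We would use this to show that in scaled coordinates, for every fixed \(R\) and large \(j\), the function \(\zeta\mapsto\rho(p_j+\zeta\tau_{j,2}e^j_2)\) varies by at most \(o(\eps_j)\) over the disk \(|\zeta|\le R\) after boundary translation. We expect this to follow from Lemma~\ref{lem:radial-chebyshev}-type polynomial control combined with the vanishing of all Taylor coefficients at \(q\), which propagates to nearby \(p_j\) by smoothness. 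The limit then contains a whole complex line direction in its recession structure, or equivalently its boundary contains \(\{u'+\zeta e_2\}\) for a boundary point \(u'\). If the limit instead contained a complex line, that would contradict \(\C\)-properness. The normalization must therefore be arranged so that the flat direction stays bounded but nondegenerate. Concretely, we rescale only by \(\tau\) of the \emph{second} level set \(\Omega_{p_j,2\eps_j}\) and use Lemma~\ref{lem:p-scale}, which bounds \(\tau\) from above linearly in \(\eps\), so the boundary disk is nontrivial but bounded.

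The main obstacle is reconciling these two requirements: the limit must be \(\C\)-proper, yet contain a boundary disk. If the choice of \(p_j\) is naive, either the flat direction blows up into a line or it collapses. Our first attempt would follow Zimmer's trick of choosing \(p_j\) and the scales adaptively, via a maximizing sequence of normalized \(\tau\)-ratios that stays bounded, and citing \cite{ZimmerFiniteType,ZimmerLimitSet} for the compactness of the normalized space of pointed \(\C\)-proper convex domains. Failing that, we would invoke Zimmer's result directly that non-finite line type yields such a limit.
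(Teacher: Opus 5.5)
Your fallback at the end, invoking Zimmer's pointed rescaling at an infinite-type point, is exactly the paper's proof. The direct construction you actually develop, however, has a genuine gap at its only nontrivial step.

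\textbf{The obstacle is misidentified.} The tension you describe between $\C$-properness and the boundary disk is not real. Use an extremal-basis normalization of Frankel type, which needs only convexity; McNeal's finite-type polydisc estimates are not available here. Then every subsequential limit is automatically $\C$-proper. No direction can blow up into a line, and rescaling at level $2\eps_j$ via Lemma~\ref{lem:p-scale} adds nothing.

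\textbf{Your mechanism for the disk fails.} By the definition of $\tau_{j,2}$, the function $\rho(p_j+\zeta\tau_{j,2}e^j_2)-\rho(p_j)$ reaches $\eps_j$ at $|\zeta|=1$. So it cannot be $o(\eps_j)$ on $|\zeta|\le R$ for $R\ge1$. What you need is that it is $o(\eps_j)$ on $|\zeta|\le r_0$ for some fixed $r_0\in(0,1)$, i.e.\ unbounded growth from radius $r_0$ to radius $1$. Lemma~\ref{lem:radial-chebyshev} points the wrong way. For polynomials of degree at most $m$ it caps exactly this growth by $T_m(1/r_0)$, which is why finite type yields doubling. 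Taylor-polynomial control therefore can never produce the collapse.

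\textbf{Infinite-order vanishing alone does not produce it either.} The condition $\tau(p_j,v_0,\eps_j)/\eps_j^{1/N}\to\infty$ holds along any normal sequence, yet the limit can still be finite type. Suppose that near $q=0$ the domain is $\{\Re z_1>g(|z_2|)\}$, where $g$ is convex, smooth and flat at $0$, but $g(x)=a_jx^2$ on $[\eta_jt_j,t_j/\eta_j]$ with $\eta_j\to0$ and $a_j\to0$ very fast. Take the normal points $p_j=(g(t_j),0)$ and rescale by $z_1\mapsto z_1/g(t_j)$, $z_2\mapsto z_2/t_j$. These converge to the Siegel domain $\{\Re w_1>|w_2|^2\}$, and any other normalization at these points gives an affine image of it. Its boundary contains no complex affine disk.

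\textbf{The missing idea} is an adaptive choice of base points and scales. It must use infinite-order vanishing to extract a sequence along which the normalized profile in a flat direction collapses to zero on a disk of fixed radius, while staying normalized at radius one. This is the content of \cite[Lemmas~9.4 and~9.5]{ZimmerLimitSet}, which you allude to but do not carry out.

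\textbf{What the paper does.} It applies that construction at $q$, obtaining $B_j(\Omega,p_j)\to(\Omega_\infty,u)$ in the pointed local Hausdorff topology. Here $B_j(p_j)\to u$, $\Omega_\infty$ is $\C$-proper, and $\partial\Omega_\infty$ contains a nontrivial complex affine disk. It then composes with the translations $T_j(z)=z+u-B_j(p_j)$, which converge to $\Id$. The maps $A_j=T_j\circ B_j$ satisfy $A_j(p_j)=u$ exactly and have the same limit. If you take your fallback route, this translation step is the only thing you need to add.
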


\begin{proof}
The pointed scaling construction in the proof of
\cite[Proposition~9.3, especially Lemmas~9.4 and 9.5]{ZimmerLimitSet}, applied
at $q$, gives $p_j\to q$, affine automorphisms
$B_j\in\Aff(\C^n)$, a point $u\in\Omega_\infty$, and a $\C$-proper
convex domain $\Omega_\infty$ such that
\[
  B_j(\Omega,p_j)\longrightarrow(\Omega_\infty,u)
\]
in the pointed local Hausdorff topology and
$\partial\Omega_\infty$ contains a nontrivial complex affine disk.
Set
\[
  T_j(z)=z+u-B_j(p_j),\qquad A_j=T_j\circ B_j.
\]
Since $B_j(p_j)\to u$, we have $T_j\to\Id_{\C^n}$ locally uniformly.
Thus $A_j(\Omega)\to\Omega_\infty$ and $A_j(p_j)=u$.
\end{proof}

\begin{theorem}\label{thm:local-to-global}
Let $\Omega\Subset\C^n$ be a convex domain with $C^\infty$-smooth
boundary. If $\delta_{\mathrm{loc}}(\Omega,p)<\infty$ for every
$p\in\partial\Omega$, then $(\Omega,K_\Omega)$ is Gromov hyperbolic.
\end{theorem}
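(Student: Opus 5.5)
The proof will go by contradiction and reduce the claim to Zimmer's finite-type characterization, using the scaling results just established. Suppose \(\delta_{\mathrm{loc}}(\Omega,p)<\infty\) for every \(p\in\partial\Omega\), but \((\Omega,K_\Omega)\) is not Gromov hyperbolic. By Zimmer's theorem \cite{ZimmerFiniteType}, a bounded convex domain with smooth boundary is Gromov hyperbolic for \(K_\Omega\) if and only if \(\partial\Omega\) has finite D'Angelo type. For convex domains the D'Angelo type equals the line type, by the result of McNeal recalled in Subsection~\ref{subsec:finite-type}. Hence \(\partial\Omega\) is not of finite line type. (If pointwise finiteness everywhere were the only issue, compactness together with Lemma~\ref{lem:local-type-bound} would give a uniform bound, so either formulation of the hypothesis works.) Lemma~\ref{lem:extreme-infinite-type-point} then produces a point \(q\in\partial\Omega\) of infinite line type. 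The extremality it also provides is not needed below, except possibly for arranging that the scaling is centered.

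The next step applies Lemma~\ref{lem:pointed-scaling} at \(q\). This gives \(p_j\to q\), affine maps \(A_j\) with \(A_j(p_j)=u\), and \(A_j(\Omega)\to\Omega_\infty\) in the local Hausdorff topology. Here \(\Omega_\infty\) is \(\C\)-proper convex and \(\partial\Omega_\infty\) contains a nontrivial complex affine disk.

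To invoke Proposition~\ref{prop:local-constant-scaling-limit} at the point \(q\), I must check that the scaling is centered at \(q\): for every compact \(E\Subset\Omega_\infty\), \(\sup_{x\in E}|A_j^{-1}(x)-q|\to0\). This is the main obstacle. My plan is to use the Kobayashi distance. Fix \(x\in E\). Since \(K_{A_j(\Omega)}\to K_{\Omega_\infty}\) locally uniformly by \cite[Theorem~4.1]{ZimmerFiniteType}, the distances \(K_\Omega(A_j^{-1}(x),p_j)=K_{A_j(\Omega)}(x,u)\) are bounded uniformly for \(x\in E\). On a bounded convex domain with smooth boundary that contains no \dots{} the issue is that points at bounded Kobayashi distance from a sequence tending to a boundary point \(q\) must also tend to \(q\). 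For smooth bounded convex domains this holds because \(K_\Omega\) satisfies the standard lower estimate \(K_\Omega(z,w)\ge \frac12\log\frac{1}{\delta(z)}-C\) toward distinct boundary points. More precisely, the visibility-type separation \(K_\Omega(z_j,w_j)\to\infty\) when \(z_j\to q\), \(w_j\to q'\ne q\) holds, for example, by comparing with a supporting half-space at \(q'\) and using Lemma~\ref{lem:normal-lower}. If \(q'\in\partial\Omega\) were a different limit, a supporting real hyperplane separating \(q\) and \(q'\) is not directly available. If that fails, I would instead use the lemma of Zimmer that \(K_\Omega(z_j,w_j)\) bounded with \(z_j\to q\) forces \(w_j\) to accumulate in the same closed face. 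Then I would choose \(q\) to be a real extreme point, which is what Lemma~\ref{lem:extreme-infinite-type-point} essentially provides (its proof yields a real extreme point), so that the face is \(\{q\}\). Also, \(\delta(A_j^{-1}(x))\to0\) follows from \(\delta(p_j)\to0\) and Lemma~\ref{lem:normal-lower}. With centering established, Proposition~\ref{prop:local-constant-scaling-limit} gives \(\delta(\Omega_\infty,K_{\Omega_\infty})\le\delta_{\mathrm{loc}}(\Omega,q)<\infty\).

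Finally, a \(\C\)-proper convex domain whose boundary contains a nontrivial complex affine disk is not Gromov hyperbolic for its Kobayashi distance. This is the Gaussier--Seshadri/Zimmer obstruction, proved by exhibiting quasi-flats near the disk, much as in Example~\ref{ex:no-global-bound}, where polydisc-like regions force arbitrarily large four-point defects. Since \(\Omega_\infty\) has such a disk, \(\delta(\Omega_\infty,K_{\Omega_\infty})=\infty\), which is the desired contradiction.
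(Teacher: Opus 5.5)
Your overall architecture matches the paper's proof: argue by contradiction, take an infinite-type point from Lemma~\ref{lem:extreme-infinite-type-point}, rescale with Lemma~\ref{lem:pointed-scaling}, apply Proposition~\ref{prop:local-constant-scaling-limit}, and finish with Zimmer's obstruction. However, the centering step, which you correctly identify as the crux, is not established.

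Bounded Kobayashi distance to \(p_j\to q\) does not force convergence to \(q\). The separation you first propose fails in exactly the flat, infinite-type situations at issue. For example, in \(\D^2\) the points \((1-1/j,0)\) and \((1-1/j,1/2)\) stay at bounded Kobayashi distance but converge to different boundary points.

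The face lemma you fall back on only gives that every accumulation point \(\eta\) of \(A_j^{-1}(x)\) satisfies \([q,\eta]\subset\overline\Omega\cap(q+T_q\partial\Omega)\). Your claim that a real extreme point has face \(\{q\}\) is false, because real extreme does not mean exposed. It fails for the very point Lemma~\ref{lem:extreme-infinite-type-point} produces. In the main case of that proof, the functional \(\ell\) satisfies \(\ell(q)=0\) and \(\ell\le0\) on \(\overline\Omega\). By smoothness, the supporting hyperplane at \(q\) is unique, so the face at \(q\) is exactly \(F\), and \(F\) contains the disc \(D\). Kobayashi-distance bounds therefore give you no way to rule out \(A_j^{-1}(x)\) accumulating elsewhere in \(F\).

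The missing idea is to combine the affine form of the rescalings with complex extremality, the property you set aside as unnecessary.

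\begin{itemize}
\item Write \(A_j^{-1}(z)=p_j+L_j(z-u)\) with \(L_j\) complex linear.
\item Choose \(r>0\) with \(\overline{B(u,r)}\Subset\Omega_\infty\). Then \(B(u,r)\subset A_j(\Omega)\) for large \(j\), so \(A_j^{-1}(B(u,r))\subset\Omega\). Since \(\Omega\) is bounded, \(\sup_j\|L_j\|<\infty\).
\item If a subsequence converged to some \(L\ne0\), then \(q+L(z-u)\in\overline\Omega\) for all \(z\in B(u,r)\). Taking \(v\) with \(Lv\ne0\), this set contains a nontrivial complex affine disc in \(\overline\Omega\) centered at \(q\), contradicting complex extremality. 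Hence \(L_j\to0\).
\item Therefore \(\sup_{z\in E}|A_j^{-1}(z)-q|\le|p_j-q|+\|L_j\|\sup_{z\in E}|z-u|\to0\).
\end{itemize}

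This argument needs no Kobayashi-distance estimates at all.
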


\begin{proof}
We first prove that $\partial\Omega$ is of finite type. Suppose, to
the contrary, that it is not. Lemma~\ref{lem:extreme-infinite-type-point} gives a point
$q\in\partial\Omega$ of infinite line type which is a complex extreme
point of $\overline\Omega$. Apply Lemma~\ref{lem:pointed-scaling}.
There exist $p_j\to q$, complex affine automorphisms $A_j$, a point
$u\in\Omega_\infty$, and a $\C$-proper convex domain $\Omega_\infty$
such that
\[
  A_j(p_j)=u,\qquad \Omega_j:=A_j(\Omega)\longrightarrow\Omega_\infty
\]
in the local Hausdorff topology, and $\partial\Omega_\infty$ contains
a nontrivial complex affine disk.

We claim that the scaling is centered at $q$. Write
\[
  A_j^{-1}(z)=p_j+L_j(z-u),
\]
where $L_j$ is complex linear. Choose $r>0$ with
$\overline{B(u,r)}\Subset\Omega_\infty$. By local Hausdorff
convergence, $B(u,r)\subset\Omega_j$ for all sufficiently large $j$.
Its inverse image lies in the bounded domain $\Omega$, so the
operators $L_j$ are uniformly bounded.

If a subsequence converged to a nonzero operator $L$, then
\[
  q+L(z-u)=\lim_j A_j^{-1}(z)\in\overline\Omega,
  \qquad z\in B(u,r).
\]
Choosing $v$ with $Lv\ne0$ would produce a nontrivial complex affine
disk in $\overline\Omega$ centered at $q$, contrary to the complex
extremality of $q$. Hence $L_j\to0$. Therefore, for every compact
$E\Subset\Omega_\infty$,
\[
  \sup_{z\in E}|A_j^{-1}(z)-q|
  \le |p_j-q|+\|L_j\|\sup_{z\in E}|z-u|
  \longrightarrow0.
\]
Thus the scaling is centered at $q$. Proposition~\ref{prop:local-constant-scaling-limit} gives
\[
  \delta(\Omega_\infty,K_{\Omega_\infty})
  \le \delta_{\mathrm{loc}}(\Omega,q)<\infty.
\]
Hence $(\Omega_\infty,K_{\Omega_\infty})$ is Gromov hyperbolic. This
contradicts \cite[Theorem~3.1]{ZimmerFiniteType}, since
$\partial\Omega_\infty$ contains a nontrivial complex affine disk.
It follows that $\partial\Omega$ is of finite type. Therefore
$(\Omega,K_\Omega)$ is Gromov hyperbolic by
\cite[Theorem~1.1]{ZimmerFiniteType}.
\end{proof}

\subsection{Asymptotic upper curvature}

We record a coarse-curvature consequence of
Theorem~\ref{thm:main}.  Following Bonk and Foertsch
\cite{BonkFoertsch}, let \(\kappa\in[-\infty,0)\).  A metric space
\((X,d)\) is called an \(\mathrm{AC}_u(\kappa)\)-space if there exist
a basepoint \(o\in X\) and a constant \(c\ge0\) such that, for every
finite chain \(x_0=x,x_1,\ldots,x_N=x'\) in \(X\) with \(N\ge1\), one has
\begin{equation}\label{eq:acu-chain-condition}
	(x\mid x')_o
	\ge
	\min_{1\le j\le N}(x_{j-1}\mid x_j)_o
	-
	\frac{1}{\sqrt{-\kappa}}\log N
	-c.
\end{equation}
Here and below, \(1/\sqrt{\infty}=0\).  The asymptotic upper curvature
of \((X,d)\) is defined by
\[
K_u(X,d)
:=
\inf\left\{
\kappa\in[-\infty,0):
(X,d)\text{ is an }\mathrm{AC}_u(\kappa)\text{-space}
\right\},
\]
with the convention that the infimum of the empty set is \(+\infty\).
The definition of an \(\mathrm{AC}_u(\kappa)\)-space is independent of
the choice of basepoint, up to a change in \(c\), and \(K_u\) is
invariant under rough isometries.

\begin{corollary}[Local asymptotic upper curvature]
	\label{cor:local-asymptotic-curvature}
	Suppose that the hypotheses of Theorem~\ref{thm:main} hold, and set
	\(\kappa_m:=-(\log 2/(36m))^2\).
	Then there exists a neighborhood \(V\) of \(p\) such that
	\(\Omega\cap V\), equipped with the restriction of \(K_\Omega\),
	is an \(\mathrm{AC}_u(\kappa_m)\)-space.  Consequently,
	\begin{equation*}
		K_u\left(
		\Omega\cap V,\,
		\left.K_\Omega\right|_{\Omega\cap V}
		\right)
		\le
		-\left(\frac{\log 2}{36m}\right)^2.
	\end{equation*}
\end{corollary}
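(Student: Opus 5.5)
The plan is to combine Theorem~\ref{thm:main} with the Bonk--Foertsch relation between Gromov hyperbolicity and chain inequalities. First, Theorem~\ref{thm:four-point} (with \(\mathcal E_m<36m\)) gives a neighborhood \(V\) of \(p\) on which \((\Omega\cap V,K_\Omega|_{\Omega\cap V})\) is \(\delta\)-hyperbolic with \(\delta:=\mathcal E_m<36m\). In the Gromov-product formulation \eqref{eq:gromov-product-inequality}, this holds for every basepoint \(o\in\Omega\cap V\). Fix such a basepoint \(o\), and take \(c\) to be a constant depending only on \(\delta\).

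The key step is the standard iteration of the \(\delta\)-inequality along chains. Given \(x_0,\ldots,x_N\) with \(N\ge1\), choose \(k\ge0\) with \(2^{k-1}<N\le2^k\). Pad the chain by repeating \(x_N\); this changes nothing, since \((x_N\mid x_N)_o=K_\Omega(o,x_N)\ge\) every product involving \(x_N\), so the minimum is unaffected. Then split the chain dyadically. Induction on \(k\) gives
\[
(x_0\mid x_N)_o\ge\min_{j}(x_{j-1}\mid x_j)_o-k\delta ,
\]
because each halving step costs one application of \eqref{eq:gromov-product-inequality}. Since \(k\le 1+\log_2 N\), we get
\[
(x\mid x')_o\ge\min_j(x_{j-1}\mid x_j)_o-\frac{\delta}{\log2}\log N-\delta .
\]
This is \eqref{eq:acu-chain-condition} with \(c=\delta\) and \(1/\sqrt{-\kappa}=\delta/\log 2\), that is, \(\kappa=-(\log2/\delta)^2\).

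Since \(\delta<36m\), we have \(\delta/\log2<36m/\log2=1/\sqrt{-\kappa_m}\). The \(\log N\) term is therefore weaker with \(\kappa_m\), so the same inequality holds for \(\kappa_m\) and the space is \(\mathrm{AC}_u(\kappa_m)\). The bound on \(K_u\) then follows directly from the definition of the infimum. The only point needing care is the padding and dyadic induction. I expect the main (minor) obstacle to be bookkeeping: making sure the constant in front of \(\log N\) is exactly \(\delta/\log 2\), and that the additive remainder is absorbed into \(c\). Everything else is immediate from Theorem~\ref{thm:four-point}.
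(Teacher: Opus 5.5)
Your proof is correct and follows essentially the same route as the paper. You obtain a neighborhood on which the restricted Kobayashi distance is \(\Delta\)-hyperbolic with \(\Delta<36m\), then iterate the Gromov-product inequality dyadically along the chain, losing \(\Delta\lceil\log_2 N\rceil\le(\Delta/\log 2)\log N+\Delta\). The differences are only cosmetic: you take \(\Delta=\mathcal E_m\) directly from Theorem~\ref{thm:four-point} and pad the chain with copies of \(x_N\) (correctly justified by \((x_N\mid x_N)_o\ge(x_{N-1}\mid x_N)_o\)), whereas the paper picks \(\Delta\in(\delta(X,d),36m)\) and splits the chain into two subchains of length at most \(2^{q-1}\).
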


\begin{proof}
	By Theorem~\ref{thm:main} and the definition of the local
	hyperbolicity constant, there exists a neighborhood \(V\) of \(p\)
	such that the space \(X:=\Omega\cap V\), equipped with
	\(d:=\left.K_\Omega\right|_{X\times X}\), satisfies
	\(\delta(X,d)<36m\).
	Choose a number \(\Delta\) such that
	\(\delta(X,d)<\Delta<36m\).
	Then \((X,d)\) is \(\Delta\)-hyperbolic in the sense of
	\eqref{eq:gromov-product-inequality}.
	
	Fix \(o\in X\).  We claim that every finite chain
	\(x_0,\ldots,x_N\) in \(X\) satisfies
	\begin{equation*}
		(x_0\mid x_N)_o
		\ge
		\min_{1\le j\le N}(x_{j-1}\mid x_j)_o
		-
		\Delta\left\lceil\log_2 N\right\rceil.
	\end{equation*}
	We prove this by induction on \(q:=\lceil\log_2 N\rceil\).
	The case \(q=0\), corresponding to \(N=1\), is immediate.  Suppose
	that \(q\ge1\).  Choose \(r\in\{1,\ldots,N-1\}\) so that both
	subchains \(x_0,\ldots,x_r\) and \(x_r,\ldots,x_N\) have length at
	most \(2^{q-1}\).  By the Gromov-product inequality, we have
	\[
	(x_0\mid x_N)_o
	\ge
	\min\bigl\{
	(x_0\mid x_r)_o,\,
	(x_r\mid x_N)_o
	\bigr\}
	-\Delta.
	\]
	Applying the induction hypothesis to the two subchains gives
	\[
	(x_0\mid x_N)_o
	\ge
	\min_{1\le j\le N}(x_{j-1}\mid x_j)_o
	-q\Delta,
	\]
	which proves the claimed chain estimate.
	
	Since
	\[
	\left\lceil\log_2 N\right\rceil
	\le
	\frac{\log N}{\log 2}+1,
	\]
	we obtain
	\[
	(x_0\mid x_N)_o
	\ge
	\min_{1\le j\le N}(x_{j-1}\mid x_j)_o
	-
	\frac{\Delta}{\log 2}\log N
	-\Delta.
	\]
	Because \(\Delta<36m\), it follows that
	\[
	(x_0\mid x_N)_o
	\ge
	\min_{1\le j\le N}(x_{j-1}\mid x_j)_o
	-
	\frac{36m}{\log 2}\log N
	-36m.
	\]
	Since \(1/\sqrt{-\kappa_m}=36m/\log2\), the preceding inequality
	is precisely
	\eqref{eq:acu-chain-condition}, with \(\kappa=\kappa_m\) and
	\(c=36m\).  Thus \((X,d)\) is an
	\(\mathrm{AC}_u(\kappa_m)\)-space, and
	the asserted inequality follows from the definition of
	\(K_u\).
\end{proof}

\begin{remark}
Corollary~\ref{cor:local-asymptotic-curvature} converts the universal
upper estimate \(\delta_{\mathrm{loc}}(\Omega,p)<36m\) into the
coarse-curvature bound \(K_u\le-((\log2)/(36m))^2\) on a sufficiently
small neighborhood, equipped with the restricted ambient distance.
The order \(m^{-2}\) is not claimed to be optimal.  This is a coarse metric
statement and should not be confused with a pointwise bound on the sectional,
holomorphic sectional, or bisectional curvature tensor of a smooth
Riemannian or K\"ahler metric.
\end{remark}

\end{document}